\newcommand{\hkra}{\hookrightarrow}
\newcommand{\ra}{\rightarrow}
\newcommand{\sub}{\subseteq}%subset
\newcommand{\s}[2]{\sum\limits_{#1}{#2} }
\newcommand{\inv}{^{-1}}
\DeclareMathOperator\Spec{Spec}
\newcommand{\cl}{\colon}
\newcommand{\xra}{\xrightarrow}
\newcommand{\eva}{\normalfont\text{ev}}
\newcommand{\lspan}[1]{\langle {#1}\rangle}
\newcommand{\chml}{\check{H}}
\newcommand{\fcolim}{\varinjlim\limits}
\newcommand{\eval}{\eva}
\newcommand{\cupr}{\smile}
\newcommand{\dul}{^\vee}
\newcommand{\vfc}[1]{[#1]^{\virt}}
\newcommand{\fcl}[1]{[#1]}
\newcommand{\reg}{\normalfont\text{reg}}
\newcommand{\GL}{\normalfont\text{GL}}
\newcommand{\PGL}{\normalfont\text{PGL}}
\newcommand{\PU}{\normalfont\text{PU}}
\newcommand{\coker}{\normalfont\text{coker}}
\newcommand{\pr}{\normalfont\text{pr}}
\newcommand{\Mbar}{\overline{\mathcal M}}
\newcommand{\vdim}{\text{vdim}}
\newcommand{\delbar}{\bar\partial}
\newcommand{\Aut}{\normalfont\text{Aut}}
\newcommand{\bC}{\mathbb{C}}
\newcommand{\bD}{\mathbb{D}}
\newcommand{\bG}{\mathbb{G}}
\newcommand{\bH}{\mathbb{H}}
\newcommand{\bK}{\mathbb{K}}
\newcommand{\bP}{\mathbb{P}}
\newcommand{\bQ}{\mathbb{Q}}
\newcommand{\bR}{\mathbb{R}}
\newcommand{\bZ}{\mathbb{Z}}
\newcommand{\cB}{\mathcal{B}}
\newcommand{\cC}{\mathcal{C}}
\newcommand{\cE}{\mathcal{E}}
\newcommand{\cF}{\mathcal{F}}
\newcommand{\cG}{\mathcal{G}}
\newcommand{\cH}{\mathcal{H}}
\newcommand{\cI}{\mathcal{I}}
\newcommand{\cJ}{\mathcal{J}}
\newcommand{\cK}{\mathcal{K}}
\newcommand{\cL}{\mathcal{L}}
\newcommand{\cM}{\mathcal{M}}
\newcommand{\cN}{\mathcal{N}}
\newcommand{\cO}{\mathcal{O}}
\newcommand{\cS}{\mathcal{S}}
\newcommand{\cT}{\mathcal{T}}
\newcommand{\cU}{\mathcal{U}}
\newcommand{\cV}{\mathcal{V}}
\newcommand{\fA}{\mathfrak{A}}
\newcommand{\fB}{\mathfrak{B}}
\newcommand{\fC}{\mathfrak{C}}
\newcommand{\fF}{\mathfrak{F}}
\newcommand{\fL}{\mathfrak{L}}
\newcommand{\fM}{\mathfrak{M}}
\newcommand{\ff}{\mathfrak{f}}
\newcommand{\fg}{\mathfrak{g}}
\newcommand{\fl}{\mathfrak{l}}
\newcommand{\fs}{\mathfrak{s}}
\newcommand{\fu}{\mathfrak{u}}
\newcommand{\virt}{\normalfont\text{vir}}
\newcommand{\rank}{\normalfont\text{rank}}
\newcommand{\pcd}{\normalfont\text{PD}}
\newtheorem{theorem}{Theorem}[section]
\newtheorem{lemma}[theorem]{Lemma}
\newtheorem{corollary}[theorem]{Corollary}
\newtheorem{proposition}[theorem]{Proposition}
\newtheorem{construction}[theorem]{Construction}
\theoremstyle{definition}
\newtheorem{definition}[theorem]{Definition}
\newtheorem{discussion}[theorem]{Discussion}
\theoremstyle{remark}
\newtheorem{remark}[theorem]{Remark}
\newtheorem{example}[theorem]{Example}
\newtheorem*{notation*}{Notation}
\numberwithin{equation}{subsection}
\DeclareMathOperator{\Pic}{Pic}
\DeclareMathOperator{\ind}{ind}
\newcommand{\Addresses}{{% additional braces for segregating \footnotesize
  \bigskip
  \footnotesize

  \textsc{Amanda Hirschi, Sorbonne Université}\par\nopagebreak
  \textit{E-mail address}: \texttt{hirschi@imj-prg.fr}
  
\smallskip
  
    \textsc{Mohan Swaminathan, Stanford University}\par\nopagebreak
    \textit{E-mail address}: \texttt{mohans@stanford.edu}
}}
\begin{document}

\title[Global charts and a product formula in GW theory]{Global Kuranishi charts and a product formula in symplectic Gromov--Witten theory}
\author{Amanda Hirschi, Mohan Swaminathan}

\begin{abstract}
    We construct global Kuranishi charts for the moduli spaces of stable pseudoholomorphic maps to a closed symplectic manifold in all genera. This is used to prove a product formula for symplectic Gromov--Witten invariants. As a consequence we obtain a K\"unneth formula for quantum cohomology. 
\end{abstract}

\maketitle
\tableofcontents
\section{Introduction}

\subsection{Background}
Let $(X,\omega)$ be a closed symplectic manifold. Let $\cJ_\tau(X,\omega)$ be the space of $\omega$-tame almost complex structures and let $J\in\cJ_\tau(X,\omega)$. The fundamental object of study in Gromov--Witten theory, as introduced in \cite{KM94}, is the moduli space $\Mbar_{g,n}(X,A;J)$ of stable $J$-holomorphic maps representing a class $A\in H_2(X,\bZ)$. To construct invariants using this moduli space, one must contend with two technical issues: lack of transversality and smoothness of gluing. 

The first of these issues occurs when we have non-regular $J$-holomorphic curves $u\cl (C,j)\to X$, i.e., those for which the linearization of the Cauchy--Riemann operator $\delbar_{J,j}$ at $u$ is not surjective. In general, this issue persists even if we choose a generic $\omega$-tame $J$, due to the presence of multiply covered curves.

To explain the second issue, note that we need gluing theorems to show that the moduli space is a manifold (or orbifold) near nodal holomorphic curves, even when we have transversality. In general, determining whether two local charts for the moduli space constructed via gluing are smoothly compatible is very subtle.

When $(X,\omega)$ is semi-positive and $J$ is generic, one can control the codimension of the non-regular curves. In this situation, \cite{RT95,RT97} gave a construction of Gromov--Witten invariants using pseudocycles. For general symplectic manifolds, the aforementioned issues require the use of \emph{virtual techniques}. The last decades have seen the development of several virtual frameworks in symplectic geometry, see \cite{LT98,FO99,CM,HWZ17,MW17b,P16,IP19} for example. Most of these constructions begin by representing the moduli space locally using Kuranishi charts. They then employ delicate local-to-global arguments to extract from this a global invariant, called the virtual fundamental class, a generalization of the fundamental class for moduli spaces containing non-regular curves.

The recent work \cite{AMS21} showed how to construct a global Kuranishi chart for $\Mbar_{0,n}(X,A;J)$ by building on ideas from \cite{siebert-98}. This eliminates the need for patching together local information and simplifies the definition of Gromov--Witten invariants considerably. A \emph{global Kuranishi chart} $\cK$ for a compact space $Z$ consists of a finite-rank vector bundle $\cE$ over a manifold $\cT$ both equipped with an almost free action by a compact Lie group $G$ and an equivariant section $\fs \cl \cT\to \cE$, so that $\fs\inv(0)/G\cong Z$. 
We refer to $\cT,\cE$ and $\fs$ as the \emph{thickening}, \emph{obstruction bundle} and \emph{obstruction section} respectively.
An \emph{orientation} on $\cK$ consists of orientations on $\cT$ and $\cE$ which are both $G$-invariant and an orientation on $\fg = \text{Lie}(G)$. The \emph{virtual fundamental class} $\vfc{Z}_\cK \in \chml^{\vdim(Z)}(Z;\bQ)\dul$ associated to an oriented global Kuranishi chart $\cK$ for $Z$ is defined to be the linear map 
\begin{equation*} \chml^{\vdim(Z)}(Z;\bQ) \xra{\fs^*\tau_{\cE/G}\cup(\cdot)} H_c^{\dim(\cT/G)}(\cT/G;\bQ)\xra{[\cT/G]} \bQ
\end{equation*}
where $\vdim(Z) := \dim(\cT)-\dim(G)-\rank(\cE)$ is the \emph{virtual dimension} of $Z$. Here $\tau_{\cE/G}$ is the Thom class of $\cE/G$ and $[\cT/G]$ is the fundamental class of $\cT/G$ in Borel--Moore homology; see \textsection\ref{subsec:gkc-vfc} for more details. The virtual fundamental class of $Z$ can be thought of as the Euler class localized to the zero locus of $\fs$. As in \cite{P16}, we do not need $\cT$ to be a smooth manifold.

\subsection{Main results} In this article, we construct global Kuranishi charts for moduli spaces of stable pseudo-holomorphic maps, generalizing the work of \cite{AMS21} to all genera. 
 
\begin{theorem}[Theorem \ref{global-kuranishi-existence}]\label{intro-main-thm} 
For integers $g,n\geq 0$ and $A\in H_2(X,\bZ)$, the moduli space of stable maps $\Mbar_{g,n}(X,A;J)$ admits an oriented global Kuranishi chart $\cK = (G,\cT,\cE,\fs)$ of the correct virtual dimension, depending on certain auxiliary data, but unique in the following sense.
	\begin{enumerate}[\normalfont(i)]
		\item Different choices of auxiliary data result in global Kuranishi charts which are related by certain equivalence moves that do not affect the virtual fundamental class.
		\item Given any other $J'\in\cJ_\tau(X,\omega)$, there exist auxiliary data such that the associated global Kuranishi charts are cobordant.
	\end{enumerate}
In particular, $\Mbar_{g,n}(X,A;J)$ admits a well-defined virtual fundamental class.
\end{theorem}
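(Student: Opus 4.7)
The plan is to generalize the Abouzaid--McLean--Smith genus-zero construction to all genera by thickening $\Mbar_{g,n}(X,A;J)$ with two kinds of auxiliary data: a framing that rigidifies the domain via a projective embedding, and a finite-dimensional space of section-valued parameters that covers the cokernel of the linearized Cauchy--Riemann operator. Concretely, I would fix a Hermitian line bundle $L \to X$ whose first Chern class rationally approximates $[\omega]$ and a large integer $d$ so that for every stable map $u\cl C \to X$ appearing in the moduli space, the twisted line bundle $L_u^d := u^*L^{\otimes d} \otimes \omega_C^{\mathrm{log}}$ is very ample with vanishing higher cohomology. A compatible framing $F$ is then a choice of basis of $H^0(C,L_u^d)$, giving an embedding $C \hookrightarrow \bP^N$; the group $G = \PGL(N+1,\bC)$ acts on such framings by precomposition.

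The thickening $\cT$ will be the parameter space of tuples $(u,F,\lambda)$, where $\lambda$ lies in a finite-dimensional space $W$ of smooth $(0,1)$-form-valued perturbations, and $(u,F)$ satisfies a compatibility condition matching the framing with the embedding induced by $L_u^d$. The obstruction bundle $\cE = W \times \cT$ carries the tautological section $\fs(u,F,\lambda) = \delbar_J u - \lambda$, and the cut locus $\fs\inv(0)/G$ is canonically identified with $\Mbar_{g,n}(X,A;J)$. The space $W$ is constructed globally by combining Gromov compactness with a partition of unity: at each stable map one chooses local perturbations surjecting onto $\coker D\delbar_J$ and extends them by cutoff to produce a single finite-dimensional $W$ that works uniformly.

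The main analytical hurdle will be showing that $\cT$ carries the structure of a topological (or stratified smooth) manifold of the expected dimension, uniformly across strata with nodal domains, positive-genus components, and nontrivial automorphism groups. This requires the full all-genera gluing package for $J$-holomorphic maps, together with a verification that the framing data eliminates domain automorphisms so that the $G$-action has finite stabilizers; for unstable rational or elliptic components carrying nonconstant maps one may need to augment $F$ with additional jet data or extra marked points. Orientations on $\cT$, $\cE$, and $\mathrm{Lie}(G)$ are inherited from the complex linearity of the leading symbol of $\delbar_J$ and the complex structure on $\mathfrak{pgl}$.

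Finally, for part (i), I would identify a short list of elementary equivalence moves on auxiliary data--enlarging $W$ by a finite-dimensional subspace; replacing $d$ by a higher multiple; tensoring $L$ by an additional positive line bundle--and show that any two choices are connected by a finite sequence of such moves, each extending canonically to a morphism of global Kuranishi charts that preserves the virtual fundamental class as defined in \textsection\ref{subsec:gkc-vfc}. For part (ii), I would interpolate between $J$ and $J'$ by a smooth path $J_t\in \cJ_\tau(X,\omega)$ and rerun the construction parametrically over $[0,1]$, producing a global Kuranishi chart on a manifold with boundary whose two boundary components are charts for $\Mbar_{g,n}(X,A;J)$ and $\Mbar_{g,n}(X,A;J')$; this provides the desired cobordism. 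Together, (i) and (ii) force the virtual fundamental class to be independent of all choices.
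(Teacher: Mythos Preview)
Your outline captures the overall shape of the argument---rigidify domains via a projective embedding from a very ample line bundle, add finite-dimensional perturbations to kill the cokernel, and prove independence by interpolation---but there are two genuine gaps that separate it from a working proof in positive genus.

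First, the symmetry group in a global Kuranishi chart must be \emph{compact}; $\PGL(N+1,\bC)$ is not. This is not a technicality: the virtual fundamental class construction in \textsection\ref{subsec:gkc-vfc} uses that $\cT/G$ is a rational homology manifold and that $\cE/G$ has a Thom class, both of which require compactness. The paper addresses this by working with $G=\PU(N+1)$ and introducing the auxiliary datum $(\cU,\lambda)$---a good covering of the moduli space together with a $\cG$-equivariant map $\lambda:\Mbar^{*,p}_{g,[3d]}(\bP^N,m)\to\cG/G$---which singles out a $\PU(N+1)$-orbit inside each $\PGL(N+1,\bC)$-orbit of framed maps. The obstruction section then acquires an extra $\fs\fu(N+1)$-valued component $\text{i}\log\lambda_\cU$ cutting out this reduction. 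Your proposal has no mechanism for this reduction.

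Second, and more specific to $g\ge 1$: requiring the framing $F$ to come from a basis of $H^0(C,L_u^d)$ forces the holomorphic line bundle $\cO_C(1)$ to equal $L_u^d$ exactly. But as $u$ varies, $L_u^d$ moves in the $g$-dimensional Jacobian $\Pic^0(C)$, while the base $\Mbar^*_g(\bP^N,m)$ of embedded curves does not see this variation. Consequently the forgetful map from your thickening to the base of embedded curves will not be a topological submersion, and you lose the manifold structure near nodal curves. The paper's fix is to relax the compatibility condition: in the thickening one allows any embedding $C\subset\bP^N$ for which $\cO_C(1)$ and $\fL_u^{\otimes p}$ merely have the same degree on each component, and introduces an extra parameter $\alpha\in H^1(C,\cO_C)$ satisfying $[\cO_C(1)]\otimes[\fL_u^{\otimes p}]^{-1}=\exp\alpha$ in $\Pic^0(C)$. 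The obstruction bundle then gains a dual-Hodge-bundle summand with section $\alpha$, and a delicate Abel--Jacobi argument (Proposition~\ref{alpha-smoothness-technical}, Appendix~\ref{line-bundles-on-families-of-curves}) is needed to show this section is rel--$C^\infty$. This is the principal new ingredient beyond genus zero, and your proposal does not account for it.

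A smaller point: constructing the perturbation space $W$ by partition of unity and cutoffs, as you suggest, does not obviously produce something equivariant under the symmetry group or varying smoothly over strata. The paper instead uses the intrinsic obstruction space \eqref{obstruction-space-defined} built from $H^0$-groups of bundles pulled back along the embedding $C\subset\bP^N$, which is automatically equivariant and is shown to be a rel--$C^\infty$ vector bundle via Theorem~\ref{rel-smooth-obs-bundle-rep}.
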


We have the natural map
\begin{align*}
	\text{ev}\times\text{st}\cl \Mbar_{g,n}(X,A;J)&\xrightarrow{} X^n\times\Mbar_{g,n}
\end{align*}
given by evaluation at the marked points and stabilization of the domain, where we follow the convention that $\Mbar_{g,n}$ is a point for $2g-2+n\le 0$. 
Using this map, we define the \emph{Gromov--Witten class} of $(X,\omega)$ to be 
\begin{equation}\label{gw-class-de}  \text{GW}^{(X,\omega)}_{A,g,n}:=(\text{ev}\times\text{st})_*[\Mbar_{g,n}(X,A;J)]^\text{vir}\in H_{\vdim}(X^n\times\Mbar_{g,n},\bQ).\end{equation}
This class contains the same information as the \emph{Gromov--Witten homomorphism}
$$\text{GW}^{(X,\omega)}_{A,g,n}\cl H^*(X^n;\bQ)\to H^*(\Mbar_{g,n};\bQ)$$
defined by the formula
 $$ \text{GW}^{(X,\omega)}_{A,g,n}(\alpha) = \pcd(\normalfont\text{st}_*(\eva^*\alpha \cap \vfc{\Mbar_{g,n}(X,A;J)})),$$
 where $\pcd$ denotes the Poincar\'e duality isomorphism with $\bQ$-coefficients for orbifolds. The Kontsevich-Manin axioms in \cite{KM94} are defined in terms of this homomorphism. They are satisfied by the invariants defined here, as was shown in \cite{Hir23} by the first author. Specializing to $g = 0$ and $n = 3$, we obtain the \emph{small quantum cohomology ring} $QH^*(X,\omega)$ over the universal Novikov ring $\Lambda_0$ as in \cite[Chapter 11]{MS12}.\par
The global Kuranishi charts of Theorem \ref{intro-main-thm} can be shown to be equivalent in genus $0$ to the global Kuranishi charts constructed by \cite{AMS21} and thus, the associated Gromov--Witten invariants agree; see Remark \ref{AMS-comparison} for more details.

\begin{remark}[Smoothness]\label{rel-smooth-remark} The thickening $\cT$ we construct is not smooth, but it admits a topological submersion $\pi$ to a smooth manifold $\cB$ and carries a \emph{canonical} structure of a \emph{rel--$C^\infty$ manifold} over $\cB$. This relies on the results of \cite{Swa21}, recalled in \textsection\ref{subsec:rel-smooth-review} and explained further in Appendix \ref{rel-smooth-addendum}. 
	In particular, we can use smoothing theory as in \cite[\textsection 4.2]{AMS21} to upgrade our construction to a smooth global Kuranishi chart for $\Mbar_{g,n}(X,A,J)$. This allows for the definition of a Morava K-theory valued virtual fundamental class as in \cite{AMS21}. Also, as explained in \cite{BX22}, our global chart can be used as an input for the construction of $\bZ$-valued Gromov--Witten type invariants in all genera.
\end{remark}

As an application of the construction, we prove a product formula for the Gromov--Witten invariants of a product symplectic manifold. This result was conjectured in \cite{KM94} and shown in \cite{KM96,Beh99} in the algebraic setting. In the symplectic category, \cite{RT95} prove the product formula for semi-positive symplectic manifolds in genus $0$. See \cite{Hy12} for related work.

\begin{theorem}[Product formula, Theorem \ref{gw-product}]\label{gw-product-formula} Let $(X_i,\omega_i)$ for $i=0,1$ be closed symplectic manifolds and set $(X,\omega) = (X_0,\omega_0)\times (X_1,\omega_1)$. Let $A_i\in H_2(X_i;\bZ)$ and $g,n\geq 0$ be such that $2g-2+n > 0$ and $(g,n)$ is neither $(1,1)$ nor $(2,0)$. Then, for any $\alpha_i\in H^*(X_i^n;\bQ)$, we have
 \begin{align*}
 	\s{({\pr_i})_*A = A_i}{\normalfont\text{GW}^{(X,\omega)}_{g,n,A}}(\alpha_0\times\alpha_1) = \normalfont\text{GW}^{(X_0,\omega_0)}_{g,n,A_0}(\alpha_0)\cupr\normalfont\text{GW}^{(X_1,\omega_1)}_{g,n,A_1}(\alpha_1),
 \end{align*}
 where the sum on the left side ranges over all $A\in H_2(X,\bZ)$ which map to $A_i$ under the natural projection $\pr_i:X_0\times X_1\to X_i$ for $i=0,1$.
 \end{theorem}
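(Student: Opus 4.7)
The plan is to exhibit a global Kuranishi chart for the LHS moduli space as a fiber product of charts for the two factor moduli spaces over a common ambient space, and then read off the cup product formula from the multiplicativity of the associated Thom classes.

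First, by the $J$-invariance statement in Theorem \ref{intro-main-thm}(ii), I would reduce to the case where $J = J_0 \oplus J_1$ is a product almost complex structure. For such $J$, a $J$-holomorphic stable map $u \cl (C,\vec{x}) \to X_0 \times X_1$ is precisely a pair of $J_i$-holomorphic maps $u_i = \pr_i \circ u$ sharing the same (possibly unstable) marked domain $(C,\vec{x})$. Because $2g-2+n > 0$, the domain stabilization map is well-defined, and this gives a natural homeomorphism
\begin{equation*}
    \Mbar_{g,n}(X,A;J) \;\cong\; \bigsqcup_{(\pr_i)_*A = A_i} \Mbar_{g,n}(X_0,A_0;J_0) \;\times_{\Mbar_{g,n}}\; \Mbar_{g,n}(X_1,A_1;J_1),
\end{equation*}
compatible with evaluation and stabilization on both sides.

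Second, I would arrange the auxiliary data of Theorem \ref{intro-main-thm} so that it is compatible across the two factors, with both resulting thickenings $\cT_0, \cT_1$ submersing to a common rel-$C^\infty$ base $\cB$ that encodes the shared domain data together with a thickening of $\Mbar_{g,n}$. From these I would form the fiber product global Kuranishi chart
\begin{equation*}
    \cK = \bigl(G_0\times G_1,\; \cT_0\times_{\cB}\cT_1,\; \pr_0^*\cE_0\oplus\pr_1^*\cE_1,\; (\pr_0^*\fs_0,\pr_1^*\fs_1)\bigr)
\end{equation*}
for $\Mbar_{g,n}(X,A;J)$, with the induced product orientation. Since the Thom class of $\cE/G$ then factors as a cup product $\pr_0^*\tau_{\cE_0/G_0}\cup \pr_1^*\tau_{\cE_1/G_1}$, the virtual fundamental class of $\cK$ realizes $\vfc{\Mbar_{g,n}(X,A;J)}$ as a fiber-product intersection of $\vfc{\Mbar_{g,n}(X_0,A_0;J_0)}$ and $\vfc{\Mbar_{g,n}(X_1,A_1;J_1)}$ over $\Mbar_{g,n}$. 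Using $\eva = (\eva_0,\eva_1)$ and the fact that the stabilization maps of all three moduli spaces agree on the common domain, the defining formula of the GW class becomes a pushforward of a cap product on the fiber product, which by a standard push-pull argument equals the intersection product on $\Mbar_{g,n}$ of the separate classes $\text{st}_*(\eva_i^*\alpha_i\cap\vfc{\Mbar_{g,n}(X_i,A_i;J_i)})$. Summing over $A$ projecting to $(A_0,A_1)$ and applying Poincar\'e duality on $\Mbar_{g,n}$ yields the formula.

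The main obstacle is the construction of the product chart in a genuinely multiplicative form: the line bundles, projective embeddings, and framings used in Theorem \ref{intro-main-thm} are not automatically compatible with the product decomposition of $X$, and the equivalence moves of part (i) must be deployed to align charts built independently for the two factors into ones living over a common base $\cB$. The excluded cases $(g,n) \in \{(1,1),(2,0)\}$ are precisely those for which $\Mbar_{g,n}$ has a nontrivial generic stabilizer (elliptic or hyperelliptic involution), which complicates both the fiber-product identification and the construction of a compatible chart whose symmetry group acts almost freely.
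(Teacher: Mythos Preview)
Your claimed homeomorphism
\[
    \Mbar_{g,n}(X,A;J) \;\cong\; \Mbar_{g,n}(X_0,A_0;J_0) \;\times_{\Mbar_{g,n}}\; \Mbar_{g,n}(X_1,A_1;J_1)
\]
is false, and this is the central gap. Given a stable map $u\cl (C,\vec x)\to X_0\times X_1$, the projections $u_i=\pr_i\circ u$ need not be stable: each one must be stabilized separately, contracting \emph{different} trees of components of $C$ to obtain domains $C_0,C_1$. Conversely, a pair $(u_0\cl C_0\to X_0,\;u_1\cl C_1\to X_1)$ with isomorphic stabilizations in $\Mbar_{g,n}$ does not determine a single curve $C$ dominating both; over the nodal locus there may be many choices or none. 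So the forgetful map $\Phi$ factors through the fibre product but is not a homeomorphism onto it, and your fibre-product chart $\cT_0\times_{\cB}\cT_1$ cannot be built, because no common base $\cB$ carrying the ``shared domain data'' exists.

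The paper's fix is to introduce an intermediary base $\cN\subset\Mbar_{g,n}(\bP^{N_0}\times\bP^{N_1},(m_0,m_1))$ recording a \emph{biframed} curve with embeddings into both projective spaces, and to pull back the product chart along $\Psi\cl \cN\to\cB_0\times\cB_1$; Lemma~\ref{Zero Locus Fibre Product} shows that the resulting zero locus is exactly $\Mbar_{g,n}(X,\fA;J)$. The crucial geometric input, replacing your false homeomorphism, is Lemma~\ref{Regularity of Product Maps}: the natural map $\cN\to\cB_0\times_{\Mbar_{g,n}}\cB_1$ is only \emph{proper and birational} (an isomorphism away from nodal curves). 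This is enough to deduce the Borel--Moore identity $\Psi_*[\cN]=(\text{st}\times\text{st})^*\pcd([\Delta])\cap[\cB_0\times\cB_1]$, from which the formula follows. Your diagnosis of the excluded $(g,n)$ is correct, but the issue enters precisely here: for $(1,1)$ and $(2,0)$ the orbifold fibre product and the coarse fibre product differ and the degree of $\Psi$ becomes $2$ rather than $1$.
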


As a consequence of this product formula, we deduce a K\"unneth formula for quantum cohomology.
 
 \begin{corollary}[Corollary \ref{quantum-kunneth}]\label{quantum} The K\"unneth map induces an isomorphism
 \begin{align*}
 	QH^*(X_0)\otimes_{\Lambda_0}QH^*(X_1)\to QH^*(X_0\times X_1)
 \end{align*}
 	of $\Lambda_0$-algebras, where $\Lambda_0$ is the universal Novikov ring.
 \end{corollary}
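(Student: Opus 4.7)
The plan is to reduce everything to Theorem \ref{gw-product-formula} applied in the three-point genus-zero case $(g,n)=(0,3)$, which satisfies the admissibility hypothesis since $2g-2+n=1>0$ and $(0,3)\notin\{(1,1),(2,0)\}$. At the level of the underlying $\Lambda_0$-modules, the Künneth map
\[ QH^*(X_0)\otimes_{\Lambda_0} QH^*(X_1)=H^*(X_0;\mathbb{Q})\otimes_{\mathbb{Q}} H^*(X_1;\mathbb{Q})\otimes_{\mathbb{Q}}\Lambda_0\longrightarrow H^*(X_0\times X_1;\mathbb{Q})\otimes_{\mathbb{Q}}\Lambda_0 \]
is already an isomorphism by the classical Künneth theorem with $\mathbb{Q}$-coefficients. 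All the work is therefore in verifying compatibility with the quantum product.

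I would use the standard characterization of the small quantum product by three-point invariants: for $a,b,c\in H^*(X;\mathbb{Q})\subset QH^*(X,\omega)$,
\[ \int_X (a\ast_X b)\cupr c \;=\; \sum_{A\in H_2(X;\mathbb{Z})} T^{\omega(A)}\,\text{GW}^{(X,\omega)}_{A,0,3}(a\times b\times c), \]
where $a\times b\times c$ denotes the cross product in $H^*(X^3;\mathbb{Q})$. Since $H^*(X_0\times X_1;\mathbb{Q})$ is spanned by cross products and the Poincaré pairing on $X_0\times X_1$ is, up to Koszul sign, the tensor product of the pairings on the factors, it suffices to verify
\[ \Bigl((a_0\times a_1)\ast_{X_0\times X_1} (b_0\times b_1),\,c_0\times c_1\Bigr)_{X_0\times X_1} \;=\; \bigl((a_0\ast_{X_0}b_0)\times(a_1\ast_{X_1}b_1),\,c_0\times c_1\bigr)_{X_0\times X_1} \]
for all $a_i,b_i,c_i\in H^*(X_i;\mathbb{Q})$.

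Expanding the left-hand side via the above characterization, I would reinterpret the class
\[ (a_0\times a_1)\times(b_0\times b_1)\times(c_0\times c_1)\in H^*((X_0\times X_1)^3;\mathbb{Q}) \]
as $\alpha_0\times\alpha_1$ with $\alpha_i:=a_i\times b_i\times c_i\in H^*(X_i^3;\mathbb{Q})$ under the shuffle $(X_0\times X_1)^3\cong X_0^3\times X_1^3$ (tracking the Koszul sign). Since $\omega=\pr_0^*\omega_0+\pr_1^*\omega_1$, the weight $T^{\omega(A)}$ depends only on the pair $(A_0,A_1):=((\pr_0)_*A,(\pr_1)_*A)$ and equals $T^{\omega_0(A_0)}T^{\omega_1(A_1)}$. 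Grouping the sum over $A\in H_2(X_0\times X_1;\mathbb{Z})$ according to this pair and invoking Theorem \ref{gw-product-formula} on each inner sum converts the double sum into a product of two single sums. Each of these is precisely the Poincaré pairing on the corresponding factor computed via the same three-point characterization, yielding the right-hand side.

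The heavy lifting is carried by Theorem \ref{gw-product-formula}; what remains is genuinely combinatorial rather than analytic. The only point requiring attention is the shuffle of factors in $(X_0\times X_1)^3\cong X_0^3\times X_1^3$ with its accompanying Koszul sign, so that Theorem \ref{gw-product-formula} is being applied to a class of the form $\alpha_0\times\alpha_1$, together with the observation that the weight $T^{\omega(A)}$ is constant on each fiber of $A\mapsto((\pr_0)_*A,(\pr_1)_*A)$. I do not anticipate any genuine obstacle beyond this bookkeeping.
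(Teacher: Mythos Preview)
Your proposal is correct and is exactly the approach the paper takes: it states Corollary~\ref{quantum-kunneth} as an immediate consequence of the product formula specialized to $(g,n)=(0,3)$, with $\Mbar_{0,3}$ a point so that the cup product on the right becomes ordinary multiplication. You have supplied more detail than the paper itself, but the reduction and the bookkeeping you describe are precisely what is intended.
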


Theorem \ref{gw-product-formula} is used crucially in \cite{HW24}.

During the completion of this work, we were informed by the authors of \cite{AMS21} about an independent construction of global Kuranishi charts for moduli spaces of stable maps of positive genus. This has now appeared in \cite{AMS23}.

\subsection{Overview of the paper}
In \textsection\ref{sec:prelims}, we recall several preliminary results required for the global Kuranishi chart construction. In particular, in \textsection\ref{subsec:AMS-review}, we review the construction of \cite{AMS21} and discuss the difficulties in generalizing it to positive genus. In \textsection\ref{subsec:rel-smooth-review}, we define rel--$C^\infty$ manifolds and review the results of \cite{Swa21}; a refinement, necessary for proving the relative smoothness of the obstruction section, is stated and shown in Appendix \ref{rel-smooth-addendum}. In \textsection\ref{subsec:gkc-vfc}, we discuss rel--$C^\infty$ global Kuranishi charts, the notions of equivalence and cobordism for them and the associated virtual fundamental classes. The impatient reader is advised to read \textsection\ref{overview} first and to refer to \textsection\ref{sec:prelims} as needed.

In \textsection\ref{overview}, we define the auxiliary data required for the construction of the global Kuranishi chart, which is described precisely in Construction \ref{high-level-description-defined}. Suitable auxiliary data are constructed in \textsection\ref{achieving-transversality-proof}. The existence statement in Theorem \ref{intro-main-thm} is proved in \textsection\ref{transversality-implies-smoothness-proof}, while the uniqueness statements are established in \textsection\ref{proof-of-equivalence-and-cobordism}.

In \textsection\ref{products}, we establish the product formula, Theorem \ref{gw-product}. Appendices \ref{line-bundles-on-families-of-curves} and \ref{rel-smooth-addendum} contain technical results necessary for showing the relative smoothness of the obstruction section.
 
\subsection*{Acknowledgements} A.H. thanks her advisor Ailsa Keating for helpful discussions, her encouragement and feedback. She is grateful to Soham Chanda, Andrin Hirschi, Noah Porcelli and Luya Wang for interesting conversations. M.S. thanks Shaoyun Bai, Deeparaj Bhat, Yash Deshmukh and John Pardon for useful comments. 
Both authors thank Mohammed Abouzaid, Mark McLean and Ivan Smith for useful discussions. They are also grateful to the anonymous referee for several insightful comments and suggestions which improved the exposition considerably. Part of this work was done while both authors were in residence at SLMath (formerly known as MSRI) during the Fall 2022 semester, supported by NSF Grant No. DMS-1928930. 
A.H. was supported in her graduate studies by the EPSRC and is currently supported by ERC Grant No. 864919. 

\subsection*{Conventions}
The symbol $\otimes$ denotes tensor product over $\bC$ unless indicated otherwise. The notation for complex projective space $\bC\bP^N$ is abbreviated to $\bP^N$. Smooth manifolds are always assumed to be Hausdorff.
\section{Preliminaries}\label{sec:prelims}

In this section, we collect some relevant background material. In \textsection\ref{subsec:AMS-review}, we describe Abouzaid--McLean--Smith's construction of global Kuranishi charts for genus $0$ Gromov--Witten moduli spaces. In \textsection\ref{subsec:rel-smooth-review}, we discuss some results about canonical rel--$C^\infty$ structures on moduli spaces and obstruction bundles.

\subsection{Abouzaid--McLean--Smith's genus $0$ construction}\label{subsec:AMS-review}

To put Theorem \ref{intro-main-thm} in context, we briefly sketch the genus $0$ construction from \cite[\textsection 6]{AMS21} without proof. We then explain the difficulties in directly adapting it to the general genus case. For simplicity, we restrict to the case without marked points. 

Fix a closed symplectic manifold $(X,\omega)$, a homology class $A\in H_2(X,\bZ)$ and $J\in\cJ_\tau(X,\omega)$. We will describe how to construct a global Kuranishi chart (in the sense of \cite[Definition 4.1]{AMS21}) for the stable map moduli space $\Mbar_0(X,A;J)$.

\begin{definition}[Polarization]\label{polarization-defined}
    A \emph{polarization} on $X$ taming $J$ is a Hermitian line bundle $\cO_X(1)\to X$ equipped with a Hermitian connection $\nabla$ with curvature form $-2\pi \text{i}\Omega$ where $\Omega$ is a symplectic form on $X$ taming $J$.
\end{definition}

\begin{remark}
    In \cite{AMS21}, the polarization $\cO_X(1)$ is denoted by $L_\Omega$.
\end{remark}

Given any compact subset of $\cJ_\tau(X,\omega)$, there exists a polarization on $X$ taming all the almost complex structures in this set. We give a proof of this fact in Lemma \ref{Suitable Line Bundle on Target} below as this is used in the proof of Theorem \ref{intro-main-thm} as well.

\begin{lemma}[Polarization: existence]\label{Suitable Line Bundle on Target}
	There is a complex line bundle $\cO_X(1)\to X$ with a Hermitian metric $\langle\cdot,\cdot\rangle$ and a Hermitian connection $\nabla$ whose curvature is given by $-2\pi{\normalfont\text{i}}\Omega$ where $\Omega$ is a symplectic form taming $J$. 
 
    In fact, given any compact subset $K\subset\cJ_\tau(X,\omega)$ containing $J$, we can choose $\cO_X(1)$ to be such that $\Omega$ tames each $J'\in K$.
\end{lemma}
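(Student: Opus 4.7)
The plan is to reduce to a standard Chern--Weil construction: once I have a closed $2$-form $\Omega$ taming each $J' \in K$ whose de Rham cohomology class lies in the image of $H^2(X;\bZ) \to H^2(X;\bR)$, a complex line bundle with the appropriate first Chern class carries a Hermitian connection with curvature $-2\pi{\normalfont\text{i}}\Omega$, and nondegeneracy of $\Omega$ is automatic from taming. So the real work lies in producing such an $\Omega$.

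The key input is that taming is a uniformly open condition over $K$. Since $K$ and the unit tangent sphere bundle of $X$ (for any auxiliary Riemannian metric) are both compact, the strictly positive function $(J',v) \mapsto \omega(v,J'v)$ attains a positive minimum, so any closed $2$-form sufficiently $C^0$-close to $\omega$ still tames every $J' \in K$. To find such a form with integer cohomology class, I would use Hodge theory with respect to the fixed metric: writing $\omega = \omega_h + d\alpha$ for $\omega_h$ the harmonic representative of $[\omega]$, the continuous linear map sending a class to its harmonic representative lets me approximate $\omega$ in $C^0$ by $c_h + d\alpha$ as $c$ ranges over classes near $[\omega]$. Density of $H^2(X;\bQ)$ in $H^2(X;\bR)$ allows me to pick such $c$ rational, and then $\Omega := N(c_h + d\alpha)$, for $N \geq 1$ chosen with $Nc$ in the image of $H^2(X;\bZ)$, is closed, represents an integer class, and still tames each $J' \in K$ since this property is invariant under positive scaling.

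Finally, fix a complex line bundle $\cO_X(1)$ whose first Chern class maps to $[\Omega]$, equip it with a Hermitian metric, and choose any Hermitian connection $\nabla_0$. By Chern--Weil theory $\frac{{\normalfont\text{i}}}{2\pi} F_{\nabla_0}$ represents $[\Omega]$ in real cohomology, so $\frac{{\normalfont\text{i}}}{2\pi} F_{\nabla_0} - \Omega = d\beta$ for some real $1$-form $\beta$; modifying $\nabla_0$ by the imaginary-valued $1$-form $2\pi{\normalfont\text{i}}\beta$ yields a Hermitian connection with curvature exactly $-2\pi{\normalfont\text{i}}\Omega$. The main point requiring care is the middle step, where the openness of the taming condition must be balanced against the discreteness of integer cohomology; this balance is achieved by combining Hodge-theoretic continuity, density of rational cohomology, and positive scaling.
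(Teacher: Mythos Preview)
Your proposal is correct and follows essentially the same approach as the paper: both arguments approximate $[\omega]$ by a rational cohomology class (exploiting that taming $K$ is an open condition on closed $2$-forms), scale to obtain an integral class, pick a line bundle with that first Chern class, and then correct an arbitrary Hermitian connection by an imaginary $1$-form to realize the exact curvature $-2\pi\text{i}\Omega$. Your version is slightly more explicit about the openness step (via compactness of the unit sphere bundle) and the approximation step (via Hodge theory), but the structure is identical.
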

\begin{proof}
    By approximating $[\omega]\in H^2(X,\bR)$ by an element of $H^2(X,\bQ)$ and multiplying by a large positive integer to clear denominators, we first choose a symplectic form $\Omega$ taming (each almost complex structure in) $K$ such that $[\Omega]$ has an integral lift $h\in H^2(X,\bZ)$. This is possible as being symplectic and taming $K$ are both open properties of closed $2$-forms. Now, let $\cL$ be a complex line bundle on $X$ with $c_1(\cL) = h$. Choose any Hermitian metric $\langle\cdot,\cdot\rangle$ on $\cL$ and a compatible Hermitian connection $\nabla'$ on $\cL$ and write the curvature as $-2\pi \text{i} \Omega'$. Since $h$ is a common integral lift of $[\Omega]$ and $[\Omega']$, we can find a smooth (real) $1$-form $\beta$ such that $\Omega' = \Omega + d\beta$. The connection $\nabla = \nabla' + 2\pi \text{i} \beta$ now is also Hermitian for $\langle\cdot,\cdot\rangle$ and has curvature given by $-2\pi \text{i}\Omega$. Thus, we may take $\cO_X(1)$ to be the line bundle $\cL$ equipped with the metric $\langle\cdot,\cdot\rangle$ and compatible Hermitian connection $\nabla$.
\end{proof}

Choose a polarization $\cO_X(1)\to X$ taming $J$. Let $-2\pi\text{i}\Omega$ be the curvature form of the connection $\nabla$ on $\cO_X(1)$ and write $d := \langle[\Omega],A\rangle$.

\begin{definition}\label{AMS-framed-curve}
    A \emph{framed genus $0$ curve in $X$} is a tuple $(C,u,F)$ where
    \begin{enumerate}
        \item $C$ is a nodal genus $0$ curve, 
        \item $u\cl C\to X$ is a smooth map with $u_*[C] = A$ with $\int u^*\Omega\ge 0$ (resp. $>0$) on each irreducible (resp. unstable irreducible) component of $C$,
        \item $F = (f_0,\ldots,f_d)$ is a complex basis of the space of global holomorphic sections of the line bundle $u^*\cO_X(1)$, equipped the holomorphic structure given by $(u^*\nabla)^{0,1}$, such that the $(d+1)\times(d+1)$ Hermitian matrix
        \begin{align*}
            \cH(C,u,F) = \left(\int_C\langle f_i,f_j\rangle\, u^*\Omega\right)_{0\le i,j\le d}
        \end{align*}
        is positive definite (i.e., has strictly positive eigenvalues).
    \end{enumerate}
    Note that conditions (1) and (2) imply that $u^*\cO_X(1)$ has vanishing $H^1$ and thus, we have $\dim_\bC H^0(C,u^*\cO_X(1)) = d+1$ by Riemann--Roch.
    
    An \emph{equivalence} of framed genus $0$ curves $(C,u,F)$ and $(C',u',F')$ in $X$ is a biholomorphic map $\varphi\cl C\to C'$ such that $u'\circ\varphi = u$ and $\varphi^*F' = F$.
\end{definition}

Any $[u\cl C\to X]\in\Mbar_0(X,A;J)$ satisfies (1) and (2) of Definition \ref{AMS-framed-curve}. For any choice of basis $F = (f_0,\ldots,f_d)$ of $H^0(C,u^*\cO_X(1))$, condition (3) is a consequence of the $\Omega$-tameness of $J$ and the $J$-holomorphicity of $u$.

\begin{definition}\label{AMS-base}
    Let $\Mbar_0^*(\bP^d,d)$ be the moduli space  of degree $d$ genus $0$ stable holomorphic maps to $\bP^d$ whose image is not contained in any hyperplane. Let
    \begin{center}
    \begin{tikzcd}
        \cC \arrow[r] \arrow[d] & \bP^d \\
        \Mbar_0^*(\bP^d,d) & 
    \end{tikzcd}    
    \end{center}
    be the universal family of stable maps over this moduli space, i.e., its fibre over a point $[v\cl C\to \bP^d]$ of $\Mbar_0^*(\bP^d,d)$ is the curve $C$ mapping to $\bP^d$ via the map $v$. 
\end{definition}

\cite[Lemma 6.4]{AMS21} shows that $\Mbar_0^*(\bP^d,d)$ and $\cC$ are smooth quasi-projective varieties. There is a natural forgetful map from equivalence classes of framed genus $0$ curves in $X$ to points of $\Mbar_0^*(\bP^d,d)$ described as follows. Given $(C,u,F)$, we map it to $\phi_F = [f_0:\cdots:f_d]\cl C\to\bP^d$. This yields a well-defined holomorphic embedding $\iota_F\cl C\hookrightarrow\cC$ which identifies $C$ with the fibre over $[\phi_F\cl C\to\bP^d]$ of the universal curve $\cC$.\\\\
Next, choose
\begin{enumerate}[(i)]
    \item a relatively ample line bundle $\cL$ on $\cC\to\Mbar_0^*(\bP^d,d)$, equipped with a Hermitian metric so that the natural $U(d+1)$ action on $\cC$ induced by the linear action $U(d+1)$ action on $\bP^d$ lifts to a unitary action on $\cL$,
    \item a $\bC$-linear connection on ${T^*_\cC}^{0,1}$, which is invariant under the $U(d+1)$ action,
    \item a $\bC$-linear connection
    and tangent bundle $T_X$, viewed as a complex vector bundle using the almost complex structure $J$ and
    \item a sufficiently large positive integer $k\gg 1$.
\end{enumerate}

\begin{definition}
    The \emph{thickening} $\cT$ is the moduli space of tuples $(C,u,F,\eta)$ where
    \begin{enumerate}
        \item $(C,u,F)$ is a \emph{framed curve of genus $0$ in $X$} and
        \item $\eta\in H^0(C,\iota_F^*({T^*_\cC}^{0,1})\otimes u^*T_X\otimes \cL^{\otimes k})\otimes\overline{H^0(C,\cL^{\otimes k})}$,
    \end{enumerate}
    satisfy the equation
    \begin{align*}
        \delbar_Ju + \langle\eta\rangle\circ d\tilde\iota_F = 0.    
    \end{align*}
    Here, the holomorphic structures on $\iota_F^*({T^*_\cC}^{0,1})$ and $u^*T_X$ come from the $\bC$-linear connections fixed above, $\tilde\iota_F$ is the pullback of $\iota_F\cl C\hookrightarrow\cC$ to the normalization $\tilde C\to C$ and $\langle\cdot\rangle$ is induced by the Hermitian inner product on $\cL^{\otimes k}$.
    
    The \emph{obstruction bundle} $\cE\to\cT$ is the vector bundle with fibre
    \begin{align*}
        \left(H^0(C,\iota_F^*({T^*_\cC}^{0,1})\otimes u^*T_X\otimes \cL^{\otimes k})\otimes\overline{H^0(C,\cL^{\otimes k})}\right)\oplus\cH_{d+1}
    \end{align*}
    over $(C,u,F,\eta)\in\cT$, with $\cH_{d+1}$ being the space of $(d+1)\times(d+1)$ Hermitian matrices. The \emph{obstruction section} $\fs$ is the section of $\cE$ over $\cT$ defined by
    \begin{align*}
        (C,u,F,\eta)\mapsto(\eta,\log\cH(C,u,F)),
    \end{align*}
    where $\log$ denotes the inverse of the exponentiation map from Hermitian matrices to positive definite Hermitian matrices. The group $G = U(d+1)$ has a natural action on $\cT$, which lifts to $\cE$ so that $\fs$ becomes a $G$-equivariant section.
\end{definition}

\begin{theorem}[{\cite[Proposition 6.1]{AMS21}}]
    When $k\gg 1$, there is an open neighborhood of $\fs^{-1}(0)\subset\cT$ over which the forgetful map $\cT\to\Mbar_0^*(\bP^d,d)$ is a topological submersion and $\cE$ is a vector bundle. Moreover, $\fs$ is a continuous section of $\cE\to\cT$ and the map $\fs^{-1}(0)/G\to\Mbar_0(X,A;J)$ given by $[C,u,F,0]\mapsto[u\cl C\to X]$ is a homeomorphism.
\end{theorem}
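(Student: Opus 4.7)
The plan is to establish the four assertions of the proposition --- the set-theoretic description of $\fs^{-1}(0)/G$, the vector bundle structure of $\cE$, continuity of $\fs$, and topological submersion --- by combining Riemann--Roch, Serre vanishing, and a parametric implicit function theorem for $\delbar_J$. First I would identify $\fs^{-1}(0)$: a tuple $(C,u,F,\eta)$ lies here iff $\eta = 0$ (so $u$ is $J$-holomorphic) and $\cH(C,u,F) = \id$ (so $F$ is an $L^2$-orthonormal basis of $H^0(C,u^*\cO_X(1))$ with respect to the inner product $(f,g)\mapsto\int_C\langle f,g\rangle\, u^*\Omega$). Since $U(d+1)$ acts simply transitively on orthonormal bases, the quotient by $G$ recovers $\Mbar_0(X,A;J)$ set-theoretically. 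Surjectivity uses that $u^*\cO_X(1)$ has strictly positive degree on every non-ghost component (and degree zero on ghost components), so Riemann--Roch yields $H^1 = 0$ and $\dim H^0 = d+1$. Continuity in both directions follows from Gromov compactness together with the observation that $L^2$-orthonormal bases vary continuously modulo $G$ under Gromov-convergent families.

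Next I would address the vector bundle structure of $\cE$ and continuity of $\fs$. Both reduce to showing that the pushforwards $H^0(C,\iota_F^*({T^*_\cC}^{0,1})\otimes u^*T_X\otimes\cL^{\otimes k})$ and $H^0(C,\cL^{\otimes k})$ have constant rank near $\fs^{-1}(0)$. For $k\gg 1$, Serre vanishing applied over the compact image $\fs^{-1}(0)/G\cong\Mbar_0(X,A;J)$ makes $H^1$ vanish uniformly; this vanishing extends to an open neighborhood by semicontinuity, and the direct images assemble into continuous vector bundles of the expected rank. The $\eta$-component of $\fs$ is tautological, hence continuous. Its matrix component $\log\cH(C,u,F)$ is continuous because $\cH$ varies continuously in $(C,u,F)$ as the integrands vary continuously under Gromov convergence, and $\cH$ remains positive-definite on a neighborhood of $\fs^{-1}(0)$ so the logarithm is well-defined there.

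The main obstacle will be the topological submersion statement. The approach is a parametric implicit function theorem applied to
\[ \Phi(u,\eta) = \delbar_J u + \langle\eta\rangle\circ d\tilde\iota_F, \]
viewed as an equation on pairs $(u,\eta)$ parametrized by $\Mbar_0^*(\bP^d,d)$ together with a choice of framed lift. The linearization at $\eta = 0$ decomposes as $D_u\delbar_J + L$, where $L(\eta') = \langle\eta'\rangle\circ d\tilde\iota_F$; for $k\gg 1$, $\cL^{\otimes k}$ is very ample on the fibres of $\cC\to\Mbar_0^*(\bP^d,d)$, so sections separate points and jets, and a standard calculation shows that $L$ surjects onto $\coker(D_u\delbar_J)$. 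Hence the total linearization is surjective, and the quantitative implicit function theorem --- made uniform over $\fs^{-1}(0)$ by Gromov compactness --- yields a local topological trivialization of $\cT$ over $\Mbar_0^*(\bP^d,d)$ near each point of $\fs^{-1}(0)$. The delicate feature is that gluing parameters at nodes introduce non-smoothness, so the implicit function theorem must be applied only relative to the base $\Mbar_0^*(\bP^d,d)$, in keeping with the rel--$C^\infty$ framework recalled in \textsection\ref{subsec:rel-smooth-review}; this produces a topological submersion rather than a smooth one, which is precisely what the proposition claims.
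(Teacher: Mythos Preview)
The paper does not give its own proof of this statement: it is quoted verbatim as \cite[Proposition 6.1]{AMS21} in the review section \textsection\ref{subsec:AMS-review}, with no argument supplied. So there is no proof in the present paper to compare your proposal against.

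That said, your sketch is broadly in line with the strategy of \cite{AMS21} and with the paper's own proof of the higher-genus analogue in \textsection\ref{transversality-implies-smoothness-proof}. A few comments on alignment and one substantive point: the identification of $\fs^{-1}(0)/G$ with the moduli space is handled in the paper (for the higher-genus version) via the groupoid-equivalence argument of Discussion~\ref{groupoid-equivalence} and Lemma~\ref{zero-locus-homeo-lemma}, which matches your first paragraph. The surjectivity of the linearization for $k\gg 1$ is treated in \textsection\ref{k-choice} via the Gromov-trick reformulation (Lemmas~\ref{E_l-gromov-trick}--\ref{l-transverse-is-transverse}) rather than by directly arguing that $L$ surjects onto $\coker(D_u\delbar_J)$; your direct argument is closer to what \cite{AMS21} does. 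The one place where your sketch is thin is the ``quantitative implicit function theorem made uniform by Gromov compactness'' step: in \cite{AMS21} this is carried out via explicit gluing estimates, while the present paper replaces that by the rel--$C^\infty$ representability machinery of \cite{Swa21} (Theorem~\ref{rel-smooth-representability}), which packages the uniformity automatically. Your invocation of the rel--$C^\infty$ framework at the end is appropriate, but note that \cite{AMS21} itself does not use that framework---it proves only a $C^1_{\text{loc}}$ structure, as the paper notes in Remark~\ref{AMS-comparison}.
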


\begin{remark}
    The bijection between $\fs^{-1}(0)/G$ and $\Mbar_0(X,A;J)$ is due to the fact that $\fs^{-1}(0)$ consists of framed genus $0$ curves $(C,u,F)$ in $X$ such that $u$ is $J$-holomorphic and the basis $F$ is unitary, i.e., $\cH(C,u,F)$ is the identity matrix.
\end{remark}

\begin{remark}
    The fact that $\cT\to\Mbar_0^*(\bP^d,d)$ is a topological submersion is crucially used in \cite{AMS21} to construct a $G$-invariant smooth structure on $\cT$. In a bit more detail, one first shows that $\cT\to\Mbar_0^*(\bP^d,d)$ has a well-defined vertical tangent bundle after which it becomes possible to apply $G$-equivariant smoothing theory to construct the desired smooth structure.
\end{remark}

\begin{discussion}
Let us now consider how to extend this construction to the case of stable maps of any genus. Let $u\cl C\to X$ be a $J$-holomorphic stable map of positive genus.

First, the holomorphic line bundle $u^*\cO_X(1)$ may have non-vanishing $H^1$, i.e., the dimension of $H^0(C,u^*\cO_X(1))$ may jump. To resolve this issue, we replace $u^*\cO_X(1)$ by $(\omega_C\otimes(u^*\cO_X(1))^{\otimes 3})^{\otimes p}$ for some $p\gg 1$, where $\omega_C$ denotes the dualizing line bundle of $C$. This ensures that the relevant $H^1$ vanishes.

Second, even after replacing $u^*\cO_X(1)$ as above, the analogue of the forgetful map is not a topological submersion. To resolve this, we need to consider a version of Definition \ref{AMS-framed-curve} where, in place of $u^*\cO_X(1)$, we allow any holomorphic line bundle $L$ on $C$ which is topologically isomorphic to $(\omega_C\otimes(u^*\cO_X(1))^{\otimes 3})^{\otimes p}$ and then take a complex basis of $H^0(C,L)$.
  
\end{discussion}

\subsection{Rel--$C^\infty$ manifolds and representability}\label{subsec:rel-smooth-review}

We recall the definition of the category of rel--$C^\infty$ manifolds and summarize the main results from \cite{Swa21}. These will play an important role in our proof of Theorem \ref{global-kuranishi-existence}, which is a more precise formulation of Theorem \ref{intro-main-thm} in the introduction.

\subsubsection{Generalities}  For a continuous map of spaces $p\cl Y\to S$, we abbreviate $(Y,S,p\cl Y\to S)$ to simply $Y/S$ when $p$ is clear from the context.

\begin{definition}[Rel--$C^\infty$ maps]
    Given spaces $S$ and $T$ and integers $m,n\ge 0$, suppose $U\subset\bR^n\times S$ and $V\subset\bR^m\times T$ are open subsets. A \emph{rel--$C^\infty$ map} $\Phi = (\tilde\varphi,\varphi)\cl U/S\to V/T$ consists of a continuous map $\varphi\cl S\to T$ and a continuous map $\tilde\varphi\cl U\to V$ having the form
    \begin{align*}
        \tilde\varphi(x,s) = (F(x,s),\varphi(s)),
    \end{align*}
    where $F\cl U\to\bR^m$ is such that all its partial derivatives $D^\alpha_xF\cl U\to\bR^m$ with respect to $x\in\bR^n$ are defined and continuous as functions on $U$.
\end{definition}

\begin{definition}[Rel--$C^\infty$ manifolds]
    For a continuous map of spaces $p\cl Y\to S$ and an integer $n\ge 0$, a \emph{chart} of relative dimension $n$ is an open subset $U\subset Y$ together with an open embedding $\varphi\cl U\to \bR^n\times S$ which is compatible with the projection maps to $S$. Two charts $(U,\varphi)$ and $(V,\psi)$ of relative dimension $n$ for $Y/S$ are said to be \emph{rel--$C^\infty$ compatible} if
    \begin{align*}
        (\psi\circ\varphi^{-1},\text{id}_S)\cl \varphi(U\cap V)/S\to\psi(U\cap V)/S
    \end{align*}
    is a \emph{rel--$C^\infty$ diffeomorphism}, i.e., a rel--$C^\infty$ map with a rel--$C^\infty$ inverse.
    
    A \emph{rel--$C^\infty$ structure} on $Y/S$ is a maximal atlas of rel--$C^\infty$ compatible charts for $Y/S$. In this situation, we refer to $Y/S$ with this atlas as a \emph{rel--$C^\infty$ manifold} or we say that $Y$ has the structure of a rel--$C^\infty$ manifold over $S$.
\end{definition}

\begin{definition}[Rel--$C^\infty$ category]
    Given rel--$C^\infty$ manifolds $Y/S$ and $Z/T$, a rel--$C^\infty$ map $\Phi=(\tilde\varphi,\varphi)\cl Y/S\to Z/T$ is a commutative diagram
    \begin{center}
    \begin{tikzcd}
        Y \arrow[d] \arrow[r,"\tilde\varphi"] & Z \arrow[d] \\
        S \arrow[r,"\varphi"] & T
    \end{tikzcd}    
    \end{center}
    such that $\varphi$ and $\tilde\varphi$ are continuous and, in any local charts belonging to the maximal atlases of $Y/S$ and $Z/T$, the pair $(\tilde\varphi,\varphi)$ induces a rel--$C^\infty$ map between open subsets of $\bR^n\times S$ and $\bR^m\times T$ for some integers $m,n\ge 0$.

    The \emph{category of rel--$C^\infty$ manifolds}, denoted by $(C^\infty/\cdot)$, is the category whose objects are rel--$C^\infty$ manifolds and whose morphisms are rel--$C^\infty$ maps. Any smooth manifold $M$ can be considered as a rel--$C^\infty$ manifold $M/\text{pt}$, where $\text{pt}$ denotes the space with one point.
\end{definition}

\begin{definition}[Rel--$C^\infty$ vector bundles]
    Let $Y/S$ be a rel--$C^\infty$ manifold, and let $E$ be a topological $\bK$-vector bundle of rank $r$ on the space $Y$, where $\bK$ denotes $\bR$ or $\bC$. Two local trivializations of $E$ are said to be \emph{rel--$C^\infty$ compatible} if their transition function is a rel--$C^\infty$ function valued in $\text{GL}(r,\bK)$.
    
    A \emph{rel--$C^\infty$ vector bundle} on $Y/S$ is a topological vector bundle $E$ on $Y$ along with a maximal atlas of rel--$C^\infty$ compatible trivializations. In this situation, $E/S$ is naturally a rel--$C^\infty$ manifold and thus, there is a well-defined notion of rel--$C^\infty$ sections of a rel--$C^\infty$ vector bundle.
\end{definition}

\begin{example}
    Any rel--$C^\infty$ manifold $Y/S$ has a well-defined \emph{vertical tangent bundle} denoted by $T_{Y/S}$. This is a rel--$C^\infty$ real vector bundle on $Y/S$.
\end{example}

\begin{definition}[Rel--$C^\infty$ submersion]
    A morphism $f\cl Y'/S\to Y/S$ in $(C^\infty/\cdot)$, where the underlying map $S\to S$ is the identity, is called a rel--$C^\infty$  submersion if the differential $df\cl T_{Y'/S}\to f^*T_{Y/S}$ is surjective.
\end{definition}

Rel--$C^\infty$ submersions are given by coordinate projections in suitable local coordinates just like $C^\infty$ submersions. This follows from the inverse function theorem with parameters, \cite[Lemma 5.10]{Swa21}. Similarly, we have a well-behaved notion of \emph{vertical transversality} for rel--$C^\infty$ maps, which is the analogue of usual transversality formulated using the vertical tangent bundle. The following observation was not stated explicitly in \cite{Swa21}.

\begin{lemma}\label{fibre-product}
    Suppose $Y'/S\to Y/S$ is a rel--$C^\infty$ submersion. Then, $Y'/Y$ is naturally a rel--$C^\infty$ manifold given by the categorical fibre product
    \begin{align*}
        Y'/Y = (Y'/S)\times_{(Y/S)}(Y/Y)
    \end{align*}
    in the category $(C^\infty/\cdot)$ of rel--$C^\infty$ manifolds.
\end{lemma}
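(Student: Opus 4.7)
The plan is to construct local rel--$C^\infty$ charts for $Y'/Y$ using the local normal form for rel--$C^\infty$ submersions, verify that they assemble into a canonical atlas, and deduce the universal property of the fibre product.

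First I would produce charts. Fix $y' \in Y'$ with images $y \in Y$ and $s \in S$. By the inverse function theorem with parameters (\cite[Lemma 5.10]{Swa21}) applied to the submersion $Y'/S \to Y/S$, there is a chart $\psi \cl V \to \bR^n \times S$ of $Y/S$ near $y$ and a chart $\varphi \cl U \to \bR^{n'} \times S$ of $Y'/S$ near $y'$, with $U$ mapping into $V$, such that the submersion becomes the coordinate projection $\bR^{n'-n} \times \bR^n \times S \to \bR^n \times S$ onto the last two factors. Rewriting, $\varphi$ becomes an open embedding $\widetilde{\varphi} \cl U \hookrightarrow \bR^{n'-n} \times V \subset \bR^{n'-n} \times Y$ compatible with the projection to $Y$, i.e., a chart of relative dimension $n' - n$ for $Y'/Y$.

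Next I would verify rel--$C^\infty$ compatibility. Two such charts overlap via a homeomorphism preserving the $Y$-coordinate, which pulled back through the $S$-charts takes the form $(x, y_1, s) \mapsto (F(x, y_1, s), y_1, s)$ with $y_1 \in \bR^n$ corresponding to $v \in V$ under $\psi$. Being rel--$C^\infty$ over $S$, the partial derivatives $D^\alpha_{(x, y_1)} F$ all exist and are continuous on the domain; in particular the purely $x$-partials are continuous in $(x, y_1, s)$, hence in $(x, v)$. This is exactly rel--$C^\infty$ compatibility over $Y$, and so the charts assemble into a maximal rel--$C^\infty$ atlas on $Y'/Y$. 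The main technical obstacle here is bookkeeping: keeping the $S$-chart $\bR^{n'} \times S$ of $Y'/S$ and the $Y$-chart $\bR^{n'-n} \times Y$ of $Y'/Y$ cleanly separated while confirming that the correct mix of smoothness and continuity transfers through the base chart $\psi$.

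For the universal property, note that a rel--$C^\infty$ map $Z/W \to Y/Y$ unpacks to just a continuous base map $W \to Y$ (whose composition with $Z \to W$ determines the map $Z \to Y$). So given compatible rel--$C^\infty$ maps $Z/W \to Y'/S$ and $Z/W \to Y/Y$, the underlying continuous maps $Z \to Y'$ and $W \to Y$ are determined; in a chart $\bR^k \times W$ of $Z/W$ mapping into the $S$-chart, the rel--$C^\infty$-over-$S$ map $\bR^k \times W \to \bR^{n'-n} \times \bR^n \times S$ splits as $(z_1, w) \mapsto (G_1(z_1, w), G_2(z_1, w), \varphi(w))$ with all components $C^\infty$ in $z_1$ and all $z_1$-partials continuous in $(z_1, w)$. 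Compatibility forces $(G_2, \varphi)$ to be determined by the base map $W \to Y$ and thus independent of $z_1$, so the resulting map into the $Y$-chart $\bR^{n'-n} \times Y$ is $(z_1, w) \mapsto (G_1(z_1, w), \psi(w))$, which is rel--$C^\infty$ over $Y$ as required. All analytic content is supplied by \cite[Lemma 5.10]{Swa21}; the rest is an unpacking of definitions.
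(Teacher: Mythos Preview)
Your proposal is correct and follows the same approach as the paper: reduce to the local model via the inverse function theorem with parameters, then verify the claim directly in coordinates. The paper compresses this into two sentences (``local on $Y'$ and $Y$'', ``can be checked directly''), and what you have written is precisely the direct check the paper omits.
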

\begin{proof}
    This statement is local on $Y'$ and $Y$. Thus we may reduce to the case of $Y'=\bR^{m+n}\times S$ and $Y = \bR^n\times S$ with $p$ given by a coordinate projection. Now, the statement can be checked directly.
\end{proof}

\subsubsection{Moduli spaces}
Holomorphic curves in almost complex manifolds can be used to define moduli functors on the category of rel--$C^\infty$ manifolds.

Let $\fB$ be a complex manifold and $g,m\ge 0$ be integers. Let $\pi\cl\fC\to \fB$ be a flat analytic family of prestable curves (i.e., projective, connected curves with at worst nodal singularities) of arithmetic genus $g$ with $m$ marked points, given by analytic sections $p_1,\ldots,p_m\cl\fB\to \fC$. The fibre of this family over any point $s\in \fB$ will be written as $(C_s,p_1(s),\ldots,p_m(s))$.

\begin{definition}[Moduli functors]\label{rel-smooth-moduli-functors}
    Let $(X,J)$ be a smooth almost complex manifold and let $A\in H_2(X,\bZ)$. Define the functor
    \begin{align*}
        \fM^\text{reg}(\pi,X)_A\cl (C^\infty/\cdot)^\text{op}\to(\text{Sets})
    \end{align*}
    as follows. Given a rel--$C^\infty$ manifold $Z/T$, we define $\fM^\text{reg}(\pi,X)_A(Z/T)$ to be the set of all commutative diagrams
    \begin{center}
    \begin{tikzcd}
        \fC \arrow[d,"\pi"] & \arrow[l] \fC_T \arrow[d] & \arrow[l] \fC_Z \arrow[r,"f"] \arrow[d] & X \\
        \fB & \arrow[l,"\varphi"] T & \arrow[l] Z &
    \end{tikzcd}
    \end{center}
    where 
    \begin{itemize}
        \item $\varphi\cl T\to \fB$ is continuous,
        \item the two squares are pullbacks (i.e., $\fC_T = \fC\times_\fB T$ and $\fC_Z = \fC_T\times_T Z$),
        \item  the map $f\cl\fC_Z/\fC_T\to X/\text{pt}$ is rel--$C^\infty$ (with the rel--$C^\infty$ structure on $\fC_Z/\fC_T$ being pulled back from $Z/T$) whose restriction to each fibre of $\fC_Z\to Z$ is an unobstructed $J$-holomorphic map in class $A$.
    \end{itemize}  
    Explicitly, the last condition says that for $z\in Z$ with image $s\in \fB$, the restriction $f_z\cl C_s\to X$ of the map $f$ to $\fC_Z|_z = C_s$ is $J$-holomorphic, $(f_z)_*[C_s] = A$ and the linearization
    \begin{align*}
        D(\delbar_J)_{f_z}\cl\Omega^0(C_s,f_z^*T_X)\to\Omega^{0,1}(\tilde C_s,\tilde f_z^*T_X)
    \end{align*}
    is surjective. Here, $\tilde C_s\to C_s$ denotes the normalization of $C_s$ and $\tilde f_z$ is the pullback of $f_z$ to $\tilde C_s$. To complete the definition of $\fM^\text{reg}(\pi,X)_A$, note that such diagrams can be pulled back along any morphism $Z'/T'\to Z/T$ in $(C^\infty/\cdot)$. 
    
    Additionally, define
    \begin{align*}
        \Mbar^\text{reg}(\pi,X)_A\cl (C^\infty/\cdot)^\text{op}\to(\text{Sets})
    \end{align*}
    to be the subfunctor of $\fM^\text{reg}(\pi,X)_A$ consisting of commutative diagrams as above which satisfy the additional condition that for any $z\in Z$ with image $s\in \fB$, the number of biholomorphic automorphisms of $C_s$ which preserve $p_1(s),\ldots,p_m(s)$ and $f_z$ is finite, i.e., $(C_s,p_1(s),\ldots,p_m(s),f_z)$ is a stable map.
\end{definition}

Recall that for any category $\mathscr{C}$, we say that a functor $\mathscr{F}\cl\mathscr{C}^\text{op}\to(\text{Sets})$ \emph{represented} by an object $M$ of $\mathscr{C}$ if there is a natural isomorphism
\begin{align*}
    \eta\cl\text{Hom}_{\mathscr{C}}(-,M)\to\mathscr{F}
\end{align*}
of functors. By the Yoneda lemma, $\eta$ is specified completely by $\eta(\text{id}_M)\in\mathscr{F}(M)$. The pair $(M,\eta)$, if it exists, is unique up to unique isomorphism.

The main result of \cite{Swa21}, recalled below, is that the functors defined above are representable. The representing objects are the moduli spaces, endowed with rel--$C^\infty$ structures.

\begin{theorem}[Rel--$C^\infty$ structures on moduli spaces]\label{rel-smooth-representability}
    The functors $\fM^{\normalfont\text{reg}}(\pi,X)_A$ and $\Mbar^{\normalfont\text{reg}}(\pi,X)_A$ are represented by rel--$C^\infty$ manifolds over $\fB$, whose underlying topological spaces are Hausdorff. Moreover, the inclusion
    \begin{align*}
        \Mbar^{\normalfont\text{reg}}(\pi,X)_A\subset\fM^{\normalfont\text{reg}}(\pi,X)_A
    \end{align*}
    is an open embedding.
\end{theorem}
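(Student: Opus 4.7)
The plan is to construct the underlying set $M$ of $\fM^{\text{reg}}(\pi, X)_A$ as the disjoint union over $s \in \fB$ of the sets of unobstructed $J$-holomorphic maps $u \colon C_s \to X$ in class $A$, equip it with a topology of $C^\infty$ convergence on compact subsets of the smooth locus (together with appropriate uniform control near nodes), and then produce a maximal atlas of rel--$C^\infty$ charts over $\fB$ via a parametric implicit function theorem. Representability amounts to producing a tautological family of curves and maps over $M/\fB$ serving as the universal element $\eta(\text{id}_M)$, and then showing that any element of $\fM^{\text{reg}}(\pi,X)_A(Z/T)$ is pulled back from this universal family by a unique rel--$C^\infty$ map $Z/T \to M/\fB$.

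First, I would build local charts as follows. Fix an unobstructed $J$-holomorphic map $u_0 \colon C_{s_0} \to X$. After passing to a small neighborhood $U \subset \fB$ of $s_0$, trivialize $\fC_U \to U$ topologically so that the fibres $C_s$ are identified (as topological spaces with marked nodal structure) with $C_{s_0}$, with the complex structure $j_s$ varying continuously in $s \in U$. Set up a Banach manifold of $W^{k,p}$-maps $C_{s_0} \to X$ near $u_0$ for large $k$ and work with the parametric Cauchy--Riemann section $\bar\partial_{J,j_s}$; because $u_0$ is unobstructed, $D\bar\partial_{J,j_{s_0}}$ is surjective and the fibrewise kernel at $s_0$ gives a finite-dimensional model. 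The implicit function theorem with parameters (in the $s$-direction, which varies only continuously, but in the additional finite-dimensional kernel direction, which varies smoothly in the fibre) yields a local homeomorphism between $M$ near $u_0$ and an open set in $\bR^n \times U$ whose fibre-direction structure is genuinely $C^\infty$. Elliptic regularity upgrades $W^{k,p}$-solutions to $C^\infty$-solutions, and a careful bootstrap shows the resulting chart is rel--$C^\infty$ compatible with itself under shrinking.

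Second, I would verify that two such charts are rel--$C^\infty$ compatible on their overlap by comparing the two parametrizations through the uniqueness part of the implicit function theorem: both identify a neighborhood in $M$ with the zero locus of a rel--$C^\infty$ section of a rel--$C^\infty$ Banach bundle, and the transition function is therefore itself rel--$C^\infty$. Hausdorffness follows from a Gromov compactness / elliptic bootstrap argument: two sequences converging to different limits over the same $s$ would force $C^\infty$-convergence to both limits, contradicting regularity. Representability then reduces to the following universal-property statement: given a diagram as in Definition \ref{rel-smooth-moduli-functors} over $Z/T$, the assignment $z \mapsto f_z$ yields a set-theoretic map $Z \to M$ over $\varphi \colon T \to \fB$, and in any local chart produced above this map is precisely the one that solves the relevant parametric Cauchy--Riemann equation — hence is rel--$C^\infty$ by the uniqueness in the implicit function theorem. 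Conversely, pulling back the tautological family over $M$ recovers the given diagram.

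For the second assertion, I would note that stability is an open condition on the fibres (a stable curve with map cannot be deformed to an unstable one without destroying a marked point or collapsing a component), so $\Mbar^{\text{reg}}(\pi, X)_A \subset \fM^{\text{reg}}(\pi, X)_A$ is the open subfunctor cut out by finiteness of $\Aut(C_s, p_1(s), \ldots, p_m(s), f_z)$, which corresponds to an open subset of the underlying topological space of $M$ and hence inherits a rel--$C^\infty$ structure. The main obstacle I anticipate is the parametric regularity step in constructing the charts: namely, ensuring that the implicit function theorem solution depends rel--$C^\infty$ on the base $\fB$-parameter even though $\fB$ enters only as a continuous parameter for the Cauchy--Riemann operator (via the varying $j_s$), and verifying that this rel--$C^\infty$ structure is canonical and chart-independent. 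This is the technical heart of \cite{Swa21}, requiring careful bookkeeping of Sobolev regularity of $s$-derivatives of the operator and compatibility of bootstrapping with the parametric implicit function theorem.
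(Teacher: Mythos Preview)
Your proposal has a genuine gap: it only works on the equisingular locus of $\fB$ and says nothing about node-smoothing. When you write ``trivialize $\fC_U \to U$ topologically so that the fibres $C_s$ are identified (as topological spaces with marked nodal structure) with $C_{s_0}$,'' you are assuming the topological type of the fibre is constant. But $\pi\cl\fC\to\fB$ is a family of \emph{prestable} curves, so generically the nodes of $C_{s_0}$ are smoothed in nearby fibres and no such trivialization exists. The heart of the theorem is precisely the construction of rel--$C^\infty$ charts across the strata where the combinatorial type changes; your parametric implicit function theorem on a fixed Banach manifold of maps out of $C_{s_0}$ cannot see these directions at all.

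The paper's proof is simply a citation to \cite{Swa21}, but Appendix~\ref{rel-smooth-addendum} sketches what actually happens there, and it is quite different from what you propose. One reduces to a versal deformation and splits the base as $\fB=\bG\times V$, with $\bG\subset\bC^d$ the gluing parameters and $V$ parametrizing complex structures on the fixed curve $C$. One then pre-glues $u$ to nearby fibres $C_\alpha$ and sets up the Cauchy--Riemann problem in weighted Sobolev spaces $H^{k,\delta}$ viewed as an \emph{sc-Hilbert space} (scales $H^{k+m,\delta_m}$). The solution map $(\alpha,v,\kappa)\mapsto\xi_{\alpha,v,\kappa}$ is obtained from the \emph{polyfold} sc-implicit function theorem of Hofer--Wysocki--Zehnder, and is sc-smooth only after reparametrizing the gluing disc by the exponential gluing profile $\bG_{\exp}$. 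Passing to the rel--$C^\infty$ category then lets one forget the smooth structure in the $\fB$-direction and revert to $\bG$. None of this is captured by a classical Banach-manifold implicit function theorem with continuous parameter; indeed, obtaining even $C^1$ dependence on the gluing parameter via classical gluing is already delicate, and the sc-calculus is invoked precisely to sidestep that. Your final paragraph correctly flags parametric regularity as the hard part, but misidentifies the source of the difficulty: it is not the continuous dependence on $j_s$ within a fixed stratum, but the gluing across strata.
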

\begin{proof}
    \cite[Theorems 3.5, 3.6 and Corollary 3.7]{Swa21} show that $\fM^{\normalfont\text{reg}}(\pi,X)_A$ and $\Mbar^{\normalfont\text{reg}}(\pi,X)_A$ are represented by Hausdorff\footnote{In fact, the cited statements show the \textit{a priori} weaker property of the rel--$C^\infty$ manifolds being separated over $\fB$. As the base $\fB$ is Hausdorff, the two notions agree.} rel--$C^\infty$ manifolds over $\fB$ and that the inclusion of the latter in the former is an open embedding.
\end{proof}

In view of Theorem \ref{rel-smooth-representability}, we will conflate the functors $\fM^{\normalfont\text{reg}}(\pi,X)_A$ and $\Mbar^{\normalfont\text{reg}}(\pi,X)_A$ with the rel--$C^\infty$ manifolds over $\fB$ that represent them.

\begin{remark}
    Points of $\fM^{\normalfont\text{reg}}(\pi,X)_A$ are given by pairs $(s,f)$, where $s\in \fB$ and $f\cl C_s\to X$ is an unobstructed $J$-holomorphic map in class $A$, as can be seen by taking $Z = T = \text{pt}$ in Definition \ref{rel-smooth-moduli-functors}. Among these, points of $\Mbar^\text{reg}(\pi,X)_A$ are the ones for which $(C_s,p_1(s),\ldots,p_m(s),f)$ is a stable map.
\end{remark}

The proof of Theorem \ref{rel-smooth-representability} given in \cite{Swa21} uses polyfold theory to show the existence of a representing object. The key elements of this argument are reviewed in Appendix \ref{rel-smooth-addendum}, where we also establish a useful refinement of Theorem \ref{rel-smooth-representability}.\par
 We emphasize that the rel--$C^\infty$ manifolds whose existence is asserted in Theorem \ref{rel-smooth-representability} are determined up to unique isomorphism by the corresponding moduli functors. Thus, any alternate proof method (e.g., using only usual gluing analysis) would necessarily produce a canonically isomorphic object.

\subsubsection{Obstruction bundles} We next describe a generalization of Theorem \ref{rel-smooth-representability}, which will allow us to describe the obstruction bundle in the rel--$C^\infty$ set-up. 

\begin{definition}[Obstruction bundle functor]\label{obstruction-bundle-functor}
    We continue with the same setup as in Definition \ref{rel-smooth-moduli-functors}. Additionally, also assume that we have a smooth $\bC$-vector bundle $p\cl F\to X$, equipped with a $\bC$-linear connection $\nabla$. Define
    \begin{align*}
        \fM^\text{reg}_F(\pi,X)_A\cl (C^\infty/\cdot)^\text{op}\to(\text{Sets})
    \end{align*}
    to be the subfunctor of $\fM^\text{reg}(\pi,X)_A$ consisting of diagrams as in Definition \ref{rel-smooth-moduli-functors} with the additional property that, for any $z\in Z$ with image $s\in \fB$, the restriction $f_z\cl C_s\to X$ of the map $f$ to $\fC_Z|_z = C_s$ satisfies
    \begin{align*}
        H^1(C_s,f_z^*F) = 0.    
    \end{align*}
    Here the vector bundle $f_z^*F$ is endowed with the holomorphic structure given by the $\bC$-linear Cauchy--Riemann operator $(f_z^*\nabla)^{0,1}$.

    Define the functor
    \begin{align*}
        \fF^\text{reg}_F(\pi,X)_A\cl (C^\infty/\cdot)\to(\text{Sets})
    \end{align*}
    as follows. Given a rel--$C^\infty$ manifold $Z/T$, we define $\fF^\text{reg}_F(\pi,X)_A(Z/T)$ to be the set of all commutative diagrams
    \begin{center}
    \begin{tikzcd}
        \fC \arrow[d,"\pi"] & \arrow[l] \fC_T \arrow[d] & \arrow[l] \fC_Z \arrow[r,"\tilde f"] \arrow[d] & F \\
        \fB & \arrow[l,"\varphi"] T & \arrow[l] Z &
    \end{tikzcd}
    \end{center}
    such that composing with the projection $p\cl F\to X$ yields a diagram
    \begin{center}
    \begin{tikzcd}
        \fC \arrow[d,"\pi"] & \arrow[l] \fC_T \arrow[d] & \arrow[l] \fC_Z \arrow[r,"f"] \arrow[d] & X \\
        \fB & \arrow[l,"\varphi"] T & \arrow[l] Z &
    \end{tikzcd}
    \end{center}
    belonging to $\fM^\text{reg}_F(\pi,X)_A(Z/T)$ and for, any $z\in Z$ with image $s\in \fB$, the section $\sigma_z$ of $f_z^*F$ corresponding to the map $\tilde f_z\cl C_s\to F$ is holomorphic with respect to the $\bC$-linear Cauchy--Riemann operator $(f_z^*\nabla)^{0,1}$, i.e., $\sigma_z\in H^0(C_s,f_z^*F)$. Here, $\tilde f_z\cl C_s\to F$ is the restriction of the $\tilde f$ to $\fC_Z|_z = C_s$ and $f_z = p\circ\tilde f_z\cl C_s\to X$.
\end{definition}

\begin{theorem}[Rel--$C^\infty$ obstruction bundle on moduli spaces]\label{rel-smooth-obs-bundle-rep}
    The functors $\fM^{\normalfont\text{reg}}_F(\pi,X)_A$ and $\fF^{\normalfont\text{reg}}_F(\pi,X)_A$ are represented by rel--$C^\infty$ manifolds over $\fB$, whose underlying topological spaces are Hausdorff. Moreover, the inclusion
    \begin{align*}
        \fM^{\normalfont\text{reg}}_F(\pi,X)_A\subset\fM^{\normalfont\text{reg}}(\pi,X)_A
    \end{align*}
    is an open embedding and the map
    \begin{align*}
        p_*\cl\fF^{\normalfont\text{reg}}_F(\pi,X)_A\to\fM^{\normalfont\text{reg}}_F(\pi,X)_A
    \end{align*}
    induced by the projection $p\cl F\to X$ defines a rel--$C^\infty$ $\bC$-vector bundle of rank $N(1-g)+\langle c_1(F),A\rangle$, where $N$ is the rank of the $\bC$-vector bundle $F$.
\end{theorem}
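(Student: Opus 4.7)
The plan is to combine Theorem \ref{rel-smooth-representability} with an upper semi-continuity argument, a Riemann--Roch computation, and the refinement of Theorem \ref{rel-smooth-representability} carried out in Appendix \ref{rel-smooth-addendum}.

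First, the inclusion $\fM^{\reg}_F(\pi, X)_A \subset \fM^{\reg}(\pi, X)_A$ is open: the quantity $\dim_\bC H^1(C_s, f_z^*F)$ equals the dimension of the cokernel of the Fredholm Cauchy--Riemann operator $(f_z^*\nabla)^{0,1}$ on the normalization of $C_s$ (with the standard node-matching conditions), and this family of operators varies continuously in $(s, z)$; upper semi-continuity of cokernel dimensions for continuous families of Fredholm operators then shows that the vanishing locus is open. On this open locus, the Euler characteristic formula for holomorphic vector bundles on prestable curves yields $\chi(C_s, f_z^*F) = N(1 - g) + \langle c_1(F), A\rangle$, which gives the claimed fibre dimension.

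Second, to represent $\fF^{\reg}_F(\pi, X)_A$, the most natural approach is to recognize it as a restricted moduli problem for pseudo-holomorphic maps to the total space of $F$. Equip $F$ with the almost complex structure $J_F$ acting as $J$ on the horizontal subbundle defined by $\nabla$ and as fibrewise complex multiplication on the vertical subbundle; a smooth map $\tilde f\cl C\to F$ is then $J_F$-holomorphic iff $f := p\circ\tilde f$ is $J$-holomorphic and the associated section $\sigma$ of $f^*F$ is annihilated by $(f^*\nabla)^{0,1}$. Using the splitting of $\tilde f^*TF$, the linearization $D(\delbar_{J_F})_{\tilde f}$ is block lower-triangular with diagonal entries $D(\delbar_J)_f$ and $(f^*\nabla)^{0,1}$, so the simultaneous surjectivity of these two operators implies the surjectivity of $D(\delbar_{J_F})_{\tilde f}$. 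One then identifies $\fF^{\reg}_F(\pi, X)_A$ with an open subset of $\fM^{\reg}(\pi, F)_{i_* A}$, where $i\cl X\hookrightarrow F$ is the zero section, and Theorem \ref{rel-smooth-representability} applied to $(F, J_F)$ gives a Hausdorff rel--$C^\infty$ manifold over $\fB$ representing $\fF^{\reg}_F(\pi, X)_A$.

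The main obstacle is promoting the resulting rel--$C^\infty$ submersion $p_* \cl \fF^{\reg}_F(\pi, X)_A \to \fM^{\reg}_F(\pi, X)_A$ to a rel--$C^\infty$ $\bC$-vector bundle of the correct rank. Functoriality of the representing objects provides a rel--$C^\infty$ zero section, fibrewise addition, and scalar multiplication, exhibiting $p_*$ as a $\bC$-vector space object in the slice category over $\fM^{\reg}_F$. To verify local triviality, at each point $(s_0, f_0) \in \fM^{\reg}_F$ one picks a basis of $H^0(C_{s_0}, f_0^*F)$ and extends it to a rel--$C^\infty$ frame of $p_*$ over a neighborhood; by constancy of the fibre dimension, this trivialization is then a fibrewise linear bijection. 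The delicate step is producing these local frames with the required rel--$C^\infty$ dependence on the base parameters, going beyond the abstract functor-of-points representability. This is exactly what the refinement of Theorem \ref{rel-smooth-representability} in Appendix \ref{rel-smooth-addendum} supplies: the direct image of the rel--$C^\infty$ bundle $u^*F$ along the universal curve $\cC \to \fM^{\reg}_F$, equipped with its fibrewise holomorphic structure $(u^*\nabla)^{0,1}$, is a rel--$C^\infty$ $\bC$-vector bundle on $\fM^{\reg}_F$.
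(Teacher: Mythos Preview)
Your overall strategy is sound and indeed parallels what the cited results of \cite{Swa21} do under the hood: equip $F$ with the almost complex structure $J_F$, use the block lower-triangular form of $D(\delbar_{J_F})_{\tilde f}$ to identify $\fF^{\reg}_F(\pi,X)_A$ with $\fM^{\reg}(\pi,F)_{i_*A}$ (in fact they are equal, not merely an open inclusion, since surjectivity of a block lower-triangular operator is equivalent to surjectivity of both diagonal blocks), and then invoke Theorem~\ref{rel-smooth-representability}. This is exactly the Gromov-trick mechanism used later in the paper (Lemmas~\ref{E_l-gromov-trick} and~\ref{l-transverse-is-transverse}).

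The gap is in your final paragraph. Appendix~\ref{rel-smooth-addendum} does not establish that direct images of rel--$C^\infty$ bundles along the universal curve are rel--$C^\infty$ vector bundles. What Proposition~\ref{extra-smoothness} actually proves is that the universal map $f^\circ\cl \fC^\circ_Z/T\to X/\Pt$ is rel--$C^\infty$ over the finer base $T$ rather than $\fC_T$; this is a statement about extra fibre-direction smoothness of the universal curve, not about pushforward bundles. So your citation does not supply the local frames you need.

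A correct way to finish is to stay inside your own setup: since $p_*$ is a rel--$C^\infty$ submersion (the block structure gives surjectivity of $\ker D(\delbar_{J_F})_{\tilde f}\to\ker D(\delbar_J)_f$), choose rel--$C^\infty$ local sections $\tau_1,\ldots,\tau_r$ of $p_*$ through a basis of $H^0(C_{s_0},f_0^*F)$, and then check that the fibrewise-linear map $(c_1,\ldots,c_r)\mapsto\sum_i c_i\tau_i$ is a rel--$C^\infty$ diffeomorphism near the zero section by the inverse function theorem with parameters \cite[Lemma~5.10]{Swa21}. Alternatively, note that the local chart $\bG\times V\times K_F$ for $\fM^{\reg}(\pi,F)$ constructed in \textsection\ref{gluing-ift} (applied at a point of the zero section) splits as $(\bG\times V\times K)\times H^0(C_{s_0},f_0^*F)$ by block triangularity, and this directly exhibits $p_*$ as a coordinate projection. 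Either of these replaces your appeal to Appendix~\ref{rel-smooth-addendum}; the paper itself simply cites \cite[Corollary~5.15 and Theorem~5.18]{Swa21}, which package this argument.
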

\begin{proof}
    This follows from \cite[Corollary 5.15 and Theorem 5.18]{Swa21}. To see that $\fM^{\normalfont\text{reg}}_F(\pi,X)_A$ and $\fF^{\normalfont\text{reg}}_F(\pi,X)_A$ are Hausdorff, we use the same argument as in the proof of Theorem \ref{rel-smooth-representability}.
\end{proof}

\begin{remark}
    A point $(s,f\cl C_s\to X)\in\fM^\text{reg}(\pi,X)_A$ lies in the open subset $\fM^\text{reg}_F(\pi,X)_A$ precisely when $H^1(C_s,f^*F) = 0$.
    
    The fibre of $p_*$ over a point $(s,f\cl C_s\to X)\in\fM^{\normalfont\text{reg}}_F(\pi,X)_A$ is the $\bC$-vector space $H^0(C_s,f^*F)$. Since we have $H^1(C_s,f^*F) = 0$, the Riemann--Roch formula determines the dimension of $H^0(C_s,f^*F)$.
\end{remark}

\subsection{Global Kuranishi chart and virtual fundamental class}\label{subsec:gkc-vfc}

We define global Kuranishi charts and the relations of equivalence and cobordism between them. Our definitions are slightly different from the ones in \cite[\textsection 4.1]{AMS21} due to the fact that we work in the rel--$C^\infty$ category. We then explain how an oriented global Kuranishi chart determines a virtual fundamental class.

\begin{definition}[Rel--$C^\infty$ global Kuranishi charts]\label{rel-smooth-global-chart-abstract-def}
    A \emph{rel--$C^\infty$ global Kuranishi chart} $\cK=(G,\cT/\cB,\cE,\fs)$ consists of
    \begin{enumerate}[(i)]
        \item a rel--$C^\infty$ manifold $\cT\to\cB$, called the \emph{thickening}, where the \emph{base space} $\cB$ is a smooth manifold;
        \item  a rel--$C^\infty$ vector bundle $\cE$ on $\cT/\cB$, the \emph{obstruction bundle}, and a rel--$C^\infty$ section $\fs$ of $\cE$, the \emph{obstruction section};
        \item a compact Lie group $G$, called the \emph{symmetry group}, which acts on $\cT/\cB$ and $\cE$ with finite stabilizers so that $\fs$ is $G$-equivariant. The action map
        \begin{align*}
            (G\times\cT)/(G\times\cB)\to\cT/\cB
        \end{align*}
        and its analogue for $\cE$ are both required to be rel--$C^\infty$ maps while the action of $G$ on $\cB$ is required to be smooth.
    \end{enumerate}
    If, in addition, we are given a Hausdorff space $Z$ and a homeomorphism 
    \begin{align*}
        \fs^{-1}(0)/G\xrightarrow{\simeq}Z,   
    \end{align*}
    then we say $\cK$ is a \emph{rel--$C^\infty$ global Kuranishi chart for} $Z$. The \emph{virtual dimension} of $Z$ with respect to $\cK$ is defined to be
    \begin{align*}
        \vdim_\cK\,Z = \dim\cT-\dim G-\rank\,\cE,
    \end{align*}
    where all dimensions are understood to be real dimensions.
    
    The rel--$C^\infty$ global Kuranishi chart $\cK$ is \emph{oriented} if we are provided with the data of orientations on $\cT$ and $\cE$ which are preserved by the $G$-action and an orientation on $\fg = \text{Lie}(G)$. 
    
    We say that $\cK$ is \emph{stably complex} if we are given the data of a $G$-invariant almost complex structure on $\cB$ and a $G$-invariant stably complex lift of the virtual vector bundle $T_{\cT/\cB} - (\cE\oplus\underline{\fg})$. Here, $T_{\cT/\cB}$ denotes the vertical tangent bundle of $\cT/\cB$ and $\underline{\fg}$ is the trivial bundle with fibre $\fg = \text{Lie}(G)$. 
\end{definition}

\begin{definition}[Rel--$C^\infty$ equivalence]\label{equivalence-of-charts-defined}
    Let $\cK = (G,\cT/\cB,\cE,\fs)$ be a rel--$C^\infty$ global Kuranishi chart as in Definition \ref{rel-smooth-global-chart-abstract-def}. Consider the following moves applied to $\cK$.
    \begin{enumerate}[(i)]
        \item\label{germ-move} (Germ equivalence) Given a $G$-invariant open neighborhood $U\subset\cT$ of $\fs^{-1}(0)$, replace $\cK$ by $(G,U/\cB,\cE|_U,\fs|_U)$.
        \item\label{stabilization-move} (Stabilization) Given a rel--$C^\infty$ vector bundle $p\cl W\to\cT/\cB$ carrying a compatible rel--$C^\infty$ action of $G$, replace $\cK$ by $(G,W/\cB,p^*\cE\oplus p^*W,p^*\fs\oplus\Delta_W)$. Here, $\Delta_W$ denotes the tautological diagonal section of $p^*W\to W$.
        \item\label{group-move} (Group enlargement) Given a compact Lie group $G'$ and a rel--$C^\infty$ principal $G'$-bundle $q\cl P\to\cT/\cB$ carrying a compatible rel--$C^\infty$ action of $G$, replace $\cK$ by $(G\times G',P/\cB,q^*\cE,q^*\fs)$.
        \item\label{base-move} (Base modification) Given a smooth manifold $\cB'$ (equipped with a smooth submersion to $\cB$) and a rel--$C^\infty$ submersion $\cT/\cB\to\cB'/\cB$ (covering the identity map of $\cB$), replace $\cK$ by $(G,\cT/\cB',\cE,\fs)$.
    \end{enumerate}
    We say that two rel--$C^\infty$ global Kuranishi charts $\cK,\cK'$ are \emph{rel--$C^\infty$ equivalent} if there exists a finite sequence of rel--$C^\infty$ global Kuranishi charts $\cK = \cK_0,\ldots,\cK_N = \cK'$ such that for each $0\le i<N$, the chart $\cK_i$ is obtained from $\cK_{i+1}$ (or $\cK_{i+1}$ is obtained from $\cK_i$) by applying one of the moves \eqref{germ-move}--\eqref{base-move} above. There is an obvious refinement of this notion of equivalence when $\cK$, $\cK'$ are stably complex (resp. oriented) by allowing only $G$-equivariant stably complex (resp. oriented) $W$ in (Stabilization) and pseudo-holomorphic (resp. oriented) submersions $\cB'\to\cB$ in (Base modification).
\end{definition}

\begin{remark}
    The move (Base modification) is not present in \cite{AMS21} since the definition of global Kuranishi charts therein does not make explicit reference to the base space of the thickening.
\end{remark}

\begin{definition}[Rel--$C^\infty$ cobordism]\label{cobordism-of-charts-defined}
    Let $\cK_0 = (G,\cT_0/\cB,\cE_0,\fs_0)$ and $\cK_1=(G,\cT_1/\cB,\cE_1,\fs_1)$ be rel--$C^\infty$ global Kuranishi charts having the same symmetry group $G$ and base space $\cB$. We say that $\cK_0$ and $\cK_1$ are \emph{rel--$C^\infty$ cobordant} if there exists $\cK_{01} = (G,\cT_{01}/\cB,\cE_{01},\fs_{01})$ with the following properties.
    \begin{enumerate}[(i)]
        \item $\cT_{01}\to\cB$ is a rel--$C^\infty$ manifold-with-boundary with $$\partial(\cT_{01}/\cB) = (\cT_0/\cB)\sqcup(\cT_1/\cB).$$
        \item $\cE_{01}\to\cT_{01}/\cB$ is a rel--$C^\infty$ vector bundle which restricts on the boundary to $\cE_0\sqcup\cE_1$.
        \item $\fs_{01}$ is a rel--$C^\infty$ section of $\cE_{01}$ with compact zero locus and it restricts on the boundary to $\fs_0\sqcup\fs_1$.
        \item There is a rel--$C^\infty$ $G$-action on $\cE_{01}\to\cT_{01}/\cB$ which makes $\fs_{01}$ a $G$-equivariant section and is compatible with the given actions on the boundary.
    \end{enumerate}
    There is an obvious refinement of this notion of cobordism when $\cK_0,\cK_1$ are stably complex (resp. oriented) by requiring the cobordism to carry compatible stable complex structures (resp. orientations).
\end{definition}

Consider an oriented rel--$C^\infty$ global Kuranishi chart $\cK = (G,\cT/\cB,\cE,\fs)$ for a compact Hausdorff space $Z$. An elementary argument (Lemma \ref{local-linear-action} and the paragraph preceding it) shows that the action of $G$ on $\cT$ is locally linear, i.e., for any $x\in \cT$, there are local coordinates on $\cT$ centred at $x$ in which the stabilizer $G_x$ acts linearly. From this, it follows that $\cT/G$ is a $\bQ$-homology manifold and carries a fundamental class $[\cT/G]$ in Borel--Moore homology with $\bQ$-coefficients. Recall that Borel--Moore homology with $\bQ$-coefficients is dual to compactly supported cohomology with $\bQ$-coefficients. 

Using local linearity of the $G$-action and a Mayer--Vietoris argument, it also follows that the natural map $(\cE\times EG)/G\to\cE/G$ induces an isomorphism on cohomology with $\bQ$-coefficients. Let $\tau_{\cE/G}$ be the image of the $G$-equivariant Thom class of $\cE$ under this isomorphism.

\begin{definition}[Virtual fundamental class] Let $\cK = (G,\cT/\cB,\cE,\fs)$ be an oriented rel--$C^\infty$ global Kuranishi chart for a compact Hausdorff space $Z$.
The \emph{virtual fundamental class} $\vfc{Z}_\cK\in \check{H}^d(Z,\bQ)^\vee$ of $Z$ with respect to $\cK$ is defined to be
\begin{equation}\label{vfc-de}
    \chml^{\vdim(Z)}(Z;\bQ) \xra{\fs^*\tau_{\cE/G}\cup(\cdot)} H_c^{\dim(\cT/G)}(\cT/G;\bQ)\xra{[\cT/G]} \bQ.
\end{equation}
The first map in \eqref{vfc-de} uses the continuity property 
$$\chml^{*}(Z;\bQ)\cong \fcolim H^*(U;\bQ)$$
of \v{C}ech cohomology, where the direct limit ranges over open neighborhoods $U$ of $\fs\inv(0)/G$ in $\cT/G$ while the second map in \eqref{vfc-de} uses the fact that Borel--Moore homology is dual to compactly supported cohomology.
\end{definition}

\begin{lemma}\label{vfc-invariant}
     Given an oriented global Kuranishi chart $\cK$ for a compact Hausdorff space $Z$, the virtual fundamental class $\vfc{Z}_\cK$ is unchanged if we replace $\cK$ by an equivalent oriented global Kuranishi chart.
\end{lemma}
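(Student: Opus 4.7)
The plan is to verify invariance of the virtual fundamental class under each of the four elementary moves---(Germ equivalence), (Stabilization), (Group enlargement), and (Base modification)---listed in Definition \ref{equivalence-of-charts-defined}, since any equivalence of oriented global Kuranishi charts is by definition a finite composition of such moves. The key observation is that \eqref{vfc-de} is defined purely in terms of topological data, namely \v{C}ech cohomology of $Z$, the $G$-equivariant Thom class of $\cE$, and the Borel--Moore fundamental class of $\cT/G$. Consequently, the verification reduces to a sequence of naturality statements for these operations.

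For (Germ equivalence), I use the continuity $\chml^*(Z;\bQ) = \fcolim H^*(U;\bQ)$ over open neighborhoods of $\fs\inv(0)/G$ in $\cT/G$: shrinking $\cT$ to a $G$-invariant open neighborhood of $\fs\inv(0)$ leaves both $\fs^*\tau_{\cE/G}$ (which is supported near $\fs\inv(0)/G$) and the pairing with $[\cT/G]$ unchanged. For (Base modification), nothing topological changes---only the rel-$C^\infty$ structure is modified along different directions in the same underlying space $\cT$---so the VFC is manifestly preserved. For (Group enlargement), the quotient $P/(G\times G')$ is canonically homeomorphic to $\cT/G$ since $P\to \cT$ is a principal $G'$-bundle, and under this identification the pullbacks $q^*\tau_{\cE/G}$ and $q^*\fs$ correspond to the original data, while the chosen orientations on $\text{Lie}(G\times G') = \fg\oplus\text{Lie}(G')$ combine with the orientation of $P/\cT$ to recover the prescribed orientation data on $\cT/G$.

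The substantive case is (Stabilization). Writing $p\cl W\to \cT$ for the $G$-equivariant rel-$C^\infty$ vector bundle, the section $p^*\fs\oplus\Delta_W$ vanishes precisely along the image of the zero section of $W$ over $\fs\inv(0)$, which descends to a homeomorphism with $\fs\inv(0)/G$. The Thom class factorizes as $\tau_{(p^*\cE\oplus p^*W)/G} = p^*\tau_{\cE/G}\cup \tau_{p^*W/G}$, so cupping with $(p^*\fs\oplus\Delta_W)^*\tau_{(p^*\cE\oplus p^*W)/G}$ amounts to cupping with $p^*\fs^*\tau_{\cE/G}$ followed by cupping with the Thom class of $p^*W/G$. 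Pairing with $[W/G]$ and invoking the Thom isomorphism $H_c^{*+\rank W}(W/G;\bQ)\cong H_c^*(\cT/G;\bQ)$, together with its compatibility with Borel--Moore fundamental classes, then recovers \eqref{vfc-de} for $\cK$.

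The main obstacle is the careful orientation and sign bookkeeping in the (Stabilization) and (Group enlargement) moves: several pieces of orientation data---on $W$, on $P$, on $\text{Lie}(G')$, and on the original bundles---must be combined in the conventionally correct way so that the two virtual fundamental classes agree on the nose rather than up to an overall sign. Once these sign conventions are pinned down, the remaining content is a diagram chase using naturality of the Thom class, the Thom isomorphism for vector bundles and principal bundles, and the fact that $\cT/G$ is a $\bQ$-homology manifold.
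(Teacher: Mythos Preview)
Your proposal is correct and follows essentially the same approach as the paper: the paper also checks each of the four moves in turn, noting that (Base modification) is a tautology, (Germ equivalence) follows from excision, (Group enlargement) follows because only the quotients $\cT/G$ and $\cE/G$ enter the definition, and (Stabilization) follows from the Thom class factorization $(\fs')^*\tau_{\cE'/G} = p^*\fs^*\tau_{\cE/G}\cup\tau_{W/G}$ together with the compatibility of $[W/G]$ and $[\cT/G]$ under cupping with $\tau_{W/G}$. The paper does not dwell on the orientation bookkeeping you flag as the main obstacle, treating it as implicit in the conventions, but your emphasis there is not misplaced.
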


\begin{proof} 

We consider each move in Definition \ref{equivalence-of-charts-defined} in turn. Invariance of the virtual fundamental class under (Base modification) is a tautology while invariance under (Germ equivalence) follows from excision. Invariance under (Group enlargement) follows from the fact that only the quotients $\cT/G$ and $\cE/G$ figure in the definition of the virtual fundamental class. 

Finally, fix a global Kuranishi chart $(G,\cT,\cE,\fs)$ for $Z$ and let $p \cl W\to \cT$ be an oriented rel--$C^\infty$ $G$-vector bundle. Let $\Delta_W$ be the tautological diagonal section of $p^*W\to W$ and set $\cE' := p^*\cE \oplus p^*W$ and $\fs':= p^*\fs \oplus \Delta_W$. Using the identity $(\fs')^*\tau_{\cE'/G} = p^*\fs^*\tau_{\cE/G}\cup\tau_{W/G}$ and the commutativity of the diagram
\begin{center}
\begin{tikzcd}
    H^{\dim(\cT/G)}_c(\cT/G;\bQ) \arrow[rr,"\tau_{W/G}\cup p^*(\cdot)"] \arrow[drr,"{[\cT/G]}"] & &   H^{\dim(W/G)}_c(W/G;\bQ) \arrow[d,"{[W/G]}"] \\
      & & \bQ
\end{tikzcd}
\end{center}
we get invariance under (Stabilization) as well.
\end{proof}

\begin{remark}\label{rem:vfc-alternate-description} 
The definition of the virtual fundamental class with $\bQ$-coefficients given in \cite[Equation (5.1)]{AMS21} coincides with \eqref{vfc-de} since the Poincar\'e duality isomorphism intertwines the cup product with the cap product, e.g., see \cite[Theorem V.10.1]{Bre12}.
\end{remark}

\section{Main construction}\label{overview}

Let $(X,\omega)$ be a closed symplectic manifold, $A\in H_2(X,\bZ)$ and let $g,n\ge 0$ be integers. Given any $J\in\cJ_\tau(X,\omega)$, we will construct a rel--$C^\infty$ global Kuranishi chart in the sense of Definition \ref{rel-smooth-global-chart-abstract-def} for the Gromov--Witten moduli space $\Mbar_{g,n}(X,A;J)$ using the choice of an auxiliary datum as specified in Definition \ref{aux-choices-defined}. The construction will be independent of this choice in a sense made precise in Theorem \ref{global-kuranishi-existence} below.

\subsection{Curves in projective space}We introduce certain moduli spaces of curves in projective spaces which will be important for our construction, some of which will play the role of $\cB$ in Definition \ref{rel-smooth-global-chart-abstract-def} in the global Kuranishi charts for $\Mbar_{g,n}(X,A;J)$. We also introduce \emph{framed stable maps}, which are analogous to the framed curves of \cite{AMS21}.

\begin{definition}[Algebraic base spaces]\label{base-space-de} 
    Given integers $N,m\ge 1$ and $\ell\ge 0$ we have the moduli stack $\Mbar_{g,\ell}(\bP^N,m)$ of stable holomorphic maps to $\bP^N$ of degree $m$ and genus $g$ with $\ell$ marked points. When $\ell = 0$, we typically omit it from the notation. Define
    \begin{align*}
        \Mbar_g^*(\bP^N,m)\subset\Mbar_g(\bP^N,m)
    \end{align*}
    to consist of the stable maps $\iota\cl C\to\bP^N$ such that
    \begin{enumerate}[(i)]
        \item the map $\iota: C\hookrightarrow\bP^N$ is an embedding and
        \item the restriction $H^0(\bP^N,\cO_{\bP^N}(1))\to H^0(C,\cO_C(1))$ is an isomorphism and $H^1(C,\cO_C(1)) = 0$, where we are identifying $C$ with its image under $\iota$.\footnote{Observe that we need $N = m-g$ for this condition to be non-vacuous.}
    \end{enumerate}
    Denote the universal family on it  as follows.
    \begin{center}
    \begin{tikzcd}
        \cC_g \arrow[r] \arrow[d,"\pi"] & \bP^N \\
        \Mbar_g^*(\bP^N,m) &
    \end{tikzcd}    
    \end{center} 
    Define $\Mbar_{g,\ell}^*(\bP^N,m)\subset\Mbar_{g,\ell}(\bP^N,m)$ to be the inverse image of $\Mbar_g^*(\bP^N,m)$ under the map $\Mbar_{g,\ell}(\bP^N,m)\to\Mbar_g(\bP^N,m)$ given by first forgetting the marked points and then stabilizing the resulting map. Given an integer $p\ge 1$, define
    \begin{align*}
        \Mbar^{*,p}_{g,\ell}(\bP^N,m)\subset\Mbar^*_{g,\ell}(\bP^N,m)
    \end{align*}
    to consist of the stable maps $(C,x_1,\ldots,x_\ell,\iota\cl C\to\bP^N)$ such that 
    \begin{enumerate}[(i)]
        \item the map $\iota\cl C\hookrightarrow\bP^N$ is an embedding and
        \item the line bundles $(\omega_C(x_1 + \cdots + x_\ell))^{\otimes p}$ and $\iota^*\cO_{\bP^N}(1)$ have the same degree on each irreducible component of $C$, where $\omega_C$ denotes the dualizing line bundle of the curve $C$.\footnote{Observe that we need $m = p(2g-2+\ell)$ for this condition to be non-vacuous.}
    \end{enumerate}
    Note that these two conditions imply that $\omega_C(x_1 + \cdots + x_\ell)$ has positive degree on each component, i.e., $(C,x_1,\ldots,x_\ell)$ is a stable pointed curve.
    
    Finally, define $\Mbar_{g,[\ell]}^{*,p}(\bP^N,m)$ to be the quotient of $\Mbar_{g,\ell}^{*,p}(\bP^N,m)$ under the action of the permutation group $S_\ell$ on the marked points.
\end{definition}

The group $\PGL(N+1,\bC) = \text{Aut}(\bP^N)$ acts on all the moduli spaces introduced in Definition \ref{base-space-de}. The next two statements will show that these moduli spaces are second countable complex manifolds. 

\begin{lemma}\label{alg-base-spaces-are-smooth-qproj}
    For integers $N,m,p\ge 1$ and $\ell\ge 0$, the moduli spaces
    \begin{align*}
        \Mbar_{g,\ell}^*(\bP^N,m)\quad\text{and}\quad \normalfont\Mbar_{g,\ell}^{*,p}(\bP^N,m)    
    \end{align*}
    are smooth quasi-projective schemes over $\bC$ of complex dimension
     \begin{align*}
         (N-3)(1-g)+m(N+1)+\ell.
     \end{align*}
   In particular, these moduli spaces are second countable complex manifolds.
\end{lemma}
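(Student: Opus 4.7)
I would realize $\Mbar^*_{g,\ell}(\bP^N,m)$ as an open substack of the proper Deligne--Mumford stack $\Mbar_{g,\ell}(\bP^N,m)$ of all stable maps, verify that stabilizers on this open locus are trivial (so it is an algebraic space) and that the deformation obstructions vanish (so it is smooth of the expected dimension), and finally identify it with an open subscheme of a Hilbert scheme to obtain quasi-projectivity. Openness is routine, since being a closed immersion, having $H^1(C,\iota^*\cO(1))=0$, and $\iota(C)$ not lying in a hyperplane are all open conditions on flat families; when they hold, $h^0(C,\iota^*\cO(1))=m-g+1=N+1$ by Riemann--Roch forces the stated isomorphism. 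Triviality of automorphisms is immediate: any $\varphi$ with $\iota\circ\varphi=\iota$ must be the identity because $\iota$ is a closed immersion.

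\textbf{Smoothness and dimension.} For smoothness I would work relative to the smooth Artin stack $\fM_{g,\ell}$ of prestable $\ell$-pointed genus $g$ curves. The obstruction to lifting a stable map $(C,\{x_i\},\iota)$ along $\Mbar_{g,\ell}(\bP^N,m)\to\fM_{g,\ell}$ lies in $H^1(C,\iota^*T_{\bP^N})$, which vanishes by the pulled-back Euler sequence
\begin{equation*}
    0\to \cO_C\to \iota^*\cO_{\bP^N}(1)^{\oplus(N+1)}\to \iota^*T_{\bP^N}\to 0
\end{equation*}
combined with $H^1(C,\iota^*\cO(1))=0$ (from the definition of $\Mbar^*$) and $H^2(C,\cO_C)=0$ (as $C$ is a curve). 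A Riemann--Roch calculation yields $\chi(\iota^*T_{\bP^N})=(N+1)m+N(1-g)$, and adding $\dim\fM_{g,\ell}=3g-3+\ell$ gives exactly the claimed dimension $(N-3)(1-g)+m(N+1)+\ell$.

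\textbf{Quasi-projectivity, $p$-variant, main obstacle.} The map $(C,\{x_i\},\iota)\mapsto \iota(C)$ realizes $\Mbar^*_g(\bP^N,m)$ as an open subscheme of the projective Hilbert scheme $\mathrm{Hilb}^P(\bP^N)$ for $P(t)=mt+(1-g)$; marking $\ell$ smooth points on the universal curve then realizes $\Mbar^*_{g,\ell}(\bP^N,m)$ as an open subscheme of the $\ell$-fold fibered product of the smooth locus of the universal curve, hence as a smooth quasi-projective scheme. For $\Mbar^{*,p}_{g,\ell}(\bP^N,m)$, the per-component degree-matching condition is locally constant in families (a short dual-graph bookkeeping shows it is preserved both under smoothing of nodes and sprouting of rational tails), so this variant is a union of connected components of $\Mbar^*_{g,\ell}$ and inherits every stated property. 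The step I expect to require the most care is identifying the obstruction space for $\Mbar_{g,\ell}(\bP^N,m)\to\fM_{g,\ell}$ with $H^1(C,\iota^*T_{\bP^N})$ when $C$ has nodes: here the smoothness of $\bP^N$ is essential, ensuring that $\iota^*T_{\bP^N}$ is locally free even over the nodes of $C$, after which the Euler-sequence cohomology computation completes the argument.
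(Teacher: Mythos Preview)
Your overall strategy matches the paper's: identify $\Mbar_g^*(\bP^N,m)$ with an open subscheme of the Hilbert scheme, prove smoothness via the Euler sequence and $H^1(C,\cO_C(1))=0$, and then treat the $p$-variant by an openness argument. For $\ell=0$ your sketch is essentially the paper's proof.

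The gap is in your treatment of $\ell>0$. By definition, $\Mbar_{g,\ell}^*(\bP^N,m)$ is the \emph{inverse image} of $\Mbar_g^*(\bP^N,m)$ under the forget-markings-and-stabilize map $\Mbar_{g,\ell}(\bP^N,m)\to\Mbar_g(\bP^N,m)$. A point $(\Sigma,x_1,\dots,x_\ell,f)$ of this space need not have $f$ a closed immersion: $\Sigma$ may carry trees of rational components on which $f$ is constant, which are contracted only upon stabilization. Your realization of $\Mbar_{g,\ell}^*(\bP^N,m)$ as an open subscheme of the $\ell$-fold fibered product of the smooth locus of the universal curve over $\Mbar_g^*(\bP^N,m)$ captures only the open locus where $f$ is itself an embedding and misses these boundary points entirely. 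The paper instead cites \cite[Corollary 4.6]{behrend-manin} $\ell$ times to conclude quasi-projectivity from the $\ell=0$ case. This misreading also propagates: your automorphism argument (``$\iota$ is a closed immersion'') and your $H^1$-vanishing claim (``from the definition of $\Mbar^*$'') both silently assume $f$ is an embedding. The paper supplies the missing step by observing that contracting a forest of spherical components identifies $H^1(\Sigma,f^*\cO_{\bP^N}(1))$ with $H^1(C,\cO_C(1))$, where $C=f(\Sigma)$.

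One smaller point: your claim that $\Mbar_{g,\ell}^{*,p}(\bP^N,m)$ is a \emph{union of connected components} of $\Mbar_{g,\ell}^*(\bP^N,m)$ is too strong. Condition (i) in its definition (that $f$ itself is an embedding, equivalently that $f^*\cO_{\bP^N}(1)$ is ample) is open but not closed---a family of embeddings can degenerate to a map with a contracted rational tail. The paper argues openness of (i) via \cite[Corollary 9.6.4]{EGA4part3} and openness of (ii) via the combinatorial-type stratification, which together give that $\Mbar_{g,\ell}^{*,p}$ is an open (not open-and-closed) subscheme; this is all that is needed.
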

\begin{proof}
    Throughout this argument, `open' is used to mean `Zariski open'.\par
   We first assume $\ell = 0$. In this case, $\Mbar^*_g(\bP^N,m)$ can be identified with a subset of the Hilbert scheme $\text{Hilb}_{P}(\bP^N)$ parametrizing closed subschemes of $\bP^N$ with Hilbert polynomial $P(t) = mt-g+1$. By \cite[\href{https://stacks.math.columbia.edu/tag/0E6U}{Tag 0E6U}]{stacks-project}, there is a maximal open subscheme $U\subset\text{Hilb}_P(\bP^N)$ parametrizing prestable curves, which are of genus $g$ and degree $m$ due to the definition of $P$. The curves in $U$ on which the restriction of $\cO_{\bP^N}(1)$ has vanishing $H^1$ constitute an open subscheme $U'\subset U$ by semicontinuity theorem \cite[Theorem III.12.8]{Har77}. For curves in $U'$, the assignment $[C\subset\bP^N]\mapsto H^0(C,\cO_C(1))$ defines a vector bundle by the theorem on cohomology and base change \cite[Theorem III.12.11]{Har77}. Thus, the curves in $U'$ for which the restriction map $H^0(\bP^N,\cO_{\bP^N}(1))\to H^0(C,\cO_C(1))$ is an isomorphism is an open subscheme. Thus, we have realized
    \begin{align*}
        \Mbar_g^*(\bP^N,m)\subset\text{Hilb}_P(\bP^N)
    \end{align*}
    as an open subscheme. The Hilbert scheme (for any fixed Hilbert polynomial) is known to be a projective scheme over $\bC$ and, thus its open subschemes are quasi-projective. To show that $\Mbar_g^*(\bP^N,m)$ is smooth of the expected dimension at any given curve $C\subset\bP^N$, it is enough to argue that $H^1(C,T_{\bP^N}|_C) = 0$. This vanishing statement follows by restricting the Euler exact sequence
    \begin{align*}
        0\to\cO_{\bP^N}\to\cO_{\bP^N}(1)^{N+1}\to T_{\bP^N}\to 0
    \end{align*}
    to $C$ and using $H^1(C,\cO_C(1)) = 0$. 
    
    As $\Mbar_{g,\ell}^*(\bP^N,m)$ is the inverse image of $\Mbar_g^*(\bP^N,m)$ under the forgetful map
    \begin{align*}
        \Mbar_{g,\ell}(\bP^N,m)\to\Mbar_g(\bP^N,m),
    \end{align*}
    we conclude that it is a quasi-projective scheme over $\bC$ using $\ell$ repeated applications of \cite[Corollary 4.6]{behrend-manin}. For any $(\Sigma,x_1,\ldots,x_\ell,f\cl \Sigma\to\bP^N)$
    belonging to $\Mbar_{g,\ell}^*(\bP^N,m)$, its image $C\subset\bP^N$ in $\Mbar_g^*(\bP^N,m)$ is obtained by contracting a forest of spherical components in $\Sigma$ and therefore, 
    $$H^1(\Sigma,f^*\cO_{\bP^N}(1)) =  H^1(C,\cO_C(1)) = 0.$$ 
    As before, we now conclude that $\Mbar_{g,\ell}^*(\bP^N,m)$ is smooth of the expected dimension. 
    
    Next, observe that $\Mbar_{g,\ell}^{*,p}(\bP^N,m)\subset\Mbar_{g,\ell}^*(\bP^N,m)$
    consists of precisely those stable maps $(\Sigma,x_1,\ldots,x_\ell,f\cl \Sigma\to\bP^N)$ for which
    \begin{enumerate}[(i)]
        \item $f\cl \Sigma\to\bP^N$ does not map any irreducible component of $\Sigma$ to a point, i.e., $f^*\cO_{\bP^N}(1)$ is an ample line bundle on $\Sigma$ and
        \item the line bundle $(\omega_\Sigma(x_1 + \cdots + x_\ell))^{\otimes p}\otimes f^*\cO_{\bP^N}(-1)$ has degree zero on each irreducible component of $\Sigma$.
    \end{enumerate}
    Since ampleness is an open condition in proper families \cite[Corollary 9.6.4]{EGA4part3}, imposing condition (i) gives an open subscheme. By the next paragraph, imposing condition (ii) also gives an open subscheme and this concludes the proof. 
    
    Define the \emph{combinatorial type} of any $(\Sigma,x_1,\ldots,x_\ell,f)$ in $\Mbar_{g,\ell}^*(\bP^N,m)$ to consist of its dual graph with labels on each vertex encoding the genus of the corresponding irreducible component, the number of marked points on it and the degree of $f^*\cO_{\bP^N}(1)$ on it. The combinatorial type stratifies $\Mbar_{g,\ell}^*(\bP^N,m)$ into locally closed subsets. Note that whether or not (ii) is satisfied depends only on the combinatorial type. Moreover, if $(\Sigma,x_1,\ldots,x_\ell,f)$ satisfies (ii), then this is true for any deformation as well (which includes smoothing some or all of the nodes in $\Sigma$). Thus, (ii) is satisfied away from a closed union of strata.
\end{proof}

\begin{corollary}\label{alg-base-spaces-are-smooth-cor}
    For integers $N,m,p\ge 1$ and $\ell\ge 0$, 
    \begin{align*}
        \normalfont\Mbar_{g,[\ell]}^{*,p}(\bP^N,m)    
    \end{align*}
    is a second countable complex manifold of complex dimension
    \begin{align*}
        (N-3)(1-g)+m(N+1)+\ell.    
    \end{align*}
\end{corollary}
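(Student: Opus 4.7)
The plan is to reduce immediately to Lemma \ref{alg-base-spaces-are-smooth-qproj}, which establishes the analogous statement for the ordered version $\Mbar_{g,\ell}^{*,p}(\bP^N,m)$, and then to check that the $S_\ell$-action quotiented out to obtain $\Mbar_{g,[\ell]}^{*,p}(\bP^N,m)$ is free and holomorphic. This is the only nontrivial step, and the rest follows from generalities about quotients by finite groups.

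First, I would observe that $S_\ell$ acts biholomorphically on $\Mbar_{g,\ell}^{*,p}(\bP^N,m)$ by permuting the marked points, since the functor of stable maps with $\ell$ marked points carries an obvious permutation action, which is compatible with the complex-analytic structure coming from the quasi-projective scheme structure of Lemma \ref{alg-base-spaces-are-smooth-qproj}.

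The key step is to show that this action is free. Suppose $\sigma \in S_\ell$ fixes a point $(\Sigma,x_1,\ldots,x_\ell,f\colon\Sigma\to\bP^N)$ of $\Mbar_{g,\ell}^{*,p}(\bP^N,m)$. Then there exists a biholomorphic automorphism $\varphi\colon \Sigma \to \Sigma$ with $f\circ\varphi = f$ and $\varphi(x_i) = x_{\sigma(i)}$ for all $i$. Since membership in $\Mbar_{g,\ell}^{*,p}(\bP^N,m)\subset\Mbar_{g,\ell}^*(\bP^N,m)$ forces $f$ to be an embedding (condition (i) of Definition \ref{base-space-de}), the relation $f\circ\varphi = f$ implies $\varphi = \mathrm{id}_\Sigma$. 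Hence $x_i = x_{\sigma(i)}$ for all $i$, and since the marked points of a stable curve are pairwise distinct, we conclude $\sigma = \mathrm{id}$.

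Finally, a finite group acting freely and holomorphically on a complex manifold yields a complex-manifold quotient of the same dimension, with quotient map a finite \'etale cover. Applying this to the free $S_\ell$-action on $\Mbar_{g,\ell}^{*,p}(\bP^N,m)$, we obtain that $\Mbar_{g,[\ell]}^{*,p}(\bP^N,m)$ is a complex manifold of complex dimension $(N-3)(1-g)+m(N+1)+\ell$. Second countability is preserved under quotients by finite group actions (the quotient map is open and surjective from a second countable space), giving the remaining claim. There is no serious obstacle; the only substantive point is the freeness of the $S_\ell$-action, which is a direct consequence of the embedding condition baked into the starred moduli space.
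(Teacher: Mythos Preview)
Your proof is correct and follows the same approach as the paper: reduce to Lemma \ref{alg-base-spaces-are-smooth-qproj} and observe that $\Mbar_{g,[\ell]}^{*,p}(\bP^N,m)$ is the quotient by a free $S_\ell$-action. The paper's proof simply asserts freeness without justification, whereas you supply the (correct) argument via the embedding condition; the paper derives second countability from quasi-projectivity of the ordered moduli space rather than from preservation under quotients, but both work.
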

\begin{proof}
    Since $\Mbar_{g,[\ell]}^{*,p}(\bP^N,m)$ is the quotient of $\normalfont\Mbar_{g,\ell}^{*,p}(\bP^N,m)$ by a free $S_\ell$ action, this follows from Lemma \ref{alg-base-spaces-are-smooth-qproj}. Second countability of $\Mbar_{g,[\ell]}^{*,p}(\bP^N,m)$ follows from the quasi-projectivity of $\Mbar_{g,\ell}^{*,p}(\bP^N,m)$.
\end{proof}

\begin{definition}[Framed stable maps]\label{framed-stable-map} 
    Fix a polarization $\cO_X(1)\to X$ taming $J$ as in Definition \ref{polarization-defined} and write $d:=\langle[\Omega],A\rangle$. An \emph{$\Omega$-stable map} (of genus $g$ and class $A$) is a smooth map $u:C\to X$ satisfying the following conditions.
	\begin{enumerate}[(i)]
		\item $C$ is a prestable curve of genus $g$ and $u_*[C] = A$.
		\item For any irreducible (resp. unstable irreducible) component $C'\subset C$, we have 
  $$\int_{C'}u^*\Omega = \langle[\Omega],u_*[C']\rangle\ge 0\qquad \text{ 
 (resp. $>0$)}.$$
	\end{enumerate}
	We will refer to $\Omega$-stable maps of genus $g$ and class $A$ as just stable maps when $\Omega,g,A$ are clear from the context. An \emph{equivalence} of stable maps $u:C\to X$ and $v:\Sigma\to X$ is a biholomorphic map $\varphi:\Sigma\to C$ such that $u\circ\varphi = v$. Given any stable map $u:C\to X$, we define the holomorphic line bundle $\fL_u\to C$ to be
    \begin{align}\label{domain-polarization-defined}
	    \fL_u := \omega_C\otimes (u^*\cO_X(1))^{\otimes 3}
    \end{align}
    where $\omega_C$ denotes the dualizing line bundle of $C$ and the holomorphic structure on the line bundle $u^*\cO_X(1)$ is defined by $(u^*\nabla)^{0,1}$. 
    
    Fix an integer $p\ge 1$ and set $m:=p(2g-2+3d)$ and $N:=m-g$. A \emph{framed stable map} is a smooth map $(u,\iota): C\to X\times\bP^N$ satisfying the following conditions.
    \begin{enumerate}[(i)]
        \item $u: C\to X$ is a stable map.
        \item $\iota: C\hkra\bP^N$ belongs to $\Mbar_g^*(\bP^N,m)$ and the line bundles $\iota^*\cO_{\bP^N}(1)$ and $\fL_u^{\otimes p}$ have the same degree on each irreducible component of $C$.
    \end{enumerate}
    Since $\iota$ is an embedding, we will usually identify $C$ with its image under $\iota$ and denote the framed stable map by $(C\subset\bP^N,u:C\to X)$.
    An \emph{equivalence} of framed stable maps $(u,\iota): C\to X\times\bP^N$ and $(v,\kappa): \Sigma\to X\times\bP^N$ is a biholomorphic map $\varphi: \Sigma\to C$ such that $u\circ\varphi = v$ and $\iota\circ\varphi = \kappa$.
\end{definition}

While a stable map $u:C\to X$ may have non-trivial automorphisms (i.e., self-equivalences), a framed stable map $(u,\iota):C\to X\times\bP^N$ can never have any non-trivial automorphisms since $\iota$ is an embedding. 

\begin{remark}
    For any stable map $u:C\to X$, the line bundle $\fL_u$ defined in \eqref{domain-polarization-defined} is ample. Therefore, we can promote $u:C\to X$ to a framed stable map by choosing a complex basis $(s_0,\ldots,s_N)$ of $H^0(C,\fL_u^{\otimes p})$ for a large integer $p\ge 1$ and taking the associated projective embedding $[s_0:\cdots:s_N]:C\hookrightarrow\bP^N$. Refer to Lemma \ref{Stabilisation is Very Positive} for more details.
\end{remark}

\begin{remark}
    The forgetful map from the set of framed stable maps to $\Mbar_g^*(\bP^N,m)$ is equivariant with respect to the natural $\PGL(N+1,\bC)$-actions.
\end{remark}

\subsection{Reducing the symmetry group} Even though the space of framed pseudo-holomorphic maps carries a $\PGL(N+1)$-action, the perturbations we use to achieve transversality only admit a $\PU(N+1)$ symmetry. In this subsection, we explain how to continuously select a $\PU(N+1)$-orbit inside each $\PGL(N+1)$-orbit of framed stable maps.

\begin{definition}[Logarithm for Hermitian matrices]\label{positive-definite-matrices}
    Given an integer $N\ge 2$, define the Lie groups
    \begin{align*}
        \cG := \PGL(N+1,\bC),\quad G:=\text{PU}(N+1) = U(N+1)/S^1,
    \end{align*}
    and let $\cH$ be the space of $(N+1)\times(N+1)$ Hermitian positive definite matrices modulo the action of positive real scalars. Any equivalence class $[A]$ in $\cH$ then has a unique representative with determinant $1$ which we denote by $[A]_\nu$. 
    
    The group $\cG$ has a left action on $\cH$ as follows: an element $[T]\in\cG$ maps an element $[A]\in\cH$ to $[TAT^*]\in\cH$. The Lie algebra $\fs\fu(N+1)$ consisting of skew-Hermitian trace-free $(N+1)\times(N+1)$ matrices carries a natural adjoint $G$-action. With this, the exponential map
	\begin{align*}
	\fs\fu(N+1)&\to\cH\\
	\text{i}M&\mapsto[\exp M]
	\end{align*}
	is a $G$-equivariant diffeomorphism. We let $\text{i}\log:\cH\to\fs\fu(N+1)$ denote its inverse. We can identify this with a map $\cG/G\to\fs\fu(N+1)$, also denoted $\text{i}\log$, via the isomorphism $P:\cG/G\to\cH$ provided by Lemma \ref{polar-decomp}\eqref{metric-coset} below.
\end{definition}

\begin{lemma}[Polar decomposition for $\cG$]\label{polar-decomp}
    Consider the setup of Definition \ref{positive-definite-matrices}. We then have the following.
	\begin{enumerate}[\normalfont(i)]
		\item\label{trivial-principal-bundle} The multiplication map $\cH\times G\to\cG$ is a diffeomorphism. 
		\item\label{convex-combs} If $\Lambda$ is a finite set, then the linear combination map
		\begin{align*}
		c_\Lambda:(\bR_{\ge 0}^\Lambda\setminus\{0\})\times\cH^\Lambda&\to\cH \\
		(\{t_i\}_{i\in \Lambda}, \{[A_i]_{i\in \Lambda}\})&\mapsto[\textstyle\sum_{i\in \Lambda}t_i[A_i]_\nu]
		\end{align*}
		is $\cG$-equivariant, where $\bR^\Lambda_{\ge 0}\setminus\{0\}$ carries the trivial action.
		\item\label{metric-coset} The map $\cG\to\cH$ given by $T\mapsto TT^*$ descends to a $\cG$-equivariant diffeomorphism $P:\cG/G\to\cH$. The identity $G$-coset is mapped to the class $[I]\in\cH$ of the identity matrix under $P$.
	\end{enumerate}
\end{lemma}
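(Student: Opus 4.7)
The plan is to reduce everything to the classical polar decomposition of $\GL(N+1,\bC)$ and then pass to the relevant quotients. Let $\cP\subset\GL(N+1,\bC)$ denote the subset of positive definite Hermitian matrices. Recall that the classical polar decomposition asserts that the multiplication map $\cP\times U(N+1)\to\GL(N+1,\bC)$ is a diffeomorphism, and that the map $\GL(N+1,\bC)\to\cP$ sending $T\mapsto TT^*$ is a principal $U(N+1)$-bundle, well-defined because $TT^*$ is always positive definite Hermitian.

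For \eqref{trivial-principal-bundle}, I would quotient both sides of the classical polar decomposition by the appropriate scalar actions: on the left, $\cP$ modulo $\bR_{>0}$ is exactly $\cH$, and $U(N+1)$ modulo $S^1$ is exactly $G$; on the right, $\GL(N+1,\bC)$ modulo $\bC^\times$ is $\cG$. Since the diagonal $\bC^\times\hookrightarrow\cP\times U(N+1)$ matches the scalar action $\bR_{>0}\times S^1$ factorwise (via $\lambda\mapsto(|\lambda|,\lambda/|\lambda|)$), the quotient of the multiplication map is a diffeomorphism $\cH\times G\to\cG$.

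For \eqref{convex-combs}, the main computation is to relate $[TAT^*]_\nu$ and $T[A]_\nu T^*$ for $[T]\in\cG$ and $[A]\in\cH$. Since $\det [A]_\nu=1$, we have $\det(T[A]_\nu T^*)=|\det T|^2$, hence
\[
[TAT^*]_\nu \;=\; |\det T|^{-2/(N+1)}\,T[A]_\nu T^*.
\]
The scalar $|\det T|^{-2/(N+1)}$ is independent of $[A]$, so it factors out of the linear combination $\sum_i t_i[TA_iT^*]_\nu$ and is absorbed when we pass to the class in $\cH$. Thus $c_\Lambda([T]\cdot\{[A_i]\})=[T]\cdot c_\Lambda(\{[A_i]\})$, which is the claimed equivariance.

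For \eqref{metric-coset}, the map $\GL(N+1,\bC)\to\cP$, $T\mapsto TT^*$, is $U(N+1)$-invariant on the right, so modding out by scalars gives a well-defined map $P\colon\cG/G\to\cH$. Injectivity follows because $T_1T_1^*=T_2T_2^*$ up to positive scalar forces $T_1$ and $T_2$ to agree up to right multiplication by an element of $U(N+1)\cdot\bC^\times$, hence in $\cG$ up to $G$. Surjectivity and smoothness of the inverse come from the existence of Hermitian square roots of positive definite Hermitian matrices, depending smoothly on the matrix. The $\cG$-equivariance is immediate from $(ST)(ST)^*=S(TT^*)S^*$, and the identity coset maps to $[I]$ by inspection. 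The main (mild) obstacle is bookkeeping the scalar factors when descending from $\GL(N+1,\bC)$ and $\cP$ to their projective quotients, but once the calculation in \eqref{convex-combs} is carried out, the rest of the argument is formal.
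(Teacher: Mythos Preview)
Your proof is correct and follows essentially the same approach as the paper, reducing everything to the classical polar decomposition in $\GL(N+1,\bC)$. The only minor difference is in part \eqref{metric-coset}: the paper uses part \eqref{trivial-principal-bundle} to identify $P$ with the squaring map $[A]\mapsto[A^2]$ on $\cH$ (which is manifestly a diffeomorphism), whereas you verify injectivity and surjectivity directly via Hermitian square roots; both arguments rest on the same underlying fact.
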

\begin{proof}
	The first assertion is an immediate consequence of the polar decomposition in $\GL(N+1,\bC)$. Note that $\cG$-equivariance in the second and third assertions are true by definition. Using the first assertion, we can view the map $P$ as the squaring map $[A]\mapsto[A^2]$ on $\cH$, which shows that it is a diffeomorphism.
\end{proof}

We will need the next definition when we describe the one of the auxiliary data required for our construction of global Kuranishi charts.

\begin{definition}[Polyfold of stable maps]\label{polyfolds-defined}
    Fix a polarization $\cO_X(1)\to X$ taming $J$. We define $Z = Z_{A,g}(X)$ to be the polyfold of $\Omega$-stable genus $g$ maps\footnote{In \cite{HWZ17}, $Z$ is called the space of \emph{stable curves} to indicate that these are stable maps up to equivalence. We prefer not to use the term `stable curve' in this context in order to avoid confusion with the moduli space $\Mbar_{g,n}$.} to $X$ in class $A$ which are not necessarily $J$-holomorphic, as constructed in \cite[Theorem 3.37]{HWZ17}. Note that the topology on $\Mbar_g(X,A;J)$ induced by $Z$ is the same as the topology of Gromov convergence, as explained in \cite[Remark 4.10]{HWZ17}.
\end{definition}

\begin{definition}[Good covering]\label{good-covering}
    Fix a polarization $\cO_X(1)\to X$ taming $J$ and write $d:=\langle[\Omega],A\rangle$. Given an integer $p\ge 1$, set $m := p(2g-2+3d)$ and $N:= m-g$. We define a \emph{good covering} to be a collection
    \begin{align*}
        \cU=\{(U_i,D_i,\chi_i)\}_{i\in\Lambda}    
    \end{align*}
    of tuples indexed by a finite set $\Lambda$ such that we have the following properties.
	\begin{enumerate}[1.]
		\item\label{stabilizing-divisor} For each $i\in \Lambda$,  $U_i$ is an open subset of $Z$ (from Definition \ref{polyfolds-defined}) and $D_i\subset X$ is a codimension $2$ submanifold-with-boundary satisfying the following properties for any stable map $(C,u)\in U_i$.
		\begin{enumerate}[(i)]
	        \item\label{transverse-marking} The set $u^{-1}(D_i)$ consists of exactly $3d$ distinct non-nodal points of $C$. Moreover, for each irreducible component $C'\subset C$, we have 
            \begin{align*}
                \#(u^{-1}(D_i)\cap C') = 3\langle [\Omega],u_*[C']\rangle.
            \end{align*}
            In particular, $C$ equipped with $u^{-1}(D_i)$ as marked points is a stable curve.
                \item\label{avoid-boundary} We have $u(C)\cap\partial D_i=\varnothing$ and the map $u$ is transverse to $D_i$.
        \end{enumerate}
        As a result, for any framed stable map $(C\subset\bP^N,u:C\to X)$ with $(C,u)\in U_i$, there is a well-defined element
        \begin{align*}
            \text{st}_{D_i}(C\subset\bP^N,u)\in\Mbar^{\;*,p}_{g,[3d]}(\bP^N,m)
        \end{align*}
        given by adding $u^{-1}(D_i)$ to $C$ as a set of $3d$ unordered marked points.
		\item For each $i\in \Lambda$, $\chi_i:Z\to[0,1]$ is a nonzero sc-smooth function supported in $U_i$. 
        \item For every point $(C,u)$ in $\Mbar_g(X,A;J)$, there exists an index $i\in\Lambda$ such that $(C,u)\in U_i$ and $\chi_i(C,u)>0$.
	\end{enumerate}
    Given a good covering $\cU$ as above, let $V_\cU\subset Z$ be the open subset where the sc-smooth function $\sum_{i\in \Lambda}\chi_i$ is strictly positive. Assume that we are additionally given a smooth $\cG$-equivariant map
    \begin{align*}
        \lambda:\Mbar^{\;*,p}_{g,[3d]}(\bP^N,m)\to\cG/G,
    \end{align*}
    with $\cG$ and $G$ as in Definition \ref{positive-definite-matrices}. Then, for any framed stable map $(C\subset\bP^N,u)$ such that $(C,u)\in V_\cU$, we have a well-defined element
    \begin{align*}
        \lambda_\cU(C\subset\bP^N,u)\in \cG/G
    \end{align*}
    defined by the formula
    \begin{align}\label{lambda-defined}
        \lambda_\cU(C\subset\bP^N,u):=c_\Lambda(\{\chi_i(C,u)\}_{i\in \Lambda},\{\lambda(\text{st}_{D_i}(C\subset\bP^N,u))\}_{i\in \Lambda}),
    \end{align}
    where we are using the identification $\cG/G\simeq \cH$ and the notation $c_\Lambda$ from Lemma \ref{polar-decomp}. Note that the assignment $(C\subset\bP^N,u)\mapsto\lambda_\cU(C\subset\bP^N,u)$ is $\cG$-equivariant.
\end{definition}

\begin{remark}
    The map $\lambda_\cU$ described in Definition \ref{good-covering} allows us to distinguish a class of \emph{unitarily framed stable maps} from among all framed stable maps. Morally, this could have been done by choosing local slices for the $\cG$-action on the space of framed stable maps followed by a partition of unity argument. The good covering $\cU$ allows us to implement this strategy without having to appeal to slice theorems in infinite dimensions.
\end{remark}

\subsection{Description of the auxiliary data and the chart} We can now describe the auxiliary data necessary for the construction of a global Kuranishi chart for $\Mbar_{g,n}(X,A;J)$, as well as the construction itself.

\begin{definition}[Auxiliary data]\label{aux-choices-defined}
	An \emph{auxiliary datum} for the moduli space $\Mbar_{g,n}(X,A;J)$ is a tuple
    \begin{align*}
        (\nabla^X,\cO_X(1),p,\cU,\lambda,k)    
    \end{align*}
    with the following properties.
	\begin{enumerate}[(i)]
		\item $\nabla^X$ is a $\bC$-linear connection on the tangent bundle $T_X$, which is viewed as a complex vector bundle using $J$.
		\item\label{polarization-on-target} $\cO_X(1)\to X$ is a polarization taming $J$ as in Definition \ref{polarization-defined}. In particular, $\cO_X(1)$ has a Hermitian connection with curvature form $-2\pi\text{i}\Omega$. We set 
        \begin{align*}
            d := \langle[\Omega],A\rangle.         
        \end{align*}
		\item\label{framed-maps-etc} $p\ge 1$ is an integer. We set
        \begin{align*}
            m:=p(2g-2+3d),\quad &N:=m-g,\\
            \cG:=\PGL(N+1,\bC),\quad &G:=\PU(N+1).
        \end{align*}
		\item\label{aux-good-covering} $\cU$ is a good covering as in Definition \ref{good-covering} and
        \begin{align*}
            \lambda:\Mbar^{\;*,p}_{g,[3d]}(\bP^N,m)\to\cG/G    
        \end{align*}
        is a smooth $\cG$-equivariant map, 
        with $\lambda_\cU$ being the map defined by \eqref{lambda-defined}.
		\item $k\ge 1$ is an integer.
	\end{enumerate}
\end{definition}

\begin{construction}\label{high-level-description-defined}
	\normalfont{
		Having fixed an auxiliary datum $(\nabla^X,\cO_X(1),p,\cU,\lambda,k)$ and the notation as in Definition \ref{aux-choices-defined}, the associated global Kuranishi chart
        \begin{align*}
            \cK = (G,\cT/\Mbar^*_g(\bP^N,m),\cE,\fs)    
        \end{align*}
        for $\Mbar_g(X,A;J)$ consists of the following data.
		\begin{enumerate}[(i)]
			\item (Thickening) $\mathcal T$ consists of all tuples $(C\subset\bP^N,u:C\to X,\eta,\alpha)$ satisfying the following properties.
			\begin{enumerate}[(a)]
				\item\label{thickening-condition-1} $(C\subset\bP^N,u:C\to X)$ is a framed stable map lying in the domain of $\lambda_\cU$.
				\item\label{thickening-condition-2} The element $\eta$ belongs to the finite-dimensional $\bC$-vector space
				\begin{align}\label{obstruction-space-defined}
				E_{(C\subset\bP^N,u)}:= H^0(C,{T^*}^{0,1}_{\bP^N}|_C\otimes u^*T_X\otimes\cO_C(k))\otimes\overline{H^0(\bP^N,\cO_{\bP^N}(k))}.
				\end{align}
				Here, we use the complex linear identification
                \begin{align}\label{antilinear-dual-metric-identification}
                    {T^*}^{0,1}_{\bP^N}\simeq T_{\bP^N},    
                \end{align}
                given by the Fubini--Study metric, to endow the ${T^*}^{0,1}_{\bP^N}$ with a holomorphic structure. Similarly, we endow $u^*T_X$ with the holomorphic structure given by $(u^*\nabla^X)^{0,1}$. On the normalization $\tilde C\to C$, the equation
				\begin{align*}
				\delbar_J\tilde u + \langle\eta\rangle\circ d\iota_{\tilde C} = 0\in\Omega^{0,1}(\tilde C,\tilde u^*T_X)
				\end{align*}
				is satisfied. Here, $\tilde u$ and $\iota_{\tilde C}$ denote the pullbacks to $\tilde C$ of the map $u$ and the inclusion $C\subset\bP^N$ respectively. The $\bC$-linear contraction operator
				\begin{align}\label{inner-product-pairing-defined}
				\langle\cdot\rangle: E_{(C\subset\bP^N,u)}\to\Omega^0(C,{T^*}^{0,1}_{\bP^N}|_C\otimes u^*T_X)
				\end{align}
				is induced by the standard Hermitian metric on the line bundle $\cO_{\bP^N}(k)$.
				
				\item\label{thickening-condition-3} The element $\alpha\in H^1(C,\cO_C)$ is such that we have the identity
				\begin{align}\label{fix-polarization-alpha}
				[\cO_C(1)]\otimes[\fL_u^{\otimes p}]^{-1} = \exp\alpha\in \text{Pic}(C)
				\end{align}
				in the Picard group of $C$, i.e., the group of isomorphism classes of holomorphic line bundles with group operation given by tensor product of line bundles. Here, $\exp:H^1(C,\cO_C)\to\text{Pic}(C)$ is the canonical exponential map, recalled in Appendix \ref{line-bundles-on-families-of-curves}.
			\end{enumerate}
			The natural $U(N+1)$-action on $\cT$ descends to a $G$-action with respect to which the natural forgetful morphism $\pi:\cT\to\Mbar_g^*(\bP^N,m)$ is equivariant.
			\item (Obstruction bundle) $\cE\to\cT$ is the family of vector spaces over $\cT$ whose fibre over a given point $(C\subset\bP^N,u,\eta,\alpha)\in\cT$ is given by
			\begin{align}\label{obstruction-bundle-fibre-def}
			\fs\fu(N+1)\oplus E_{(C\subset\bP^N,u)} \oplus H^1(C,\cO_C).
			\end{align}
			It carries a natural fibrewise linear action of $G$ that lifts the $G$-action on $\cT$.
			\item (Obstruction section) The section $\fs:\cT\to\cE$ is defined by the formula
			\begin{align*}
			\fs(C\subset\bP^N,u,\eta,\alpha) = (\text{i}\log\lambda_\cU(C\subset\bP^N,u),\eta,\alpha).
			\end{align*}
			Here, $\text{i}\log:\cG/G\to\fs\fu(N+1)$ is the `polar decomposition' map from Definition \ref{positive-definite-matrices}.
		\end{enumerate}
        The section $\fs$ is $G$-equivariant and there is a natural forgetful map 
        \begin{align*}
            \fs^{-1}(0)/G\to\Mbar_g(X,A;J).    
        \end{align*}
		The associated global Kuranishi chart 
        \begin{align}\label{marked-point-chart-defined}
            \cK_n = (G,\cT_n/\Mbar^*_{g,n}(\bP^N,m),\cE_n,\fs_n)    
        \end{align}
        for $\Mbar_{g,n}(X,A;J)$ is defined by pulling back $\cT,\cE$ and $\fs$ along the natural forgetful map $$\Mbar^*_{g,n}(\bP^N,m)\to\Mbar^*_g(\bP^N,m).$$
	}
\end{construction}

\begin{remark}[Exceptional case]\label{A=0-exception}
    In the special case when $\lspan{[\omega],A} = 0$, $J$-holomorphic stable maps are just stable curves mapping to a point in $X$ and thus, there are no stable maps when $g = 0, n\le 2$ or $g=1,n=0$. 
    
    When $g\le 1$ and $\lspan{[\omega],A} = 0$, we must modify Construction \ref{high-level-description-defined} by twisting $\fL_u$ from \eqref{domain-polarization-defined} by the marked points, i.e., we associate the holomorphic line bundle $\omega_C(p_1 + \cdots + p_n)\otimes (u^*\cO_X(1))^{\otimes 3}$ to the stable map $(C,p_1,\ldots,p_n,u:C\to X)$. We leave this modification to the reader and ignore this special case for the remainder.
\end{remark}

Note that Construction \ref{high-level-description-defined} defines only the points of the spaces $\cT$ and $\cE$ and does not describe any additional structure on these (e.g., a rel--$C^\infty$ structure, a vector bundle structure). In order to ensure that $\cT$ as defined above is a manifold and $\cE$ is a vector bundle on it, at least near $\fs^{-1}(0)$, we will need to restrict to a special subclass of auxiliary data.

\begin{definition}[Unobstructed auxiliary datum]\label{unobstructed-aux}
	Given an auxiliary datum $(\nabla^X,\cO_X(1),p,\cU,\lambda,k)$ as in Definition \ref{aux-choices-defined}, say that it is \emph{unobstructed} if the following properties hold for any stable $J$-holomorphic map $u:C\to X$ in $\Mbar_g(X,A;J)$.
	\begin{enumerate}[(a)]
		\item\label{very-ample-acyclic-line-bundle} The line bundle $\fL_u^{\otimes p}\to C$ is very ample and we have $H^1(C,\fL_u^{\otimes p}) = 0$. Here, $\fL_u$ is the holomorphic line bundle on $C$ introduced in Definition \ref{framed-stable-map}.
		\item\label{killing-obstructions} For every complex basis $\cF = (s_0,\ldots,s_N)$ of $H^0(C,\fL_u^{\otimes p})$, we get a corresponding projective embedding $\iota_{C,\cF} = [s_0:\cdots:s_N]:C\hookrightarrow\bP^N$ and this yields a framed stable map $(C\subset\bP^N,u)$ as in Definition \ref{framed-stable-map}. If $\lambda_\cU(C\subset\bP^N,u)\in\cG/G$ is the identity coset, then we have
        \begin{align*}
            H^1(C,{T^*}^{0,1}_{\bP^N}|_C\otimes u^*T_X\otimes\cO_C(k)) = 0,
        \end{align*}
        and the map
        \begin{align*}
			D(\delbar_J)_u\oplus(\langle\cdot\rangle\circ d\iota_{\tilde C,\cF}):\Omega^0(C,u^*T_X)\oplus E_{(C\subset\bP^N,u)}\to\Omega^{0,1}(\tilde C,\tilde u^*T_X)
		\end{align*}
			is surjective. Here, $D(\delbar_J)_u$ is the linearization of the non-linear Cauchy--Riemann operator $\delbar_J$ at $u$ and the map $\langle\cdot\rangle$ is as in \eqref{inner-product-pairing-defined}.
	\end{enumerate}
\end{definition}

\begin{discussion}[Groupoid equivalence]\label{groupoid-equivalence}
With reference to Construction \ref{high-level-description-defined}, let us explain why we have an identification of $\fs^{-1}(0)/G$ and $\Mbar_g(X,A;J)$ as sets and, in fact, even as groupoids when the auxiliary datum is unobstructed.\footnote{In fact, for this, we only need condition \eqref{very-ample-acyclic-line-bundle} of Definition \ref{unobstructed-aux}.}

On the one hand, define the groupoid $[\fs^{-1}(0)/G]$ as follows. Its objects are the points of $\fs^{-1}(0)$, i.e., a framed stable map $(C\subset\bP^N,u:C\to X)$ such that
\begin{enumerate}[\normalfont(i)]
    \item $u$ is $J$-holomorphic,
    \item the holomorphic line bundles $\cO_C(1)$ and $\fL_u^{\otimes p}$ are isomorphic and
    \item $\lambda_\cU(C\subset\bP^N,u)\in\cG/G$ is the identity coset.
\end{enumerate}
Its morphisms are described as follows. For every $\gamma\in G$ and $x\in\fs^{-1}(0)$, there is a morphism $\gamma:x\to\gamma\cdot x$ in $[\fs^{-1}(0)/G]$. For $x = (C\subset\bP^N,u)$, note that $\gamma\cdot x = (\gamma(C)\subset\bP^N,u\circ \gamma_C^{-1})$, where $\gamma_C:C\to\gamma(C)$ is the restriction of $\gamma:\bP^N\to\bP^N$. 

On the other hand, consider the groupoid of stable $J$-holomorphic maps which we denote, by abuse of notation, as $\Mbar_g(X,A;J)$. Its objects are the stable $J$-holomorphic maps $u:C\to X$ and its morphisms are equivalences of stable maps. There is an obvious forgetful functor 
\begin{align*}
    \ff: [\fs^{-1}(0)/G]\to\Mbar_{g,n}(X,A;J)    
\end{align*}
defined as follows. On objects, $\ff(C\subset\bP^N,u)=(C,u)$, i.e., $\ff$ forgets the embedding of $C$ into $\bP^N$. For $\gamma\in G$, the morphism $\gamma:(C\subset\bP^N,u)\to(\gamma(C)\subset\bP^N,u\circ\gamma_C^{-1})$ is mapped by $\ff$ to the morphism $\gamma_C:(C,u)\to(\gamma(C),u\circ\gamma_C^{-1})$. We claim that $\ff$ is an equivalence of groupoids, i.e., it is fully faithful and essentially surjective.
    
To see that $\ff$ is essentially surjective, let $u:C\to X$ in $\Mbar_g(X,A;J)$ be given. Then, $\fL_u^{\otimes p}$ is very ample and has vanishing $H^1$. Thus, by Riemann--Roch, we have $\dim_\bC H^0(C,\fL_u^{\otimes p}) = N+1$. By taking a basis of global sections, we get a projective embedding $\iota: C\hookrightarrow\bP^N$ with $\cO_C(1)\simeq\fL_u^{\otimes p}$. It is possible that $\text{i}\log\lambda_\cU$ does not vanish on the resulting framed stable map $(\iota(C)\subset\bP^N,u\circ\iota^{-1})$, but this can be corrected by the action of a suitable $\gamma\in \cG$ since the action of $\cG$ on $\cG/G$ is transitive. This amounts to changing the basis of global sections. Thus, we have an object of $[\fs^{-1}(0)/G]$ whose image under $\ff$ is equivalent to $(C,u)$.

To see that $\ff$ is fully faithful, let $(C\subset\bP^N,u)$ and $(C'\subset\bP^N,u')$ be two given objects of $[\fs^{-1}(0)/G]$. Given two morphisms $\gamma_1,\gamma_2$ between them such that $\ff(\gamma_1) = \ff(\gamma_2)$, we must show that $\gamma_1 = \gamma_2$. But this is immediate from the fact that $\gamma_1,\gamma_2$ are linear automorphisms of $\bP^N$ and $C\subset\bP^N$ is not contained in any hyperplane. Conversely, given an equivalence $\varphi:(C,u)\to(C',u')$, we must show that it comes from some $\gamma:(C\subset\bP^N,u)\to(C'\subset\bP^N,u')$. Since $u'\circ\varphi = u$, we obtain a canonical isomorphism of holomorphic line bundles
\begin{align*}
    \fL_u\simeq\varphi^*\fL_{u'},    
\end{align*}
which induces an isomorphism $H^0(C,\fL_u^{\otimes p})\simeq H^0(C',\fL_{u'}^{\otimes p})$. Since the embeddings $C\subset\bP^N$ (resp. $C'\subset\bP^N$) are defined by choosing a basis of the global sections of $\fL_u^{\otimes p}$ (resp. $\fL_{u'}^{\otimes p}$), this gives a linear automorphism $\gamma\in\cG$ of $\bP^N$ such that $\gamma(C) = C'$ and $\gamma|_C=\varphi$. Since the $\cG$-equivariant map $\lambda_\cU$ sends both $(C\subset\bP^N,u)$ and $\gamma\cdot(C\subset\bP^N,u) = (C'\subset\bP^N,u')$ to the identity coset in $\cG/G$, we conclude that $\gamma\in G$. Thus, the morphism $\varphi$ is in the image of $\ff$.

\end{discussion}

\subsection{Statement of the main result} We can now formulate our main result on global Kuranishi charts for Gromov--Witten moduli spaces.

\begin{theorem}[Global Kuranishi charts for GW moduli spaces]\label{global-kuranishi-existence}
    Fix a closed symplectic manifold $(X,\omega)$, a class $A\in H_2(X,\bZ)$ and integers $g,n\ge 0$.
	\begin{enumerate}[\normalfont(1)]
		\item\label{achieveing-transversality} Fix $J\in\cJ_\tau(X,\omega)$. Unobstructed auxiliary data, in the sense of Definition \ref{unobstructed-aux}, exist. Moreover, any choices of connection $\nabla^X$ and polarization $\cO_X(1)$ taming $J$ extend to an unobstructed auxiliary datum $(\nabla^X,\cO_X(1),p,\cU,\lambda,k)$. 
		\item\label{transversality-implies-smoothness} Fix $J\in\cJ_\tau(X,\omega)$ and an unobstructed auxiliary datum $(\nabla^X,\cO_X(1),p,\cU,\lambda,k)$. Then, the result $\cK_n = (G,\cT_n,\cE_n,\fs_n)$ of Construction \ref{high-level-description-defined} is a stably complex rel--$C^\infty$ global Kuranishi chart over $\Mbar_{g,n}^*(\bP^N,m)$, when restricted to an open $G$-invariant neighbourhood $\cT_n^{\normalfont\text{reg}}\subset\cT_n$ of $\fs_n\inv(0)$.
		\item\label{global-kuranishi-uniqueness} The global Kuranishi charts obtained from Construction \ref{high-level-description-defined} have the following uniqueness properties.
		\begin{enumerate}[\normalfont(a)]
		    \item\label{chart-uniqueness-up-to-equivalence} Fix $J\in\cJ_\tau(X,\omega)$. Then, the global Kuranishi charts for $\Mbar_{g,n}(X,A;J)$ associated to any two unobstructed auxiliary data are stably complex rel--$C^\infty$ equivalent in the sense of Definition \ref{equivalence-of-charts-defined}.
		    \item\label{chart-uniqueness-up-to-cobordism} Given $J_0,J_1\in\cJ_\tau(X,\omega)$, there exist auxiliary data $(\nabla^{X,i},\cO_X(1),p,\cU,\lambda,k)$ for $i=0,1$ which are unobstructed and the associated global Kuranishi charts for $\Mbar_{g,n}(X,A;J_0)$ and $\Mbar_{g,n}(X,A;J_1)$ are stably complex rel--$C^\infty$ cobordant in the sense of Definition \ref{cobordism-of-charts-defined}.
		\end{enumerate}
	\end{enumerate}
\end{theorem}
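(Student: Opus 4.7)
The plan is to prove the three parts of Theorem \ref{global-kuranishi-existence} sequentially, with the substantive work being in parts (1) and (2); the uniqueness statements of part (3) then follow from the flexibility afforded by the equivalence and cobordism moves of Definitions \ref{equivalence-of-charts-defined} and \ref{cobordism-of-charts-defined}. For part (1), I would fix any $\bC$-linear connection $\nabla^X$ and any polarization $\cO_X(1)$ taming $J$, noting that $\Mbar_g(X,A;J)$ is compact in the topology induced by $Z$ by Gromov compactness. Condition (a) of Definition \ref{unobstructed-aux}---very ampleness of $\fL_u^{\otimes p}$ and the vanishing of $H^1(C,\fL_u^{\otimes p})$---can be arranged uniformly for $p\gg 1$ because $\fL_u=\omega_C\otimes(u^*\cO_X(1))^{\otimes 3}$ has strictly positive degree on every irreducible component of $C$: on an unstable component $u^*\cO_X(1)$ already contributes positive degree by $\Omega$-stability, and on a stable component the contribution of $\omega_C$ is non-negative, the factor of $3$ compensating for the value $-2$ of $\deg\omega_C$ on any unstable rational component on which $u$ is non-constant.

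For the good covering, I would take finitely many stabilizing divisors \`a la Cieliebak--Mohnke: for each $[u]\in\Mbar_g(X,A;J)$ choose a codimension-$2$ submanifold-with-boundary $D\subset X$ meeting $u(C)$ transversely in $3\langle[\Omega],u_*[C']\rangle$ distinct non-nodal points on each irreducible component $C'$; by compactness, finitely many tuples $(U_i,D_i,\chi_i)$ suffice. The map $\lambda$ is then produced by integrating a natural Hermitian pairing of sections of $\cO_C(1)$ against measures determined by the $D_i$, with $\cG$-equivariance built into the construction. Finally, for condition (b), taking $k\gg 1$ makes $\cO_C(k)$ very ample and ensures that sections in $E_{(C\subset\bP^N,u)}$ can be concentrated at arbitrary points of $C$, thereby spanning the cokernel of $D(\bar\partial_J)_u$ uniformly over the compact moduli space, by a standard argument extending that of \cite{AMS21} to positive genus.

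For part (2), the strategy is to identify an open neighborhood $\cT_n^{\text{reg}}$ of $\fs_n^{-1}(0)$ with a moduli space representable in the rel--$C^\infty$ sense. Taking $\cB_n=\Mbar^*_{g,n}(\bP^N,m)$ as the base, with universal family $\pi\cl\fC_n\to\cB_n$ equipped with its universal embedding into $\bP^N$, the datum $\alpha\in H^1(C,\cO_C)$ subject to \eqref{fix-polarization-alpha} exhibits the space of $\alpha$-data as a torsor under a rel--$C^\infty$ vector bundle over $\cB_n$ via the analytic exponential map recalled in Appendix \ref{line-bundles-on-families-of-curves}. The remaining data $(u,\eta)$ subject to $\bar\partial_J u+\langle\eta\rangle\circ d\iota_{\tilde C}=0$ can be reformulated as pseudoholomorphicity of the graph $(u,\iota)\cl C\to X\times\bP^N$ with respect to a family of almost complex structures on $X\times\bP^N$ depending parametrically on $\eta$; unobstructedness places us in the regular locus, where Theorems \ref{rel-smooth-representability} and \ref{rel-smooth-obs-bundle-rep}, suitably extended to accommodate the $\eta$-parameter as in Appendix \ref{rel-smooth-addendum}, deliver a rel--$C^\infty$ structure on $\cT_n^{\text{reg}}$ and a rel--$C^\infty$ vector bundle structure on $\cE_n$. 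The section $\fs_n$ is rel--$C^\infty$ because the $\eta$ and $\alpha$ components are tautological projections while the $\text{i}\log\lambda_\cU$ component factors through the smooth map $\lambda$ and the sc-smooth cut-offs $\chi_i$. The stably complex structure is inherited from the natural complex structures on $\cB_n$, on the fibres $E_{(C\subset\bP^N,u)}$ and on $H^1(C,\cO_C)$, together with the $G$-invariant isomorphism $\fs\fu(N+1)\otimes\bC\simeq\fs\fu(N+1)\oplus\fs\fu(N+1)$ that pairs the real summand of $\cE_n$ with $\underline{\fg}$.

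For part (3)(a), I would interpolate between two given unobstructed auxiliary data by applying (Stabilization), (Group enlargement), (Germ equivalence) and (Base modification) in sequence to first align the integers $p$ and $k$, then combine good coverings by taking the disjoint union and stabilizing by the extra obstruction contributions, and finally interpolate connections and polarizations using the affine structure on the spaces of $\bC$-linear connections and Hermitian metrics, after passage to a common tensor power of the two line bundles. For part (3)(b), given $J_0,J_1\in\cJ_\tau(X,\omega)$, I would choose a smooth path $\{J_t\}_{t\in[0,1]}$ and invoke the refinement of Lemma \ref{Suitable Line Bundle on Target} to produce a single polarization taming every $J_t$; running Construction \ref{high-level-description-defined} parametrically over $[0,1]$ with generically chosen auxiliary data yields the desired rel--$C^\infty$ cobordism, the parametric transversality argument being a direct analogue of part (1). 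The main obstacle throughout is part (2): one must verify carefully that the tautological solution set of the $\eta$-perturbed Cauchy--Riemann equation admits a smooth parametric dependence on $\eta$ compatible with the representability framework of Theorem \ref{rel-smooth-representability}, which is precisely the role of the refinement proved in Appendix \ref{rel-smooth-addendum}; once in place, this technical backbone supports both the existence claim in (2) and the equivalence and cobordism constructions in (3).
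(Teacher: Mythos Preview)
Your outline is broadly aligned with the paper for parts (1), (2) and (3)(b), but there are two places where the approach diverges in ways that would not go through as written.

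\textbf{Construction of $\lambda$ in part (1).} You propose to obtain the $\cG$-equivariant map $\lambda:\Mbar^{*,p}_{g,[3d]}(\bP^N,m)\to\cG/G$ by ``integrating a natural Hermitian pairing of sections of $\cO_C(1)$ against measures determined by the $D_i$''. This echoes the genus $0$ matrix $\cH(C,u,F)$ of \cite{AMS21}, but that matrix used the measure $u^*\Omega$, which is unavailable on $\Mbar^{*,p}_{g,[3d]}(\bP^N,m)$ (there is no $u$). The paper instead obtains $\lambda$ abstractly: the $\cG$-action on $\Mbar^{*,p}_{g,[3d]}(\bP^N,m)$ is proper (Lemma \ref{Proper Action on Domain Stable Embedded Curves}, proved via the valuative criterion and stable reduction), and any proper $\cG$-action with finite stabilizers on a second countable manifold admits a smooth $\cG$-equivariant map to $\cG/G$ by a slice-plus-partition-of-unity argument (Lemma \ref{General reduction of orbits}). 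You would need some argument of this kind.

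\textbf{Part (3)(a).} Your plan is to align $p$ and $k$, combine good coverings, and then ``interpolate connections and polarizations using the affine structure \ldots\ after passage to a common tensor power of the two line bundles''. This does not work: two distinct polarizations $\cO_{X,0}(1)$ and $\cO_{X,1}(1)$ need not have commensurable Chern classes, so there is no common tensor power; and even when there is, changing $\cO_X(1)$ changes $d$, $m$, $N$, the group $G$ and the base $\cB$ simultaneously, which is not accomplished by any single equivalence move. The paper's mechanism is different and is the key idea you are missing: one builds a \emph{doubly framed} chart $\cK=(G_0\times G_1,\cT_{01}/\cB_{01},\cE_{01},\fs_{01})$ whose thickening consists of curves $C\subset\bP^{N_0}\times\bP^{N_1}$ carrying both framings and both obstruction data, with $\fs_{01}=\fs_0\oplus\fs_1$. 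The crucial computation (Lemma \ref{double-thickening-transversality}) is that the vertical linearization $d\fs_1|_{\hat x}:T_{\cT_{01}/\cB_0}\to\cE_1$ is surjective on $\fs_{01}^{-1}(0)$; this uses the Euler sequence to identify $\ker\hat L_3$ with $\fs\fl(N_1+1,\bC)$ and the $\cG_1$-equivariance of $\lambda_{\cU_1}$ to see that the remaining map to $\fs\fu(N_1+1)$ is onto. From this transversality, $\cK$ is related to $\cK_0$ by (Base modification) along $\cB_{01}\to\cB_0$, then (Stabilization) by $\cE_1$, then (Group enlargement) by $G_1$; symmetry gives the relation to $\cK_1$.

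A smaller point in part (2): the set of $\alpha$ satisfying \eqref{fix-polarization-alpha} is not a torsor under a vector bundle but under the lattice $H^1(C,\bZ)\subset H^1(C,\cO_C)$. The paper selects the unique small solution $\alpha$ via an Abel--Jacobi type construction (Proposition \ref{alpha-smoothness-technical} and Appendix \ref{line-bundles-on-families-of-curves}), realizing $\cT$ near $\fs^{-1}(0)$ as the graph of a rel--$C^\infty$ section of $\bH^*_{\cT'}$.
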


We will prove Theorem \ref{global-kuranishi-existence}\eqref{achieveing-transversality}, \eqref{transversality-implies-smoothness} and  \eqref{global-kuranishi-uniqueness} in \textsection\ref{achieving-transversality-proof}, \textsection\ref{transversality-implies-smoothness-proof} and \textsection\ref{proof-of-equivalence-and-cobordism} respectively.

\begin{remark}[Virtual fundamental classes] Lemma \ref{vfc-invariant} and Theorem \ref{global-kuranishi-existence} yield a well-defined virtual fundamental class for $\Mbar_{g,n}(X,A;J)$. By Theorem \ref{global-kuranishi-existence}\eqref{chart-uniqueness-up-to-cobordism}, its pushforward to $X^n\times \Mbar_{g,n}$ does not depend on the choice of $J$. 
\end{remark}

\begin{remark}\label{AMS-comparison}
    We compare our construction for $g = 0$ with Abouzaid--McLean--Smith's construction, \cite[\textsection 6]{AMS21}, of a global Kuranishi chart for genus $0$ stable maps.
   For an overview of the construction from \cite{AMS21}, see \textsection\ref{sec:prelims} above. 
    
   The most immediate difference is the choice of auxiliary data that goes into both constructions. A more technical difference is that we endow our moduli spaces with canonical rel--$C^\infty$ structures and for this, we use some inputs from polyfold theory. In contrast, the construction of \cite{AMS21} uses classical gluing analysis to give the thickened moduli space a \emph{$\normalfont C^1_\text{loc}$ structure} (this is similar to a rel--$C^1$ structure in the sense of \cite{Swa21}). This turns out to be enough to get a well-defined vertical tangent bundle, which can then be used to endow the thickened moduli space with a smooth structure (via equivariant smoothing theory).

    On the other hand, one can use the techniques of this paper to endow the global Kuranishi chart constructed in \cite{AMS21} with a natural rel--$C^\infty$ structure. Then, a slight variant of the argument which proves Theorem \ref{global-kuranishi-existence}\eqref{chart-uniqueness-up-to-equivalence} shows that it is rel--$C^\infty$ equivalent to our $g = 0$ construction.
\end{remark}
\section{Transversality for auxiliary data}\label{achieving-transversality-proof}

In this section, we will prove Theorem \ref{global-kuranishi-existence}\eqref{achieveing-transversality} by showing how to choose the parameters $p,\cU,\lambda,k$ as in Definition \ref{aux-choices-defined} so that the auxiliary datum 
\begin{align*}
    (\nabla^X,\cO_X(1),p,\cU,\lambda,k)    
\end{align*}
is unobstructed in the sense of Definition \ref{unobstructed-aux}. We may assume that we are already given choices of $\nabla^X$ and $\cO_X(1)$. Indeed, it is obvious that $J$-linear connections on $T_X$ exist, while the existence of polarizations on $X$ taming $J$ is guaranteed by Lemma \ref{Suitable Line Bundle on Target}. We set
\begin{align*}
    d := \langle [\Omega], A\rangle
\end{align*}
as in Definition \ref{aux-choices-defined}.

\subsection{Choosing the integer $p$}\label{p-choice} Recall from  \eqref{domain-polarization-defined} that we defined the holomorphic line bundle 
$$\fL_u := \omega_C \otimes (u^*\cO_X(1))^{\otimes 3}$$
for any stable map $u \cl C\to X$.

\begin{lemma}[Existence of $p$]\label{Stabilisation is Very Positive}
	There exists a positive integer $p$ depending only on $g,d$ with the property that for any stable map  $(C,u)$ and any integer $q\ge p$, the line bundle $\fL_u^{\otimes q}\to C$ is very ample and we have $H^1(C,\fL_u^{\otimes q}) = 0$.
\end{lemma}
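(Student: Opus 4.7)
The strategy is to reduce the problem to a uniform per-component degree lower bound on $\fL_u$, and then invoke a standard very ampleness/vanishing criterion for line bundles on nodal curves.

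My first step is to show that $\deg \fL_u|_{C'} \geq 1$ on every irreducible component $C' \subseteq C$, by case analysis on the stability of $C'$. If $C'$ is stable (i.e., $2g(C') - 2 + \#(\text{nodes on } C') \geq 1$), then $\deg \omega_C|_{C'} \geq 1$ and $\deg u^*\cO_X(1)|_{C'} \geq 0$ by $\Omega$-tameness, yielding $\deg \fL_u|_{C'} \geq 1$. If $C'$ is unstable, then $\deg \omega_C|_{C'} \in \{-2,-1,0\}$, but $\Omega$-stability from Definition \ref{framed-stable-map} forces $\deg u^*\cO_X(1)|_{C'} = \langle [\Omega], u_*[C'] \rangle \geq 1$, since this pairing is a positive integer ($[\Omega]$ has an integral lift by the construction of the polarization). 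Thus $\deg \fL_u|_{C'} \geq -2 + 3 = 1$ in every case.

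The second step invokes the following standard fact: for a line bundle $L$ on a connected nodal curve $C$ of arithmetic genus $g$, if $\deg L|_{C'} \geq 2g + 1$ on every irreducible component $C' \subseteq C$, then $L$ is very ample and $H^1(C, L) = 0$. This is proved via the normalization exact sequence, reducing the $H^1$ vanishing and point separation to the classical smooth curve case, together with a node-separation argument (sections are easily constructed supported on a single component). Applying this to $L = \fL_u^{\otimes q}$, whose multidegree satisfies $\deg L|_{C'} \geq q$ by Step 1, it suffices to take $q \geq 2g + 1$. Setting $p := 2g + 1$ thus yields a threshold depending only on $g$, and hence only on $g$ and $d$, as required.

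The main obstacle is pinning down a convenient multidegree criterion on possibly reducible nodal curves; a slightly more robust alternative exploits finiteness of combinatorial types. Since $g$ and $d$ are fixed, the number of possible dual graphs of $\Omega$-stable maps $(C, u)$ is bounded: the number of stable components is bounded via $\sum_{\text{stable}}(2g(C')-2+\#\text{nodes}) \leq 2g-2+2d$, while the number of unstable components is at most $d$ (each contributes $\geq 1$ to the total $\Omega$-area), so the total number of irreducible components is at most $2g + 3d - 2$. The degree distribution among components is then a bounded set of non-negative integer vectors summing to $d$. For each such combinatorial type, classical results provide a threshold $p_0$, and $p$ may then be taken as the maximum of this finite list.
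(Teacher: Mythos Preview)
Your proposal is correct. The alternative you sketch in the final paragraph is essentially the paper's argument: once $\deg\fL_u|_{C'}\ge 1$ on each component, the paper simply notes that $\deg\fL_u=2g-2+3d$ bounds the number of components (and hence of decorated dual graphs), and for each dual graph $\Gamma$ takes $p_\Gamma:=1+\max_v(2g_v+|D_v|)$, checking via Serre duality on each normalized component that $H^1(C_v,\fL_u^{\otimes q}|_{C_v}(-D_v-a-b))=0$ for $q\ge p_\Gamma$. The paper's component bound is reached more directly than yours (total degree of $\fL_u$ versus your separate counts of stable and unstable components), but the two agree.

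Your primary route (Step~2) is genuinely different: the assertion that per-component degree $\ge 2g+1$, with $g$ the \emph{arithmetic} genus of $C$, forces very ampleness and $H^1$-vanishing would give the elegant $d$-independent threshold $p=2g+1$. The $H^1$-vanishing half is correct, though not by a straightforward normalization argument: one must apply Serre duality and analyze the support subcurve $C_{\mathrm{supp}}$ of a hypothetical nonzero section of $\omega_C\otimes L^{-1}$, using that the arithmetic genus of any subcurve is at most $g$. The very-ampleness half (particularly the length-three condition at nodes) follows by the same mechanism but is not ``standard'' in the form you state for reducible nodal curves; the usual criteria are phrased in terms of $2g_v+|D_v|$ rather than the global $g$. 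You effectively acknowledge this gap and fall back on the finiteness-of-types argument, which is where your proof and the paper's coincide.
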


\begin{proof}
	$\fL_u$ has total degree $2g-2+3d$ on $C$ and, by the stability of $(C,u)$, it has degree $\ge 1$ on each irreducible component of $C$. It follows that $C$ has $\le 2g-2+3d$ irreducible components and thus, the number of possibilities for the dual graph of $C$, decorated by genus labels on the vertices, can be bounded in terms of $g,d$. Hence it suffices to find a $p$ for each possible decorated dual graph $\Gamma$ of $C$.
	
	For each vertex $v$ of the dual graph $\Gamma$ of $C$, let $C_v$ be the component of the normalization of $C$ corresponding to $v$. Let $g_v$ be the genus of $C_v$, $D_v\subset C_v$ be the subset consisting of the inverse images of the nodal points and $L_v$ be the pullback of $\fL_u$ to $C_v$. For any two points $a,b\in C_v$, Serre duality \cite[Theorem III.7.6]{Har77} yields
	\begin{align*}
	H^1(C_v,L_v^{\otimes q}(-D_v-a-b)) = H^0(C_v,\omega_{C_v}(D_v+a+b)\otimes {L_v^*}^{\otimes q})^* = 0
	\end{align*}
	for $q\ge p_{\Gamma} := 1 + \max_{v\in\Gamma}(2g_v + |D_v|)$ once we recall that $\deg_{C_v}L_v\ge 1$. This cohomology vanishing statement (for each $v\in\Gamma$) implies that $\fL_u^{\otimes q}$ is very ample and has trivial $H^1$ on $C$. Since the lower bound $p_\Gamma$ on $q$ depends only on the decorated dual graph $\Gamma$, the proof is complete.
\end{proof}

We fix $p\ge 1$ to be the smallest integer which satisfies the conclusion of Lemma \ref{Stabilisation is Very Positive} above. Having fixed the choice of $p$, we set
\begin{align*}
    m:=p(2g-2+3d),\quad &N:=m-g,\\
    \cG:=\PGL(N+1,\bC),\quad &G:=\PU(N+1).
\end{align*}
exactly as in Definition \ref{aux-choices-defined}\eqref{framed-maps-etc}. Lemma \ref{Stabilisation is Very Positive} shows that our choice of $p$ is such that condition \eqref{very-ample-acyclic-line-bundle} of Definition \ref{unobstructed-aux} is satisfied. The following observation will be useful later.

\begin{lemma}[Unobstructed projective embedding]\label{Regular Embedding via Framing}
Let $(C,u)$ be a stable map as in Definition \ref{framed-stable-map}. Then, any complex linear basis $\cF = (s_0,\ldots,s_N)$ of $H^0(C,\fL_u^{\otimes p})$ determines a point of $\Mbar_{g}^*(\bP^N,m)$ via the holomorphic projective embedding 
\begin{align*}
    \iota_{C,\cF} = [s_0:\cdots:s_N] : C\to \bP^N.
\end{align*}
\end{lemma}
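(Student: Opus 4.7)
The plan is to verify that $\iota_{C,\cF}$ satisfies conditions (i) and (ii) of Definition \ref{base-space-de}, and that the resulting data $(C, \iota_{C,\cF})$ define a stable map of genus $g$ and degree $m$ to $\bP^N$. Almost all of this follows directly from the fact, established in Lemma \ref{Stabilisation is Very Positive}, that $\fL_u^{\otimes p}$ is very ample with $H^1(C, \fL_u^{\otimes p}) = 0$.

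First I would note that Riemann--Roch together with the $H^1$ vanishing gives
\begin{align*}
\dim_\bC H^0(C, \fL_u^{\otimes p}) = \deg_C(\fL_u^{\otimes p}) + 1 - g = p(2g-2+3d) + 1 - g = m - g + 1 = N+1,
\end{align*}
so it makes sense to index the basis as $\cF = (s_0,\ldots,s_N)$. Since $\fL_u^{\otimes p}$ is very ample, the complete linear system $|\fL_u^{\otimes p}|$ has no base points and defines a closed embedding $\iota_{C,\cF} \colon C \hookrightarrow \bP^N$ satisfying $\iota_{C,\cF}^* \cO_{\bP^N}(1) \cong \fL_u^{\otimes p}$; moreover, the standard basis of $H^0(\bP^N, \cO_{\bP^N}(1))$ pulls back precisely to $(s_0,\ldots,s_N)$. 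This verifies condition (i) of Definition \ref{base-space-de}, and simultaneously gives that the restriction map $H^0(\bP^N, \cO_{\bP^N}(1)) \to H^0(C, \cO_C(1))$ is an isomorphism (since it sends a basis to a basis) and that $H^1(C, \cO_C(1)) = H^1(C, \fL_u^{\otimes p}) = 0$, yielding condition (ii).

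The degree of $\iota_{C,\cF}(C)$ in $\bP^N$ equals $\deg_C(\fL_u^{\otimes p}) = m$ and the arithmetic genus equals $g$, so $(C, \iota_{C,\cF})$ represents a class in the correct moduli space $\Mbar_g(\bP^N, m)$. Finally, stability is automatic: because $\iota_{C,\cF}$ is a closed embedding, its restriction to any irreducible component is non-constant, so each component has positive degree and cannot be a destabilizing unmarked rational or elliptic component with trivial map. This places $(C, \iota_{C,\cF})$ inside $\Mbar_g^*(\bP^N, m)$ as required, and no step is a genuine obstacle once Lemma \ref{Stabilisation is Very Positive} is in hand.
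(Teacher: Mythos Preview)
Your proof is correct and follows the same approach as the paper: the paper's proof is the single line ``This is immediate from the isomorphism $\iota_{C,\cF}^*\cO_{\bP^N}(1) \simeq \fL_u^{\otimes p}$ of holomorphic line bundles,'' and you have simply unpacked why that isomorphism (together with the very ampleness and $H^1$-vanishing from Lemma~\ref{Stabilisation is Very Positive}) yields conditions (i) and (ii) of Definition~\ref{base-space-de}.
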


\begin{proof}
    This is immediate from the isomorphism $\iota_{C,\cF}^*\cO_{\bP^N}(1) \simeq \fL_u^{\otimes p}$ of holomorphic line bundles. See also the essential surjectivity argument in Discussion \ref{groupoid-equivalence}.
\end{proof}

\subsection{Choosing the good covering $\cU$ and the map $\lambda$}\label{lambda-choice}

\begin{lemma}[Existence of $\cU$]\label{good-coverings-exist}
    Good coverings exist.
\end{lemma}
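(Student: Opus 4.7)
The plan is to construct a good covering by a standard Cieliebak--Mohnke-style procedure: produce local data near each stable $J$-holomorphic map, extract a finite subcover using Gromov compactness of $\Mbar_g(X,A;J)$, and then invoke sc-smooth partitions of unity on the polyfold $Z$ to obtain the cutoff functions.

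First, fix any $(C_0,u_0)\in\Mbar_g(X,A;J)$ and, for each irreducible component $C'\sub C_0$, set $d_{C'}:=\langle[\Omega],(u_0)_*[C']\rangle\in\Zg$, so that $\sum_{C'}d_{C'}=d$ and $d_{C'}>0$ whenever $C'$ is unstable. Pick $3d_{C'}$ distinct non-nodal points $z^{C'}_1,\dots,z^{C'}_{3d_{C'}}$ on each component $C'$ with $d_{C'}>0$, avoiding the critical points of $u_0|_{C'}$ and the preimages of self-intersections of $u_0$. At each image $u_0(z^{C'}_j)$ choose a small closed codimension-$2$ embedded disk $D^{C'}_j\sub X$ transverse to $u_0$; shrink so that the $D^{C'}_j$ are pairwise disjoint and set $D:=\bigsqcup D^{C'}_j$.

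Next, take a sufficiently small polyfold neighborhood $U\sub Z$ of $(C_0,u_0)$ so that for every $(C,u)\in U$ the map $u$ avoids $\pd D$, is transverse to $D$, and $u\inv(D)$ consists of exactly $3d$ distinct non-nodal points. The existence of such a $U$ follows from the openness of transverse intersection together with the compatibility of the polyfold topology on $Z$ with Gromov convergence (Definition \ref{polyfolds-defined}); each disk $D^{C'}_j$ is met transversely in exactly one nearby point by any $u$ close to $u_0$. The per-component counting requirement in Definition \ref{good-covering} is then automatic, since an irreducible component of a nearby $C$ either corresponds to an irreducible component of $C_0$ or arises by smoothing a nodal subtree of components of $C_0$, and the $\Omega$-degrees of the parent components add, matching the number of disks the new component inherits.

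By Gromov compactness, $\Mbar_g(X,A;J)\sub Z$ is compact, so finitely many such neighborhoods cover it, yielding the finite index set $\Lambda$ and the pairs $(U_i,D_i)_{i\in\Lambda}$. Finally, apply the existence of sc-smooth partitions of unity on the paracompact polyfold $Z$ to produce nonzero sc-smooth bump functions $\chi_i\cl Z\to[0,1]$ with support contained in $U_i$ and with $\sum_{i\in\Lambda}\chi_i>0$ on an open neighborhood of $\Mbar_g(X,A;J)$; after possibly shrinking the $U_i$, such a partition of unity exists by standard polyfold infrastructure. The main technical point is verifying the per-component counting condition for nearby nodal maps; transversality, the avoidance of $\pd D_i$, and sc-smoothness of the $\chi_i$ are routine consequences of openness and existing polyfold machinery.
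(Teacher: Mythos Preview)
Your approach has a genuine gap in the case of multiply covered components. Suppose an irreducible component $C'\subset C_0$ is such that $u_0|_{C'}$ factors as $\tilde C'\xrightarrow{\varphi}\Sigma\xrightarrow{f}X$ with $\varphi$ a branched cover of degree $k\ge 2$ and $f$ somewhere injective. Then for \emph{any} non-critical point $z\in C'$ and any small transverse disk $D$ through $u_0(z)$, the preimage $(u_0|_{C'})^{-1}(D)$ consists of $k$ points, not one. So placing $3d_{C'}$ disks at images of points on $C'$ gives $(u_0|_{C'})^{-1}(D)$ of size roughly $3kd_{C'}$, violating the count required in Definition~\ref{good-covering}. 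Your stipulation to avoid ``preimages of self-intersections of $u_0$'' does not help: for a multiple cover, the non-injective locus of $u_0|_{C'}$ is all of $C'$, so there is nothing left to pick. The same problem occurs when two distinct components of $C_0$ have overlapping images in $X$: disks placed for one will also be hit by the other.

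The paper's proof addresses exactly this by first passing to the finite list $\{f_\nu:\Sigma_\nu\to X\}$ of underlying somewhere injective curves (via \cite[Proposition 2.5.1]{MS12}), then placing $3d_\nu$ disks per $\Sigma_\nu$ at \emph{injective} points of $f_\nu$ which lie off the images of the other $f_{\nu'}$. Each such disk is then hit by a component $C'$ covering $\Sigma_\nu$ in exactly $\deg(\varphi_{C'})$ points, and not at all by components covering other $\Sigma_{\nu'}$; since $d_{C'}=\deg(\varphi_{C'})\cdot d_\nu$, the per-component count $3d_{C'}$ comes out correctly. Your construction is salvageable if you replace the choice of points on $C'$ by this choice of injective points on the simple underlying curves, but as written it fails whenever multiple covers or coincident images occur.
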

\begin{proof}
    Let $(C,u)\in\Mbar_g(X,A;J)$ be given. For any irreducible component $C'\subset C$ on which $u$ is non-constant, consider the $J$-holomorphic map $\tilde u_{C'}:\tilde C'\to X$ induced by $u$ on the normalization $\tilde C'$ of $C'$. Use \cite[Proposition 2.5.1]{MS12} to factor $\tilde u_{C'}$
    \begin{align*}
        \tilde C'\xrightarrow{\varphi}\Sigma\xrightarrow{f} X,
    \end{align*}
    where $\varphi$ is a holomorphic (possibly branched) cover of compact Riemann surfaces and $f$ is a somewhere injective $J$-holomorphic map. Let $\{f_\nu:\Sigma_\nu\to X\}_\nu$ be the finite list of somewhere injective $J$-holomorphic maps obtained in this way from $u$. For indices $\nu\ne\nu'$, the set $f_\nu^{-1}(f_{\nu'}(\Sigma_{\nu'}))\subset \Sigma_\nu$ is at most countable and can accumulate only at critical points of $f_\nu$ using \cite[Corollary 2.5.4 and Proposition 2.4.4]{MS12}.

    Fix a particular $\nu$ and let $d_\nu := \langle[\Omega],(f_\nu)_*[\Sigma_\nu]\rangle$. We can then choose $3d_\nu$ distinct points on $\Sigma_\nu$ such that these are injective points of $f_\nu$, they do not lie on $f_\nu^{-1}(f_{\nu'}(\Sigma_{\nu'}))$ and all their inverse images in the normalization $\tilde C$ of $C$ correspond to non-nodal points of $C$ at which $du\ne 0$. Choose a codimension $2$ submanifold-with-boundary $D_\nu\subset X$ such that $f_\nu$ is transverse to $D_\nu$, the set $f_\nu^{-1}(\partial D_\nu)$ is empty, the set $f_\nu^{-1}(D_\nu)$ consists precisely of the chosen $3d_\nu$ points and finally, $f_{\nu'}(\Sigma_{\nu'})$ is disjoint from $D_\nu$ for $\nu\ne\nu'$. Repeating this for all $\nu$ and ensuring that the $D_\nu$ are pairwise disjoint, define $D\subset X$ to be their disjoint union. By construction, we have the following.
    \begin{enumerate}[(i)]
        \item The set $u^{-1}(D)$ consists of exactly $3d$ distinct non-nodal points of $C$ and, for each irreducible component $C'\subset C$, we have
        \begin{align*}
            \#(u^{-1}(D)\cap C') = 3\langle[\Omega],u_*[C']\rangle.
        \end{align*}
        \item We have $u(C)\cap\partial D = \varnothing$ and the map $u$ is transverse to $D$.
    \end{enumerate}
    These are exactly conditions \eqref{transverse-marking} and \eqref{avoid-boundary} from Definition \ref{good-covering} for $(C,u)$. Since the polyfold $Z$ carries the $H^{3,\delta_0} = W^{3,2,\delta_0}$ topology (for a small $\delta_0>0$) and we have the Sobolev embedding $H^3 = W^{3,2}\subset C^1$ in dimension two, it follows that there is an open neighborhood $U\subset Z$ of $(C,u)$ such that conditions (i)--(ii) above hold for all stable maps $(C',u')\in U$.

    Next, note that the polyfold $Z$ admits sc-smooth cutoff functions since it is locally modeled on (retracts of) sc-Hilbert spaces. (To see this, combine Proposition 5.5 and Theorem 12.6 in \cite{HWZ21} with the example of sc-Hilbert spaces discussed in the paragraph following Definition 5.13 therein.) Thus, we may choose an sc-smooth function $\chi:Z\to [0,1]$ supported in $U$ such that $\chi(C,u)>0$.

    The above argument constructs a triple $(U,D,\chi)$ for any given stable map $(C,u)$. Compactness of $\Mbar_g(X,A;J)$ now implies that good coverings exist.
\end{proof}

Using Lemma \ref{good-coverings-exist}, we fix a good covering $\cU = \{(U_i,D_i,\chi_i)\}_{i\in\Lambda}$.

\begin{lemma}\label{Proper Action on Domain Stable Embedded Curves} For any $N,m,p\ge 1$ and $\ell\ge 0$, the $\cG$-action on 
\begin{align*}
    \Mbar^{\;*,p}_{g,[\ell]}(\bP^N,m)    
\end{align*}
is proper.
\end{lemma}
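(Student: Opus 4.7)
The plan is to verify the sequential characterization of properness: given sequences $s_n \in \Mbar^{*,p}_{g,[\ell]}(\bP^N,m)$ and $g_n \in \cG$ with $s_n \to s$ and $g_n \cdot s_n \to s'$, we must extract a convergent subsequence of $g_n$ in $\cG$. This suffices since $\Mbar^{*,p}_{g,[\ell]}(\bP^N,m)$ is a second countable complex manifold by Corollary \ref{alg-base-spaces-are-smooth-cor}. Writing $s = [C \subset \bP^N, \vec{x}]$ and $s' = [C' \subset \bP^N, \vec{x}']$, observe that the $\cG$-action does not move the marked points, which therefore play no role beyond being carried along passively.

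The main idea is to compactify the group. I would view $\cG = \PGL(N+1,\bC)$ as the complement of a projective hypersurface in the compact projective space $\bP(\End(\bC^{N+1}))$ of non-zero $(N+1) \times (N+1)$ matrices modulo scalars. After passing to a subsequence, $g_n \to [T]$ for some non-zero matrix $T$. If $T$ is invertible, then $[T] \in \cG$ and we are done, so assume for contradiction that $T$ is singular. Set $L := \bP(\ker T)$ and $M := \bP(\im T)$, both proper linear subspaces of $\bP^N$. Then $T$ defines a regular morphism $\bP^N \setminus L \to M$, and $g_n \to T$ uniformly on compact subsets of $\bP^N \setminus L$.

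Next I would exploit the nondegeneracy condition built into $\Mbar^*_g(\bP^N,m)$: the curve $C$ is not contained in any hyperplane, in particular not in $L$, so $C \cap L$ is finite and $C \setminus L$ is dense. Combining uniform convergence $g_n \to T$ on compact subsets of $\bP^N \setminus L$ with the Hausdorff convergence $C_n \to C$ yields pointwise convergence $g_n(C_n) \to T(C \setminus L) \subseteq M$ on a dense open subset of the target. On the other hand, $g_n \cdot s_n \to s'$ forces the curves $g_n(C_n)$ to converge to $C'$ as stable maps, in particular in Hausdorff sense. Thus $C' \subseteq M$, contradicting the fact that $s' \in \Mbar^{*,p}_{g,[\ell]}(\bP^N,m)$ requires $C'$ to span $\bP^N$.

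The main obstacle is the last passage: going from pointwise convergence on the dense open set $C \setminus L$ to the containment $C' \subseteq M$, since the finitely many points of $C \cap L$ could a priori produce escaping mass in the limit. This can be handled rigorously by noting that any Hausdorff limit of $g_n(C_n)$ is contained in the closure of $T(C \setminus L)$, which lies in $M$ (a closed subset of $\bP^N$), because points in the dense open complement of the finite set $C \cap L$ determine the limit cycle. A slightly more refined version of this argument, invoking semicontinuity of the support under Gromov convergence, will make this step airtight.
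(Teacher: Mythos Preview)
Your argument has a genuine gap, and it is not merely a technicality: the claim that the Hausdorff limit of $g_n(C_n)$ is contained in $M=\bP(\im T)$ is false. Take $C=\{xy=z^2\}\subset\bP^2$ and $g_n=\text{diag}(n,1/n,1)\in\PGL(3,\bC)$. Then $g_n(C)=C$ for every $n$, while $g_n\to[T]$ with $T=\text{diag}(1,0,0)$, so $M=\{[1:0:0]\}$. Here the Hausdorff limit $C'=C$ is a nondegenerate conic, certainly not contained in the point $M$. What is happening is exactly the phenomenon you tried to dismiss as the ``main obstacle'': near the single point of $C\cap L$, the sequence $g_n$ rescales and the entire curve reappears as a bubble. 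No amount of semicontinuity of supports will prevent this.

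The deeper issue is that your argument never uses the marked points or the multidegree condition defining $\Mbar^{*,p}_{g,[\ell]}$, and you explicitly say the marked points ``play no role beyond being carried along passively.'' But on the larger space $\Mbar^*_g(\bP^N,m)$ the $\cG$-action is \emph{not} proper (the conic above has noncompact stabilizer $\PGL(2,\bC)\subset\PGL(3,\bC)$), so any argument that ignores the extra structure cannot succeed. In the conic example, what actually fails once you remember the marked points is that $g_n(p_i)$ all collide at $[1:0:0]$, so the Gromov limit acquires a contracted rational tail and the map is no longer an embedding---hence the limit escapes $\Mbar^{*,p}_{g,\ell}$. To repair your approach you would have to track the marked points and exploit stability of the underlying pointed curve; this is possible but requires substantially more than what you have written.

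The paper's proof is entirely different in character: it is algebraic, using the valuative criterion for properness. Given two families over a DVR which are related by an element of $\cG$ over the fraction field, one first invokes uniqueness of stable reduction (this is where stability of the pointed curve enters) to extend the isomorphism of abstract curves over the closed point, and then uses the multidegree condition together with \cite[Proposition 1]{FP97} to extend the isomorphism of line bundles $\cO(1)$, which upon taking global sections recovers the element of $\cG$.
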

\begin{proof}
    Since $\Mbar^{\;*,p}_{g,\ell}(\bP^N,m)\to \Mbar^{\;*,p}_{g,[\ell]}(\bP^N,m)$ is a finite unbranched covering map, it suffices to show that the $\cG$-action on the former is proper. Thus, we must argue that the map
    \begin{align*}
        \cG\times\Mbar_{g,\ell}^{*,p}(\bP^N,m)&\to \Mbar_{g,\ell}^{*,p}(\bP^N,m)\times\Mbar_{g,\ell}^{*,p}(\bP^N,m) \\
        (g,x)&\mapsto(x,g\cdot x)
    \end{align*}
    is proper, i.e., the inverse image of any compact set is compact. Since this is a morphism of quasi-projective schemes, it will be enough to show that it is a proper morphism in the sense of algebraic geometry.\footnote{A proper morphism of quasi-projective schemes is a projective morphism by \cite[\href{https://stacks.math.columbia.edu/tag/0BCL}{Tag 0BCL}]{stacks-project}. Projective morphisms induce proper maps on the $\bC$-points, equipped with the usual topology, since Zariski closed sets are closed in the usual topology and the $\bC$-points of $\bP^r$ form a compact Hausdorff space in the usual topology.}
    
    We may use the Noetherian valuative criterion \cite[\href{https://stacks.math.columbia.edu/tag/0208}{Tag 0208}]{stacks-project} to test properness of the above morphism of schemes. To this end, let $R$ be a discrete valuation ring and $K$ its fraction field. Given any three morphisms 
    \begin{align*}
	\alpha,\alpha'&:\Spec R\to \Mbar^{\;*,p}_{g,\ell}(\bP^N,m)\\
	\gamma&:\Spec K\to\cG
	\end{align*}
	such that $\gamma\cdot\alpha_K = \alpha'_K$, we need to extend $\gamma$ to a morphism $\Spec R\to\cG$. Lift $\gamma$ to an element $\delta\in \GL(N+1,K)$. This lift is unique up to multiplication by an element of $K^\times$. We will lift $\delta$ to $GL(N+1,R)$ up to an element of $K^\times$. 
	
    The morphism $\alpha$ yields a projective flat family
    \begin{align*}
        (\pi_R:\cC_R\subset\bP^N_R\to\Spec R,\sigma_1,\ldots,\sigma_\ell)    
    \end{align*}
    of stable $\ell$-pointed genus $g$ curves with the marked points given by sections $\sigma_i$ of $\pi_R$ for $1\le i\le\ell$. Additionally, the line bundles $(\omega_{\pi_R}(\sigma_1 + \cdots + \sigma_\ell))^{\otimes p}$ and $\cO_{\cC_R}(1)$ have the same degree on each irreducible component of each geometric fibre of $\pi_R$, with $\omega_{\pi_R}$ being the relative dualizing line bundle, and the restriction map
	\begin{align*}
	R^{N+1} = H^0(\bP^N_R,\cO_{\bP^N_R}(1))\to H^0(\cC_R,\cO_{\cC_R}(1))
	\end{align*}
    gives an isomorphism of $R$-modules, where we use \cite[Theorem III.5.1(a)]{Har77} to compute the $H^0$ group on the left explicitly. Similarly, we get
    \begin{align*}
        (\pi_R':\cC'_R\subset\bP^N_R\to\Spec R,\sigma'_1,\ldots,\sigma'_\ell)    
    \end{align*}
    associated to $\alpha'$. The element $\delta$ now yields an isomorphism $\varphi:\cC_K\to\cC'_K$ over $\Spec K$ (mapping $\sigma_i$ to $\sigma'_i$ for $1\le i\le \ell$) of the two families and an isomorphism $\Phi:\cO_{\cC_K}(1)\simeq\varphi^*\cO_{\cC'_K}(1)$. Taking global sections of $\Phi$ recovers $\delta$. 
	
	By the uniqueness of stable reduction \cite[\href{https://stacks.math.columbia.edu/tag/0E97}{Tag 0E97}]{stacks-project}, we obtain a unique extension $\widehat\varphi:\cC_R\to\cC'_R$ of $\varphi$ to an isomorphism of families of stable $\ell$-pointed genus $g$ curves over $\Spec R$. Observe that $\cO_{\cC_R}(1)$ and $\widehat\varphi^*\cO_{\cC'_R}(1)$ have the same degree on each irreducible component of the geometric fibres of $\pi_R$. Since $\Spec K\subset\Spec R$ is dense and $\cO_{\cC_K}(1)\simeq\varphi^*\cO_{\cC'_K}(1)$, we may apply \cite[Proposition 1]{FP97} to get $\cO_{\cC_R}(1)\simeq\widehat\varphi^*\cO_{\cC'_R}(1)$. Taking global sections of this isomorphism now yields an element of $\GL(N+1,R)$ whose restriction to $K$ differs from $\delta$ by multiplication by an element of $K^\times$.
\end{proof}

\begin{lemma}[Reduction of structure group]\label{General reduction of orbits}
	If $M$ is a second countable smooth manifold with a proper action of $\cG$ with finite stabilizers, then there exists a smooth $\cG$-equivariant map $M\to\cG/G$.
\end{lemma}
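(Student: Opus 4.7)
The plan is to glue together local $\cG$-equivariant maps using the $\cG$-equivariant convex combination structure on $\cG/G \cong \cH$ established in Lemma \ref{polar-decomp}.

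First, to obtain the local equivariant maps, I would invoke the slice theorem for proper Lie group actions (due to Palais): since $\cG$ acts properly on $M$ with finite stabilizers, any $x \in M$ admits a $\cG$-invariant open neighborhood equivariantly diffeomorphic to $\cG \times_{\cG_x} S_x$, where $\cG_x$ is the finite stabilizer of $x$ acting smoothly on a slice $S_x$. The group $G = \PU(N+1)$ is a maximal compact subgroup of $\cG = \PGL(N+1,\bC)$, and every compact subgroup of $\cG$ is contained in some maximal compact subgroup, all of which are conjugate (Cartan--Iwasawa--Malcev). In particular, the finite subgroup $\cG_x$ is contained in $g_x G g_x^{-1}$ for some $g_x \in \cG$. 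The assignment $[g,s] \mapsto g g_x G$ then defines a well-defined smooth $\cG$-equivariant map from this neighborhood to $\cG/G$.

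Second, since $M$ is second countable and the $\cG$-action is proper, the quotient $M/\cG$ is Hausdorff, second countable, and locally compact, hence paracompact. I would extract a locally finite (modulo $\cG$) cover $\{N_i\}_{i \in \Lambda}$ of $M$ by $\cG$-invariant neighborhoods of the above form, together with smooth $\cG$-equivariant maps $\varphi_i\cl N_i \to \cG/G$ built as above. I would also choose a $\cG$-invariant smooth partition of unity $\{\rho_i\}_{i \in \Lambda}$ subordinate to $\{N_i\}$; such a partition can be constructed by producing $\cG_x$-invariant smooth bump functions on slices (by averaging over the finite group $\cG_x$), extending them $\cG$-equivariantly to $\cG$-invariant smooth functions on $\cG \cdot S_x$, and patching via paracompactness of $M/\cG$.

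Finally, using the $\cG$-equivariant diffeomorphism $P\cl \cG/G \xrightarrow{\sim} \cH$ from Lemma \ref{polar-decomp}\eqref{metric-coset}, I would define the desired map by
\[
x \longmapsto P^{-1}\bigl(c_\Lambda\bigl(\{\rho_i(x)\}_{i \in \Lambda},\, \{P(\varphi_i(x))\}_{i \in \Lambda}\bigr)\bigr),
\]
with $c_\Lambda$ the convex combination map of Lemma \ref{polar-decomp}\eqref{convex-combs}. The $\cG$-equivariance of $c_\Lambda$ (with $\cG$ acting trivially on the coefficient factor), combined with the $\cG$-equivariance of each $\varphi_i$ and the $\cG$-invariance of each $\rho_i$, yields a smooth $\cG$-equivariant map $M \to \cG/G$. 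The main obstacle will be the local step, i.e., producing the local maps $\varphi_i$; this rests on the conjugacy of finite subgroups of $\cG$ into the maximal compact $G$ together with the slice theorem for proper actions with finite stabilizers, after which the rest of the argument is a routine partition-of-unity gluing.
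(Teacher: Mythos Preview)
Your proposal is correct and follows essentially the same strategy as the paper: construct local $\cG$-equivariant maps to $\cG/G$ via the slice theorem (after arranging the finite stabilizer to lie in $G$), then patch using a $\cG$-invariant partition of unity and the equivariant convex combinations from Lemma \ref{polar-decomp}. The only differences are that the paper proves the slice theorem from scratch (noting in a footnote that one might appeal to Palais but that his definition of properness differs), moves along the orbit rather than invoking Cartan--Iwasawa--Malcev (these are equivalent), and establishes paracompactness of $M/\cG$ via Urysohn metrization rather than local compactness.
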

\begin{proof}
    Write $\fg_\bC$ for the Lie algebra of $\cG$. We first construct such a map in a $\cG$-invariant neighborhood of any point in $M$. Given $x\in M$, replace it by a point in its orbit to assume that its stabilizer $\Gamma$ is contained in $G$. Choose a (locally closed) $\Gamma$-invariant submanifold $S\subset M$ passing through $x$ such that $T_xS$ is a complement to the ($\Gamma$-equivariant, injective) linearized action map $\fg_\bC\to T_xM$. Restricting the action map $\cG\times M\to M$ gives a smooth $\cG$-equivariant map
	\begin{align*}
	\Phi:(\cG\times S)/\Gamma\to M
	\end{align*}
	where $\Gamma$ acts on $\cG\times S$ by $\gamma\cdot(g,s) = (g\gamma^{-1},\gamma s)$. As
    \begin{align*}
        d\Phi_{(1,x)}\cl \fg_\bC\oplus T_xS\to T_xM    
    \end{align*}
    is an isomorphism, we may shrink $S$ to assume that $\Phi$ is a $\cG$-equivariant local diffeomorphism and, in particular, it is locally injective. We claim that $\Phi$ is actually injective after possibly shrinking $S$ further. If not, then we would have sequences $(g_\nu,x_\nu),(g_\nu',x_\nu')\in\cG\times S$ with
    \begin{align*}
        g_\nu x_\nu = g_\nu'x_\nu',\quad g_\nu^{-1}g_\nu'\not\in\Gamma,\quad\text{and}\quad x_\nu,x'_\nu\to x.
    \end{align*}
    By properness of the action, after passing to a subsequence, we have $g_\nu^{-1}g_\nu'\to\gamma$ for some $\gamma\in G$ with $\gamma x = x$, i.e., $\gamma\in\Gamma$. But now, the sequences $(1,x_\nu)\to(1,x)$ and $(g_\nu^{-1}g_\nu',x_\nu')\to(\gamma,x)$ contradict the local injectivity of $\Phi$. We conclude that $\Phi$ is a $\cG$-equivariant diffeomorphism onto its open image in $M$, i.e., $S$ is a local $\Gamma$-invariant slice at $x$ for the $\cG$-action.\footnote{It may be possible to replace this argument by an appeal to \cite[Proposition 2.2.2]{Pal61}, though that paper has a seemingly different definition of proper actions.} Define a $\cG$-equivariant map $\cN:=\Phi((\cG\times S)/\Gamma)\to\cG/G$ by applying $\Phi\inv$ followed by the obvious projection. Thus, we have solved the problem in the $\cG$-invariant neighborhood $\cN$ of $x$. Further, any smooth $\Gamma$-invariant compactly supported cutoff function $\chi$ on $S$ admits a $\cG$-invariant smooth extension $\tilde\chi$ to $\cN$ and can be extended by zero to obtain a $\cG$-invariant cutoff function $\tilde\chi$ on $M$. Here, we are using the fact that $\cG\cdot\text{supp }\chi$ is closed in $M$, which is a consequence of the action being proper. 
	
    Therefore, the statement follows if we can cover $M$ by a locally finite collection of $\cG$-invariant open subsets, each admitting a smooth $\cG$-equivariant map to $\cG/G$ and then use a $\cG$-invariant smooth partition of unity to patch them. Here we use \eqref{convex-combs}, \eqref{metric-coset} of Lemma \ref{polar-decomp} to make sense of convex combinations in $\cG/G$.
	
    To obtain such a locally finite cover, it suffices to show that the quotient space $N = M/\cG$ is metrizable (and therefore paracompact). By properness of the $\cG$-action, we know that $N$ is Hausdorff. Since $M$ is second countable, so is $N$. By the Urysohn metrization theorem, it remains to show that $N$ is a regular space (i.e., given $y\in N$ and a closed subset $C\subset N$ with $y\not\in C$, there exist open neighborhoods in $N$ separating them). Equivalently, given a closed $\cG$-invariant subset $F\subset M$ and a point $x\not\in F$, we need to find $\cG$-invariant disjoint neighborhoods $\cU$, $\cV$ of $x$, $F$ respectively. As before, let $\Gamma$ be the stabilizer of $x$ and $S$ be a local $\Gamma$-invariant slice at $x$ for the $\cG$-action. Then, $x\in S\setminus F$ and thus, we can choose a $\Gamma$-invariant open neighborhood $U$ of $x$ in $S\setminus F$ such that $\overline U$ is compact and is contained in $S\setminus F$. Now, we can take $\cU = \cG\cdot U$ and $\cV = M\setminus(\cG\cdot\overline{U})$. Note that $\cV\subset M$ is open because the action is proper.
\end{proof}

\begin{corollary}[Existence of $\lambda$]\label{Map to Reduce Orbits} 
For any $N,m,p\ge 1$ and $\ell\ge 0$, there exists a smooth $\cG$-equivariant map
\begin{align*}
    \lambda:\Mbar^{\;*,p}_{g,[\ell]}(\bP^N,m)\ra \cG/G.
\end{align*}
\end{corollary}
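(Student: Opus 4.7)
The plan is to deduce this corollary directly from the two preceding results, Lemma \ref{Proper Action on Domain Stable Embedded Curves} and Lemma \ref{General reduction of orbits}, together with the smoothness statement in Corollary \ref{alg-base-spaces-are-smooth-cor}. Concretely, set $M := \Mbar^{\;*,p}_{g,[\ell]}(\bP^N,m)$. I want to verify the three hypotheses of Lemma \ref{General reduction of orbits} for the $\cG$-action on $M$: (a) $M$ is a second countable smooth manifold, (b) the action is proper, and (c) the stabilizers are finite. Granting these, Lemma \ref{General reduction of orbits} produces the desired smooth $\cG$-equivariant map $\lambda\cl M\to\cG/G$.

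Hypothesis (a) is immediate from Corollary \ref{alg-base-spaces-are-smooth-cor}, which asserts that $M$ is a second countable complex (hence smooth) manifold. Hypothesis (b) is the content of Lemma \ref{Proper Action on Domain Stable Embedded Curves}.

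The only thing that still needs a short argument is hypothesis (c), the finiteness of stabilizers. Given a point $[(C,x_1,\ldots,x_\ell,\iota\cl C\hookrightarrow\bP^N)]\in M$, its stabilizer in $\cG$ consists of automorphisms $\gamma\in\cG$ that, for some permutation of the marked points, restrict to a biholomorphism of $\iota(C)$ preserving the (unordered) marking. Such a $\gamma$ is determined by its restriction to $C$ because $\iota$ is an embedding whose image lies in no hyperplane (by the defining condition of $\Mbar^*_g(\bP^N,m)$, so any linear automorphism of $\bP^N$ fixing $\iota(C)$ pointwise is the identity). Thus the stabilizer injects into the automorphism group of the stable pointed curve $(C,x_1,\ldots,x_\ell)$ (up to $S_\ell$), which is finite by stability; here stability holds because $\omega_C(x_1+\cdots+x_\ell)$ has positive degree on each irreducible component of $C$, as noted in Definition \ref{base-space-de}.

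With (a), (b), (c) verified, Lemma \ref{General reduction of orbits} applies and yields the smooth $\cG$-equivariant map $\lambda\cl\Mbar^{\;*,p}_{g,[\ell]}(\bP^N,m)\to\cG/G$, completing the proof. No step here is a substantive obstacle; the real work has already been done in Lemmas \ref{Proper Action on Domain Stable Embedded Curves} and \ref{General reduction of orbits}, and this corollary is essentially an assembly.
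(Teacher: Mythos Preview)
Your proof is correct and follows the same route as the paper, which simply cites Corollary \ref{alg-base-spaces-are-smooth-cor} together with Lemmas \ref{Proper Action on Domain Stable Embedded Curves} and \ref{General reduction of orbits}. You additionally spell out the finite-stabilizer hypothesis of Lemma \ref{General reduction of orbits}, which the paper leaves implicit; your argument for this (injectivity of the stabilizer into $\text{Aut}$ of the stable unordered pointed curve via linear nondegeneracy of $\iota(C)\subset\bP^N$) is correct.
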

\begin{proof}
    This follows from Corollary \ref{alg-base-spaces-are-smooth-cor} combined with Lemmas \ref{Proper Action on Domain Stable Embedded Curves} and \ref{General reduction of orbits}.
\end{proof}

Using Corollary \ref{Map to Reduce Orbits}, we fix a smooth $\cG$-equivariant map $\lambda$ as above. Recall from \eqref{lambda-defined} that $\cU$ and $\lambda$ determine an element $\lambda_\cU(C\subset\bP^N,u)\in\cG/G$ for any framed stable map $(C\subset\bP^N,u)$ satisfying $(\sum_{i\in\Lambda}\chi_i)(C,u)>0$.

\begin{definition}\label{unitary-holo-maps}
    Define $\cM_\bC$ to be the set of framed stable maps $(C\subset\bP^N,u)$ such that $u$ is $J$-holomorphic and we have $\cO_C(1)\simeq\fL_u^{\otimes p}$ as holomorphic line bundles. We equip $\cM_\bC$ with the topology of Gromov convergence. Define $\cM\subset\cM_\bC$ to be the subspace consisting of the framed stable maps $(C\subset\bP^N,u)$ for which $\lambda_\cU(C\subset\bP^N,u)\in\cG/G$ is the identity coset.
\end{definition}

\begin{lemma}\label{projective-frame-bundle}
    Every convergent sequence in $\Mbar_g(X,A;J)$ is the image of some convergent sequence in $\cM_\bC$ under the natural forgetful map
    \begin{align*}
        \cM_\bC\to\Mbar_g(X,A;J).
    \end{align*}
\end{lemma}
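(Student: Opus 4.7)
The plan is to build a framing of the limit and transport it along a local analytic deformation. First I would fix a basis $\cF_\infty = (s_0^\infty,\ldots,s_N^\infty)$ of $H^0(C_\infty,\fL_{u_\infty}^{\otimes p})$; by our choice of $p$ (Lemma \ref{Stabilisation is Very Positive}), the line bundle $\fL_{u_\infty}^{\otimes p}$ is very ample with $H^1=0$, and Riemann--Roch yields $\dim_\bC H^0(C_\infty,\fL_{u_\infty}^{\otimes p}) = N+1$. The associated projective embedding $\iota_\infty = [s_0^\infty:\cdots:s_N^\infty]\colon C_\infty\hookrightarrow\bP^N$ produces a preimage $(C_\infty\subset\bP^N,u_\infty)\in\cM_\bC$ of the limit, and the task is reduced to producing framings $\cF_k$ of $(C_k,u_k)$ whose induced embeddings Gromov-converge to $\iota_\infty$ over the underlying convergence.

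To that end, I would present a Gromov neighborhood of $(C_\infty,u_\infty)$ as a finite-dimensional analytic family. Using the good covering $\cU$ together with the divisors $D_i$ from Definition \ref{good-covering}, the stabilizations $\mathrm{st}_{D_i}(C_k,u_k)$ lie in $\Mbar_{g,[3d]}$ and converge to $\mathrm{st}_{D_i}(C_\infty,u_\infty)$ for each $i$ with $\chi_i(C_\infty,u_\infty) > 0$. Pulling back a local versal family of stable pointed curves over $\Mbar_{g,[3d]}$ rigidifies the domain and produces a flat analytic family $\pi\colon\cC\to T$ of prestable genus-$g$ curves over a pointed analytic base $(T,t_\infty)$ with $\cC|_{t_\infty}=C_\infty$, together with a continuous family of $J$-holomorphic maps $u_t\colon\cC|_t\to X$ in class $A$, such that up to reparametrization $(\cC|_{t_k},u_{t_k})\cong(C_k,u_k)$ for some sequence $t_k\to t_\infty$. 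Over $T$ the line bundle $\fL^{\otimes p}:= \omega_\pi^{\otimes p}\otimes(u^*\cO_X(1))^{\otimes 3p}$ is a continuous family of holomorphic line bundles on the fibres, with Euler characteristic $N+1$ and $H^1=0$ at $t_\infty$.

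The last step is routine cohomology and base change: upper semicontinuity of $h^1$ in analytic families gives $H^1(\cC|_t,\fL_t^{\otimes p})=0$ for $t$ in a neighborhood of $t_\infty$, after which $\pi_*(\fL^{\otimes p})$ is a holomorphic vector bundle of rank $N+1$ whose fibre over $t$ is $H^0(\cC|_t,\fL_t^{\otimes p})$. A local trivialization extending $\cF_\infty$ yields bases $\cF_t=(s_0^t,\ldots,s_N^t)$ depending continuously on $t$; the induced embeddings $\iota_t=[s_0^t:\cdots:s_N^t]\colon \cC|_t\hookrightarrow\bP^N$ define framed stable maps $(\cC|_t\subset\bP^N,u_t)\in\cM_\bC$ that converge to $(C_\infty\subset\bP^N,u_\infty)$ in Gromov topology as $t\to t_\infty$, and specializing to $t=t_k$ produces the required lift of the original convergent sequence.

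The main obstacle is the construction of the local analytic family $\pi\colon\cC\to T$ carrying a continuous family of $J$-holomorphic maps. The stabilization step via $\cU$ avoids the subtleties of automorphisms and reduces this to the well-understood versal deformations of stable pointed curves, but one must still check that the Gromov convergence on $Z$ is compatible with analytic convergence in $T$ and with continuity of $u^*\cO_X(1)$ together with its Cauchy--Riemann operator; this is a standard consequence of the polyfold structure on $Z$ from Definition \ref{polyfolds-defined}, but is the only nontrivial input beyond Riemann--Roch and semicontinuity.
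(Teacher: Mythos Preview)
Your overall strategy coincides with the paper's: lift the limit by choosing a basis of $H^0(C_\infty,\fL_{u_\infty}^{\otimes p})$, then extend to nearby maps via local triviality of the family $\{H^0(C,\fL_u^{\otimes p})\}$. The paper states this more directly: since $H^1(C,\fL_u^{\otimes p})=0$ for every $(C,u)\in\Mbar_g(X,A;J)$ by Lemma~\ref{Stabilisation is Very Positive}, the assignment $(C,u)\mapsto H^0(C,\fL_u^{\otimes p})$ is a continuous complex vector orbi-bundle on $\Mbar_g(X,A;J)$ (by the implicit function theorem and linear gluing, as in \cite[Appendix~B]{P16}), and $\cM_\bC$ is its associated projective frame orbi-bundle; lifting convergent sequences is then immediate from local triviality.

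The gap in your write-up is the justification for that local triviality. You posit a flat analytic family $\pi\colon\cC\to T$ over an analytic base carrying a continuous family of $J$-holomorphic maps $u_t$, and then invoke cohomology and base change to make $\pi_*(\fL^{\otimes p})$ a holomorphic vector bundle. No such $T$ exists in general: $\Mbar_g(X,A;J)$ is only a compact Hausdorff space (not cut out transversely), and if $T$ is instead a chart for $\Mbar_{g,[3d]}$ obtained via your stabilization, there is no family of $J$-holomorphic maps $u_t$ defined over all of $T$ (the forgetful map from stable maps to stable curves is neither injective nor surjective). Moreover, even where the $u_t$ are defined, $\fL_t^{\otimes p}=\omega_{C_t}^{\otimes p}\otimes(u_t^*\cO_X(1))^{\otimes 3p}$ does not vary holomorphically in $t$, since the Cauchy--Riemann operator on $u_t^*\cO_X(1)$ depends on the non-holomorphic datum $u_t$; the algebraic theorems on semicontinuity and base change therefore do not apply. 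The correct replacement is linear gluing for the family of $\delbar$-operators on $\fL_u^{\otimes p}$, which produces continuous local trivializations directly over the topological space $\Mbar_g(X,A;J)$; this is what the paper cites. Your use of the good covering $\cU$ is an unnecessary detour for this lemma, and semicontinuity of $h^1$ is likewise not needed, since Lemma~\ref{Stabilisation is Very Positive} already gives $H^1=0$ at every point of the moduli space.
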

\begin{proof}
    For any stable map $(C,u)$ in $\Mbar_g(X,A;J)$, we have $H^1(C,\fL_u^{\otimes p}) = 0$ by Lemma \ref{Stabilisation is Very Positive}. Therefore, the assignment
    \begin{align*}
        (C,u)\mapsto H^0(C,\fL_u^{\otimes p})
    \end{align*}
    defines a continuous complex vector orbi-bundle on $\Mbar_g(X,A;J)$, by the implicit function theorem and linear gluing analysis (e.g., following \cite[Appendix B]{P16}). The key observation now is that $\cM_\bC$ is the projective frame\footnote{For a $\bC$-vector bundle $E\to B$ of rank $r$, its projective frame bundle is the principal $\PGL(r,\bC)$-bundle obtained as the $\bC^\times$ quotient of its frame bundle $\text{Fr}(E)\to B$.} orbi-bundle associated to this vector orbi-bundle. From this, it is immediate that any convergent sequence in $\Mbar_g(X,A;J)$ can be lifted to a convergent sequence in $\cM_\bC$.
\end{proof}

\begin{lemma}\label{unitary-holo-maps-compact}
    The space $\cM$ from Definition \ref{unitary-holo-maps} is compact.
\end{lemma}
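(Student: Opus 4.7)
The plan is to show that every sequence in $\cM$ has a convergent subsequence with limit in $\cM$. Given a sequence $(C_\nu \subset \bP^N, u_\nu)_\nu$ in $\cM$, its image in $\Mbar_g(X, A; J)$ has, by Gromov compactness, a subsequence converging to some point $[C_\infty, u_\infty]$. Lemma \ref{projective-frame-bundle} then produces a lift of this convergent sequence to a convergent sequence $(C'_\nu \subset \bP^N, u'_\nu) \to (C'_\infty \subset \bP^N, u'_\infty)$ in $\cM_\bC$; this lift may a priori differ from the original sequence by the action of $\cG$.

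To relate the two sequences, observe that both $(C_\nu \subset \bP^N, u_\nu)$ and $(C'_\nu \subset \bP^N, u'_\nu)$ represent the same underlying stable map, so one can pick a biholomorphism $\varphi_\nu \cl C'_\nu \to C_\nu$ with $u_\nu \circ \varphi_\nu = u'_\nu$. The two projective embeddings $\iota_\nu \circ \varphi_\nu$ and $\iota'_\nu$ of $C'_\nu$ into $\bP^N$ pull back $\cO_{\bP^N}(1)$ to (canonically identified copies of) $\fL_{u'_\nu}^{\otimes p}$, and hence correspond to two bases of the same space $H^0(C'_\nu, \fL_{u'_\nu}^{\otimes p})$. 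These bases are related by a unique element $g_\nu \in \cG$, which by construction satisfies
\begin{align*}
g_\nu \cdot (C'_\nu \subset \bP^N, u'_\nu) \;=\; (C_\nu \subset \bP^N, u_\nu).
\end{align*}

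The main step is to show that the sequence $\{g_\nu\}$ has a convergent subsequence in $\cG$. Here we use the constraint that the original sequence lies in $\cM$: by the $\cG$-equivariance of $\lambda_\cU$,
\begin{align*}
g_\nu \cdot \lambda_\cU(C'_\nu \subset \bP^N, u'_\nu) \;=\; \lambda_\cU(C_\nu \subset \bP^N, u_\nu) \;=\; [G] \in \cG/G.
\end{align*}
Using the continuous global section $\cG/G \simeq \cH \to \cG$ provided by Lemma \ref{polar-decomp}\eqref{trivial-principal-bundle} and \eqref{metric-coset}, lift the convergent sequence $\lambda_\cU(C'_\nu \subset \bP^N, u'_\nu)$ to a convergent sequence $h_\nu \to h_\infty$ in $\cG$. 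The displayed identity forces $g_\nu h_\nu \in G$, so $g_\nu = k_\nu h_\nu^{-1}$ for some $k_\nu \in G$; by compactness of $G$, after passing to a further subsequence $k_\nu \to k_\infty \in G$, whence $g_\nu \to k_\infty h_\infty^{-1} =: g_\infty$ in $\cG$. Continuity of the $\cG$-action on $\cM_\bC$ (in the Gromov topology) then gives $(C_\nu \subset \bP^N, u_\nu) \to g_\infty \cdot (C'_\infty \subset \bP^N, u'_\infty)$ in $\cM_\bC$, and continuity of $\lambda_\cU$ ensures the limit still satisfies $\lambda_\cU = [G]$, i.e., lies in $\cM$.
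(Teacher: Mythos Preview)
Your proof is correct and follows essentially the same approach as the paper's: both use Gromov compactness on the underlying stable maps, lift via Lemma \ref{projective-frame-bundle}, and then use the $\cG$-equivariance of $\lambda_\cU$ together with compactness of $G$ to force the connecting elements $g_\nu\in\cG$ to subconverge. The only cosmetic difference is that the paper phrases the last step as properness of the principal bundle $\cG\to\cG/G$, whereas you unpack this explicitly via the global section $\cH\to\cG$ from Lemma \ref{polar-decomp}.
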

\begin{proof}
    The map $\lambda_\cU$ is continuous on $\cM_\bC$ and so, $\cM$ is closed in $\cM_\bC$. Thus, given any sequence $(u_\nu,\iota_\nu):C_\nu\to X\times\bP^N$ of framed stable maps in $\cM$, it will suffice to show that it has a convergent subsequence in $\cM_\bC$. By the compactness of $\Mbar_g(X,A;J)$, after passing to a subsequence, we may assume that we have $(C_\nu,u_\nu)$ converges to some $J$-holomorphic stable map $(C,u)$.

    Using Lemma \ref{projective-frame-bundle}, choose a sequence $(u_\nu,\iota'_\nu):C_\nu\to X\times\bP^N$ in $\cM_\bC$ converging to the point $(u,\iota'):C\to X\times\bP^N$ in $\cM_\bC$. Choose $\varphi_\nu\in\cG = \Aut(\bP^N)$ so that $\varphi_\nu\circ(u_\nu,\iota_\nu) = (u_\nu,\iota_\nu')$. It will suffice to show that $\varphi_\nu$ converges in $\cG$ after passing to a subsequence. For this, note that we have
    \begin{align*}
        \varphi_\nu G = \varphi_\nu\cdot\lambda_\cU(C_\nu,u_\nu,\iota_\nu)=\lambda_\cU(C_\nu,u_\nu,\iota_\nu')\to\lambda_\cU(C,u,\iota')\in\cG/G,
    \end{align*}
    where we have used the fact that $\lambda_\cU$ is $\cG$-equivariant. 
    By Lemma \ref{polar-decomp}\eqref{trivial-principal-bundle},  the projection map $\cG\to\cG/G$ is a principal $G$-bundle and thus proper. Hence, $\varphi_\nu$ has a convergent subsequence.
\end{proof}

\subsection{Choosing the integer $k$}\label{k-choice}

Recall that we fixed $\bC$-linear connections on $T_X$ and $\cO_X(1)$ and note that ${T^*}^{0,1}_{\bP^N}$ carries a $\bC$-linear connection obtained via the Fubini--Study connection and the identification \eqref{antilinear-dual-metric-identification}. As a result, for any framed stable map $C\to X\times\bP^N$, the pullbacks of these vector bundles to $C$ carry holomorphic structures induced by the pullbacks of these connections.

\begin{definition}[$l$-transversality]\label{l-transverse}
    Let $l\ge 1$ be an integer and let $(C\subset\bP^N,u)$ be a framed stable map lying in the space $\cM$ from Definition \ref{unitary-holo-maps}. Define
    \begin{align*}
        E_{(C\subset\bP^N,u),l} := H^0(C,{T^*}^{0,1}_{\bP^N}|_C\otimes u^*T_X\otimes\cO_C(l))\otimes\overline{H^0(\bP^N,\cO_{\bP^N}(l))}.
    \end{align*}
    We say that $(C\subset\bP^N,u)$ is \emph{$l$-transverse} if we have
    \begin{align}\label{constant-rank-obstruction-space-l}
            H^1(C,{T^*}^{0,1}_{\bP^N}|_C\otimes u^*T_X\otimes\cO_C(l)) = 0,
        \end{align}
        and the map
        \begin{align}
        \label{spanning-cokernel-l}
			D(\delbar_J)_u\oplus(\langle\cdot\rangle\circ d\iota_{\tilde C}):\Omega^0(C,u^*T_X)\oplus E_{(C\subset\bP^N,u),l}\to\Omega^{0,1}(\tilde C,\tilde u^*T_X)
		\end{align}
		is surjective. Here, $\iota_{\tilde C}$ and $\tilde u$ are the pullbacks to the normalization $\tilde C\to C$ of the inclusion $C\subset\bP^N$ and the map $u:C\to X$.
\end{definition}

Any $\bC$-linear functional $\hat\varphi:\bC^{N+1}\to\bC$ can be restricted to the fibres of the line bundle $\cO_{\bP^N}(-1)$ to obtain a global section $\varphi$ of $\cO_{\bP^N}(1)$. Identifying $\bC^{N+1}$ with its dual (using the standard basis), the map $\hat\varphi\mapsto\varphi$ yields a $\bC$-linear isomorphism $\bC^{N+1}\simeq H^0(\bP^N,\cO_{\bP^N}(1))$. We refer to this below as the \emph{standard identification}.

\begin{lemma}[Partition of unity]\label{handy-numerical-identity}
    Let $\hat\sigma_0,\ldots,\hat\sigma_N$ be an orthonormal basis of $\bC^{N+1}$ with respect to the standard Hermitian inner product and let $\sigma_0,\ldots,\sigma_N$ be the corresponding elements of $H^0(\bP^N,\cO_{\bP^N}(1))$ under the standard identification $\bC^{N+1} \simeq H^0(\bP^N,\cO_{\bP^N}(1))$. Then, using the standard Hermitian metric on the line bundle $\cO_{\bP^N}(1)$, we have the identity $\sum_{j=0}^N|\sigma_j(x)|^2 = 1$ for all points $x\in\bP^N$.
\end{lemma}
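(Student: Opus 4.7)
The plan is to work pointwise on $\bP^N$ and reduce the identity to Parseval's equality for orthonormal bases of $\bC^{N+1}$.

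Fix $x\in\bP^N$ and let $L_x\subset\bC^{N+1}$ be the corresponding line, i.e.\ the fibre of the tautological bundle $\cO_{\bP^N}(-1)$ over $x$. The standard Hermitian metric on $\cO_{\bP^N}(-1)$ is by definition the restriction of the standard Hermitian inner product on $\bC^{N+1}$, and the metric on $\cO_{\bP^N}(1)$ is its fibrewise dual. Unwinding this, if $v\in L_x$ is any unit vector and $\hat\varphi_j\cl\bC^{N+1}\to\bC$ denotes the linear functional corresponding to $\hat\sigma_j$ under the standard identification $\bC^{N+1}\simeq(\bC^{N+1})^*$, then $|\sigma_j(x)|=|\hat\varphi_j(v)|$ because $\sigma_j(x)\in L_x^*$ is precisely the restriction $\hat\varphi_j|_{L_x}$.

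It therefore suffices to show that $\sum_{j=0}^N|\hat\varphi_j(v)|^2=\|v\|^2$ for every $v\in\bC^{N+1}$. The only point requiring a little care is that the identification $\bC^{N+1}\simeq(\bC^{N+1})^*$ used in the definition of the $\sigma_j$ is complex linear (via the standard basis), whereas the Hermitian inner product gives a conjugate-linear identification. However, if $(\hat\sigma_0,\dots,\hat\sigma_N)$ is an orthonormal basis of $\bC^{N+1}$, then so is its componentwise conjugate $(\overline{\hat\sigma_0},\dots,\overline{\hat\sigma_N})$, and the functional $\hat\varphi_j$ agrees with $w\mapsto\inp{w,\overline{\hat\sigma_j}}$. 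Parseval's identity applied to the orthonormal basis $(\overline{\hat\sigma_j})_{j=0}^N$ then yields $\sum_j|\hat\varphi_j(v)|^2=\|v\|^2=1$, which is the desired statement.

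There is no serious obstacle here: the content is purely the definition of the standard metric on $\cO_{\bP^N}(1)$ together with Parseval. The only place one might slip is the complex-linear vs. conjugate-linear identification mentioned above, which is handled by passing from the basis $(\hat\sigma_j)$ to its conjugate.
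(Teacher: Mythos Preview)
Your argument is correct. The paper itself omits the proof entirely (writing only ``Omitted''), so there is nothing to compare against; your reduction to Parseval's identity via a unit vector in the tautological line, together with the observation that the complex-linear identification $\bC^{N+1}\simeq(\bC^{N+1})^*$ sends an orthonormal basis to the functionals $\langle\,\cdot\,,\overline{\hat\sigma_j}\rangle$ for the conjugate orthonormal basis, is exactly the standard verification and fills in what the authors left to the reader.
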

\begin{proof}
    Omitted.
\end{proof}

\begin{lemma}[Monotonicity of $l$-transversality]\label{monotone-l-transverse}
    Let $l\ge 1$ be an integer and let $(C\subset\bP^N,u)$ be a framed stable map in $\cM$.
    \begin{enumerate}[\normalfont(i)]
        \item If $(C\subset\bP^N,u)$ and $l$ satisfy \eqref{constant-rank-obstruction-space-l}, then the same is true after replacing $l$ by any larger integer.
        \item If $(C\subset\bP^N,u)$ and $l$ are such that \eqref{spanning-cokernel-l} is surjective, then the same is true after replacing $l$ by any larger integer.
    \end{enumerate}
    In particular, if $(C\subset\bP^N,u)$ is $l$-transverse, then it is also $l'$-transverse for all integers $l'\ge l$.
\end{lemma}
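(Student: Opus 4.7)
Both parts reduce to proving the implication $l \leadsto l+1$, after which a trivial induction handles all $l' \ge l$. The two proofs are of very different flavors: part (i) is a sheaf cohomology argument using a multiplication-by-section short exact sequence, while part (ii) is an explicit lifting argument based on the partition-of-unity identity from Lemma \ref{handy-numerical-identity}. Throughout, I abbreviate $\mathcal{F} := {T^*}^{0,1}_{\bP^N}|_C \otimes u^*T_X$.

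For part (i), the key point is that $\iota\cl C\hookrightarrow\bP^N$ lies in $\Mbar_g^*(\bP^N,m)$, so the restriction $H^0(\bP^N,\cO_{\bP^N}(1))\to H^0(C,\cO_C(1))$ is an isomorphism; in particular $C$ is not contained in any hyperplane. I would choose a hyperplane $H\subset\bP^N$ that contains no irreducible component of $C$ (such an $H$ exists because $C$ has finitely many components, each of which is a curve and hence not contained in any proper subvariety of $\bP^N$ of codimension $\ge 1$ other than finitely many). The corresponding section $\sigma\in H^0(C,\cO_C(1))$ is then a non-zerodivisor on $C$, giving a short exact sequence of $\cO_C$-modules
\begin{equation*}
    0 \to \mathcal{F}\otimes\cO_C(l) \xrightarrow{\,\cdot\,\sigma\,} \mathcal{F}\otimes\cO_C(l+1) \to \mathcal{F}\otimes\cO_Z(l+1) \to 0,
\end{equation*}
where $Z = C\cap H$ is zero-dimensional. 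The long exact sequence in cohomology then reads
\begin{equation*}
    H^1(C,\mathcal{F}\otimes\cO_C(l)) \to H^1(C,\mathcal{F}\otimes\cO_C(l+1)) \to H^1(C,\mathcal{F}\otimes\cO_Z(l+1)),
\end{equation*}
whose left term vanishes by hypothesis and whose right term vanishes because the sheaf is supported on a finite set; this gives \eqref{constant-rank-obstruction-space-l} at level $l+1$.

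For part (ii), it suffices to show that each element of $E_{(C\subset\bP^N,u),l}$ admits a lift to $E_{(C\subset\bP^N,u),l+1}$ having the same image under $\langle\cdot\rangle\circ d\iota_{\tilde C}$; then the image of the map at level $l+1$ contains that at level $l$, which is already the whole codomain. Fix an orthonormal basis $\sigma_0,\ldots,\sigma_N$ of $H^0(\bP^N,\cO_{\bP^N}(1))$, and for a pure tensor $\eta = s\otimes\bar\tau$ with $s\in H^0(C,\mathcal{F}\otimes\cO_C(l))$ and $\tau\in H^0(\bP^N,\cO_{\bP^N}(l))$, define
\begin{equation*}
    \eta' := \sum_{j=0}^N (s\cdot\iota^*\sigma_j)\otimes\overline{\tau\cdot\sigma_j} \in E_{(C\subset\bP^N,u),l+1}.
\end{equation*}
A pointwise computation using the contraction formula (the Hermitian pairing on the line bundle fibres) and the identity $\sum_j|\sigma_j(x)|^2 = 1$ from Lemma \ref{handy-numerical-identity} yields $\langle\eta'\rangle = \langle\eta\rangle$ as sections of $\mathcal{F}$ on $C$, and therefore $(\langle\cdot\rangle\circ d\iota_{\tilde C})(\eta') = (\langle\cdot\rangle\circ d\iota_{\tilde C})(\eta)$. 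Extending by linearity covers arbitrary (non-decomposable) $\eta$.

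The last sentence of the lemma, asserting that $l$-transversality propagates to all $l'\ge l$, is immediate from iterating the two steps above. The main subtlety is the choice of the hyperplane $H$ in part (i): it must avoid every irreducible component of the nodal curve $C$ so that multiplication by $\sigma$ is injective on $\mathcal{F}\otimes\cO_C(l)$. Once this is arranged (using finiteness of the components and that $C\not\subset H$ for generic $H$), both parts are essentially formal.
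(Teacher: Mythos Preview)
Your proof is correct and follows essentially the same approach as the paper. Part (ii) is identical to the paper's argument. For part (i), the paper passes through Serre duality to show $\dim H^0$ of the dual bundle is monotone decreasing in $l$ via the inclusion $\cO_C(-l-1)\hookrightarrow\cO_C(-l)$ given by a generic hyperplane section, whereas you use the same generic hyperplane section directly in a short exact sequence and read off the vanishing of $H^1(C,\mathcal{F}\otimes\cO_C(l+1))$ from the long exact sequence; these are dual formulations of the same idea, with your version marginally more direct since it avoids Serre duality.
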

\begin{proof}
    \begin{enumerate}[(i)]
        \item Serre duality gives an identification
        \begin{align*}
            H^1(C,{T^*}^{0,1}_{\bP^N}|_C\otimes u^*T_X\otimes\cO_C(l))^* = H^0(C,({T^*}^{0,1}_{\bP^N}|_C\otimes u^*T_X\otimes\cO_C(l))^*\otimes\omega_C)   
        \end{align*}
        for any integer $l$, where $\omega_C$ is the dualizing line bundle of $C$. The dimension of the right side is monotonically decreasing in $l$ since, by picking a generic hyperplane section $D$ of $C\subset\bP^N$, the sheaf $\cO_C(l+1)^* = \cO_C(-l)\otimes\cO_C(-D)$ may be viewed as a subsheaf of $\cO_C(l)^* = \cO_C(-l)$. Thus, if \eqref{constant-rank-obstruction-space-l} holds for some $l\ge 1$, then it continues to hold even if we replace $l$ by any $l'\ge l$.
        \item Let $\sigma_0,\ldots,\sigma_N$ be as in Lemma \ref{handy-numerical-identity}. For any integer $l\ge 1$, define
        \begin{align*}
            E_{(C\subset\bP^N,u),l}&\to E_{(C\subset\bP^N,u),l+1}\\
            f\otimes\overline g&\mapsto\textstyle\sum_{j=0}^N f_j\otimes \overline g_j,
        \end{align*}
        where $f_j = f\otimes\sigma_j|_C$ and $g_j = g\otimes\sigma_j$. Lemma \ref{handy-numerical-identity} shows that these maps are compatible with the maps $\langle\cdot\rangle\circ d\iota_{\tilde C}: E_{(C\subset\bP^N,u),l}\to\Omega^{0,1}(\tilde C,\tilde u^*T_X)$ as $l\ge 1$ varies. This shows that if \eqref{spanning-cokernel-l} is surjective for some $l\ge 1$, then it remains surjective even if we replace $l$ by any $l'\ge l$.
    \end{enumerate}
    
    \noindent The assertion on $l$-transversality follows by combining (i) and (ii).
\end{proof}

\begin{lemma}[Extension of sections]\label{extension-to-projective-space}
    There exists a positive integer $k_0$ with the following property. For any $C\subset\bP^N$ corresponding to a point of $\Mbar^*_{g}(\bP^N,m)$ and any integer $l\ge k_0$, the restriction map
    \begin{align*}
        H^0(\bP^N,\cO_{\bP^N}(l))\to H^0(C,\cO_C(l))
    \end{align*}
    is surjective.
\end{lemma}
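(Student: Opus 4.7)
The plan is to reduce the statement to a uniform Castelnuovo--Mumford regularity bound, valid for all closed subschemes of $\bP^N$ with a fixed Hilbert polynomial. From the short exact sequence of sheaves on $\bP^N$,
\begin{align*}
    0 \to \cI_C(l) \to \cO_{\bP^N}(l) \to \cO_C(l) \to 0,
\end{align*}
where $\cI_C \subset \cO_{\bP^N}$ is the ideal sheaf of $C$, the long exact sequence in cohomology shows that the restriction map $H^0(\bP^N,\cO_{\bP^N}(l)) \to H^0(C,\cO_C(l))$ is surjective as soon as $H^1(\bP^N,\cI_C(l)) = 0$. Thus it suffices to produce an integer $k_0$, independent of $C$, such that this vanishing holds for every $C$ corresponding to a point of $\Mbar_g^*(\bP^N,m)$ and every $l \geq k_0$.

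Next, I would observe that every such $C$ has the \emph{same} Hilbert polynomial. Indeed, by the defining conditions of $\Mbar_g^*(\bP^N,m)$ from Definition \ref{base-space-de}, the curve $C$ has arithmetic genus $g$, degree $m$, and satisfies $H^1(C,\cO_C(1)) = 0$, which together with Riemann--Roch pin down the Hilbert polynomial as $P(t) = mt + (1-g)$. As noted in the proof of Lemma \ref{alg-base-spaces-are-smooth-qproj}, this realizes $\Mbar_g^*(\bP^N,m)$ as an open subscheme of the Hilbert scheme $\textrm{Hilb}_P(\bP^N)$.

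Finally, I would invoke the classical uniform regularity theorem of Mumford (equivalently, Gotzmann's regularity theorem): there exists an integer $k_0 = k_0(P,N)$ such that for every closed subscheme $C \subset \bP^N$ with Hilbert polynomial $P$, the ideal sheaf $\cI_C$ is $k_0$-regular in the sense of Castelnuovo--Mumford; in particular $H^1(\bP^N,\cI_C(l)) = 0$ for all $l \geq k_0-1$. Combining this with the preceding reduction yields the claimed surjectivity for all $l \geq k_0$.

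The only real subtlety is uniformity across the non-compact moduli space $\Mbar_g^*(\bP^N,m)$; this is precisely supplied by the fact that the regularity bound depends only on the Hilbert polynomial, which is constant along $\Mbar_g^*(\bP^N,m)$. No compactness or semicontinuity argument on $\Mbar_g^*(\bP^N,m)$ itself is needed.
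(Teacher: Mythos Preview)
Your proof is correct and follows essentially the same route as the paper: both reduce to the vanishing $H^1(\bP^N,\cI_C(l))=0$ via the ideal sheaf exact sequence, observe that the Hilbert polynomial $P(t)=mt+1-g$ is fixed, and then invoke a uniform vanishing theorem depending only on $P$. The paper cites \cite[Lemma IX.4.1(i)]{ACG-moduli} for this last step, while you appeal to Mumford's uniform Castelnuovo--Mumford regularity bound (equivalently Gotzmann's theorem); these are the same underlying fact.
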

\begin{proof}
    Let $\cI_{C/\bP^N}$ be the ideal sheaf of $C\subset\bP^N$, defined by the short exact sequence
    \begin{align*}
        0\to\cI_{C/\bP^N}\to\cO_{\bP^N}\to\cO_C\to 0
    \end{align*}
    of coherent sheaves on $\bP^N$. It suffices to show that there is an integer $k_0\ge 1$, depending only on $g$, $m$ and $N$, such that $H^1(C,\cI_{C/\bP^N}(l)) = 0$ for all $l\ge k_0$. Noting that $C\subset\bP^N$ has Hilbert polynomial $P(t) = mt-g+1$, which depends only on $m$ and $g$, this follows from \cite[Lemma IX.4.1(i)]{ACG-moduli}.
\end{proof}

\begin{lemma}[Achieving $l$-transversality for one stable map]\label{achieving-l-transverse}
    Let $(C\subset\bP^N,u)$ be a framed stable map in $\cM$. Then, there is an integer $l\ge 1$ such that $(C\subset\bP^N,u)$ is $l$-transverse.
\end{lemma}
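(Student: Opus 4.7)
The conditions defining $l$-transversality in Definition \ref{l-transverse} split into a cohomological vanishing \eqref{constant-rank-obstruction-space-l} and a surjectivity \eqref{spanning-cokernel-l} statement, and by Lemma \ref{monotone-l-transverse} each persists after enlarging $l$. I would therefore establish the two conditions separately and take $l$ to be the maximum of the resulting thresholds.

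For the vanishing \eqref{constant-rank-obstruction-space-l}, Serre's vanishing theorem applies directly: since $(C\subset\bP^N,u)\in\cM$, the line bundle $\cO_C(1)=\iota^*\cO_{\bP^N}(1)$ is ample on the projective nodal curve $C$, so the twists of the locally free coherent sheaf ${T^*}^{0,1}_{\bP^N}|_C \otimes u^*T_X$ (endowed with the holomorphic structure induced by the chosen $\bC$-linear connections) by $\cO_C(l)$ have vanishing $H^1$ for all $l$ sufficiently large.

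For the surjectivity \eqref{spanning-cokernel-l}, set $K:=\coker D(\delbar_J)_u$, which is finite-dimensional by Fredholm theory, and let $V_l\subset K$ denote the image of $E_{(C\subset\bP^N,u),l}$ under the composition $\langle\cdot\rangle\circ d\iota_{\tilde C}$ followed by the projection to $K$. By Lemma \ref{monotone-l-transverse}(ii) the subspaces $V_l$ form an ascending chain in $K$ and hence stabilize to some $V_\infty\subset K$; the task reduces to showing $V_\infty=K$. Arguing by contradiction, take a nonzero $\eta\in K^*$ vanishing on $V_\infty$; using elliptic regularity for the formal adjoint of $D(\delbar_J)_u$, one can represent $\eta$ via the $L^2$-pairing with a smooth section $\tilde\eta$ on $\tilde C$ of the appropriate dual bundle. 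The orthogonality hypothesis then becomes the family of integral identities
\begin{equation*}
\int_{\tilde C} \tilde\eta\bigl(\langle f, g|_C\rangle \circ d\iota_{\tilde C}\bigr) = 0
\end{equation*}
for every integer $l\geq 1$, every $f\in H^0(C, {T^*}^{0,1}_{\bP^N}|_C \otimes u^*T_X \otimes \cO_C(l))$, and every $g\in H^0(\bP^N, \cO_{\bP^N}(l))$.

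The main obstacle I anticipate is extracting pointwise vanishing of $\tilde\eta$ from this infinite family of integral identities. The natural strategy is a density assertion: as $l$ varies, the smooth sections $\langle f, g|_C\rangle$ should approximate arbitrary $C^\infty$ sections of ${T^*}^{0,1}_{\bP^N}|_C\otimes u^*T_X$ in a topology strong enough that the resulting $L^2$-orthogonality forces $\tilde\eta\circ d\iota_{\tilde C}=0$ on the smooth locus of $C$; since $d\iota_{\tilde C}$ is fibrewise injective at every point of $\tilde C$ (as $\iota$ is an embedding), this gives $\tilde\eta=0$ on the smooth locus, and hence on all of $\tilde C$ by continuity, contradicting $\eta\neq 0$. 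The density itself rests on peak section / Bergman kernel asymptotics for high tensor powers of the positive Hermitian line bundle $\cO_{\bP^N}(1)$ in the style of Tian--Zelditch: one constructs holomorphic sections of $\cO_{\bP^N}(l)$ whose Hermitian norms concentrate near any prescribed point as $l\to\infty$, and pairs them via the Hermitian metric with correspondingly peaked sections of the ample twisted bundle ${T^*}^{0,1}_{\bP^N}|_C \otimes u^*T_X \otimes \cO_C(l)$ on $C$ to produce smooth approximations concentrated at any chosen point of the smooth locus of $C$.
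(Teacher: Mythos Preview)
Your approach is correct and essentially the same as the paper's: both split into Serre vanishing for \eqref{constant-rank-obstruction-space-l} and a peak-section density argument for \eqref{spanning-cokernel-l}, invoking Lemma \ref{monotone-l-transverse} to combine the two thresholds.

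The only difference is in packaging. The paper does not carry out the density argument you sketch; instead it cites \cite[Lemma 6.24 and Proposition 6.26]{AMS21} for the statement that
\[
H^0(C,{T^*}^{0,1}_{\bP^N}|_C\otimes u^*T_X\otimes\cO_C(l))\otimes\overline{H^0(C,\cO_C(l))}\longrightarrow\coker D(\delbar_J)_u
\]
is surjective for arbitrarily large $l$ (note the second factor is sections on $C$, not on $\bP^N$), and then invokes Lemma \ref{extension-to-projective-space} to replace $H^0(C,\cO_C(l))$ by $H^0(\bP^N,\cO_{\bP^N}(l))$. The content of the cited AMS21 results is exactly the Hermitian peak-section concentration you describe. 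Your version, by building peak sections for $g$ directly on $\bP^N$, bypasses the need for the extension lemma, which is a minor streamlining; on the other hand, the paper's route lets it treat the density step as a black box. Both are valid, and the underlying analytic input (Tian--Zelditch-type asymptotics for large twists of a positive line bundle, together with the injectivity of $d\iota_{\tilde C}$) is identical.
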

\begin{proof}
    We will find two separate values of $l$, with one satisfying \eqref{constant-rank-obstruction-space-l} and the other such that \eqref{spanning-cokernel-l} is surjective. By Lemma \ref{monotone-l-transverse}, the larger of these two values will yield $l$-transversality. 
    
    By Serre's vanishing theorem \cite[Theorem III.5.2]{Har77}, all sufficiently large values of $l$ satisfy \eqref{constant-rank-obstruction-space-l}. By \cite[Lemma 6.24 and Proposition 6.26]{AMS21},
    \begin{align*}
        \langle\cdot\rangle\circ d\iota_{\tilde C}: H^0(C,{T^*}^{0,1}_{\bP^N}|_C\otimes u^*T_X\otimes\cO_C(l))\otimes\overline{H^0(C,\cO_{C}(l))}\to\Omega^{0,1}(\tilde C,\tilde u^*T_X)
    \end{align*}
    induces a surjection onto $\coker(D(\delbar_J)_u)$ for arbitrarily large values of $l$. Combining this with Lemma \ref{extension-to-projective-space} gives an integer $l$ such that \eqref{spanning-cokernel-l} is surjective.
\end{proof}

We will now give an interpretation of $l$-transversality as a special case of transversality for pseudo-holomorphic maps (i.e., surjectivity of the linearized Cauchy--Riemann operator). This allows us to conclude (in Lemma \ref{open-l-transverse} below) that $l$-transversality is an open condition on $\cM$.

\begin{definition}\label{E_l-acs}
    For any integer $l\ge 1$, define a $\bC$-vector bundle on $X\times\bP^N$ by
    \begin{align*}
        E_l = T_X\boxtimes({T^*}^{0,1}_{\bP^N}\otimes\cO_{\bP^N}(l)\otimes\overline{H^0(\bP^N,\cO_{\bP^N}(l))}).
    \end{align*}
    Using the $\bC$-linear connections on all of the bundles involved, we obtain a splitting of the tangent bundle of $E_l$ into the horizontal $X$- and $\bP^N$-directions and the vertical $E_l$-direction, i.e., we get an identification
    \begin{align}\label{E_l-splitting}
        T_{E_l} = \pi_l^*(T_{X\times\bP^N}\oplus E_l),
    \end{align}
    where $\pi_l:E_l\to X\times\bP^N$ is the projection. At any point $(x,y,\eta)\in E_l$, with $x\in X$, $y\in\bP^N$ and $\eta\in (E_l)_{(x,y)}$, we use this splitting of the tangent space of $E_l$ to define the endomorphism
    \begin{align*}
        \tilde J_{(x,y,\eta)}^{E_l} := 
        \begin{bmatrix}  
            J_x & 2J_x\langle\eta\rangle & 0\\
            0 & J_y^\bP & 0\\
            0 & 0 & J^{E_l}_{(x,y)}
        \end{bmatrix}.
    \end{align*}
    Here, $J^\bP$ is the standard complex structure on $\bP^N$, $J^{E_l}$ is the multiplication by $i$ on the fibres of $E_l$ and $\langle\eta\rangle:T_{\bP^N,y}\to T_{X,x}$ is the $\bC$-antilinear map obtained from $\eta$ using the standard Hermitian inner product on $\cO_{\bP^N}(l)$.
    
    We readily check that $\tilde J^{E_l}$ squares to $-\text{Id}$ and thus, defines an almost complex structure on $E_l$. It is also clear that the inclusion of $X\times\bP^N$ into $E_l$ as the zero section is pseudo-holomorphic, as is the projection $E_l\to\bP^N$.
\end{definition}

The following statement is analogous to \cite[Lemma 6.18]{AMS21}.

\begin{lemma}[Gromov trick]\label{E_l-gromov-trick}
    Let $C$ be a Riemann surface and let $F:C\to E_l$ be a smooth map. Let $u:C\to X$ and $v:C\to\bP^N$ be the maps with $\pi_l\circ F = (u,v)$ and let $\eta$ be the smooth section of $(u,v)^*E_l$ induced by $F$. 
    
    The map $F$ is $\tilde J^{E_l}$-holomorphic if and only if
    \begin{enumerate}[\normalfont(1)]
        \item the map $v$ is holomorphic,
        \item we have $\eta\in H^0(C,u^*T_X\otimes v^*({T^*}^{0,1}_{\bP^N}\otimes\cO_{\bP^N}(l)))\otimes\overline{H^0(\bP^N,\cO_{\bP^N}(l))}$ and
        \item the equation $\delbar_Ju + \langle\eta\rangle\circ dv = 0$ is satisfied.
    \end{enumerate}
\end{lemma}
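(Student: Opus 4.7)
The plan is a direct computation using the connection splitting \eqref{E_l-splitting}.

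For any $z\in C$ and $\xi\in T_zC$, I would expand $dF(\xi)$ in the splitting $T_{E_l} = \pi_l^*(T_{X\times\bP^N}\oplus E_l)$. By the standard fact that a connection splits the tangent bundle of a vector bundle into horizontal and vertical parts, and that the vertical component of the differential of a section equals its covariant derivative, one has
\begin{align*}
dF(\xi) = \bigl(du(\xi),\, dv(\xi),\, (\nabla\eta)(\xi)\bigr),
\end{align*}
where $\nabla$ denotes the pullback to $C$ of the tensor-product $\bC$-linear connection on $E_l$. Applying the block-matrix form of $\tilde J^{E_l}$ in Definition~\ref{E_l-acs} to both sides of the $\tilde J^{E_l}$-holomorphicity equation $\tilde J^{E_l}\circ dF = dF\circ j$ and comparing components of the splitting yields the three equations
\begin{align*}
J\circ du + 2J\langle\eta\rangle\circ dv &= du\circ j, \\
J^{\bP}\circ dv &= dv\circ j, \\
i\cdot\nabla\eta &= \nabla\eta\circ j.
\end{align*}

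The second is exactly (1), holomorphicity of $v$. Assuming it, the bundle $v^*({T^*}^{0,1}_{\bP^N}\otimes\cO_{\bP^N}(l))$ inherits its canonical holomorphic structure, and the $(0,1)$-part of the pullback connection on $(u,v)^*E_l$ agrees with the Dolbeault operator of this holomorphic bundle, where $u^*T_X$ is endowed with the holomorphic structure $(u^*\nabla^X)^{0,1}$. The third equation thus says $\delbar\eta=0$, i.e., $\eta\in H^0(C,(u,v)^*E_l)$; since the factor $\overline{H^0(\bP^N,\cO_{\bP^N}(l))}$ is pulled back from a point, this holomorphy reduces to the tensor form asserted in (2). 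Finally, from the first equation, applying $-J$ on the left (using $J^2 = -\id$) and the $\bC$-antilinearity identity $\langle\eta\rangle\circ J^{\bP} = -J\circ\langle\eta\rangle$ yields
\begin{align*}
du + J\circ du\circ j = -2\langle\eta\rangle\circ dv,
\end{align*}
which is exactly $2(\delbar_Ju + \langle\eta\rangle\circ dv)=0$, i.e., condition (3).

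For the converse, the same three steps run in reverse: (1) gives the second equation, (2) combined with (1) gives the third (since the pullback connection's $(0,1)$-part is the Dolbeault operator of the pulled-back holomorphic bundle), and (3) combined with (1) gives the first---one only needs to note that, given (1), the right-hand side $-2\langle\eta\rangle\circ dv$ is automatically of type $(0,1)$ (by antilinearity of $\langle\eta\rangle$ and $dv$ being of type $(1,0)$), so that the $(1,0)$-part of the first equation holds trivially and its $(0,1)$-part is exactly (3). The only foreseeable obstacle is bookkeeping: the various $\bC$-antilinear identifications (the Hermitian metric on $\cO_{\bP^N}(l)$, the identification ${T^*}^{0,1}_{\bP^N}\simeq T_{\bP^N}$ from the Fubini--Study metric, and the sign conventions for the Dolbeault operator) must all be applied consistently, after which the matching of terms is forced.
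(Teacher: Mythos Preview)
Your proposal is correct and follows essentially the same route as the paper: both decompose $dF$ via the connection splitting as $(du,\,dv,\,\nabla\eta)$ and read off the three conditions from the block form of $\tilde J^{E_l}$. The only cosmetic difference is that the paper packages the computation as $\delbar_{\tilde J^{E_l}}F = \tfrac12(dF + \tilde J^{E_l}\circ dF\circ j_C)$ and identifies its three components directly, whereas you write the equivalent equation $\tilde J^{E_l}\circ dF = dF\circ j$ and manipulate each row; note that your invocation of the antilinearity identity in the forward direction is actually unnecessary (applying $-J$ alone already yields $du + J\,du\,j = -2\langle\eta\rangle\,dv$), though it does no harm.
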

\begin{proof}
    Let $\nabla$ be the pullback connection on $(u,v)^*E_l$ and let $j_C$ be the almost complex structure on $C$. With respect to the splitting \eqref{E_l-splitting} of $T_{E_l}$, we have
    \begin{align}\label{delbar-E_l}
        dF = \begin{bmatrix}
            du \\ dv \\ \nabla\eta
        \end{bmatrix}\,\Rightarrow\,
        \delbar_{\tilde J^{E_l}}F = {\textstyle\frac12}(dF + \tilde J^{E_l}\circ dF\circ j_C) = 
        \begin{bmatrix}
            \delbar_J u + J\langle\eta\rangle\, dv\, j_C \\ \delbar_{J^\bP}v \\ \nabla^{0,1}\eta    
        \end{bmatrix},
    \end{align}
    from which the claim is immediate.
\end{proof}

\begin{lemma}[Transversality in $E_l$ is $l$-transversality]\label{l-transverse-is-transverse}
    Let $(u,\iota):C\to X\times\bP^N$ be a framed stable map in $\cM$. Regard $(u,\iota)$ as a $\tilde J^{E_l}$-holomorphic map $F:C\to E_l$ via the inclusion of $X\times\bP^N$ into $E_l$ as the zero section. 
    
    Then, the linearized Cauchy--Riemann operator $D(\delbar_{\tilde J^{E_l}})_F$ is surjective if and only if the framed stable map is $(u,\iota):C\to X\times\bP^N$ is $l$-transverse.
\end{lemma}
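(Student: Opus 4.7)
The plan is to compute the linearization of $\delbar_{\tilde J^{E_l}}$ at $F = (u, \iota, 0)$ directly and observe that it has a block structure whose surjectivity decouples into exactly the two conditions defining $l$-transversality (plus one condition that turns out to be automatic).

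First, I would linearize the formula \eqref{delbar-E_l}. Writing a variation as $(\xi, \zeta, \rho) \in \Omega^0(C, u^*T_X) \oplus \Omega^0(C, \iota^*T_{\bP^N}) \oplus \Omega^0(C, (u,\iota)^*E_l)$ and observing that the off-diagonal coupling term $2J_x\langle\eta\rangle$ in $\tilde J^{E_l}$ vanishes along $\{\eta = 0\}$, I would obtain
\begin{align*}
D(\delbar_{\tilde J^{E_l}})_F(\xi, \zeta, \rho) = \bigl( D(\delbar_J)_u \xi + \langle \rho \rangle \circ d\iota_{\tilde C},\ D(\delbar_{J^\bP})_\iota \zeta,\ \nabla^{0,1}\rho \bigr),
\end{align*}
where $\nabla^{0,1}$ is the Cauchy--Riemann operator on $(u,\iota)^*E_l$ induced by the fixed $\bC$-linear connections on $T_X$, $T^{*\,0,1}_{\bP^N}$, and $\cO_{\bP^N}(l)$. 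The simplification $J \circ \langle\rho\rangle \circ d\iota \circ j_C = \langle \rho\rangle \circ d\iota$ uses the $\bC$-antilinearity of $\langle \rho\rangle$ combined with the $\bC$-linearity of $d\iota$. The linearization of $\delbar_J u$ in $\zeta$ and of $\nabla^{0,1}\eta$ in $(\xi,\zeta)$ both vanish because $\eta$ is being evaluated at $0$ where all the relevant coefficients vanish.

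Second, I would read off the surjectivity criterion from this block form. The operator is surjective if and only if three conditions hold: (a) $D(\delbar_{J^\bP})_\iota$ is surjective; (b) $\nabla^{0,1}$ on $(u,\iota)^*E_l$ is surjective; (c) the restricted joint map $(\xi, \rho') \mapsto D(\delbar_J)_u\xi + \langle\rho'\rangle \circ d\iota_{\tilde C}$ on $\Omega^0(C, u^*T_X) \oplus \ker \nabla^{0,1}$ is surjective onto $\Omega^{0,1}(\tilde C, \tilde u^*T_X)$. Indeed, given any target in the codomain, one solves the second component using (a), then solves the third component for some $\rho_0$ using (b), then corrects $\rho_0$ by an element of $\ker\nabla^{0,1}$ and chooses $\xi$ to solve the first component using (c); conversely, restricting to targets with only one component nonzero extracts each of (a)--(c).

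Third, I would identify the pullback bundle as $(u,\iota)^*E_l = u^*T_X \otimes T^{*\,0,1}_{\bP^N}|_C \otimes \cO_C(l) \otimes \overline{H^0(\bP^N, \cO_{\bP^N}(l))}$, so that $\ker \nabla^{0,1} = H^0(C, (u,\iota)^*E_l) = E_{(C\subset\bP^N, u), l}$. Consequently, (b) is precisely the vanishing \eqref{constant-rank-obstruction-space-l} (since $\overline{H^0(\bP^N, \cO_{\bP^N}(l))}$ is a nonzero constant vector space factor), and (c) is precisely the surjectivity of \eqref{spanning-cokernel-l}; together they constitute $l$-transversality. Finally, (a) is automatic: since $(C\subset\bP^N)$ belongs to $\Mbar_g^*(\bP^N, m)$ we have $H^1(C, \cO_C(1)) = 0$, and restricting the Euler sequence $0 \to \cO_{\bP^N} \to \cO_{\bP^N}(1)^{\oplus(N+1)} \to T_{\bP^N} \to 0$ to $C$ gives $H^1(C, \iota^*T_{\bP^N}) = 0$, yielding surjectivity of $D(\delbar_{J^\bP})_\iota$. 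The only non-routine ingredient is verifying the block-triangular form of the linearization; once this is in hand, the rest is elementary, so I do not anticipate a significant obstacle.
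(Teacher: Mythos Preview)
Your proposal is correct and follows essentially the same approach as the paper: both compute the linearization of \eqref{delbar-E_l} at $\eta=0$ to obtain the same block upper-triangular form, note that the $\bP^N$-component $D(\delbar_{J^\bP})_\iota$ is automatically surjective (via the Euler sequence argument, which the paper invokes through Lemma~\ref{alg-base-spaces-are-smooth-qproj}), and then identify surjectivity of the remaining $2\times 2$ upper-triangular block with the two conditions \eqref{constant-rank-obstruction-space-l} and \eqref{spanning-cokernel-l} defining $l$-transversality.
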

\begin{proof}
    Using \eqref{delbar-E_l}, we will first compute the linearization
    \begin{align*}
        D(\delbar_{\tilde J^{E_l}})_F : \Omega^0(C,F^*T_{E_l})\to\Omega^{0,1}(\tilde C,\tilde F^*T_{E_l}),
    \end{align*}
    where $\tilde F$ is the pullback of $F$ to the normalization $\tilde C\to C$. Similarly, let $\iota_{\tilde C}$ (resp. $\tilde u$) be the pullbacks of $\iota$ (resp. $u$) to $\tilde C$. Since $F$ maps into the zero section of $E_l$, we have a natural splitting $F^*T_{E_l} = u^*T_X\oplus\iota^*T_{\bP^N}\oplus(u,\iota)^*E_l$ into horizontal and vertical sub-bundles (defined without reference to the connection). This splitting coincides with the one induced by \eqref{E_l-splitting} and so, we need not distinguish between them in our computation below.
    
    In terms of the notation of \eqref{delbar-E_l}, we have $\eta = 0$ and $v = \iota$. Differentiating the expression in \eqref{delbar-E_l} in the $u$, $v$ and $\eta$ directions and writing the result as a matrix using the splitting $F^*T_{E_l} = u^*T_X\oplus\iota^*T_{\bP^N}\oplus(u,\iota)^*E_l$, we get
    \begin{align}\label{E_l-linearization}
        D(\delbar_{\tilde J^{E_l}})_F = \begin{bmatrix}
            D(\delbar_J)_u & 0 & \langle\cdot\rangle\,d\iota_{\tilde C} \\
            0 & D(\delbar_{J^\bP})_\iota & 0 \\
            0 & 0 & \nabla^{0,1}
        \end{bmatrix}.
    \end{align}
    For the purpose of verifying this computation, we may reduce to the case where $C$ is smooth, i.e., $\tilde C = C$. The following points now justify the computation \eqref{E_l-linearization}. 
    \begin{enumerate}[(a)]
        \item The bottom row is the linearization of $(u,v,\eta)\mapsto\nabla^{0,1}\eta$. Since we are computing it at a point with $\eta = 0$, the dependence of $\nabla^{0,1}$ on $(u,v)$ does not make an appearance in the linearization. Moreover, $\nabla^{0,1}$ is linear in the $\eta$ direction.
        \item The middle row records the linearization of $(u,v,\eta)\mapsto\delbar_{J^\bP}v$, which is manifestly independent of $(u,\eta)$. This explains the two entries which are zero. We also note that $D(\delbar_{J^\bP})_\iota$ is the standard Cauchy--Riemann operator on the holomorphic vector bundle $\iota^*T_{\bP^N}$.
        \item  The top row records the linearization of $(u,v,\eta)\mapsto\delbar_Ju + J\langle\eta\rangle\,dv\,j_C$ at a point with $\eta = 0$. The $\delbar_Ju$ term is clearly independent of $(v,\eta)$ and gives rise to the top left matrix entry $D(\delbar_J)_u$. The $(u,v)$ dependence in the term $J\langle\eta\rangle\,dv\,j_C$ makes no appearance in the linearization, since we are computing it at a point where $\eta = 0$. Moreover, by linearity of this term in the $\eta$ direction, we find that its linearization in this direction is
        \begin{align*}
            \eta'\mapsto J\langle\eta'\rangle\,d\iota\,j_C = J\langle\eta'\rangle\,J^{\bP}\,d\iota = -J^2\langle\eta'\rangle\,d\iota = \langle\eta'\rangle\,d\iota,      
        \end{align*}
        where we have used the holomorphicity of $\iota$ and the $\bC$-antilinearity of $\langle\eta'\rangle$.
    \end{enumerate}
     
    Now, let us investigate the surjectivity of \eqref{E_l-linearization}. From Lemma \ref{Regular Embedding via Framing} and the proof of smoothness in Lemma \ref{alg-base-spaces-are-smooth-qproj}, we already know that $D(\delbar_{J^\bP})_\iota$ is surjective. Thus, the surjectivity of \eqref{E_l-linearization} is equivalent to the surjectivity of the Cauchy--Riemann type operator on $u^*T_X\oplus(u,\iota)^*E_l$ given by the matrix 
    \begin{align*}
        L = \begin{bmatrix}
            D(\delbar_J)_u & \langle\cdot\rangle\,d\iota_{\tilde C} \\
            0 & \nabla^{0,1}
        \end{bmatrix}.
    \end{align*}
    As $L$ is upper triangular, we deduce that it is surjective if and only if $\nabla^{0,1}$ is surjective \emph{and} the restriction of $\langle\cdot\rangle\,d\iota_{\tilde C}$ to $\ker(\nabla^{0,1})$ maps onto $\coker (D(\delbar_J)_u)$. Observe that the surjectivity of $\nabla^{0,1}$ is the same as \eqref{constant-rank-obstruction-space-l}. Similarly, the surjectivity of the map $\ker(\nabla^{0,1})\to\coker(D(\delbar_J)_u)$ induced by $\langle\cdot\rangle\,d\iota_{\tilde C}$ is equivalent to the surjectivity of \eqref{spanning-cokernel-l}. We conclude that $D(\delbar_{\tilde J^{E_l}})_F$ is surjective if and only if $(u,\iota):C\to X\times\bP^N$ is $l$-transverse. 
\end{proof}

\begin{lemma}[Openness of $l$-transversality]\label{open-l-transverse}
    Let $l\ge 1$ be an integer. Then, the subset of $\cM$ consisting of $l$-transverse framed stable maps is open. 
\end{lemma}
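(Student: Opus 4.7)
The plan is to deduce this from the openness (in the Gromov topology) of the unobstructedness condition for pseudo-holomorphic stable maps, using the reinterpretation of $l$-transversality provided by Lemma \ref{l-transverse-is-transverse}.

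Concretely, given $(C\subset\bP^N,u)\in\cM$, let $F=F_{(u,\iota)}\cl C\to E_l$ denote the composition of $(u,\iota)\cl C\to X\times\bP^N$ with the inclusion of $X\times\bP^N$ into $E_l$ as the zero section. By Definition \ref{E_l-acs}, the map $F$ is $\tilde J^{E_l}$-holomorphic, and by Lemma \ref{l-transverse-is-transverse}, $(C\subset\bP^N,u)$ is $l$-transverse if and only if the linearization $D(\delbar_{\tilde J^{E_l}})_F$ is surjective. The assignment $(C\subset\bP^N,u)\mapsto F_{(u,\iota)}$ defines a continuous map from $\cM$ (with its Gromov topology) into the polyfold $Z_{A',g}(E_l)$ of $\Omega$-stable genus $g$ maps into $E_l$ in the appropriate class $A'\in H_2(E_l,\bZ)$, because Gromov convergence of the $(C_\nu\subset\bP^N,u_\nu)$ in $\cM$ forces Gromov convergence of the composite maps to $E_l$.

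Therefore, the statement reduces to showing that the set of $\tilde J^{E_l}$-holomorphic stable maps $F$ for which $D(\delbar_{\tilde J^{E_l}})_F$ is surjective is an open subset of $\Mbar_g(E_l,A';\tilde J^{E_l})$. This is a standard fact: it follows, for instance, from the linear gluing estimates which produce a uniformly bounded approximate right inverse for the linearized Cauchy--Riemann operator along any sequence of stable maps Gromov-converging to a stable map with surjective linearization (cf.\ \cite[Appendix B]{P16}); alternatively, it can be read off from the sc-Fredholm set-up of \cite{HWZ17} since the set where an sc-Fredholm section has a surjective linearization is sc-open on the ambient polyfold.

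Pulling this openness back along the continuous map $\cM\to Z_{A',g}(E_l)$ described above yields an open neighborhood of $(C\subset\bP^N,u)$ in $\cM$ all of whose points are $l$-transverse. The main technical point is thus the openness of surjectivity of the linearized Cauchy--Riemann operator under Gromov convergence; once this is invoked, the rest of the argument is formal.
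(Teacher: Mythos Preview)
Your argument is correct and follows essentially the same route as the paper: you embed $\cM$ into the space of $\tilde J^{E_l}$-holomorphic curves via Lemma \ref{l-transverse-is-transverse} and then invoke the openness of surjectivity of the linearized Cauchy--Riemann operator in the Gromov topology. The paper cites \cite[Proposition 9.2.6(i)]{P16} for this last fact rather than the linear gluing estimates of Appendix~B or the sc-Fredholm setup, but the content is the same.
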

\begin{proof}
    We have an embedding of $\cM$ inside the moduli space of $\tilde J^{E_l}$-holomorphic curves in $E_l$ using the embedding of $X\times\bP^N$ into $E_l$ as the zero section. 
    
    By
    \cite[Proposition 9.2.6(i)]{P16}, having surjective linearized Cauchy--Riemann operator is an open condition with respect to the Gromov topology on pseudo-holomorphic curves. Combining this with Lemma \ref{l-transverse-is-transverse}, we conclude that the subset of $\cM$ consisting of $l$-transverse framed stable maps is open.
\end{proof}

\begin{lemma}[Existence of $k$]\label{k-existence}
    There exists an integer $k\ge 1$ such that all framed stable maps in $\cM$ are $k$-transverse.
\end{lemma}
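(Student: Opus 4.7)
The plan is a straightforward compactness argument that assembles the three preceding lemmas: pointwise existence (Lemma \ref{achieving-l-transverse}), monotonicity (Lemma \ref{monotone-l-transverse}) and openness (Lemma \ref{open-l-transverse}) of $l$-transversality, together with the compactness of $\cM$ (Lemma \ref{unitary-holo-maps-compact}).

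First, I would invoke Lemma \ref{achieving-l-transverse} to produce, for every point $x = (C\subset\bP^N,u) \in \cM$, an integer $l_x\ge 1$ such that $x$ is $l_x$-transverse. Next, by Lemma \ref{open-l-transverse}, the set of $l_x$-transverse framed stable maps is an open neighborhood $U_x\subset\cM$ of $x$. Since $\cM$ is compact, I would extract a finite subcover $U_{x_1},\ldots,U_{x_r}$ of $\cM$ and set
\[
    k := \max\{l_{x_1},\ldots,l_{x_r}\}.
\]

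To conclude, let $y\in\cM$ be arbitrary. Then $y\in U_{x_i}$ for some $i$, so $y$ is $l_{x_i}$-transverse. Since $k\ge l_{x_i}$, Lemma \ref{monotone-l-transverse} implies that $y$ is also $k$-transverse. As $y$ was arbitrary, all framed stable maps in $\cM$ are $k$-transverse.

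There is no genuine obstacle here, since the substantive analytic content (realizing $l$-transversality as standard transversality for the auxiliary almost complex structure $\tilde J^{E_l}$ on $E_l$, and hence inheriting openness in the Gromov topology) has already been extracted in Lemmas \ref{l-transverse-is-transverse} and \ref{open-l-transverse}. The only point that deserves a moment of care is that $\cM$ is what is being covered: one should verify that the topology on $\cM$ inherited from $\cM_\bC$ (and ultimately from the polyfold $Z$) agrees with the Gromov topology used in the proof of openness, which is clear from the definition of $\cM_\bC$ in Definition \ref{unitary-holo-maps}.
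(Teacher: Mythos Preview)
Your proof is correct and uses exactly the same ingredients as the paper's argument (Lemmas \ref{achieving-l-transverse}, \ref{monotone-l-transverse}, \ref{open-l-transverse}, and \ref{unitary-holo-maps-compact}). The only cosmetic difference is that the paper packages the compactness step via the upper semicontinuous function $l_{\min}\cl\cM\to\bZ_{\ge 1}$ and takes $k$ to be its maximum, whereas you extract a finite subcover directly; these are equivalent formulations of the same argument.
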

\begin{proof}
    Define the function $l_\text{min}:\cM\to\bZ_{\ge 1}$ by
    \begin{align*}
        l_\text{min}(C\subset\bP^N,u) = \min\{l\in\bZ_{\ge 1}:(C\subset\bP^N,u)\text{ is }l\text{-transverse}\},
    \end{align*}
    which is well-defined by Lemma \ref{achieving-l-transverse}. By definition, for $l_0\in\bZ_{\ge 1}$, we have $l_\text{min}(C\subset\bP^N,u)<l_0$ if and only if $(C\subset\bP^N,u)$ is $l$-transverse for some integer $l$ satisfying $1\le l< l_0$. Using Lemma \ref{open-l-transverse}, this shows that $l_\text{min}$ is an upper semicontinuous function and thus, it achieves a maximum value $k\in\bZ_{\ge 1}$ on the compact space $\cM$ (Lemma \ref{unitary-holo-maps-compact}). 
    
    To complete the proof, we need to show that any $(C\subset\bP^N,u)$ in $\cM$ is $k$-transverse. Since we have $l_\text{min}(C\subset\bP^N,u)\le k$, we can find some integer $l$ with $1\le l\le k$ such that $(C\subset\bP^N,u)$ is $l$-transverse. Lemma \ref{monotone-l-transverse} now shows that $(C\subset\bP^N,u)$ must be $k$-transverse.
\end{proof}

Fix $k\ge 1$ to smallest integer satisfying the conclusion of Lemma \ref{k-existence}. Observe that our choice of $k$ is such that condition \eqref{killing-obstructions} of Definition \ref{unobstructed-aux} is satisfied.

\subsection{Completing the proof of Theorem \ref{global-kuranishi-existence}\eqref{achieveing-transversality}}

The preceding subsections show how to choose an unobstructed auxiliary datum $(\nabla^X,\cO_X(1),p,\cU,\lambda,k)$. 

In more detail, it is always possible to find a $\bC$-linear connection $\nabla^X$ on $T_X$ and a polarization $\cO_X(1)\to X$, the latter following from Lemma \ref{Suitable Line Bundle on Target}. Having fixed $\nabla^X$ and $\cO_X(1)$, Lemmas \ref{Stabilisation is Very Positive}, \ref{good-coverings-exist}, \ref{Map to Reduce Orbits} and \ref{k-existence} respectively show how to choose $p$, $\cU$, $\lambda$ and $k$ so that conditions \eqref{very-ample-acyclic-line-bundle} and \eqref{killing-obstructions} of Definition \ref{unobstructed-aux} are satisfied for these choices. 
This completes the proof of Theorem \ref{global-kuranishi-existence}\eqref{achieveing-transversality}.
\section{Global Kuranishi chart}\label{transversality-implies-smoothness-proof}

In this section, we will prove Theorem \ref{global-kuranishi-existence}\eqref{transversality-implies-smoothness}. To break up the proof, let us first state the properties of our global Kuranishi chart explicitly. Their proofs will occupy the following subsections.

\begin{proposition}\label{gkc-properties-explicit} Suppose $(\nabla^X,\cO_X(1),p,\cU,\lambda,k)$ is an unobstructed auxiliary datum. The output $\cK_n = (G_n,\cT_n,\cE_n,\fs_n)$ of Construction \ref{high-level-description-defined} has the following properties.
    \begin{enumerate}[\normalfont(a)]
			\item\label{thickening-is-smooth} The projection $\cT_n\to\Mbar^*_{g,n}(\bP^N,m)$ carries a natural rel--$C^\infty$ structure of the expected dimension in a $G$-invariant open neighborhood $\cT^{\normalfont\text{reg}}_n$ of $\fs_n^{-1}(0)$.
			\item\label{bundle-section-is-smooth} $\cE^{\normalfont\text{reg}}_n := \cE_n|_{\cT^{\normalfont\text{reg}}_n}$ over $\cT^{\normalfont\text{reg}}_n\to\Mbar^*_{g,n}(\bP^N,m)$ carries a natural rel--$C^\infty$ vector bundle structure of the expected rank such that the section $\fs_n$ is of class rel--$C^\infty$ over $\normalfont\cT_n^\text{reg}$.
			\item\label{action-is-smooth} The $G$-action on $\cT^{\normalfont\text{reg}}_n$ and $\cE^{\normalfont\text{reg}}_n$ is rel--$C^\infty$ and the stabilizer of every point of $\cT_n$ in a neighborhood of $\fs_n^{-1}(0)$ is finite. In particular, the $G$-actions are locally linear in the sense of {\normalfont\cite[Definition 4.20]{AMS21}}.
			\item\label{footprint} The zero locus $\fs_n\inv(0)$ is compact and the forgetful map $$\fs_n^{-1}(0)/G\to\Mbar_{g,n}(X,A;J)$$ is a homeomorphism.
			\item\label{orientation-complex-structure} The virtual vector bundle given by
			\begin{align*}
			    T_{\cT^{\normalfont\text{reg}}_n/\Mbar^*_{g,n} (\bP^N,m)} - (\cE_n^{\normalfont\text{reg}}\oplus \underline{\fg})
			\end{align*}
			has a natural stably complex (virtual) vector bundle lift in a neighborhood of the zero locus $\fs_n^{-1}(0)$, where $T_{\cT^{\normalfont\text{reg}}_n/\Mbar^*_{g,n} (\bP^N,m)}$ is the vertical tangent bundle and $\underline\fg$ is the trivial bundle with fibre $\fg = \normalfont\text{Lie}(G)$.
		\end{enumerate}
  In summary, 
$$\cK_n^{\reg} := (G,\cT_n^{\reg}/\Mbar_{g,n}^*(\bP^N,m),\cE_n^{\reg},\fs|_{\cT_n^{\reg}})$$ 
is a stably complex global Kuranishi chart for $\Mbar_{g,n}(X,A;J)$ of the correct virtual dimension.
\end{proposition}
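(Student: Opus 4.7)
The plan is to combine the Gromov trick of Definition \ref{E_l-acs} (which converts the perturbed equation $\delbar_J u + \langle\eta\rangle\circ d\iota_{\tilde C} = 0$ into honest $\tilde J^{E_k}$-holomorphicity of $F = (u,\iota,\eta)\cl C\to E_k$) with the rel--$C^\infty$ representability results of \textsection\ref{subsec:rel-smooth-review}, applied relative to the universal family $\pi\cl\cC_n\to\Mbar_{g,n}^*(\bP^N,m)$. The $\alpha$-variable, which controls only the holomorphic line bundle condition and is decoupled from the Cauchy--Riemann data, is handled separately using the relative Picard/exponential construction of Appendix \ref{line-bundles-on-families-of-curves}.

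For parts \eqref{thickening-is-smooth} and \eqref{bundle-section-is-smooth}, Lemma \ref{l-transverse-is-transverse} shows that $k$-transversality of the framed stable map is exactly surjectivity of $D(\delbar_{\tilde J^{E_k}})_F$; by unobstructedness (Definition \ref{unobstructed-aux}) this surjectivity holds at every point of $\fs_n\inv(0)$, hence on a $G$-invariant open neighbourhood $\cT_n^{\reg}\subset\cT_n$. On this neighbourhood, Theorems \ref{rel-smooth-representability} and \ref{rel-smooth-obs-bundle-rep}, together with the refinement from Appendix \ref{rel-smooth-addendum} (which lets us impose the constraint that the $\bP^N$-projection of $F$ be the tautological universal embedding), equip the $(u,\eta)$-portion of the thickening with a canonical rel--$C^\infty$ structure over $\Mbar_{g,n}^*(\bP^N,m)$ of the expected relative dimension, and the fibrewise spaces $E_{(C\subset\bP^N,u)}$ with a rel--$C^\infty$ vector bundle structure. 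Appendix \ref{line-bundles-on-families-of-curves} assembles $H^1(C,\cO_C)$ into a rel--$C^\infty$ vector bundle over the base and, using that $\exp$ is a local diffeomorphism near zero, cuts out the $\alpha$-constraint as a rel--$C^\infty$ subvariety near $\alpha = 0$. Combining, $\cE_n^{\reg}$ is the direct sum of the three summands $\fs\fu(N+1)$, $E_{(C\subset\bP^N,u)}$, $H^1(C,\cO_C)$ with their natural rel--$C^\infty$ structures, and the three components of $\fs_n$ are rel--$C^\infty$: the middle two tautologically, and the $\fs\fu(N+1)$-component via the smoothness of $\lambda$, the sc-smoothness of the $\chi_i$, and formula \eqref{lambda-defined}.

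For \eqref{action-is-smooth} and \eqref{footprint}, the $G$-action on $\cT_n$ and $\cE_n$ is induced from the algebraic $\PU(N+1)$-action on $\bP^N$ via pullback on the universal curve, and is manifestly rel--$C^\infty$. Discussion \ref{groupoid-equivalence} identifies each stabilizer at $\fs_n\inv(0)$ with the (finite) automorphism group of the underlying stable map, and exhibits $\fs_n\inv(0)/G \to \Mbar_{g,n}(X,A;J)$ as a continuous bijection of groupoids; combined with compactness of the source (Lemma \ref{unitary-holo-maps-compact}, plus closedness of the $\alpha = 0$ locus) and of the target (Gromov compactness), this produces the homeomorphism in \eqref{footprint} and the compactness of $\fs_n\inv(0)$. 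Local linearity of a compact Lie group action with finite stabilizers is standard, as recalled in \textsection\ref{subsec:gkc-vfc}.

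For \eqref{orientation-complex-structure}, at a zero of $\fs_n$ the standard deformation theory of the $\tilde J^{E_k}$-holomorphic map equation, combined with the $\bC$-linear nature of the $\alpha$-twist and of the framing direction, identifies the virtual bundle $T_{\cT_n^{\reg}/\Mbar_{g,n}^*(\bP^N,m)} - (\cE_n^{\reg}\oplus\underline{\fg})$ with the virtual index of a family of $\bC$-linear Cauchy--Riemann operators, yielding a canonical stable complex lift extending over a neighbourhood of $\fs_n\inv(0)$ by standard index bundle arguments. The main technical obstacle is located in parts \eqref{thickening-is-smooth}--\eqref{bundle-section-is-smooth}: the representability theorems of \textsection\ref{subsec:rel-smooth-review} parametrize arbitrary holomorphic target maps, whereas $\cT_n$ rigidly fixes the $\bP^N$-projection to be the tautological universal embedding. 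It is precisely this constraint that necessitates the refinement of the representability theorem stated and proved in Appendix \ref{rel-smooth-addendum}.
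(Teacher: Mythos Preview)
Your high-level strategy matches the paper's: use the Gromov trick to reinterpret the perturbed equation as $\tilde J^{E_k}$-holomorphicity, invoke rel--$C^\infty$ representability, and handle $\alpha$ via the Picard/exp material of Appendix~\ref{line-bundles-on-families-of-curves}. However, you have misidentified the role of Appendix~\ref{rel-smooth-addendum}, and this points to a genuine gap in your understanding of two distinct technical issues.

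The constraint that the $\bP^N$-projection of $F$ be the tautological embedding is \emph{not} what Appendix~\ref{rel-smooth-addendum} addresses. In the paper this is handled locally (Proposition~\ref{rep-shear}) by choosing auxiliary stabilizing points on $C$ and local hyperplane divisors in $\bP^N$, working over a versal deformation $\fB$ of the pointed curve, and observing that the map $\fM^{\reg}(\pi_\fB,E)\to\fM^{\reg}(\pi_\fB,\bP^N)$ induced by $p_{\bP^N}$ is a rel--$C^\infty$ submersion; then Lemma~\ref{fibre-product} rebases from $\fB$ to $\Mbar_g^*(\bP^N,m)$. No refinement of representability is needed here---only the standard Theorem~\ref{rel-smooth-representability} plus the submersion/fibre-product formalism.

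Where Appendix~\ref{rel-smooth-addendum} is actually needed is in the rel--$C^\infty$ regularity of the first component $\text{i}\log\lambda_\cU$ of $\fs$, specifically in showing that $\text{st}_{D_i}\cl\cT'\to\Mbar^{*,p}_{g,[3d]}(\bP^N,m)$ is rel--$C^\infty$. This map is defined by intersecting the universal curve with the divisors $D_i\subset X$, and to take these intersections in a rel--$C^\infty$ way one must differentiate the universal map $F$ in the \emph{fibre} directions of the universal curve. Theorem~\ref{rel-smooth-representability} only gives $F$ as rel--$C^\infty$ over $\fC_T$, not over $T$; Proposition~\ref{extra-smoothness} upgrades this to $f^\circ\cl\fC^\circ_Z/T\to X/\text{pt}$ being rel--$C^\infty$, which is exactly the extra smoothness needed. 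Your sentence ``the $\fs\fu(N+1)$-component via the smoothness of $\lambda$, the sc-smoothness of the $\chi_i$, and formula~\eqref{lambda-defined}'' glosses over precisely this point.

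Finally, your treatment of \eqref{orientation-complex-structure} is too quick: the linearized operator $D(\delbar_{\tilde J^{E_k}})_F$ is \emph{not} $\bC$-linear in general, so the vertical tangent bundle is not a priori complex. The paper decomposes the linearization into its $\bC$-linear and $\bC$-antilinear parts and runs a straight-line homotopy $L(t)$ between them (equation~\eqref{homotopy-CR-op}), stabilizing by a larger $E_{(C\subset\bP^N,u),l}$ to keep the family surjective for all $t$; only then does the $t=0$ kernel carry a complex structure, and a separate argument is needed to show this fibrewise complex structure is continuous.
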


We first reduce the statement of Proposition \ref{gkc-properties-explicit} for $n \geq 0$ to the case of $n = 0$. As stated in Remark \ref{A=0-exception}, we leave the straightforward modification of this argument for the special case $\lspan{[\omega],A} = 0$ and $g\le 1$ to the reader.

\begin{lemma}[Reduction to $n = 0$]\label{marked-points-formal-consequence}
    Assume that $(\lspan{[\omega],A},g)$ is not $(0,0)$ or $(0,1)$. Then, Proposition \ref{gkc-properties-explicit} for $n>0$ follows from the case $n=0$.
\end{lemma}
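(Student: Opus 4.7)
The chart $\cK_n=(G,\cT_n/\Mbar^*_{g,n}(\bP^N,m),\cE_n,\fs_n)$ is by construction the pullback of $\cK=(G,\cT/\Mbar^*_g(\bP^N,m),\cE,\fs)$ along the forgetful morphism $\pi_n\colon\Mbar^*_{g,n}(\bP^N,m)\to\Mbar^*_g(\bP^N,m)$, so the plan is to show that each property (a)--(e) of Proposition \ref{gkc-properties-explicit} is preserved under this pullback. Here $\pi_n$ is an algebraic morphism between smooth complex manifolds (Lemma \ref{alg-base-spaces-are-smooth-qproj}) that is $G$-equivariant for the $G\subset\cG$ actions on both sides.

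Properties (a)--(c) and (e) follow by general pullback principles. Writing $\tilde\pi_n\colon\cT_n\to\cT$ for the top projection in the fiber square, we have
\[
    \cT_n^{\reg}=\cT^{\reg}\times_{\Mbar^*_g(\bP^N,m)}\Mbar^*_{g,n}(\bP^N,m),\qquad \cE_n^{\reg}=\tilde\pi_n^*\cE^{\reg},\qquad \fs_n=\tilde\pi_n^*\fs,
\]
so any local rel--$C^\infty$ trivialization of $\cT^{\reg}$ over $\Mbar^*_g(\bP^N,m)$ pulls back to a local trivialization of $\cT_n^{\reg}$ over $\Mbar^*_{g,n}(\bP^N,m)$, yielding (a)--(b) of the expected dimensions and ranks. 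Property (c) transfers because $G$ acts by biholomorphisms on $\Mbar^*_{g,n}(\bP^N,m)$ and $\pi_n$ is $G$-equivariant. For (e), the canonical identification $T_{\cT_n^{\reg}/\Mbar^*_{g,n}(\bP^N,m)}=\tilde\pi_n^*\,T_{\cT^{\reg}/\Mbar^*_g(\bP^N,m)}$ (a formal consequence of the fiber product description) combines with $\cE_n^{\reg}=\tilde\pi_n^*\cE^{\reg}$ to exhibit the virtual bundle $T_{\cT_n^{\reg}/\Mbar^*_{g,n}(\bP^N,m)}-(\cE_n^{\reg}\oplus\underline\fg)$ as the pullback of the $n=0$ virtual bundle, carrying the pulled-back stably complex lift.

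Property (d) will require the most care, since $\pi_n$ is defined as forget-then-stabilize and the stabilization step may contract bubble components (rational curves with fewer than three nodes on which $\iota'$ is constant but which carry marked points). Concretely, a point of $\cT_n$ is a pair $(\tau,\sigma)$ with $\tau=(C\subset\bP^N,u,\eta,\alpha)\in\cT$ and $\sigma=(C',x_1,\ldots,x_n,\iota')\in\Mbar^*_{g,n}(\bP^N,m)$ lying over the same $[C\subset\bP^N]\in\Mbar^*_g(\bP^N,m)$, so $C'$ is $C$ together with (possibly) some contracted bubbles on which $\iota'$ is constant. Extending $u$ to $u'\colon C'\to X$ by declaring it constant on each bubble (with value equal to $u$ of the attaching node), the assignment $(\tau,\sigma)\mapsto(C',x_1,\ldots,x_n,u')$ defines a continuous $G$-invariant map on $\fs_n^{-1}(0)$ landing in $\Mbar_{g,n}(X,A;J)$. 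The groupoid argument of Discussion \ref{groupoid-equivalence} then adapts essentially verbatim: essential surjectivity uses that any pointed stable map forget-and-stabilizes to an element of $\Mbar_g(X,A;J)$ which lifts via the $n=0$ case, while faithfulness follows because the embedding on each bubble is constant and determined by $\iota$ at the attaching node. Compactness of $\fs_n^{-1}(0)$ follows from properness of $\pi_n$ combined with compactness of $\fs^{-1}(0)$ (from the $n=0$ case), so Hausdorffness of the Gromov topology upgrades the continuous bijection to a homeomorphism. The main technical step I anticipate is verifying the continuity of the forgetful map at the bubbled configurations, but this reduces to standard comparisons of Gromov convergence with the subspace topology inherited from $\cT_n/G$.
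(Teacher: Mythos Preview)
Your proposal is correct and follows essentially the same approach as the paper: properties (a)--(c) and (e) are declared formal consequences of the pullback construction, and for (d) the paper constructs the same forward map (phrased as composing $u$ with the contraction $\Sigma\to C$ rather than ``extending $u$ constantly on bubbles,'' but these are identical), then establishes bijectivity by writing down an explicit inverse rather than by invoking the groupoid argument, and concludes via properness of $\pi_n$ and the compact--Hausdorff trick. The only point the paper makes more explicit than you do is the verification that the resulting map $(\Sigma,x_1,\ldots,x_n,v)$ is actually \emph{stable}, which requires a short case analysis on components contracted by $v$.
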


\begin{proof}
    Recall, from \eqref{marked-point-chart-defined}, that the global Kuranishi chart $\cK_n$ for $n>0$ was defined by pulling back the chart $\cK$ for $n = 0$ under the natural forgetful map 
    \begin{equation}\label{forget-base}
        \Mbar_{g,n}^*(\bP^N,m)\to\Mbar_g^*(\bP^N,m).
    \end{equation}
    Thus, parts \eqref{thickening-is-smooth}, \eqref{bundle-section-is-smooth}, \eqref{action-is-smooth} and \eqref{orientation-complex-structure} of Proposition \ref{gkc-properties-explicit} for general $n$ are formal consequences of the $n = 0$ case. For part \eqref{footprint}, we first produce a continuous bijection 
    \begin{align}\label{marked-point-footprint-map}
        \fs_n^{-1}(0)/G\to\Mbar_{g,n}(X,A;J)
    \end{align}
    as follows. An element of $\fs_n^{-1}(0) = \fs^{-1}(0)\times_{\Mbar_g^*(\bP^N,m)}\Mbar^*_{g,n}(\bP^N,m)$ consists of
    \begin{enumerate}
        \item a stable map $(\Sigma,x_1,\ldots,x_n,f\cl \Sigma\to\bP^N)$, such that when we forget the marked points $x_1,\ldots,x_n$ and stabilize we get the element $C = f(\Sigma)\subset\bP^N$ of $\Mbar_g^*(\bP^N,m)$ and
        \item a stable $J$-holomorphic map $u\cl C\to X$ such that $\cO_C(1)\simeq\cL_u^{\otimes p}$ and $\lambda_\cU(C\subset\bP^N,u)$ is the identity coset in $\cG/G$.
    \end{enumerate}
    To this element, we associate the stable map $(\Sigma,x_1,\ldots,x_n,v\cl \Sigma\to X)$, where $v$ is the $J$-holomorphic map defined by the composition of $u\cl C\to X$ with the projection $f\cl \Sigma\to C$. To see that $(\Sigma,x_1,\ldots,x_n,v)$ is indeed stable, and therefore determines an element of $\Mbar_{g,n}(X,A;J)$, consider any irreducible component $\Sigma'$ of $\Sigma$ on which is contracted by $v$ (i.e., $v|_{\Sigma'}$ is constant). If $f\cl \Sigma\to C$ contracts $\Sigma'$, then the stability of $(\Sigma,x_1,\ldots,x_n,f)$ ensures that $\Sigma'$ has enough special points on it to make it a stable curve. If $f$ does not contract $\Sigma'$, but $u$ contracts its image $f(\Sigma') = C'$, then the stability of $u\cl C\to X$ ensures that $C'$ has enough special points on it to make it a stable curve. In either case, we conclude that $\Sigma'$ has enough special points on it. Thus, we obtain a well-defined map \eqref{marked-point-footprint-map}.

    To show bijectivity, we construct an inverse. Given a stable map $(\Sigma,x_1,\ldots,x_n,v\cl \Sigma\to X)$ belonging to $\Mbar_{g,n}(X,A;J)$, let $u\cl C\to X$ be the stable map obtained by forgetting the marked points and stabilizing it. Let $\kappa\cl \Sigma\to C$ be the canonical map which contracts the unstable components. We know from the $n = 0$ case (that is, the very ampleness of $\fL_u^{\otimes p}$) that $u\cl C\to X$ can be lifted to an element $(\iota: C\hookrightarrow\bP^N,u,0,0)$ belonging to $\fs^{-1}(0)$. It is now immediate that $(\Sigma,x_1,\ldots,x_n,\iota\circ \kappa\cl \Sigma\to\bP^N)$ can be used to lift $(\Sigma,x_1,\ldots,x_n,v)$ to $\fs_n^{-1}(0)$. This shows how to invert \eqref{marked-point-footprint-map}. 

    Since \eqref{forget-base} is proper, so is $\fs_n\inv(0)\to \fs\inv(0)$. It follows that $\fs_n\inv(0)/G$ is compact.
    Since $\Mbar_{g,n}(X,A;J)$ is Hausdorff, the continuous bijection \eqref{marked-point-footprint-map} is a homeomorphism.
\end{proof}

For the remainder of this section, we focus on the $n = 0$ case and omit $n$ from the notation. To keep the notation readable, we use the same notation for the thickening $\cT$ and the open locus $\cT^\text{reg}\subset\cT$ where it is cut-out transversely (and similarly for $\cE$). Thus, all the statements made in \textsection\ref{transversality-implies-smoothness-proof} are to be interpreted as being valid only over a sufficiently small $G$-invariant open neighborhood of the zero locus $\fs^{-1}(0)\subset\cT$.

\subsection{Thickening}\label{smoothness-of-thickening}

Consider the space $\cT'$ of tuples 
\begin{align*}
    (C\subset\bP^N,u\cl C\to X,\eta)    
\end{align*}
satisfying conditions \eqref{thickening-condition-1} and \eqref{thickening-condition-2} of Construction \ref{high-level-description-defined}. We will endow $\cT'$ with a natural rel--$C^\infty$ structure (over the smooth quasi-projective scheme $\Mbar^*_g(\bP^N,m)$ from Definition \ref{base-space-de}) by realizing it as a moduli space of pseudo-holomorphic curves.

Let $E:=E_k$ be the almost complex manifold from Definition \ref{E_l-acs}, with $k$ being the last entry of the auxiliary datum we fixed at the beginning of \textsection\ref{transversality-implies-smoothness-proof}. The natural projection $p_{\bP^N}:E\to\bP^N$ is pseudo-holomorphic, where $\bP^N$ carries the standard complex structure.
By Lemma \ref{E_l-gromov-trick} (Gromov trick), the points of $\cT'$ can be regarded as genus $g$ pseudo-holomorphic curves in $E$ lying in the class 
\begin{align*}
    \tilde A:= A\times[\text{pt}] + [\text{pt}]\times m[\bP^1]\in H_2(X\times\bP^N,\bZ) = H_2(E,\bZ).    
\end{align*}
In view of this, we will freely conflate points of $\cT'$ with pseudo-holomorphic curves in $E$. By Lemma \ref{l-transverse-is-transverse} and unobstructedness of the auxiliary datum, the linearized Cauchy--Riemann operator is surjective for any tuple $(C\subset\bP^N,u,0)$ in $\cT'$.

To endow $\cT'\to\Mbar_g^*(\bP^N,m)$ with a rel--$C^\infty$ structure, we will apply the representability result recalled in Theorem \ref{rel-smooth-representability} to the following moduli functor.

\begin{definition}[Moduli functor for $\cT'$]\label{functor-of-points}
    Define the functor
    \begin{align*}
        \fF:(C^\infty/\cdot)^\text{op}\to(\text{Sets})
    \end{align*}
    as follows. Given a rel--$C^\infty$ manifold $Z/T$, we define the set $\fF(Z/T)$ to consist of all commutative diagrams of the form
    \begin{center}
    \begin{tikzcd}
        Z  \arrow[d,""]&\cC_Z \arrow[l,""]\arrow[d,""]\arrow[r,"F"] & E\arrow[d,"p_{\bP^N}"]\\ 
        T & \cC_T \arrow[r,"f"]\arrow[l,"\pi_T"] & \bP^N
    \end{tikzcd} 
    \end{center}
    satisfying the following properties.
    \begin{enumerate}
        \item The diagram
        \begin{center}
        \begin{tikzcd}
            \cC_T \arrow[d,"\pi_T"] \arrow[r,"f"] & \bP^N \\
            T & 
        \end{tikzcd}
        \end{center}
        is the pullback of the universal family on $\Mbar_g^*(\bP^N,m)$ along a continuous map $T\to\Mbar_g^*(\bP^N,m)$.
        The map $\cC_Z\to Z$ is obtained by pulling back $\cC_T\to T$ along $Z\to T$ (and so, $\cC_Z/\cC_T$ has a natural rel--$C^\infty$ structure).
        \item The morphism $(F,f)\cl \cC_Z/\cC_T\to E/\bP^N$ is of class rel--$C^\infty$.
        \item\label{functor-submersion} For each $z\in Z$ with image $t\in T$, the restriction 
        \begin{align*}
            F|_z\cl \pi_T^{-1}(t)\to E    
        \end{align*}
        of the map $F$ to the fibre of $\cC_Z\to Z$ over $z$ is a pseudo-holomorphic stable map to $E$ of genus $g$ and class $\tilde A$ with surjective linearized Cauchy--Riemann operator. Moreover, the projection map from the kernel of the linearized operator of $F|_z\cl \pi_T^{-1}(t)\to E$ to the kernel of the linearized operator of $p_{\bP^N}\circ (F|_z) = f|_t\cl \pi_T^{-1}(t)\to\bP^N$ is surjective.
    \end{enumerate} 
    For rel--$C^\infty$ morphisms $Z'/T'\to Z/T$, the associated functorial maps 
    \begin{align*}
        \fF(Z/T)\to\fF(Z'/T')    
    \end{align*}
    are given by pullbacks of such diagrams.
\end{definition}

\begin{proposition}[Rel--$C^\infty$ structure on $\cT'$]\label{rep-shear}
    The functor $\fF$ is represented by a rel--$C^\infty$ structure on an open subset of $\cT'/\Mbar_g^*(\bP^N,m)$, all of whose points have surjective linearized Cauchy--Riemann operators. 
    
    This open subset contains all elements of $\cT'$ of the form $(C\subset\bP^N,u,0)$, where $(C\subset\bP^N,u)$ lies in the space $\cM$ from Definition \ref{unitary-holo-maps}.
\end{proposition}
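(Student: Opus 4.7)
My plan is to realize $\cT'$ as a rel-$C^\infty$ fibre product using Theorem \ref{rel-smooth-representability} as the main input.

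First I would apply Theorem \ref{rel-smooth-representability} twice over the base $\fB := \Mbar_g^*(\bP^N, m)$ (a complex manifold by Lemma \ref{alg-base-spaces-are-smooth-qproj}), with $\pi\cl \fC \to \fB$ being the universal family. The first application, with target $(E, \tilde J^E)$ in class $\tilde A$, yields a rel-$C^\infty$ manifold $\cM' := \fM^\text{reg}(\pi, E)_{\tilde A}$ over $\fB$; by Lemma \ref{E_l-gromov-trick}, its points correspond to tuples $(C \subset \bP^N, v\cl C \to \bP^N, u\cl C \to X, \eta)$ solving the coupled equations, but with $v = p_{\bP^N} \circ F$ a priori unrelated to the given embedding $\iota_C\cl C \hookrightarrow \bP^N$. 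The second application, with target $\bP^N$ in class $m[\bP^1]$, yields $\cN := \fM^\text{reg}(\pi, \bP^N)_{m[\bP^1]}$ over $\fB$; the tautological embedding $\fC \hookrightarrow \bP^N \times \fB$ produces a rel-$C^\infty$ section $\sigma\cl \fB \to \cN$, with the required vanishing $H^1(C, T_{\bP^N}|_C) = 0$ supplied by the proof of Lemma \ref{alg-base-spaces-are-smooth-qproj}.

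Post-composition with $p_{\bP^N}$ defines a rel-$C^\infty$ morphism $\Phi\cl \cM' \to \cN$ over $\fB$, and $\cT'$ is set-theoretically the fibre product $\cM' \times_\cN \fB$ cut out by the equation $p_{\bP^N} \circ F = \iota_C$. To endow it with a rel-$C^\infty$ structure over $\fB$ I would apply Lemma \ref{fibre-product} to obtain a rel-$C^\infty$ structure on $\cM'/\cN$ and then pull it back along $\sigma$; this requires $\Phi$ to be a rel-$C^\infty$ submersion along $\sigma(\fB)$. Standard deformation theory identifies the fibrewise differential of $\Phi$ at $F \in \cM'$ with the projection $\ker(D(\delbar_{\tilde J^E})_F) \to \ker(D(\delbar_{J^\bP})_{p_{\bP^N} \circ F})$, so this submersion condition is precisely part \eqref{functor-submersion} of Definition \ref{functor-of-points}.

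The main obstacle is verifying both the surjectivity of $D(\delbar_{\tilde J^E})_F$ and the kernel surjectivity above at all points $(C \subset \bP^N, u, 0)$ with $(C \subset \bP^N, u) \in \cM$, and establishing that these are open conditions. Here the block-upper-triangular form \eqref{E_l-linearization} of the linearization from the proof of Lemma \ref{l-transverse-is-transverse} is decisive: both surjectivities follow at once from $k$-transversality (guaranteed on $\cM$ by our choice of $k$ via Lemma \ref{k-existence}) combined with the surjectivity of $D(\delbar_{J^\bP})_{\iota_C}$ recalled above. Since both conditions are open in the Gromov topology (cf.\ \cite[Proposition 9.2.6(i)]{P16} as used in Lemma \ref{open-l-transverse}), they cut out a $G$-invariant open neighborhood of $\cM \times \{0\}$ inside $\cT'$ on which the rel-$C^\infty$ structure exists.
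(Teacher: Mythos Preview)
Your approach is correct and follows the same essential strategy as the paper (realize $\cT'$ as a fibre product of representable moduli functors via Lemma \ref{fibre-product}), but the implementation differs in a meaningful way. The paper works \emph{locally}: near a given point it first stabilizes $C$ with auxiliary marked points $p_1,\dots,p_r$, passes to a universal deformation $\fB$ of the stable pointed curve, applies Theorem \ref{rel-smooth-representability} over this $\fB$ to both targets $E$ and $\bP^N$, and then imposes divisor incidence constraints $D_1,\dots,D_r\subset\bP^N$ to identify $\cT'$ and $\Mbar_g^*(\bP^N,m)$ locally with the resulting constrained moduli spaces. Your route is \emph{global}: you apply Theorem \ref{rel-smooth-representability} directly over $\fB=\Mbar_g^*(\bP^N,m)$ and replace the divisor constraints by the tautological section $\sigma$ of $\cN\to\fB$. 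This is cleaner and avoids the bookkeeping of the $D_i$'s; the paper's version, in exchange, comes with an explicit verification that the constructed local chart actually represents $\fF$ (the last two paragraphs of their proof), whereas in your argument this is left implicit in the universal property of the categorical fibre product. You should at least note that the equivalence of ``$F\cl\fC_Z/\fC_T\to E/\text{pt}$ is rel--$C^\infty$'' with ``$(F,f)\cl\fC_Z/\fC_T\to E/\bP^N$ is rel--$C^\infty$'' (once $p_{\bP^N}\circ F$ factors through $\fC_T$) is what makes your fibre product represent $\fF$ rather than a nearby functor.

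One small imprecision: the citation of \cite[Proposition 9.2.6(i)]{P16} justifies openness of surjectivity of $D(\delbar_{\tilde J^E})_F$, but not directly the openness of the kernel-projection surjectivity. The cleanest fix is to observe that once $\cM'$ and $\cN$ exist as rel--$C^\infty$ manifolds over $\fB$, the submersion locus of the rel--$C^\infty$ map $\Phi$ is automatically open (the vertical differential varies continuously and surjectivity of a linear map of fixed-rank bundles is open).
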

\begin{proof}
    With respect to the obvious notion of open covers in $(C^\infty/\cdot)$, the functor $\fF$ is a \emph{sheaf} (see \cite[Definition 2.13]{Swa21}). Thus, the representability of $\fF$ by a rel--$C^\infty$ manifold can be checked locally (see \cite[Proposition 2.16]{Swa21}).
    
    Fix a point $\tilde x = (C\subset\bP^N,u,\eta)$ in $\cT'$, with $x = (C\subset\bP^N)$ being its image in $\Mbar_g^*(\bP^N,m)$, such that it defines an element of $\fF(\text{pt}/\text{pt})$. For later use, let us denote the inclusion of $C$ in $\bP^N$ as $\iota:C\hookrightarrow\bP^N$. By assumption, the linearized Cauchy--Riemann operators of the pseudo-holomorphic maps $(u,\iota,\eta):C\to E$ and $\iota:C\to\bP^N$ are surjective and the map between from the kernel of the linearized operator of $(u,\iota,\eta)$ to the kernel of the linearized operator of $\iota$ is also surjective. We will show representability of $\fF$ in a neighborhood of $\tilde x$ and $x$. In particular, this discussion will apply to any $\tilde x = (C\subset\bP^N,u,0)$ such that $(C\subset\bP^N,u)$ lies in the space $\cM$ from Definition \ref{unitary-holo-maps} (by Lemma \ref{l-transverse-is-transverse} and unobstructedness of the auxiliary datum fixed at the beginning of \textsection\ref{transversality-implies-smoothness-proof}).

    Choose a minimal collection $p_1,\ldots,p_r$ of non-singular points on $C$ so that $(C,p_1,\ldots,p_r)$ is a stable pointed curve. Choose a collection of pairwise disjoint local divisors $D_1,\ldots,D_r\subset\bP^N$, defined as the zero loci of local holomorphic functions $h_1,\ldots,h_r$, such that the intersection $C\cap D_i = \{p_i\}$ is transverse for $1\le i\le r$. Choose a local holomorphic universal deformation
    \begin{center}
    \begin{tikzcd}
        C \arrow[r,hook,"i"] \arrow[d] & \fC \arrow[d,"\pi_\fB"] \\
        \{s\} \arrow[r,hook] \arrow[u,bend left,"p_j"] & \fB \arrow[u,bend left,"\sigma_j"]
    \end{tikzcd}    
    \end{center}
  of the stable pointed curve $(C,p_1,\ldots,p_r)$, with $(\fB,s)$ being a pointed complex manifold of complex dimension $3g-3+r$.\footnote{Explicitly, take a complex manifold $\fB$ with a finite group $\Gamma$ acting on it by biholomorphic maps such that $[\fB/\Gamma]$ provides a complex orbifold chart for $\Mbar_{g,r}$, with $s\in\fB$ being a point mapping to $(C,p_1,\ldots,p_r)$ under the composition $\fB\to[\fB/\Gamma]\hookrightarrow\Mbar_{g,r}$. Let $(\pi_\fB:\fC\to\fB,\{\sigma_j\})$ be the pullback of the universal family of stable $r$-pointed curves under $\fB\to\Mbar_{g,r}$ and let $i$ be an isomorphism of $(C,x_1,\ldots,x_r)$ with the fibre of this family over $s$. We point out that the family $(\pi_\fB:\fC\to\fB,\{\sigma_j\})$ carries a holomorphic action of $\Gamma$ extending the original action of $\Gamma$ on $\fB$ but this is not relevant for our argument.} We will denote the fibre of this family over any $s'\in\fB$ by $(\fC_{s'},\sigma_1(s'),\ldots,\sigma_r(s'))$. 
  
  Now, consider the moduli spaces
    \begin{align}\label{unconstrained-mod-spaces}
        \fM^\text{reg}(\pi_\fB,\bP^N)_{m[\bP^1]}\quad\text{and}\quad\fM^\text{reg}(\pi_\fB,E)_{\tilde A}
    \end{align}
    of pseudo-holomorphic maps from fibres of $\pi_\fB$ to $\bP^N$ and $E$ (respectively) which have surjective linearized Cauchy--Riemann operators. By Theorem \ref{rel-smooth-representability}, both these moduli spaces carry natural rel--$C^\infty$ structures over $\fB$ and represent the corresponding moduli functors (Definition \ref{rel-smooth-moduli-functors}). The projection $p_{\bP^N}:E\to\bP^N$ induces a natural rel--$C^\infty$ map between these moduli spaces.
    Note that the points $x$ and $\tilde x$ yield points 
    \begin{align*}
        y = (s,\iota \circ i^{-1}\cl \fC_s\to \bP^N)\quad\text{and}\quad\tilde y = (s,(u,\iota,\eta) \circ i^{-1}\cl \fC_s\to E)
    \end{align*}
    in these two moduli spaces respectively. 
    
    Evaluation at the $r$ marked points (given by the sections $\sigma_1,\ldots,\sigma_r$ of $\pi_\fB$) gives rel--$C^\infty$ maps from the moduli spaces in \eqref{unconstrained-mod-spaces} to $(\bP^N)^r/\text{pt}$. Since $p_1,\ldots,p_r$ was chosen to be a \emph{minimal} collection of marked points which stabilize $C$, it follows that these evaluation maps are vertically transverse to $D:=D_1\times\cdots\times D_r\subset(\bP^N)^r$ at $y$ and $\tilde y$ (this is proved, e.g., in \cite[Lemma 6.7]{Swa21}). Thus, by taking the inverse image of $D\subset(\bP^N)^r$ under the evaluation map and using vertical transversality to $D$, we obtain rel--$C^\infty$ structures (near $y$ and $\tilde y$) on the moduli spaces
    \begin{align*}
        \fM^\text{reg}(\pi_\fB,\bP^N)_{m[\bP^1],D}\quad\text{and}\quad\fM^\text{reg}(\pi_\fB,E)_{\tilde A,D}
    \end{align*}
    obtained from \eqref{unconstrained-mod-spaces} by requiring that the $i^\text{th}$ marked point is mapped to $D_i = h_i^{-1}(0)$ for $1\le i\le r$.

    For any $[\hat C\subset\bP^N]$ in $\Mbar_g^*(\bP^N,m)$ close to $x$, we may stabilize $\hat C$ by adding the $r$ marked points obtained by intersecting it with $D_1,\ldots,D_r$. Thus, applying the universal property of $(\fC\to\fM,\sigma_1,\ldots,\sigma_r)$ near $x$, we obtain a map
    \begin{align*}
        \Mbar_g^*(\bP^N,m)\to\fB
    \end{align*}
    by intersecting with $D_1,\ldots,D_r$. We also get an isomorphism of the universal curve on $\Mbar_g^*(\bP^N,m)$ with the pullback of $\fC\to\fB$ which, at $x$, coincides with the $i:C\to\fC_s$. This yields, near $x$ and $\tilde x$, the following commutative diagram.
    \begin{center}
    \begin{tikzcd}
         \cT' \arrow[d] \arrow[r] & \fM^\text{reg}(\pi_\fB,E)_{\tilde A,D} \arrow[d] \\
         \Mbar_g^*(\bP^N,m) \arrow[r] & \fM^\text{reg}(\pi_\fB,\bP^N)_{m[\bP^1],D} 
    \end{tikzcd}    
    \end{center}
    The horizontal maps here are such that $\tilde x\mapsto\tilde y$ and $x\mapsto y$. Since there are obvious forgetful maps which are inverse to the horizontal maps (near $y$ and $\tilde y$), we conclude that the horizontal maps are local homeomorphisms near $x$ and $\tilde x$. In particular, we have obtained rel--$C^\infty$ structures on $\cT'/\fB$ and $\Mbar_g^*(\bP^N,m)/\fB$. Moreover, Assumption \eqref{functor-submersion} in Definition \ref{functor-of-points} implies that the natural rel--$C^\infty$ map $\cT'/\fB\to\Mbar_g^*(\bP^N,m)/\fB$ is a rel--$C^\infty$ submersion. Lemma \ref{fibre-product} now yields a rel--$C^\infty$ structure on $\cT'/\Mbar_g^*(\bP^N,m)$ near the points $\tilde x$ and $x$. 
    
    We are left to check\footnote{Checking this is precisely what allows us to work locally without having to separately establish compatibility of the locally constructed rel--$C^\infty$ structures.} that the the rel--$C^\infty$ structure we have obtained indeed represents the functor $\fF$ near $\tilde x$ and $x$. For this, we will apply the purely formal Lemma \ref{fibre-product} with $Y' = \cT'$, $Y = \Mbar_g^*(\bP^N,m)$ and $S = \fB$. Given a rel--$C^\infty$ manifold $Z/T$, we need to check that a map $Z/T\to Y'/S\times_{Y/S}Y/Y$ is the same as an element of $\fF(Z/T)$ provided both of them have images close to $(\tilde x,x)$. The data of a map $Z/T\to Y'/S\times_{Y/S}Y/Y$ corresponds to
    \begin{enumerate}
        \item a diagram 
        \begin{center}
        \begin{tikzcd}
            \fC \arrow[d,"\pi_\fB"] & \fC_T \arrow[d,"\pi_T"] \arrow[l] & \arrow[l] \fC_Z \arrow[r,"F"] \arrow[d,"\pi_Z"] & E \\
            \fB \arrow[u,bend left,"\sigma_i"] & T \arrow[l] & \arrow[l] Z &
        \end{tikzcd}
        \end{center}
        as in Definition \ref{rel-smooth-moduli-functors} with the additional condition that the marked points induced by $\sigma_i$ map under $F$ to $p_{\bP^N}^{-1}(D_i)\subset E$ for $1\le i\le r$ and
        \item a diagram\footnote{Here, we are using the fact that rel--$C^\infty$ maps $Z/T\to Y/Y$ are the same as rel--$C^\infty$ maps $T/T\to Y/S$, since both just amount to continuous maps $T\to Y$.}
        \begin{center}
        \begin{tikzcd}
            \fC \arrow[d,"\pi_\fB"] & \arrow[l] \fC_T  \arrow[d,"\pi_Z"] & \arrow[l,equal] \fC_T  \arrow[d,"\pi_Z"] \arrow[r,"f"] & \bP^N \\
            \fB \arrow[u,bend left,"\sigma_i"] & \arrow[l] T & \arrow[l,equal] T &
        \end{tikzcd}   
        \end{center}
        as in Definition \ref{rel-smooth-moduli-functors} with the additional condition that the marked points induced by $\sigma_i$ map under $f$ to $D_i\subset \bP^N$ for $1\le i\le r$
    \end{enumerate}
    such that the same diagram appears when we compose the first diagram with the projection $p_{\bP^N}:E\to\bP^N$ and pullback the second one from $T/T$ to $Z/T$. More succinctly, this amounts to the data of a single diagram
    \begin{center}
    \begin{tikzcd}
        & \bP^N & \arrow[l,"p_{\bP^N}"] E \\
        \fC \arrow[d,"\pi_\fB"] & \fC_T \arrow[u,"f"] \arrow[d,"\pi_T"] \arrow[l] & \arrow[l] \fC_Z \arrow[d,"\pi_Z"] \arrow[u,"F"] \\
            \fB \arrow[u,bend left,"\sigma_i"] & T \arrow[l] & \arrow[l] Z
    \end{tikzcd}
    \end{center}
    such the bottom two squares are pullbacks, the map $(F,f)\cl \fC_Z/\fC_T\to E/\bP^N$ is rel--$C^\infty$, the map $f$ (resp. $F$) is pseudo-holomorphic on each fibre of $\pi_T$ (resp. $\pi_Z$) and $f$ maps the marked points induced by $\sigma_i$ to $D_i\subset\bP^N$ for $1\le i\le r$. If all the maps $f_t$ (resp. $F_z$) here are close to $y$ (resp. $\tilde y$), then the maps to $\fC/\fB$ in this diagram can be recovered from the remaining data by intersecting $f$ with $D_1,\ldots,D_r$. Once we drop $\fC/\fB$ from the diagram, this amounts to an element of $\fF(Z/T)$, which completes the proof of representability near $x$ and $\tilde x$.
\end{proof}

\begin{definition}[Dual Hodge bundle on $\cT'$]\label{dual-hodge-on-thickening}
    Using Lemma \ref{Vector Bundle via Homology of Structural Line}, define $\bH^*$ to be the algebraic vector bundle on the smooth quasi-projective scheme $\Mbar_g^*(\bP^N,m)$ whose fibre over any $C\subset\bP^N$ is given by $H^1(C,\cO_C)$. Viewing $\bH^*$ as a continuous vector bundle and pulling it back to $\cT'$ gives a natural rel--$C^\infty$ vector bundle $\bH_{\cT'}^*$ on $\cT'/\Mbar_g^*(\bP^N,m)$ with the same description of the fibres.
\end{definition}

For the next statement, recall that Lemma \ref{rep-shear} allows us to regard the space $\cM$ from Definition \ref{unitary-holo-maps} as a subset of the rel--$C^\infty$ manifold $\cT'/\Mbar_g^*(\bP^N,m)$. The proof uses some results developed in Appendix \ref{line-bundles-on-families-of-curves}. Specifically, we need Lemma \ref{injectivity-of-exp}, which establishes a version of injectivity near the origin of the exponential map
$\exp \cl H^1(C,\cO_C)\to \Pic^0(C)$ for families of curves.

\begin{proposition}\label{alpha-smoothness-technical}
    There exists a rel--$C^\infty$ section $\alpha$ of the vector bundle $\bH_{\cT'}^*$, defined near $\cM\subset\cT'$ with the following property. 
    For any $(C\subset\bP^N,u,\eta)$ in the domain of $\alpha$, we have the identity
    \begin{align}\label{correcting-polarization-on-curve}
        [\cO_C(1)]\otimes[\fL_u^{\otimes p}]^{-1} = \exp(\alpha(C\subset\bP^N,u,\eta))\in\normalfont\text{Pic}^0(C)
    \end{align}
    and, moreover, the section $\alpha$ is identically zero on $\cM$. 
    
    Two sections $\alpha_1,\alpha_2$ with this property necessarily agree on a neighborhood of $\cM$. In other words, the germ near $\cM$ of the section $\alpha$ is uniquely determined by the above property.
\end{proposition}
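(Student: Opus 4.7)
The plan is to build $\alpha$ by inverting a family version of the exponential map near the zero section, and to establish uniqueness from local injectivity of $\exp$.

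Let $\pi \cl \fC' \to \cT'$ denote the universal family of curves, and let $\fL$ be the rel--$C^\infty$ family of holomorphic line bundles on $\fC'$ whose restriction to the curve over $(C\subset \bP^N, u, \eta) \in \cT'$ is the line bundle $\fL_u$ from Definition \ref{framed-stable-map}. Note that $\fL_u$ is independent of $\eta$, and its holomorphic structure, given by $(u^*\nabla^X)^{0,1}$ on $(u^*\cO_X(1))^{\otimes 3}$, varies rel--$C^\infty$ly with $u$ by the rel--$C^\infty$ structure on $\cT'$ established in Proposition \ref{rep-shear}. Consider the line bundle
\[
\cL := \cO_{\fC'}(1) \otimes \fL^{\otimes (-p)}
\]
on $\fC'$. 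Condition (ii) of Definition \ref{framed-stable-map} guarantees that $\cL$ has degree zero on each irreducible component of every fiber of $\pi$, so $\cL$ defines a rel--$C^\infty$ classifying map
\[
\Phi \cl \cT' \to \Pic^0(\fC'/\cT')
\]
into the relative Picard scheme of degree-zero line bundles, whose rel--$C^\infty$ structure is set up in Appendix \ref{line-bundles-on-families-of-curves}. By the defining condition of $\cM$ in Definition \ref{unitary-holo-maps}, namely $\cO_C(1) \simeq \fL_u^{\otimes p}$ as holomorphic line bundles on $C$, the map $\Phi$ vanishes identically on $\cM$.

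Next, I invoke the family version of the local biholomorphism property of $\exp$ (Lemma \ref{injectivity-of-exp}), which provides a rel--$C^\infty$ inverse to
\[
\exp \cl \bH^*_{\cT'} \to \Pic^0(\fC'/\cT')
\]
on an open neighborhood $\cV$ of the zero section. Since $\Phi$ vanishes on $\cM$ and $\cM \subset \cT'$ is compact (by Lemma \ref{unitary-holo-maps-compact}), there is an open neighborhood $\cU \subset \cT'$ of $\cM$ on which $\Phi$ takes values in $\cV$. Composing $\Phi|_\cU$ with the local inverse of $\exp$ yields the desired rel--$C^\infty$ section $\alpha$ of $\bH^*_{\cT'}|_\cU$, which satisfies \eqref{correcting-polarization-on-curve} and vanishes on $\cM$.

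For uniqueness, suppose $\alpha_1, \alpha_2$ are two sections satisfying the stated property. Both vanish on $\cM$, so both take values in the neighborhood of the zero section on which $\exp$ is injective (after possibly shrinking to a common open neighborhood of $\cM$, which is possible by compactness of $\cM$). Since $\exp \circ \alpha_1 = \exp \circ \alpha_2$ by \eqref{correcting-polarization-on-curve}, we conclude $\alpha_1 = \alpha_2$ in a neighborhood of $\cM$.

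The main obstacle is rigorously establishing the rel--$C^\infty$ character of the classifying map $\Phi$ and the family exponential, both of which require interpreting $\Pic^0(\fC'/\cT')$ as an object in the rel--$C^\infty$ category and showing that $\fL$ indeed defines a rel--$C^\infty$ section. These foundational matters are handled by the results of Appendix \ref{line-bundles-on-families-of-curves}; in particular, Lemma \ref{injectivity-of-exp} is the technical crux, as it provides not only pointwise injectivity but the uniform local invertibility needed to construct $\alpha$ as a rel--$C^\infty$ section over an open neighborhood of the compact locus $\cM$.
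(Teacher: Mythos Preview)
Your overall strategy---invert $\exp$ locally and compose with a classifying map to $\Pic^0$---is the natural first idea, but it does not go through with the tools actually available in the paper, and your final paragraph misrepresents what Appendix \ref{line-bundles-on-families-of-curves} provides. Lemma \ref{injectivity-of-exp} is stated and proved over a smooth quasi-projective base $\cS$ (in our case $\Mbar_g^*(\bP^N,m)$), and it asserts only \emph{fiberwise injectivity} of $\exp$ on a neighborhood of the zero section; it does not construct $\Pic^0$ as a space, let alone as a rel--$C^\infty$ manifold over $\cT'$, and it does not furnish a rel--$C^\infty$ local inverse to $\exp$. So the composition $\exp^{-1}\circ\Phi$ you write down has no meaning in the rel--$C^\infty$ category as the paper has set it up. The uniqueness half of your argument is fine and matches the paper's: fiberwise injectivity of $\exp$ near zero is exactly what is needed there.

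The paper circumvents the missing target space by never forming $\Pic^0$ at all. Instead, near a point $q\in\cM$ it writes $\cO_{\hat C}(1)\otimes\fL_{\hat u}^{-\otimes p}$ as $\cO_{\hat C}(D_{\hat q})$ for an explicit divisor $D_{\hat q}=\sum\tau_i(\hat q)-\sum\tau'_i(\hat q)$, where the points $\tau_i(\hat q),\tau'_i(\hat q)$ are zeros of sections of two auxiliary very ample line bundles (one depending only on the embedding, one on $\hat u$). These zero-loci vary in a rel--$C^\infty$ manner---for the $\hat u$-dependent bundle this is checked using a residue integral formula for the location of a simple zero. Then $\alpha$ is obtained by feeding the pairs $(\tau_i,\tau'_i)$ into the holomorphic Abel--Jacobi type map $\rho$ of Lemma \ref{abel-jacobi-construction}, which lands directly in $\bH^*$ without ever passing through $\Pic^0$. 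Lemma \ref{injectivity-of-exp} is then used only to patch the local constructions, via the uniqueness argument you already gave. The essential content you are missing is this divisor-level description and the verification that the zeros vary rel--$C^\infty$ly.
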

\begin{proof}
    The uniqueness of the germ of $\alpha$ near $\cM$ follows from Lemma \ref{injectivity-of-exp} applied to the universal curve on $\cS = \Mbar_g^*(\bP^N,m)$ and the fact that $\alpha$ is required to vanish along $\cM$. Thus, it suffices to construct $\alpha$ locally near any point of $\cM$, since the local constructions will patch together by Lemma \ref{injectivity-of-exp}.

    Given any $q = (C\subset\bP^N,u,0)\in\cT'$ corresponding to $(C\subset\bP^N,u)\in\cM$, we need to construct the section $\alpha$ in a neighborhood of $q$. Define 
    \begin{align*}
        \cC^\circ_g\subset\cC_g\xrightarrow{\pi}\Mbar_g^*(\bP^N,m)
    \end{align*}
    to be the complement of the nodes in the fibres of the universal curve on $\Mbar^*_g(\bP^N,m)$. We claim that it suffices to find an integer $r\ge 1$ and rel--$C^\infty$ maps
    \begin{align*}
        \tau_1,\ldots,\tau_r,\tau'_1,\ldots,\tau'_r\cl \cT'/\Mbar^*_g(\bP^N,m)\to\cC_g^\circ/\Mbar^*_g(\bP^N,m),    
    \end{align*}
    defined near $q$ and covering the identity map on $\Mbar_g^*(\bP^N,m)$, with the following properties. First, for $1\le i\le r$, we have $\tau_i(q) = \tau_i'(q)$. Second, for all points $\hat q = (\hat C\subset\bP^N,\hat u,\hat\eta)$ near $q$, we have a holomorphic line bundle isomorphism
    \begin{align*}
            \cO_{\hat C}(1)\simeq\fL_u^{\otimes p}\otimes\cO_{\hat C}(D_{\hat q})    
    \end{align*}
    where the divisor $D_{\hat q}$ is defined by 
    \begin{align*}
        D_{\hat q}:=\sum_{i=1}^r\tau_i(\hat q) - \sum_{i=1}^r\tau'_i(\hat q).    
    \end{align*}
    
    The reason why it suffices to find $\tau_i,\tau_i'$ as above is that we can then define the desired section $\alpha$ by the explicit formula
    \begin{align}\label{alpha-formula}
        \alpha(\hat q):=\sum_{i=1}^r\rho(\tau_i(\hat q),\tau_i'(\hat q)),    
    \end{align}
    where $\rho$ denotes the holomorphic Abel--Jacobi type map
    \begin{align*}
        \rho = \rho_{\cC_g/\Mbar_g^*(\bP^N,m)}\cl \cC^\circ_g\times_{\Mbar^*_g(\bP^N,m)}\cC^\circ_g\to\bH^*
    \end{align*}
    constructed in Lemma \ref{abel-jacobi-construction}. We find from Lemma \ref{abel-jacobi-construction} that $\rho$ is defined near the diagonal $\Delta_{\cC^\circ_g}$, vanishes along the diagonal and has the property that it maps any $(x,x')$ in its domain to an element $\rho(x,x')$ satisfying
    \begin{align*}
        [\cO_{\hat C}(x-x')] = \exp(\rho(x,x'))\in\text{Pic}^0(\hat C),
    \end{align*}
    where $\hat C$ is the common fibre of the universal curve on which $x$ and $x'$ lie. Thus, the section $\alpha$ defined by \eqref{alpha-formula} will satisfy the condition from \eqref{correcting-polarization-on-curve}. To see why $\alpha$ vanishes at points $\hat q\in\cM$ near $q$, we argue as follows. First, we have $\alpha(q) = 0$ since $\tau_i(q) = \tau_i'(q)$ for $1\le i\le r$. Second, by Lemma \ref{injectivity-of-exp} and the holomorphic triviality of the line bundle $\cO_C(D_{\hat q})$, we must have $\alpha(\hat q) = 0$ for $\hat q\in\cM$ near $q$. Note that we get $\alpha(\hat q) = 0$ even though we might not have $\tau_i(\hat q) = \tau_i'(\hat q)$ for $1\le i\le r$.
    
    To complete the proof, we will now show how to construct an integer $r$ and maps $\tau_i,\tau_i'$ as above. We first choose an integer $\ell\gg 1$ such that the (isomorphic) holomorphic line bundles on $C$
    \begin{align*}
        L_1&:=\cO_C(\ell+1)\otimes(\omega_C^*)^{\otimes p} \\
        L_2&:=\cO_C(\ell)\otimes(u^*\cO_X(1))^{\otimes 3p}
    \end{align*}
    are very ample and have vanishing first cohomology. Let $r:=\deg(L_1) = \deg(L_2)$ and $s_1$ (resp. $s_2$) be a holomorphic section of $L_1$ (resp. $L_2$) such that the sections $s_1$ and $s_2$ have the same vanishing locus on $C$ consisting of $r$ distinct non-singular points $z_1,\ldots,z_r\in C$. Due to the very ampleness of $L_2$, we may also fix a holomorphic section $s_2'$ of $L_2$ which is non-vanishing at each of the points $z_1,\ldots,z_r\in C$.
    
    Let $\Mbar^*_g(\bP^N,m)\xleftarrow{\pi}\cC_g\xrightarrow{F}\bP^N$ be the universal map on $\Mbar^*_g(\bP^N,m)$ and let $\omega_\pi$ be the relative dualizing line bundle of $\pi$. Define the coherent sheaf 
    \begin{align*}
        \cL_1:=\pi_*(F^*\cO_{\bP^N}(\ell+1)\otimes(\omega_\pi^*)^{\otimes p})
    \end{align*}
    on $\Mbar^*_g(\bP^N,m)$. Using $H^1(C,L_1) = 0$ and the theorem on cohomology and base change \cite[Theorem III.12.11]{Har77}, we see that $\cL_1$ is locally free near the point $C\subset\bP^N$. Thus, we can find a local holomorphic section $\sigma_1$ of $\cL_1$ with 
    \begin{align*}
        \sigma_1(C\subset\bP^N) = s_1 \in H^0(C,\cO_C(\ell+1)\otimes\omega_C^{-\otimes p}).    
    \end{align*}
    Using this, for $\hat q = (\hat C\subset\bP^N,\hat u,\hat\eta)$ close to $q$, define $\tau_1(\hat q),\ldots,\tau_r(\hat q)\in\hat C$ to be the $r$ distinct zeros of $\sigma_1(\hat C\subset\bP^N)$. Note that the numbering of $\tau_1(\hat q),\ldots,\tau_r(\hat q)$ is determined by $\tau_i(\hat{q})$ being a small perturbation of $z_i\in C$ for $1\le i\le r$. The functions $\tau_1,\ldots,\tau_r$ are obviously rel--$C^\infty$, since they are pullbacks to $\cT'$ of continuous (in fact, holomorphic) sections of $\pi$.
    
    Finally, we are left to construct $\tau'_1,\ldots,\tau'_r$. Consider the rel--$C^\infty$ manifold $\cL_2/\Mbar_g^*(\bP^N,m)$ parametrizing tuples $(\hat C\subset\bP^N,\hat u,\hat\eta,\hat s_2)$ where $(\hat C\subset\bP^N,\hat u,\hat\eta)\in \cT'$ is a point close to $q$ and
    \begin{align*}
        \hat s_2\in H^0(\hat C,\cO_{\hat C}(\ell)\otimes(\hat u^*\cO_X(1))^{\otimes 3p}).
    \end{align*}
    In more detail, the rel--$C^\infty$ structure on $\cL_2/\Mbar_g^*(\bP^N,m)$ near $(C\subset\bP^N,u,0,s_2)$ is obtained by arguing exactly as in Lemma \ref{rep-shear}, with $H^1(C,L_2) = 0$ ensuring the surjectivity of the linearized operator in the $\hat s_2$-direction. Moreover, $H^1(C,L_2) = 0$ implies that the natural projection
    \begin{align}\label{L_2-proj}
        \cL_2/\Mbar_g^*(\bP^N,m)\to\cT'/\Mbar_g^*(\bP^N,m)
    \end{align}
    is a rel--$C^\infty$ submersion\footnote{With some more work using Theorem \ref{rel-smooth-obs-bundle-rep}, we can even show that $\cL_2\to\cT'$ is a rel--$C^\infty$ vector bundle near $q$.} near $(C\subset\bP^N,u,0,s_2)$. Thus, we may choose a rel--$C^\infty$ section $\sigma_2$ of \eqref{L_2-proj}, defined near $q$, with
    \begin{align*}
        \sigma_2(q) = s_2\in H^0(C,\cO_C(\ell)\otimes(u^*\cO_X(1))^{\otimes 3p}).
    \end{align*}
    Using this, for $\hat q = (\hat C\subset\bP^N,\hat u,\hat\eta)\in\cT'$ close to $q$, define $\tau_1'(\hat q),\ldots,\tau_r'(\hat q)\in\hat C$ to be the $r$ distinct zeros of $\sigma_2(\hat q)$ close to $z_1,\ldots,z_r \in C$ respectively. 
    
    We need to show that $\tau_1',\ldots,\tau_r'$ are rel--$C^\infty$ near $q$. For this, recall that $s_2'$ is a global section of $L_2$ on $C$ which is non-vanishing on $z_1,\ldots,z_r$. Choose a rel--$C^\infty$ section $\sigma_2'$ of \eqref{L_2-proj}, defined near $q$, with $\sigma_2'(q) = s_2'$. Then, for $\hat q\in\cT'$ near $q$, we can realize $\tau'_1(\hat q),\ldots,\tau_r'(\hat q)$ as the $r$ distinct zeros of the meromorphic function $\sigma_2(\hat q)/\sigma_2'(\hat q)$ close to $z_1,\ldots,z_r \in C$ respectively. Now, if a holomorphic function $\varphi(\zeta)$ of one variable is defined for $|\zeta|\le 1$, has no zeros on $|\zeta| = 1$ and has a unique zero in $|\zeta|<1$ then, using the residue theorem, this zero is given by the formula
    \begin{align*}
        \frac1{2\pi i}\oint_{|\zeta|=1}\zeta\frac{\varphi'(\zeta)}{\varphi(\zeta)}d\zeta.
    \end{align*}
    This complex analysis fact (applied near $z_1,\ldots,z_r$) and the rel--$C^\infty$ dependence of $\sigma_2(\hat q)$ and $\sigma_2'(\hat q)$ on $\hat q$ show that $\tau'_1(\hat q),\ldots,\tau'_r(\hat q)$ are also rel--$C^\infty$ functions of $\hat q$. This completes the construction of $r,\tau_i,\tau'_i$ and concludes the proof.
\end{proof}

Observe that $\fs^{-1}(0)$ is identified with $\cM$ under the projection $\cT\to\cT'$. Therefore, a $G$-invariant neighborhood $\cT^{\reg}\subset \cT$ of $\fs^{-1}(0)$ can be realized as the graph of the rel--$C^\infty$ section of $\bH_{\cT'}^*$ constructed in Proposition \ref{alpha-smoothness-technical}. In particular, $\cT^{\reg}$ carries a canonical rel--$C^\infty$ structure over $\Mbar_g^*(\bP^N,m)$. This completes the proof of Proposition \ref{gkc-properties-explicit}\eqref{thickening-is-smooth}.

\subsection{Obstruction bundle}\label{smoothness-of-obstruction-bundle}

Recall from \eqref{obstruction-bundle-fibre-def} that the obstruction bundle $\cE$ has three summands. The first summand is a trivial bundle with fibre $\fs\fu(N+1)$ and therefore has a natural rel--$C^\infty$ vector bundle structure. The third summand acquires a natural rel--$C^\infty$ structure as it is the pullback to $\cT$ of the rel--$C^\infty$ vector bundle $\bH^*_{\cT'}\to\cT'$ from Definition \ref{dual-hodge-on-thickening}. 

It remains to consider the second summand of the obstruction bundle. This summand is pulled back under the projection map $\cT\to\cT'$. Its fibre over any point $(C\subset\bP^N,u,\eta)\in\cT'$ is given by the vector space $E_{(C\subset\bP^N,u)}$ of \eqref{obstruction-space-defined}. Denoting by $\iota \cl C\hkra \bP^N$ the inclusion, we see that $$E_{(C\subset\bP^N,u)} = H^0(C,(u,\iota)^*E).$$
As the auxiliary datum fixed at the beginning of \textsection\ref{transversality-implies-smoothness-proof} is unobstructed, the second summand of $\cE$ carries a canonical rel--$C^\infty$ vector bundle structure by Theorem \ref{rel-smooth-obs-bundle-rep}.
This completes the proof of the first half of Proposition \ref{gkc-properties-explicit}\eqref{bundle-section-is-smooth}.

\subsection{Obstruction section}\label{smoothness-of-obstruction-section}

Recall from Construction \ref{high-level-description-defined} that the obstruction section $\fs$ is given by the formula
\begin{align*}
    \fs(C\subset\bP^N,u,\eta,\alpha) = (\text{i}\log\lambda_\cU(C\subset\bP^N,u),\eta,\alpha),
\end{align*}
with the  three components corresponding to the three summands of the obstruction bundle $\cE$. It is immediate from the construction of the rel--$C^\infty$ vector bundle structure on $\cE$ and the Yoneda lemma that the second component of $\fs$ is rel--$C^\infty$. Since $\cT$ was constructed as the graph of a rel--$C^\infty$ section defined on $\cT'$, the fact that the third component of $\fs$ is rel--$C^\infty$ is immediate.

We are left to prove that $\lambda_\cU$ is also rel--$C^\infty$ on $\cT$. Clearly, it is pulled back from $\cT'$, and so it suffices to prove that $\lambda_\cU$ is rel--$C^\infty$ on $\cT'$. For this, we need to show that each term appearing in the linear combination \eqref{lambda-defined} is rel--$C^\infty$ on $\cT'$. 

We first tackle the terms $(C\subset\bP^N,u,\eta)\mapsto\chi_i(C,u)$. To see that these are rel--$C^\infty$ on $\cT'$, we argue as follows. First, define $\tilde Z$ to be the polyfold of maps to $E$ in class $\tilde A$ whose projection to $X$ is an $\Omega$-stable map (similar to Definition \ref{polyfolds-defined}). There is a natural sc-smooth projection $\tilde Z\to Z$, where $Z$ is the polyfold from Definition \ref{polyfolds-defined}. The projection is given by composing a map to $E$ with the projection $E\to X$. It is sc-smooth because it is given by a linear projection onto a direct summand in local charts. This shows that the pullback of $\chi_i$ is sc-smooth on $\tilde Z$. As explained in Appendix \ref{rel-smooth-addendum}, every point in $\cT'$ has a local rel--$C^\infty$ chart given by some finite dimensional submanifold $M\subset\tilde Z$. (Note that $M$ inherits a $C^\infty$ structure from $Z$. The rel--$C^\infty$ structure on $M$ is obtained from this by disallowing differentiation in the directions corresponding to deformations of the map to $\bP^N$.) We conclude that the term $(C\subset\bP^N,u,\eta)\mapsto\chi_i(C,u)$ is rel--$C^\infty$ on $\cT'$. 

Next, consider the term $(C\subset\bP^N,u,\eta)\mapsto\text{st}_{D_i}(C\subset\bP^N,u)$, over the open subset $\cT'_i\subset\cT'$ where conditions \eqref{transverse-marking} and \eqref{avoid-boundary} of Definition \ref{good-covering} hold for $D_i$. The following statement will show that $\lambda_\cU$ is rel--$C^\infty$ on $\cT'$. Its proof uses a technical result developed in Appendix \ref{rel-smooth-addendum}.

\begin{proposition}[Intersection with divisor is rel--$C^\infty$]
    The assignment given by $$(C\subset\bP^N,u,\eta)\mapsto\normalfont\text{st}_{D_i}(C\subset\bP^N,u)$$ 
    defines a rel--$C^\infty$ map
    \begin{align*}
        \normalfont\text{st}_{D_i}\cl \cT'_i/\Mbar_g^*(\bP^N,m)&\to\Mbar^{\;*,p}_{g,[3d]}(\bP^N,m)/\Mbar_g^*(\bP^N,m).
    \end{align*}
\end{proposition}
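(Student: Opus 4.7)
The plan is to exhibit $u^{-1}(D_i)$ as the transverse preimage of a submanifold under the universal evaluation map, and then to apply a rel--$C^\infty$ implicit function theorem to conclude that the intersection points vary in a rel--$C^\infty$ fashion over $\cT'_i$.

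First, I would reduce the problem to the ordered setting. The forgetful map $\Mbar^{\;*,p}_{g,3d}(\bP^N,m)\to\Mbar^{\;*,p}_{g,[3d]}(\bP^N,m)$ is a free $S_{3d}$-quotient, so the claim is local on $\cT'_i$ and can be verified after passing to a local lift $\widetilde{\text{st}}_{D_i}$ of $\text{st}_{D_i}$ valued in the ordered moduli space. Moreover, $\Mbar^{\;*,p}_{g,3d}(\bP^N,m)$ can be identified (via the tautological morphism that records the marked points) with an open subscheme of the $3d$-fold fibre product $\cC_g\times_{\Mbar_g^*(\bP^N,m)}\cdots\times_{\Mbar_g^*(\bP^N,m)}\cC_g$, cut out by the open conditions that the marked points are distinct, non-nodal and satisfy the degree compatibility with $\iota^*\cO_{\bP^N}(1)$. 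Therefore, constructing $\widetilde{\text{st}}_{D_i}$ as a rel--$C^\infty$ map amounts to producing $3d$ rel--$C^\infty$ sections $\sigma_1,\dots,\sigma_{3d}\cl\cT'_i\to\cC_{\cT'_i}$ of the universal curve whose images collectively parametrize $u^{-1}(D_i)$; the three open conditions are guaranteed by the choices already made (Definition \ref{good-covering} and the degree computation recorded in the paragraph before Lemma \ref{marked-points-formal-consequence}).

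Second, I would produce these sections via the universal evaluation. The construction in Proposition \ref{rep-shear} equips $\cC_{\cT'}$ with a rel--$C^\infty$ evaluation map to $E$; postcomposing with the projection $E\to X$ yields a rel--$C^\infty$ map $\text{ev}_X\cl\cC_{\cT'}\to X$. By the definition of $\cT'_i$ (conditions \eqref{transverse-marking}--\eqref{avoid-boundary} of Definition \ref{good-covering}), the restriction of $\text{ev}_X$ to each fibre of $\cC_{\cT'_i}\to\cT'_i$ is transverse to $D_i$, meets it in exactly $3d$ distinct non-nodal points, and avoids $\partial D_i$. An implicit function theorem argument applied to the defining equations of $D_i$, pulled back along $\text{ev}_X$, then yields local rel--$C^\infty$ sections of $\cC_{\cT'_i}/\cT'_i$ near each preimage point, and these sections assemble into the desired map $\widetilde{\text{st}}_{D_i}$ by openness of the distinctness and non-nodality conditions.

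The main technical obstacle is verifying that the implicit function theorem applies in the appropriate vertical sense. The rel--$C^\infty$ structure on $\cC_{\cT'}$ is obtained via abstract representability, so fibrewise transversality of $\text{ev}_X$ to $D_i$ (which is the consequence of Definition \ref{good-covering} for individual maps) must be upgraded to genuine vertical transversality of $\text{ev}_X\cl\cC_{\cT'_i}/\cT'_i\to X/\text{pt}$ in a form compatible with the rel--$C^\infty$ structure, so that Lemma 5.10 of \cite{Swa21} (the rel--$C^\infty$ inverse function theorem with parameters) can be invoked. This is precisely the content of the refinement of the representability theorem to be established in Appendix \ref{rel-smooth-addendum}: it provides the sharper statement that the fibrewise evaluation on the universal curve is rel--$C^\infty$ transverse to submanifolds of the target, once fibrewise transversality holds pointwise. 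Once this is in hand, the patching of local sections into a global rel--$C^\infty$ map into the fibre product (and, via the open embedding, into $\Mbar^{\;*,p}_{g,3d}(\bP^N,m)$) is automatic, and descent to $\Mbar^{\;*,p}_{g,[3d]}(\bP^N,m)$ produces $\text{st}_{D_i}$ as claimed.
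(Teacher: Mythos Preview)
Your proposal is correct and follows essentially the same route as the paper: both arguments restrict to the non-nodal locus of the universal curve, invoke the refinement in Appendix~\ref{rel-smooth-addendum} (Proposition~\ref{extra-smoothness}) to make the evaluation map $\cC^\circ_{\cT'_i}/\Mbar_g^*(\bP^N,m)\to X/\text{pt}$ rel--$C^\infty$ with the curve-fibre direction now vertical, apply the rel--$C^\infty$ inverse function theorem with parameters (\cite[Lemma 5.10]{Swa21}) to cut out the $D_i$-preimage as a degree-$3d$ cover, take local sections, and descend through the $S_{3d}$-quotient. The only cosmetic difference is that the paper phrases the output as a zig-zag $\cT'_i\leftarrow\tilde\cT'_i\to\cC_g^\circ$ rather than directly as sections of $\cC_{\cT'_i}\to\cT'_i$, and is slightly more careful to keep the base as $\Mbar_g^*(\bP^N,m)$ throughout rather than $\cT'_i$.
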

\begin{proof}
    Consider the following diagram representing the universal family over $\cT'_i$.
    \begin{center}
    \begin{tikzcd}
        \cT_i' \arrow[d] & \arrow[l] \cC_{\cT_i'} \arrow[d] \arrow[r,"f_i"] & E \arrow[d] \\
        \Mbar_g^*(\bP^N,m) & \arrow[l] \cC_g \arrow[r] & \bP^N
    \end{tikzcd}
    \end{center}
    In particular, $f_i\cl \cC_{\cT'_i}/\cC_g\to E/\bP^N$ is a rel--$C^\infty$ map.
    Define the open subset 
    \begin{align*}
        \cC_g^\circ\subset\cC_g\to\Mbar_g^*(\bP^N,m)    
    \end{align*}
    to be the complement of the nodes in the fibres of the universal curve on $\Mbar_g^*(\bP^N,m)$. Then, the open subset $\cC^\circ_{\cT'_i}\subset\cC_{\cT'_i}$ obtained as the fibre product
    \begin{align*}
        \cC^\circ_g\times_{\Mbar_g^*(\bP^N,m)}\cT'_i
    \end{align*}
    carries a natural rel--$C^\infty$ structure over $\Mbar_g^*(\bP^N,m)$. Restrict the above diagram to $\cC^\circ_g$ and compose with the projection $p_X:E\to X$ to get the following diagram.
    \begin{center}
    \begin{tikzcd}
        \cT_i' \arrow[d] & \arrow[l] \cC^\circ_{\cT_i'} \arrow[d] \arrow[r,"f_i^\circ"] & E \arrow[r, "p_X"] & X \\
        \Mbar_g^*(\bP^N,m) & \arrow[l] \cC_g^\circ & &
    \end{tikzcd}
    \end{center}
    By Proposition \ref{extra-smoothness}, the map
    \begin{align*}
        f_i^\circ\cl \cC^\circ_{\cT'_i}/\Mbar_g^*(\bP^N,m)\to E/\text{pt}    
    \end{align*}
    is rel--$C^\infty$. Since the fibres of $\cC^\circ_{\cT'_i}\to\cT_i'$ are transverse\footnote{These fibre directions are `made available for differentiation' by Proposition \ref{extra-smoothness}.} to $D_i\subset X$ under the map $p_X\circ f_i^\circ$, the inverse function theorem with parameters (e.g., see \cite[Lemma 5.10]{Swa21}) shows that the subset 
    \begin{align*}
        \tilde\cT_i' := (p_X\circ f_i^\circ)^{-1}(D_i)\subset\cC^\circ_{\cT'_i}    
    \end{align*}
    is a rel--$C^\infty$ submanifold over $\Mbar_g^*(\bP^N,m)$. By assumption, the projection $\tilde\cT_i'\to\cT_i'$ is a degree $3d$ covering space.     
    The zig-zag of natural rel--$C^\infty$ projection maps
    \begin{align*}
        \cT_i'/\Mbar_g^*(\bP^N,m)\xleftarrow{}\tilde\cT'_i/\Mbar_g^*(\bP^N,m)\to\cC_g^\circ/\Mbar_g^*(\bP^N,m)
    \end{align*}
    now allows us to construct $\text{st}_{D_i}$ as a rel--$C^\infty$ map. Explicitly, we may locally take $3d$ sections of the covering space $\tilde\cT'_i\to\cT_i'$ (one section per sheet) and then project these along $\tilde\cT_i'\to\cC^\circ_g$. This gives a locally defined map to $\Mbar_{g,3d}^{*,p}(\bP^N,m)$, where the latter is realized as a subset of the $3d$-fold fibre product of $\cC^\circ_g$ over $\Mbar^*_g(\bP^N,m)$. Upon passing to the quotient under the free permutation action of $S_{3d}$ on the marked points, these locally defined maps patch together to give $\text{st}_{D_i}$.
\end{proof}

This completes the proof of the second half of Proposition \ref{gkc-properties-explicit}\eqref{bundle-section-is-smooth}.

\subsection{Group action and zero locus}\label{smoothness-of-group-action}

Since the rel--$C^\infty$ structure on $\cT$ is induced from $\cT'$, the $G$-action defines a rel--$C^\infty$ map
\begin{align*}
    (G\times\cT)/(G\times\Mbar^*_g(\bP^N,m))\to\cT/\Mbar^*_g(\bP^N,m)
\end{align*}
if the corresponding map with $\cT$ replaced by $\cT'$ is of class rel--$C^\infty$. But this is clear once we observe that the corresponding natural transformation at the level of the functor $\fF$ from Definition \ref{functor-of-points} is well-defined and is therefore represented by a rel--$C^\infty$ map by the Yoneda lemma. The next two statements establish finiteness of stabilizers near $\fs^{-1}(0)$, respectively local linearity in the sense of \cite[Definition 4.20]{AMS21}, of the $G$-action. The latter property says that near any $x \in \cT'$ we can find local coordinates of $\cT'$ in which the stabilizer $G_x$ acts linearly. 

\begin{lemma}[Finite stabilizers]\label{finite-stabilizers-lemma}
    In a sufficiently small neighborhood of $\fs^{-1}(0)$ in $\cT$, the stabilizer of any point is a finite subgroup of $G$.
\end{lemma}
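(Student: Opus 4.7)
The plan proceeds in two stages: first I identify the stabilizer of each point of $\fs^{-1}(0)$ with the automorphism group of the underlying $J$-holomorphic stable map, thereby showing it is finite; then I propagate finiteness to a neighborhood using compactness of $\fs^{-1}(0)$ together with a standard fact about compact Lie groups.

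For the first stage, I fix a point $x = (C\subset\bP^N,u,0,0)\in\fs^{-1}(0)$, whose stabilizer $G_x$ consists of $g\in G$ with $g(C)=C$ and $u\circ(g|_C)^{-1}=u$. The groupoid equivalence established in Discussion \ref{groupoid-equivalence}---whose fullness and faithfulness rely only on condition \eqref{very-ample-acyclic-line-bundle} of Definition \ref{unobstructed-aux}, which holds by unobstructedness---identifies $G_x$ canonically with $\Aut(C,u)$. Since $(C,u)$ is a stable map, $\Aut(C,u)$ is finite and so is $G_x$.

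For the second stage, I observe that $\fs^{-1}(0)$ is compact: property \eqref{footprint} presents $\fs^{-1}(0)/G$ as homeomorphic to the compact Gromov--Witten moduli space $\Mbar_g(X,A;J)$, and $G$ itself is compact. I then argue by contradiction. If the lemma fails, I extract a sequence $x_\nu\to x_\infty$ in $\cT$ with $x_\infty\in\fs^{-1}(0)$ and each $G_{x_\nu}$ infinite. Because $G$ is a compact Lie group, any discrete closed subgroup is finite, so each $G_{x_\nu}$ must have positive-dimensional identity component; this furnishes unit vectors $v_\nu\in\fg=\text{Lie}(G)$ (with respect to a fixed inner product) such that $\exp(tv_\nu)\in G_{x_\nu}$ for all $t\in\bR$. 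Compactness of the unit sphere in $\fg$ yields, after passing to a subsequence, $v_\nu\to v_\infty$ with $\|v_\infty\|=1$. Continuity of the $G$-action on $\cT$ then gives $\exp(tv_\infty)\in G_{x_\infty}$ for all $t\in\bR$, and finiteness of $G_{x_\infty}$ forces $v_\infty=0$, contradicting $\|v_\infty\|=1$.

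The main subtlety lies in Stage 1: I need the canonical isomorphism $G_x\cong\Aut(C,u)$ rather than just an inclusion. Fullness of the functor in Discussion \ref{groupoid-equivalence}, which uses that $C\subset\bP^N$ is not contained in any hyperplane (a consequence of $[C\subset\bP^N]\in\Mbar_g^*(\bP^N,m)$) together with the unitary rigidity provided by $\lambda_\cU(C\subset\bP^N,u)$ being the identity coset, is exactly what supplies this. Stage 2 is then a soft general-position argument, so I do not anticipate any further obstacles.
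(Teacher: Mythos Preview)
Your proof is correct and follows essentially the same route as the paper: identify stabilizers on $\fs^{-1}(0)$ with $\Aut(C,u)$ via Discussion~\ref{groupoid-equivalence}, then propagate to a neighborhood. The paper compresses Stage~2 into a single sentence (``This implies\ldots''), while you spell out the standard upper-semicontinuity argument via one-parameter subgroups; both are fine. One minor point of logical ordering: you invoke property~\eqref{footprint} for compactness of $\fs^{-1}(0)$, but in the paper that is Lemma~\ref{zero-locus-homeo-lemma}, proved \emph{after} the present lemma. The compactness you need is available earlier, via the identification $\fs^{-1}(0)=\cM$ and Lemma~\ref{unitary-holo-maps-compact}, so you should cite that instead.
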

\begin{proof}
    Let $\Gamma\subset G$ be the stabilizer of a point $(C\subset\bP^N,u,0,0)$ lying in $\fs^{-1}(0)$. The equivalence of groupoids established in Discussion \ref{groupoid-equivalence} shows that $\Gamma$ is naturally identified with the automorphism group of the stable map $(C,u)$. Thus, $\Gamma$ is finite. This implies that the action of $G$ on $\cT$ also has finite stabilizers in a neighborhood of the subset $\fs^{-1}(0)\subset\cT$.
\end{proof}

To see local linearity, let $\tilde{x}\in \cT$ be a point close to $\fs\inv(0)$ with image $x$ in $\Mbar_g^*(\bP^N,m)$. Due to smoothness of the action on $\Mbar_g^*(\bP^N,m)$, we may pick a coordinate chart $V\sub \Mbar_g^*(\bP^N,m)$ centred at $x$ on which $G_{\tilde x}$ acts linearly. Letting $U$ be any $G_{\tilde{x}}$-invariant neighbourhood with $\pi(U) \subset V$, it remains to show the following lemma.

\begin{lemma}[Locally linearizing an action]\label{local-linear-action}
    Let $V$ be a finite dimensional vector space, $\pi:M\to V$ a rel--$C^\infty$ manifold and $\Gamma$ a finite group. Assume that we are given a rel--$C^\infty$ action of $\Gamma$ on $M/V$ which covers a linear $\Gamma$-representation $\theta$ on $V$. Let $x\in\pi^{-1}(0)$ be fixed by $\Gamma$. Then, $M/V$ has a rel--$C^\infty$ chart at $x$ in which the $\Gamma$-action is linear.
\end{lemma}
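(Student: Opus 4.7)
The plan is to adapt the classical Bochner linearization argument to the rel--$C^\infty$ category, using the fact that the action of the finite group $\Gamma$ on the base $V$ is already linear via $\theta$.

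First, I would choose an arbitrary rel--$C^\infty$ chart $\phi_0 = (y_0, \pi)\cl U_0 \to \bR^n \times V$ centered at $x$, i.e., with $y_0(x) = 0$. By shrinking $U_0$ to the $\Gamma$-invariant open neighborhood $U := \bigcap_{\gamma\in\Gamma}\gamma\cdot U_0$, I may assume the domain is $\Gamma$-invariant. In this chart, each $\gamma\in\Gamma$ acts as
\begin{align*}
    \gamma\cdot(y, v) = (F_\gamma(y, v),\, \theta(\gamma) v),
\end{align*}
where $F_\gamma$ is rel--$C^\infty$ on its domain, $F_\gamma(0, 0) = 0$, and the assignment $\gamma\mapsto F_\gamma$ satisfies the cocycle identity inherited from the group action.

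Next, since each $\gamma$ fixes $x$ and commutes with $\pi$ (covering the linear $\theta$-action on $V$), its derivative at $x$ restricts to a linear automorphism of the vertical tangent space $T_{M/V, x}$. Trivializing $T_{M/V, x} \cong \bR^n$ via $d\phi_0(x)$, this gives a linear representation $L\cl \Gamma\to \GL(\bR^n)$, with $L_\gamma = \partial_y F_\gamma(0, 0)$. I then define the averaged map $\psi = (\tilde y, \pi)\cl U \to \bR^n\times V$ by
\begin{align*}
    \tilde y(p) := \frac{1}{|\Gamma|} \sum_{\gamma\in\Gamma} L_\gamma^{-1} \cdot y_0(\gamma\cdot p).
\end{align*}
Since $L$ is a representation, a short reindexing calculation shows $\tilde y(\gamma_0\cdot p) = L_{\gamma_0}\tilde y(p)$, so in the new chart $\Gamma$ acts as $(\tilde y, v)\mapsto (L_\gamma \tilde y, \theta(\gamma) v)$, which is linear.

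Finally, I would verify that $\psi$ is a rel--$C^\infty$ diffeomorphism on a neighborhood of $x$. It is rel--$C^\infty$ because it is a finite $\bR$-linear combination of the rel--$C^\infty$ maps $p\mapsto y_0(\gamma\cdot p)$, using the assumption that the action is rel--$C^\infty$. Its vertical derivative at $x$ computes to
\begin{align*}
    d\tilde y(x)|_{T_{M/V, x}} = \frac{1}{|\Gamma|}\sum_\gamma L_\gamma^{-1}\circ L_\gamma = \mathrm{id},
\end{align*}
so by the inverse function theorem with parameters (\cite[Lemma 5.10]{Swa21}) applied to the map $\psi$ over $V$, it is a rel--$C^\infty$ diffeomorphism onto a neighborhood of $(0, 0)\in\bR^n\times V$ after possibly shrinking $U$ further to a $\Gamma$-invariant subset. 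No step is a serious obstacle: the only point requiring care is keeping track of the $\Gamma$-invariance of the shrunken domain, which is automatic since all constructions are $\Gamma$-equivariant.
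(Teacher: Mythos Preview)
Your proof is correct and is essentially the same as the paper's: both implement the classical Bochner averaging trick in the rel--$C^\infty$ category, defining the new vertical coordinate by $\frac{1}{|\Gamma|}\sum_\gamma L_\gamma^{-1}F_\gamma$ and invoking the inverse function theorem with parameters \cite[Lemma 5.10]{Swa21} to conclude. The only cosmetic difference is that the paper works directly in a product chart $V\times W$ from the outset, whereas you phrase the averaging as $p\mapsto L_\gamma^{-1}y_0(\gamma\cdot p)$ before unwinding in coordinates; the content is identical.
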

\begin{proof}
    By shrinking to a $\Gamma$-invariant coordinate neighborhood of $x\in M$, we may assume that $M$ is an open subset of a product $V\times W$, with $W$ another finite dimensional vector space with $x=(0,0)$. The action of any $g\in\Gamma$ is given, in these coordinates, by a rel--$C^\infty$ map $(v,w)\mapsto(\theta(g)v,\varphi_g(v,w))$ with
    \begin{align*}
        \varphi_g(\theta(h)v,\varphi_h(v,w)) \equiv \varphi_{gh}(v,w).
    \end{align*}
    Define the representation $\rho\cl \Gamma\to \text{GL}(W)$ by $\rho(g) = \frac{\partial\varphi_g}{\partial w}(0,0)$. Define the map $(v,w)\mapsto(v,T(v,w))$ by
    \begin{align*}
       T(v,w) = \frac1{|\Gamma|}\sum_{g\in\Gamma}\rho(g)^{-1}\varphi_g(v,w)
    \end{align*}
    and observe that $\frac{\partial T}{\partial w}(0,0)$ is the identity map. Using the implicit function theorem with parameters \cite[Lemma 5.10]{Swa21}, it follows that $(v,w)\mapsto(v,T(v,w))$ is a rel--$C^\infty$ coordinate change on $M/V$ near $x$. A direct check shows that in these new coordinates the $\Gamma$-action is given by $\theta\oplus\rho$.
\end{proof}

This completes the proof of Proposition \ref{gkc-properties-explicit}\eqref{action-is-smooth}.

\begin{lemma}[Zeros of the obstruction section]\label{zero-locus-homeo-lemma}
    The zero locus $\fs\inv(0)$ is compact and the natural map 
    \begin{align*}
        \fs^{-1}(0)/G\to\Mbar_g(X,A;J)   
    \end{align*}
    is a homeomorphism.
\end{lemma}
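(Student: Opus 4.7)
The plan is to identify $\fs^{-1}(0)$ with the compact space $\cM$ from Definition \ref{unitary-holo-maps} and then reduce the homeomorphism claim to a standard compact-to-Hausdorff argument, leveraging the groupoid equivalence already established in Discussion \ref{groupoid-equivalence}.

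First, I would unpack the defining conditions for $\fs^{-1}(0)$. A point $(C\subset\bP^N,u,\eta,\alpha)$ lies in $\fs^{-1}(0)$ exactly when all three components of $\fs$ vanish. The vanishing of $\eta$ (together with the equation in condition \eqref{thickening-condition-2} of Construction \ref{high-level-description-defined}) forces $\delbar_J u = 0$, so $u$ is $J$-holomorphic. The vanishing of $\alpha$, together with the equation \eqref{fix-polarization-alpha}, gives $\cO_C(1)\simeq\fL_u^{\otimes p}$ as holomorphic line bundles, since the exponential map sends $0$ to the trivial class (and Proposition \ref{alpha-smoothness-technical} ensures that $\alpha$ is well-defined as a section of $\bH_{\cT'}^*$). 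Finally, $\text{i}\log\lambda_\cU(C\subset\bP^N,u)=0$ translates, via Definition \ref{positive-definite-matrices}, to $\lambda_\cU(C\subset\bP^N,u)$ being the identity coset in $\cG/G$. Comparing with Definition \ref{unitary-holo-maps}, this exactly means that the underlying framed stable map lies in $\cM$. Thus $\fs^{-1}(0)$ is identified as a set with $\cM$ (with $\eta = 0,\alpha=0$).

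Next, compactness of $\fs^{-1}(0)$ follows immediately from Lemma \ref{unitary-holo-maps-compact}, provided the topology induced on $\fs^{-1}(0)$ by the rel--$C^\infty$ structure on $\cT$ coincides with the Gromov topology used to define $\cM$. This is where some care is required: the rel--$C^\infty$ structure on $\cT$ was built in Proposition \ref{rep-shear} from the pseudo-holomorphic moduli functor for maps into $E$, whose underlying topological space (by the polyfold-theoretic input of Theorem \ref{rel-smooth-representability}) is the Gromov topology. The projection $E\to X$ identifies Gromov-convergent sequences of stable maps $F_\nu\cl C_\nu\to E$ with Gromov-convergent framed stable maps $(u_\nu,\iota_\nu)\cl C_\nu\to X\times\bP^N$, so the induced topology on $\cM\subset\cT$ agrees with the one used in Definition \ref{unitary-holo-maps}.

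With compactness in hand, consider the natural map $\fs^{-1}(0)/G\to\Mbar_g(X,A;J)$ that forgets the embedding $C\subset\bP^N$. Discussion \ref{groupoid-equivalence} already shows that the corresponding forgetful functor $\mathrm{ff}$ is an equivalence of groupoids: essential surjectivity uses that $\fL_u^{\otimes p}$ is very ample with vanishing $H^1$ (the unobstructedness assumption), and fully faithfulness uses that framed stable maps have no nontrivial automorphisms and that $\cG$ acts transitively on $\cG/G$. Passing to isomorphism classes gives a bijection on the underlying sets $\fs^{-1}(0)/G\to\Mbar_g(X,A;J)$. Continuity follows because a Gromov-convergent sequence of framed $J$-holomorphic maps in $\cM$ projects to a Gromov-convergent sequence of stable maps, which is exactly the topology on $\Mbar_g(X,A;J)$.

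The final step is to invoke the standard fact that a continuous bijection from a compact space to a Hausdorff space is a homeomorphism; $\fs^{-1}(0)/G$ is compact as a continuous image of the compact space $\fs^{-1}(0)$, and $\Mbar_g(X,A;J)$ is Hausdorff (e.g., by the Gromov compactness theorem, or as recalled in the discussion following Definition \ref{polyfolds-defined}). The only mildly delicate point in the proof is the identification of the two topologies on $\fs^{-1}(0)\simeq\cM$; once that is settled everything else is essentially bookkeeping around previously established results.
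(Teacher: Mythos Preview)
Your proposal is correct and follows essentially the same approach as the paper: identify $\fs^{-1}(0)$ with $\cM$, invoke Lemma \ref{unitary-holo-maps-compact} for compactness, use the groupoid equivalence of Discussion \ref{groupoid-equivalence} for bijectivity, and finish with the compact-to-Hausdorff argument. The paper's proof is terser and does not spell out the topology-matching step you highlight, but the logical structure is identical.
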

\begin{proof}
    Recall that $\fs^{-1}(0)$ is identified with the space $\cM$ (Definition \ref{unitary-holo-maps}) and thus, it is compact by Lemma \ref{unitary-holo-maps-compact}. The forgetful map $\fs^{-1}(0)\to\Mbar_g(X,A;J)$ is evidently continuous and therefore descends to the $G$-quotient as a continuous map.
    
    Bijectivity of the natural map $\fs^{-1}(0)/G\to\Mbar_g(X,A;J)$ follows by passing to the set of isomorphism classes in the groupoid equivalence of Discussion \ref{groupoid-equivalence}. Since $\fs^{-1}(0)/G$ is compact and $\Mbar_g(X,A;J)$ is known to be Hausdorff, this continuous bijection must be a homeomorphism.
\end{proof}

This completes the proof of Proposition \ref{gkc-properties-explicit}\eqref{footprint}.

\subsection{Stable almost complex structure}
    To establish the assertion of Proposition \ref{gkc-properties-explicit}\eqref{orientation-complex-structure}, we begin by observing that the bundle $\cE\oplus\underline{\fg}$ is already a complex vector bundle. Indeed, the second and third summand in \eqref{obstruction-bundle-fibre-def} are complex vector spaces and we have an obvious identification $\fg\oplus\fs\fu(N+1) = \fs\fl(N+1,\bC)$.
    
    It remains to find a stable almost complex structure on the vertical tangent bundle of ${\cT/\Mbar^*_g(\bP^N,m)}$. Note that the vertical tangent bundle is given by
    \begin{align*}
        (\iota:C\hookrightarrow\bP^N,u,\eta,\alpha)\mapsto\ker\left(\ker D(\delbar_{\tilde J})_{(\iota,u,\eta)}\twoheadrightarrow\ker D(\delbar_{J^\bP})_\iota\right).
    \end{align*}
    Since $\ker D(\delbar_{J^\bP})_\iota$ is a $\bC$-vector space, we focus on the vector bundle
    \begin{align}\label{vertical-tangent-over-artin}
        (\iota:C\hookrightarrow\bP^N,u,\eta,\alpha)\mapsto\ker D(\delbar_{\tilde J})_{(\iota,u,\eta)}
    \end{align}
    instead. 
    We are left to show how to produce a stable complex lift of the topological vector bundle defined by \eqref{vertical-tangent-over-artin}. Imitating the idea of \cite[Theorem 3.1.6(i)]{MS12}, we accomplish this by canonically decomposing the linearization
    \begin{align*}
        D(\delbar_{\tilde J})_{(\iota,u,\eta)} = D(\delbar_{\tilde J})_{(\iota,u,\eta)}^\bC + A_{(\iota,u,\eta)}
    \end{align*}
    into a $\bC$-linear Cauchy--Riemann operator and a $\bC$-antilinear zeroth order operator and then using the straight line homotopy
    \begin{align}\label{homotopy-CR-op}
        L_{(\iota,u,\eta)}(t) := D(\delbar_{\tilde J})_{(\iota,u,\eta)}^\bC + tA_{(\iota,u,\eta)},\quad t\in[0,1].
    \end{align}
    
    Arguing as in Lemma \ref{k-existence}, we produce an integer $l\ge k$ such that the $\bC$-vector space $E_{(C\subset\bP^N,u),l}$ from Definition \ref{l-transverse} maps surjectively onto the cokernel of \eqref{homotopy-CR-op} for each $(\iota,u)$ in $\cM$, $\eta = 0$ and $t\in[0,1]$.\footnote{For $t\ne 1$, the operator $L_{(\iota,u,\eta)}(t)$ is not the linearized Cauchy-Riemann operator of a pseudo-holomorphic map in any obvious way. The analogue of Lemma \ref{open-l-transverse} (which gives openness of surjectivity) therefore needs to be shown directly, using a (linear) gluing argument as in \cite[Appendix B]{P16}.}

    For $t\in[0,1]$ and $(\iota:C\hookrightarrow\bP^N,u,\eta,\alpha)$ in a neighborhood of $\fs^{-1}(0)\subset\cT$, we have the surjective operator
    \begin{align*}
        L_{(\iota,u,\eta)}(t;l)&:\Omega^0(C,u^*T_X)\oplus E_{(C\subset\bP^N,u),l}\to\Omega^{0,1}(\tilde C,\tilde u^*T_X)\\
        (\xi,\eta')&\mapsto L_{(\iota,u,\eta)}(t)\,\xi + \langle\eta'\rangle\circ d\iota_{\tilde C}.
    \end{align*}
    We claim that
    \begin{align}\label{stabilized-1-param-kernels}
        (\iota:C\hookrightarrow\bP^N,u,\eta,\alpha,t)\mapsto \ker L_{(\iota,u,\eta)}(t;l)
    \end{align}
    defines a topological vector bundle on a neighborhood of the compact subset $\fs^{-1}(0)\times[0,1]\subset\cT\times[0,1]$. To see this, we can construct continuous local trivializations using linear gluing analysis near nodal curves, e.g., following \cite[Appendix B]{P16}, along with the fact that $E_{(C\subset\bP^N,u),l}$ kills the cokernel of \eqref{homotopy-CR-op}.\footnote{There is no need to check that two such continuous local trivializations are compatible in any stronger sense since we are only constructing topological vector bundles.} 
    For each $t_0\in[0,1]$, we have a projection from $\eqref{stabilized-1-param-kernels}|_{t=t_0}$ to the $\bC$-vector bundle
    \begin{align}\label{stab-for-acs}
        (\iota:C\hookrightarrow\bP^N,u,\eta,\alpha)\mapsto E_{(C\subset\bP^N,u),l},
    \end{align}
    given by $(\xi,\eta')\mapsto\eta'$. Since $L_{(\iota,u,\eta)}(1) = D(\delbar_{\tilde J})_{(\iota,u,\eta)}$ is surjective near $\fs^{-1}(0)\subset\cT$, this produces a short exact sequence of vector bundles
    \begin{align*}
        0\to\eqref{vertical-tangent-over-artin}\to\eqref{stabilized-1-param-kernels}|_{t=1}\to\eqref{stab-for-acs}\to 0.
    \end{align*}
    Next, observe that the fibres of the $\bR$-vector bundle
    \begin{align*}
        \ind^{\bC} := \eqref{stabilized-1-param-kernels}|_{t=0} 
    \end{align*}
    carry natural $\bC$-vector space structures, since the operator $L_{(\iota,u,\eta)}(0;l)$ is $\bC$-linear. Taken together with the above short exact sequence, the next lemma implies that \eqref{vertical-tangent-over-artin} is stably equivalent to a $\bC$-vector bundle.

    \begin{lemma}
        The fibrewise $\bC$-linear structure on $\ind^\bC$ is continuous. Thus, $\ind^\bC$ carries the structure of a $\bC$-vector bundle.
    \end{lemma}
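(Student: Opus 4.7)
The plan is to show that the $\mathbb{C}$-linear structure on each fibre of $\ind^\bC$ varies continuously, or equivalently, that the local trivializations constructed by linear gluing analysis (as referenced in the proof of the vector bundle property of \eqref{stabilized-1-param-kernels}) can be arranged to be $\mathbb{C}$-linear. Since the operator $L_{(\iota,u,\eta)}(0;l) = D(\delbar_{\tilde J})_{(\iota,u,\eta)}^{\bC} + \langle\cdot\rangle\circ d\iota_{\tilde C}$ is $\mathbb{C}$-linear (the first summand by construction, and the second because $\langle\cdot\rangle\circ d\iota_{\tilde C}$ is the composition of the evaluation pairing with the $\mathbb{C}$-linear differential of a holomorphic map), its kernel at each point is canonically a $\mathbb{C}$-subspace of $\Omega^0(C,u^*T_X)\oplus E_{(C\subset\bP^N,u),l}$. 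What requires argument is the continuity of multiplication by $i$.

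First I would recall the standard linear gluing construction. Near a nodal fibre $(C_0,u_0,\eta_0)$, one writes down a family of approximate operators $L^{\text{app}}(q)$ on the glued curves $C_q$ parametrised by a gluing parameter $q$, together with a family of approximate right inverses $Q^{\text{app}}(q)$ built from right inverses for the restrictions of $L(0;l)$ to the irreducible components of the normalization. A Newton iteration argument then produces an honest right inverse $Q(q)$ depending continuously on $q$ (and on the other parameters in $\cT$), and this yields a continuous local isomorphism of $\ind^\bC$ with a trivial bundle whose fibre is the kernel over the central fibre. The crucial observation is that all of the operators and extension/restriction maps in this setup are $\mathbb{C}$-linear: the linearization is $\mathbb{C}$-linear, the cut-off/gluing operations preserve the Hermitian structures, and the approximate right inverses can be chosen to be $L^2$-orthogonal projections off the kernel in the $\mathbb{C}$-Hermitian inner product. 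Consequently, the right inverses $Q(q)$ produced by Newton iteration are also $\mathbb{C}$-linear, and the induced local trivializations of $\ind^\bC$ are $\mathbb{C}$-linear isomorphisms.

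Given such local trivializations, multiplication by $i$ on the fibres of $\ind^\bC$ agrees, in each trivialization, with the constant endomorphism given by multiplication by $i$ on the central fibre, so it defines a continuous endomorphism $J$ of the topological $\mathbb{R}$-vector bundle $\ind^\bC$ satisfying $J^2 = -\mathrm{id}$. This upgrades $\ind^\bC$ to a topological $\mathbb{C}$-vector bundle. The main obstacle will be making precise that the ingredients of the gluing construction can all be chosen $\mathbb{C}$-linearly, in particular selecting Hermitian-orthogonal right inverses for the component operators and Hermitian-orthogonal cutoff constructions; once this is set up, the continuity of the $\mathbb{C}$-structure is automatic. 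Away from nodal fibres the statement is much simpler, as the operators vary in a $C^0$ family of $\mathbb{C}$-linear Fredholm operators of constant rank and the kernel bundle is then a $\mathbb{C}$-vector bundle by standard functional analysis.
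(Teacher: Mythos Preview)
Your approach is correct in outline but differs from the paper's argument. Both agree that the assertion is local and that linear gluing analysis (as in \cite[Appendix B]{P16}) supplies continuous $\bR$-linear trivializations of the kernel bundle. You propose to upgrade these trivializations to $\bC$-linear ones by verifying that every ingredient of the gluing construction---pregluing via real-valued cutoffs, right inverses chosen Hermitian-orthogonally, the perturbative correction---respects the $\bC$-structure when the operator is $\bC$-linear. This works, though one terminological slip: the correction step in \emph{linear} gluing is a Neumann series (inverting an operator close to the identity), not a Newton iteration, which is the nonlinear analogue.

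The paper takes a more hands-off route that avoids re-examining the internals of the gluing argument. It fixes non-singular points $p_1,\ldots,p_r\in C$ and $\bC$-linear subspaces $W_i\subset V_i=(T_X)_{u(p_i)}\oplus(E_l)_{(u(p_i),\iota(p_i))}$ so that evaluation induces a $\bC$-linear isomorphism $\ker L_{(\iota,u,\eta)}(0;l)\to\bigoplus_i V_i/W_i$; it then extends the $p_i$ to local holomorphic sections $\tilde p_i$ and the $W_i$ to $\bC$-linear sub-bundles $\tilde W_i$. The resulting evaluation map $\widetilde\eval$ is manifestly fibrewise $\bC$-linear, and linear gluing analysis is invoked only to check that it is continuous. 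Thus the paper separates the two concerns: continuity comes from gluing as a black box, while $\bC$-linearity is visible directly from the target bundle $\bigoplus_i\tilde V_i/\tilde W_i$. Your approach bundles both into a single verification, which is conceptually fine but requires the reader to trust (or check) that nothing in the gluing package breaks $\bC$-linearity.
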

    \begin{proof}
        The assertion is local. Thus, it is enough to prove it near a given point 
        \begin{align*}
            (\iota:C\hookrightarrow\bP^N,u,\eta,\alpha)\in\cT    
        \end{align*}
        in the open neighborhood of $\fs^{-1}(0)\subset\cT$ over which the vector bundle $\ind^\bC$ is defined. By assumption, the $\bC$-linear operator
        \begin{align*}
            L_{(\iota,u,\eta)}(0;l)&:\Omega^0(C,u^*T_X)\oplus E_{(C\subset\bP^N,u),l}\to\Omega^{0,1}(\tilde C,\tilde u^*T_X)\\
            (\xi,\eta')&\mapsto D(\delbar_{\tilde J})_{(\iota,u,\eta)}^\bC\,\xi + \langle\eta'\rangle\circ d\iota_{\tilde C}
        \end{align*}
        is surjective. Fix a finite collection of non-singular points $p_1,\ldots,p_r\in C$ and, for each $1\le i\le r$, a $\bC$-linear subspace
        \begin{align}\label{eqn:ev-subspace-choice}
            W_i\subset V_i:=(T_X)_{u(p_i)}\oplus (E_l)_{(u(p_i),\iota(p_i))},
        \end{align}
        where $E_l$ is the $\bC$-vector bundle from Definition \ref{E_l-acs},
        such that the map
        \begin{align}\label{eqn:ev-subspace-choice-works}
            \eval:\ker L_{(\iota,u,\eta)}(0;l)\to\bigoplus_{1\le i\le r} V_i/W_i
        \end{align}
        induced by the evaluation map $(\xi,\eta')\mapsto(\xi(p_i),\eta'(p_i))_{1\le i\le r}$ is a $\bC$-linear isomorphism. For further details on why such choices of $p_i$ and $W_i$ exist, see \cite[Equation (B.7.3)]{P16} or \cite[Lemma 4.10]{Swa21}.

        Now, choose local holomorphic sections $\tilde p_1,\ldots,\tilde p_r$ of the universal family over $\Mbar_g^*(\bP^N,m)$, which are defined near the point $[\iota:C\hookrightarrow\bP^N]$ and coincide with $p_1,\ldots,p_r$ at this point. Extend the inclusions \eqref{eqn:ev-subspace-choice} to $\bC$-linear $C^\infty$ sub-bundles
        \begin{align*}
            \tilde W_i\subset \tilde V_i:=(T_X\boxtimes\cO_{\bP^N})\oplus E_l
        \end{align*}
        on a neighborhood of $(u(p_i),\iota(p_i))\in X\times\bP^N$, for $1\le i\le r$. 
        
        For $1\le i\le r$, evaluation at the image of the section $\tilde p_i$ induces a fibrewise $\bC$-linear map $\widetilde\eval_i$ from $\ind^\bC$ to the pullback of $\tilde V_i/\tilde W_i$. Linear gluing analysis shows that $\widetilde\eval_i$ is a continuous map of vector bundles. The isomorphism \eqref{eqn:ev-subspace-choice-works} now shows that the map $\widetilde\eval := \bigoplus_{1\le i\le r}\widetilde\eval_i$
        identifies $\ind^\bC$ with a $\bC$-vector bundle near $(\iota,u,\eta,\alpha)\in\cT$ in a continuous and fibrewise $\bC$-linear fashion.
    \end{proof}
    
    This completes the proof of Proposition \ref{gkc-properties-explicit}\eqref{orientation-complex-structure} and therefore the proof of Theorem \ref{global-kuranishi-existence}\eqref{transversality-implies-smoothness}.

\section{Uniqueness up to equivalence and cobordism}\label{proof-of-equivalence-and-cobordism}

In this section, we will prove Theorem \ref{global-kuranishi-existence}\eqref{global-kuranishi-uniqueness}. The case with $n$ marked points is a consequence of the case with no marked points (as in Lemma \ref{marked-points-formal-consequence}) and so, we shall focus only on the latter.

\subsection{Equivalence}\label{equivalence-of-charts}

Fix $J\in\cJ_\tau(X,\omega)$ and let
\begin{align*}
    (\nabla^{X,i},\cO_{X,i}(1),p_i,\cU_i,\lambda_i,k_i)    
\end{align*}
for $i = 0,1$ be any two choices of unobstructed auxiliary data for $\Mbar_g(X,A;J)$ and let $\cK_i = (G_i,\cT_i/\cB_i,\cE_i,\fs_i)$ for $i=0,1$ be the associated rel--$C^\infty$ global Kuranishi charts from Construction \ref{high-level-description-defined}. Consider the doubly thickened rel--$C^\infty$ global Kuranishi chart 
\begin{align*}
    \cK = (G_0\times G_1,\cT_{01}/\cB_{01},\cE_{01},\fs_{01})
\end{align*}
which is defined as follows.
\begin{enumerate}[(i)]
    \item $\cB_{01}$ is the space of embedded prestable genus $g$ curves $C\subset\bP^{N_0}\times\bP^{N_1}$ such that applying the coordinate projection $\bP^{N_0}\times\bP^{N_1}\to\bP^{N_i}$ yields a point $\iota_i:C\hookrightarrow\bP^{N_i}$ of $\cB_i$ for $i=0,1$. As in Lemma \ref{alg-base-spaces-are-smooth-qproj}, $\cB_{01}$ is a smooth quasi-projective variety of the expected dimension and the natural forgetful maps $\cB_{01}\to\cB_i$ are algebraic submersions.
    \item $\cT_{01}$ consists of tuples $(C\subset\bP^{N_0}\times\bP^{N_1},u:C\to X,\eta_0,\alpha_0,\eta_1,\alpha_1)$ satisfying the following properties.
    \begin{enumerate}[(a)]
        \item $C\subset\bP^{N_0}\times\bP^{N_1}$ lies in $\cB_{01}$.
        \item For $i=0,1$, $(\iota_i:C\hookrightarrow\bP^{N_i},u:C\to X)$ is a framed stable map lying in the domain of $\lambda_{\cU_i}$.
        \item For $i=0,1$, $\eta_i$ belongs to the finite dimensional vector space 
        \begin{align*}
            E^i_{(C\hookrightarrow\bP^{N_i},u)} := H^0(C,u^*T_X\otimes\iota_i^*({T^*}^{0,1}_{\bP^{N_i}}\otimes\cO_{\bP^{N_i}}(k_i)))\otimes\overline{H^0(\bP^{N_i},\cO_{\bP^{N_i}}(k_i))},    
        \end{align*}
        defined as in \eqref{obstruction-space-defined}, and we have
        \begin{align*}
            \delbar_J\tilde u + \langle\eta_0\rangle\circ d\tilde\iota_{0} + \langle\eta_1\rangle\circ d\tilde\iota_{1} = 0\in\Omega^{0,1}(\tilde C,\tilde u^*T_X)
        \end{align*}
        with $\tilde\iota_i$ being the pullback of $\iota_i$ to the normalization $\tilde C\to C$.
        \item For $i=0,1$, $\alpha_i\in H^1(C,\cO_C)$ satisfies the analogue of \eqref{fix-polarization-alpha} for $i=0,1$.
    \end{enumerate}
    \item The fibre of $\cE_{01}$ over a point $(C\subset\bP^{N_0}\times\bP^{N_1},u:C\to X,\eta_0,\alpha_0,\eta_1,\alpha_1)$ in $\cT_{01}$ is given by
    \begin{align}\label{double-obstruction}
        \bigoplus_{i=0,1}\left(\fs\fu(N_i+1)\oplus E^i_{(C\hookrightarrow\bP^{N_i},u)} \oplus H^1(C,\cO_C)\right)
    \end{align}
    and $\fs_{01} = \fs_0\oplus\fs_1$ at this point is given by $\fs_i = (\text{i}\log\lambda_{\cU_i}(C\hookrightarrow\bP^{N_i},u),\eta_i,\alpha_i)$ for $i=0,1$. By abuse of notation, we denote the vector bundle summands for $i=0,1$ from \eqref{double-obstruction} by $\cE_i$.
    \item The group $G_0\times G_1$ acts on $\cT_{01}\to\cB_{01}$ and $\cE_{01}$ in the evident way.
\end{enumerate}

Arguing as in \textsection\ref{transversality-implies-smoothness-proof}, we conclude that $\cK$ is a stably complex rel--$C^\infty$ global Kuranishi chart for $\Mbar_g(X,A;J)$ and that $\cT_{01},\cE_{01}$ and $\fs_{01}$ are actually rel--$C^\infty$ with base $\cB_0$ or $\cB_1$ (and therefore also with base $\cB_{01}$). By symmetry, it suffices to show that $\cK_0$ and $\cK$ are stably complex rel--$C^\infty$ equivalent. To see this, we first apply (Base modification) to $\cK$ and the submersion $\cB_{01}\to\cB_0$ to obtain 
\begin{align*}
    \cK'_0 := (G_0\times G_1,\cT_{01}/\cB_0,\cE_{01},\fs_{01}).
\end{align*}

The next observation is the key to showing that $\cK'_0$ and $\cK_0$ can be related by the moves (Germ equivalence), (Stabilization) and (Group enlargement). The explicit use of (Germ equivalence) will be hidden as we will always work in a sufficiently small $(G_0\times G_1)$-invariant neighborhood of $\fs_{01}^{-1}(0)$.

\begin{lemma}\label{double-thickening-transversality}
    At any $\hat x = (C\subset\bP^{N_0}\times\bP^{N_1},u:C\to X,0,0,0,0)\in\fs_{01}^{-1}(0)$, the vertical linearization
    \begin{align*}
        d\fs_1|_{\hat x}:T_{\cT_{01}/\cB_0}|_{\hat x}\to\cE_1|_{\hat x}
    \end{align*}
    is surjective.
\end{lemma}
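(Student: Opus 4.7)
The plan is to construct a vertical tangent vector at $\hat x$ whose image under $d\fs_1$ equals any prescribed $(a,b,c) \in \fs\fu(N_1+1)\oplus E^1_{(C\hookrightarrow\bP^{N_1},u)}\oplus H^1(C,\cO_C)$ by superposing a \emph{base} variation and a \emph{correction} variation: the base handles the $b$-component using unobstructedness of the $i=0$ auxiliary datum to absorb the linearized Cauchy--Riemann constraint, and the correction handles the $a$- and $c$-components using a pure $\iota_1$-deformation.

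Since $\eta_0=\eta_1=0$ at $\hat x$, the linearized Cauchy--Riemann equation defining $T_{\hat x}\cT_{01}$ simplifies to
\begin{equation*}
    D(\delbar_J)_u\,\dot u + \langle\dot\eta_0\rangle\circ d\tilde\iota_0 + \langle\dot\eta_1\rangle\circ d\tilde\iota_1 = 0,
\end{equation*}
with $\dot\iota_0, \dot\iota_1$ dropping out; the linearized polarization identities then determine $\dot\alpha_0, \dot\alpha_1$ in terms of $(\dot\iota_0,\dot\iota_1,\dot u)$. The vertical tangent space $T_{\cT_{01}/\cB_0}|_{\hat x}$ is the subspace where $\dot\iota_0=0$. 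For the base variation, set $\dot\iota_1=0$ and $\dot\eta_1=b$, and invoke unobstructedness of the $i=0$ datum (condition \eqref{killing-obstructions} of Definition \ref{unobstructed-aux} applied with $i=0$), which asserts that $D(\delbar_J)_u \oplus (\langle\cdot\rangle\circ d\tilde\iota_0)$ is surjective; this yields $(\dot u_0,\dot\eta_{0,0})$ solving
\begin{equation*}
    D(\delbar_J)_u\,\dot u_0 + \langle\dot\eta_{0,0}\rangle\circ d\tilde\iota_0 = -\langle b\rangle\circ d\tilde\iota_1,
\end{equation*}
and the resulting vertical tangent vector maps under $d\fs_1$ to $(a_0,b,c_0)$ for some $a_0 \in \fs\fu(N_1+1)$ and $c_0 \in H^1(C,\cO_C)$ arising from the $u$-dependence of $\text{i}\log\lambda_{\cU_1}$ and $\alpha_1$.

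For the correction, we add a variation with $\dot u=\dot\eta_0=\dot\eta_1=0$ and any $\dot\iota_1' \in T_{\iota_1}\cB_{01}|_{\iota_0}$; the linearized CR equation holds automatically. Its $d\fs_1$-image is $(d_{\iota_1}(\text{i}\log\lambda_{\cU_1})\cdot\dot\iota_1',\, 0,\, d[\iota_1^*\cO_{\bP^{N_1}}(1)]\cdot\dot\iota_1')$, so it suffices to show that the map
\begin{equation*}
    T_{\iota_1}\cB_{01}|_{\iota_0} \to \fs\fu(N_1+1)\oplus H^1(C,\cO_C),\qquad \dot\iota_1 \mapsto \bigl(d_{\iota_1}(\text{i}\log\lambda_{\cU_1})\cdot\dot\iota_1,\; d[\iota_1^*\cO_{\bP^{N_1}}(1)]\cdot\dot\iota_1\bigr)
\end{equation*}
is surjective, for then $\dot\iota_1'$ can be chosen to realise $(a-a_0,c-c_0)$. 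This surjectivity decouples: $\cG_1$-orbit motions of $\iota_1$ surject onto $\fs\fu(N_1+1)$ via the $\cG_1$-equivariance of $\lambda_{\cU_1}$ together with the Cartan decomposition $\fs\fl(N_1+1,\bC)=\fs\fu(N_1+1)\oplus\text{i}\fs\fu(N_1+1)$ and the description of $\text{i}\log$ in Definition \ref{positive-definite-matrices}, and they lie in the kernel of the Picard map because $\cG_1$-equivalent embeddings pull back isomorphic line bundles. Transverse motions surject onto $H^1(C,\cO_C)$ because the fibre of $\cB_{01}\to\cB_0$ over $\iota_0$ is, near $\iota_1$, a $\cG_1$-torsor over an open neighbourhood of $[\fL_u^{\otimes p_1}]$ in $\Pic^{m_1}(C)$: any very ample line bundle on $C$ with vanishing $H^1$ and the prescribed degrees on each irreducible component arises as $\iota_1^*\cO_{\bP^{N_1}}(1)$ for some embedding obtained by choosing a basis of its global sections, exactly as in the proof of Lemma \ref{alg-base-spaces-are-smooth-qproj}.

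The main obstacle is this last surjectivity---verifying that the forgetful map from embeddings to the Picard variety is submersive at $\iota_1$---which is handled by the same Kodaira--Spencer-type deformation argument underlying Lemma \ref{alg-base-spaces-are-smooth-qproj} and the construction in the proof of Proposition \ref{alpha-smoothness-technical}.
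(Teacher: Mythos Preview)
Your proposal is correct and follows the same two-step structure as the paper's proof: first hit the $E^1$-summand using unobstructedness of the $i=0$ datum (your ``base variation''), then hit $\fs\fu(N_1+1)\oplus H^1(C,\cO_C)$ via pure $\iota_1$-deformations (your ``correction''). The paper phrases this as showing $L_2|_{T_{\cT_{01}/\cB_{01}}}$ is surjective and then $(L_1\oplus L_3)|_{\ker L_2}$ is surjective via a splitting $\sigma$, which is exactly your decomposition.

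The one place where the paper's execution is cleaner is the surjectivity onto $H^1(C,\cO_C)$. You describe the fibre of $\cB_{01}\to\cB_0$ as a $\cG_1$-torsor over $\Pic^{m_1}(C)$ and defer the submersion statement to a ``Kodaira--Spencer-type argument''. The paper instead identifies the map $H^0(C,\iota_1^*T_{\bP^{N_1}})\to H^1(C,\cO_C)$ directly with the connecting homomorphism in the long exact sequence obtained by restricting the Euler sequence
\[
0\to\cO_{\bP^{N_1}}\to\cO_{\bP^{N_1}}(1)^{N_1+1}\to T_{\bP^{N_1}}\to 0
\]
to $C$; since $H^1(C,\cO_C(1))=0$ (part of the definition of $\Mbar_g^*(\bP^{N_1},m_1)$), surjectivity is immediate, and moreover the kernel is identified with $\fs\fl(N_1+1,\bC)$ on the nose. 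This simultaneously gives the surjectivity onto $H^1$ and identifies the kernel with the $\cG_1$-orbit directions, so the reduction to $\fs\fl(N_1+1,\bC)\to\fs\fu(N_1+1)$ falls out for free. Your Picard-variety picture is the geometric content of this computation, but the Euler-sequence route avoids having to separately argue submersivity.
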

\begin{proof}
    Let us write 
    \begin{align*}
        d\fs_1|_{\hat x} = L_1\oplus L_2\oplus L_3    
    \end{align*}
    using the direct sum decomposition of $\cE_1|_{\hat x}$ from \eqref{double-obstruction}. Since the given auxiliary datum $(\nabla^{X,0},\cO_{X,0}(1),p_0,\cU_0,\lambda_0,k_0)$ is unobstructed, it follows that the restriction 
    \begin{align*}
        T_{\cT_{01}/\cB_{01}}|_{\hat x}\to E^1_{(C\hookrightarrow\bP^{N_1},u)}
    \end{align*}
    of $L_2$ to the subspace $T_{\cT_{01}/\cB_{01}}|_{\hat x}\subset T_{\cT_{01}/\cB_0}|_{\hat x}$ is surjective. 
    
    It therefore suffices to show that $(L_1\oplus L_3)|_{\ker L_2}$ is surjective. Restricting the projection $T_{\cT_{01}/\cB_0}|_{\hat x}\to T_{\cB_{01}/\cB_0}|_{\hat{x}}$ to $\ker L_2$, we obtain a map
    \begin{align}\label{projection-to-domains}
        \ker L_2\to T_{\cB_{01}/\cB_0}|_{\hat{x}} = H^0(C,\iota_1^*T_{\bP^{N_1}}).
    \end{align}
    The map \eqref{projection-to-domains} is surjective and has a natural splitting
    \begin{align*}
        \sigma:H^0(C,\iota_1^*T_{\bP^{N_1}})\to\ker L_2    
    \end{align*}
    described as follows. Given any element of $H^0(C,\iota_1^*T_{\bP^{N_1}})$, corresponding to an infinitesimal holomorphic deformation of the map $\iota_1:C\hookrightarrow\bP^{N_1}$, the map $\sigma$ sends it to the infinitesimal deformation of $\hat x$ which keeps $(\iota_0,u)$ constant, keeps $\eta_0 = 0$, $\alpha_0 = 0$ and $\eta_1 = 0$ and follows the given infinitesimal deformation of $\iota_1:C\hookrightarrow\bP^{N_1}$. Observe that the infinitesimal deformation of $\alpha_1$ is automatically determined by the deformation of $\iota_1$ and the fact that $u$ remains fixed. Pulling back $(L_1\oplus L_3)|_{\ker L_2}$ along $\sigma$ yields a map
    \begin{align}\label{variation-of-line-bundle}
        \hat L_1\oplus\hat L_3:T_{\cB_{01}/\cB_0}|_{\hat x} = H^0(C,\iota_1^*T_{\bP^{N_1}})\to \fs\fu(N_1+1)\oplus H^1(C,\cO_C).
    \end{align}
    
    It will suffice to show that \eqref{variation-of-line-bundle} is surjective.
    The map $\hat L_3$ in \eqref{variation-of-line-bundle} can be identified with the connecting map of the long exact sequence in cohomology obtained by pulling back the Euler exact sequence
    \begin{align*}
        0\to\cO_{\bP^{N_1}}\to\cO_{\bP^{N_1}}(1)^{N_1+1}\to T_{\bP^{N_1}}\to 0    
    \end{align*}
    to $C$ along $\iota_1$. This shows that $\hat L_3$ is surjective and that $\ker\hat L_3$ is identified with $\fs\fl(N_1+1,\bC)$, corresponding to the infinitesimal action of $\cG_1:=\PGL(N_1+1,\bC) = \normalfont\text{PSL}(N_1+1,\bC)$ on $T_{\cB_{01}/\cB_0}|_{\hat x}$. It remains to show that the restriction
    \begin{align}\label{lie-alg-proj}
        \fs\fl(N_1+1,\bC)\to\fs\fu(N_1+1)
    \end{align}
    of $\hat L_1$ to the subspace $\ker L_3\subset T_{\cB_{01}/\cB_0}|_{\hat x}$ is surjective. The $\cG_1$-equivariance of the map $\lambda_{\cU_1}$ ensures that \eqref{lie-alg-proj} is simply the linearization of the projection map $\cG_1\to\cG_1/G_1$ at the identity element of $\cG_1$. Thus, \eqref{lie-alg-proj} is surjective. 
\end{proof}

From Lemma \ref{double-thickening-transversality}, it is immediate that $\cK_0'' := (G_0\times G_1,\fs_1^{-1}(0)/\cB_0,\cE_0,\fs_0)$ is a rel--$C^\infty$ global Kuranishi chart related to $\cK_0$ by (Group enlargement). To conclude, we need to show that $\cK_0''$ and $\cK_0'$ are related by (Stabilization) with the role of $W$ in Definition \ref{equivalence-of-charts-defined} played by $\cE_1$. This readily follows from Lemma \ref{double-thickening-transversality} and a rel--$C^\infty$ tubular neighborhood argument.
\\\\
\indent This completes the proof of Theorem \ref{global-kuranishi-existence}\eqref{chart-uniqueness-up-to-equivalence}.

\subsection{Cobordism}

Fix $J_0,J_1\in\cJ_\tau(X,\omega)$. Connect them by a smooth path $\gamma:[0,1]\to\cJ_\tau(X,\omega)$ and write $J_t:=\gamma(t)$. Choose a smooth family of $J_t$-linear connections $\nabla^{X,t}$ on $T_X$. By Lemma \ref{Suitable Line Bundle on Target}, we can find a polarization $\cO_X(1)$ with associated symplectic form $\Omega$ taming the image of $\gamma$. Choose $p$ as in \textsection\ref{p-choice}, depending only on $g$ and $d = \langle[\Omega],A\rangle$. Using the compactness of the parametrized moduli space $\Mbar_g(X,A;\gamma)$, we can now choose $(\cU,\lambda)$ and $k$ as in \textsection\ref{lambda-choice} and \textsection\ref{k-choice} such that, for each $t\in[0,1]$, the auxiliary datum $(\nabla^{X,t},\cO_X(1),p,\cU,\lambda,k)$ is unobstructed for $J_t$. This yields a stably complex rel--$C^\infty$ global Kuranishi chart $\cK_t$ for $\Mbar_g(X,A;J_t)$ for each $t\in[0,1]$. Repeating the arguments of \textsection\ref{transversality-implies-smoothness-proof} with the parameter $t$, we see that the family $\{\cK_t\}_{t\in[0,1]}$ fits together to exhibit a stably complex rel--$C^\infty$ cobordism between $\cK_0$ and $\cK_1$.
\\\\
\indent This completes the proof of Theorem \ref{global-kuranishi-existence}\eqref{chart-uniqueness-up-to-cobordism}.

\section{Product formula for Gromov--Witten invariants}\label{products}

In this section, we prove the product formula for Gromov--Witten invariants as an application of the global Kuranishi chart construction.
Recall from \eqref{gw-class-de} that, for a closed symplectic manifold $(X,\omega)$, a class $A\in H_2(X,\bZ)$ and  integers $g,n\ge 0$, the associated \emph{Gromov--Witten class} is defined to be
\begin{align*}
    \text{GW}^{(X,\omega)}_{A,g,n}:=(\text{ev}\times\text{st})_*[\Mbar_{g,n}(X,A;J)]^\text{vir}\in H_{\vdim}(X^n\times\Mbar_{g,n};\bQ)
\end{align*}
where $J\in\cJ_\tau(X,\omega)$ was arbitrary and we choose one of the equivalent global Kuranishi charts provided by Theorem \ref{global-kuranishi-existence}. Here $\eva$ evaluates a stable map at its marked points, while $\normalfont\text{st}$ stabilizes the domain. If $2g-2 + n\leq 0$, we take $\Mbar_{g,n}$ to be a point. Pairing the Gromov--Witten class with cohomology classes $\alpha_1,\ldots,\alpha_n\in H^*(X;\bQ)$ and $\beta\in H^*(\Mbar_{g,n};\bQ)$ yields the \emph{Gromov--Witten invariants} of $X$.

\subsection{Gromov--Witten invariants of a product} Let $(X_i,\omega_i)$ be closed symplectic manifolds for $i = 0,1$ and set $(X,\omega) := (X_0,\omega_0)\times (X_1,\omega_1)$. Given homology classes $A_i \in H_2(X_i,\bZ)$ for $i = 0,1$, let $\fA\subset H_2(X,\bZ)$ denote the set of classes $A$ which project to $A_i$ under the coordinate projection $\pr_i:X\to X_i$ for $i=0,1$. Fix $g,n\ge 0$ and $J_i\in\cJ_\tau(X_i,\omega_i)$ and set $J := J_0\times J_1$. Define the moduli space
\begin{align*}
    \Mbar_{g,n}(X,\fA;J) = \coprod_{A\in\fA}\Mbar_{g,n}(X,A;J).
\end{align*}
All components of this disjoint union have the same virtual dimension. Furthermore, as $\lspan{[\omega],A}$ is independent of $A\in \fA$, Gromov compactness implies that only finitely many components of $\Mbar_{g,n}(X,\fA;J)$ are nonempty.\par 
The projections $\pr_i$ induce a natural map
\begin{align}\label{forget-from-product}
    \Phi:\Mbar_{g,n}(X,\fA;J)\to\Mbar_{g,n}(X_0,A_0;J_0)\times\Mbar_{g,n}(X_1,A_1;J_1).
\end{align}

\begin{theorem}[Product formula]\label{gw-product}
Whenever $2g-2+n>0$ and $(g,n)$ is neither $(1,1)$ nor $(2,0)$, we have
    \begin{align*}
       \notag &\Phi_*\vfc{\Mbar_{g,n}(X,\fA;J)} \\ &\quad = (\normalfont\text{st}\times\text{st})^*\pcd([\Delta])\cap(\vfc{\Mbar_{g,n}(X_0,A_0;J_0)}\times\vfc{\Mbar_{g,n}(X_1,A_1;J_1)}).
    \end{align*}
    Here $\Delta:\Mbar_{g,n}\to \Mbar_{g,n}\times \Mbar_{g,n}$ is the diagonal map and $\pcd([\Delta])$ is the Poincar\'e dual of its image.
\end{theorem}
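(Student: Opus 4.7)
The plan is to build compatible rel--$C^\infty$ global Kuranishi charts for the three moduli spaces appearing in the statement, exhibit the chart for $\Mbar_{g,n}(X,\fA;J)$ as cut out from the product of charts for $\Mbar_{g,n}(X_i,A_i;J_i)$ by the diagonal of $\Mbar_{g,n}\times\Mbar_{g,n}$, and then apply the VFC formula \eqref{vfc-de} to translate this geometric fact into the stated cap product identity.

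First I would choose simultaneously unobstructed auxiliary data $(\nabla^{X_i},\cO_{X_i}(1),p,\cU_i,\lambda_i,k)$ for $J_i$ on $X_i$ with $i=0,1$ and common $p,k$ large enough to handle both factors and every stable map to the product. Construction \ref{high-level-description-defined} then produces stably complex rel--$C^\infty$ global Kuranishi charts $\cK_n^i=(G_i,\cT_n^i/\cB_n^i,\cE_n^i,\fs_n^i)$ for $\Mbar_{g,n}(X_i,A_i;J_i)$. For $(X,\omega)$ I would use $\cO_X(1):=\cO_{X_0}(1)\boxtimes\cO_{X_1}(1)$ together with auxiliary data built so that the resulting thickening $\cT_n^{01}$ parametrizes tuples $(C,u_0,\iota_0,\eta_0,\alpha_0,u_1,\iota_1,\eta_1,\alpha_1)$ where $C$ is a single prestable pointed curve equipped with two framings $\iota_i\cl C\hookrightarrow\bP^{N_i}$ coming from $\fL_{u_i}^{\otimes p}$; the combined obstruction bundle is $\cE_n^0\oplus\cE_n^1$ and the combined section is $\fs_n^0\oplus\fs_n^1$. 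By the uniqueness statement of Theorem \ref{global-kuranishi-existence}\eqref{chart-uniqueness-up-to-equivalence}, this chart is stably complex rel--$C^\infty$ equivalent to any chart arising from a single polarization on $X$, and hence computes $\vfc{\Mbar_{g,n}(X,\fA;J)}$.

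Next I would note that the composite stabilization maps $\cT_n^i\to\cB_n^i\to\Mbar_{g,n}$ realize $\cT_n^{01}$ as the pullback of $\cT_n^0\times\cT_n^1$ along the diagonal $\Delta\cl\Mbar_{g,n}\to\Mbar_{g,n}\times\Mbar_{g,n}$. Under the hypotheses $2g-2+n>0$ and $(g,n)\notin\{(1,1),(2,0)\}$, $\Delta$ is a closed embedding of smooth Deligne--Mumford stacks of the expected codimension, so $\pcd([\Delta])\in H^*(\Mbar_{g,n}\times\Mbar_{g,n};\bQ)$ is well-defined; the excluded cases are precisely those where generic curves carry nontrivial automorphisms fixing all the marked points, obstructing this cleanness. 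The stabilization maps are rel--$C^\infty$ submersions near $(\fs_n^i)^{-1}(0)$, making the pullback cohomologically transverse in the sense required for a Gysin-type formula. Combining the VFC formula \eqref{vfc-de} for the three charts and using that the Thom class $\tau_{(\cE_n^0\oplus\cE_n^1)/(G_0\times G_1)}$ factors as the external cup product $\tau_{\cE_n^0/G_0}\times\tau_{\cE_n^1/G_1}$ while the section of $\cK_n^{01}$ is the pullback of the product section, a trace through cap and cup products identifies the image of $[\cT_n^{01}/(G_0\times G_1)]$ in the product with the product fundamental class capped with $(\text{st}\times\text{st})^*\pcd([\Delta])$. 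Pushforward along $\text{ev}\times\text{st}$, which factors through the product of the $\text{ev}_i\times\text{st}$, together with the projection formula, then yields the stated identity in $X^n\times\Mbar_{g,n}$.

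The main obstacle is the rigorous construction and identification in the first step: one must verify that the moduli-theoretic description of $\cT_n^{01}$ as pairs of framed maps sharing a common domain actually carries a rel--$C^\infty$ global Kuranishi chart structure equivalent, through the moves of Definition \ref{equivalence-of-charts-defined}, to the output of Construction \ref{high-level-description-defined} applied directly to $(X,\omega,J)$. This requires comparing the product polarization $\cO_X(1)=\cO_{X_0}(1)\boxtimes\cO_{X_1}(1)$ with the naturally occurring domain line bundles $\fL_{u_i}^{\otimes p}$ and carefully handling components of $C$ contracted by one projection but not the other, exactly where the hypothesis on $(g,n)$ enters to guarantee the transversality of the diagonal.
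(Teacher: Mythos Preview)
Your overall architecture matches the paper's: build a ``product-compatible'' chart $\cK_\Psi$ for $\Mbar_{g,n}(X,\fA;J)$ from the product of the two charts $\cK_i$, show it is equivalent to a direct chart $\cK$ for the product target, and then compute the pushforward of the virtual fundamental class using the Thom-class formula. The paper's chart $\cK_\Psi$ is exactly the pullback $\Psi^*(\cK_0\times\cK_1)$ along the map $\Psi:\cN\to\cB_0\times\cB_1$, where $\cN\subset\Mbar_{g,n}(\bP^{N_0}\times\bP^{N_1},(m_0,m_1))$ parametrizes curves $C$ admitting simultaneous framings to both projective spaces---this is your $\cT_n^{01}$.

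However, two of your claims are wrong and hide the actual content of the argument. First, the stabilization maps $\cB_i\to\Mbar_{g,n}$ are \emph{not} submersions: they fail to be so precisely over the boundary strata of nodal curves, and hence the fibre product $\cB_0\times_{\Mbar_{g,n}}\cB_1$ need not be smooth. Second, and relatedly, your thickening $\cT_n^{01}$ is \emph{not} the fibre product $\cT_n^0\times_{\Mbar_{g,n}}\cT_n^1$. A point of the latter is a pair of framed stable maps with domains $C_0,C_1$ whose stabilizations are isomorphic, whereas a point of the former is a single curve $C$ carrying both framings at once; given $(C_0,C_1)$ there is generally not a unique such $C$, and the correspondence breaks down over nodal domains. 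The paper resolves this with Lemma~\ref{Regularity of Product Maps}: $\cN$ is a genuine smooth complex manifold, and the natural map $\cN\to\cB_0\times_{\Mbar_{g,n}}\cB_1$ is only \emph{proper and birational}. The identity $\Psi_*[\cN]=(\text{st}\times\text{st})^*\pcd([\Delta])\cap[\cB_0\times\cB_1]$ in Borel--Moore homology (Lemma~\ref{Image and Degree of Product Map}) is then obtained by excising the codimension-$\ge 1$ bad locus and invoking the excision sequence; the hypothesis $(g,n)\notin\{(1,1),(2,0)\}$ enters here because otherwise the map $\Mbar_{g,n}\to\overline M_{g,n}$ to the coarse space is generically $2:1$ and the degree count fails. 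This birational-plus-excision step is what replaces your (incorrect) transversality claim, and without it the ``Gysin-type formula'' you invoke does not go through.
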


Theorem \ref{gw-product-formula}, stated in the introduction, is obtained from Theorem \ref{gw-product} as an application of Poincar\'e duality with $\bQ$-coefficients for the orbifold $\Mbar_{g,n}$ and the fact that $\Delta^*(\gamma_0\times\gamma_1) = \gamma_0\smile\gamma_1$ for cohomology classes $\gamma_i\in H^*(\Mbar_{g,n};\bQ)$.

Specializing to the case $g=0$ and $n=3$, we obtain the following consequence.

\begin{corollary}[K\"unneth formula for quantum cohomology]\label{quantum-kunneth} The canonical K\"unneth map
\begin{align*}
    QH^*(X_0,\omega_0)\otimes_{\Lambda_0}QH^*(X_1,\omega_1)\to QH^*(X,\omega)
\end{align*}
is an isomorphism of $\Lambda_0$-algebras.
\end{corollary}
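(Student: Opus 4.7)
The plan is to deduce the corollary directly from Theorem \ref{gw-product-formula} applied in the genus zero three-pointed setting, since this is the case that controls the small quantum product. First I would record that the underlying $\Lambda_0$-module map in the statement is the classical Künneth isomorphism $H^*(X_0;\bQ)\otimes_\bQ H^*(X_1;\bQ)\xrightarrow{\sim} H^*(X;\bQ)$ tensored over $\bQ$ with $\Lambda_0$, so that the entire content lies in verifying compatibility with the quantum product.

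Since $\Mbar_{0,3}$ is a point, the small quantum product on $QH^*(X,\omega)$ is determined, via the Poincaré pairing on $X$ extended $\Lambda_0$-linearly, by
\[
    \big\langle \alpha\star_X\beta,\,\gamma\big\rangle_X \;=\; \sum_{A\in H_2(X;\bZ)} \text{GW}^{(X,\omega)}_{0,3,A}(\alpha\otimes\beta\otimes\gamma)\, q^{\omega(A)},
\]
and analogously for each factor $X_i$. Given $a_i,b_i,c_i\in H^*(X_i;\bQ)$, I would verify the multiplicativity identity
\[
    (a_0\times a_1)\star_X (b_0\times b_1) \;=\; (a_0\star_{X_0}b_0)\times(a_1\star_{X_1}b_1)
\]
by pairing both sides against an arbitrary test class $c_0\times c_1$. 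The right-hand side, via the classical Künneth decomposition of the Poincaré pairing on $X = X_0\times X_1$, splits as the product of pairings on $X_0$ and $X_1$, each of which expands into a sum of genus-zero three-pointed GW invariants weighted by $q^{\omega_i(A_i)}$. The left-hand side, after applying the permutation of factors that rewrites $(a_0\otimes b_0\otimes c_0)\times(a_1\otimes b_1\otimes c_1)\in H^*(X_0^3\times X_1^3)$ as $(a_0\times a_1)\otimes(b_0\times b_1)\otimes(c_0\times c_1)\in H^*(X^3)$, matches the left-hand side of Theorem \ref{gw-product-formula} for $(g,n)=(0,3)$, summed over all $A_0,A_1$; this case is admissible since $2g-2+n=1>0$ and $(g,n)$ is neither $(1,1)$ nor $(2,0)$. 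Matching coefficients of $q^{\omega_0(A_0)+\omega_1(A_1)}$, using the identity $\omega(A)=\omega_0((\pr_0)_*A)+\omega_1((\pr_1)_*A)$ for $A\in\fA$, produces the desired equality. Unitality and $\Lambda_0$-linearity of the resulting ring map are then immediate.

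The main obstacle I anticipate is the careful bookkeeping of Koszul signs in two places: the permutation of tensor factors required to bring the external product in Theorem \ref{gw-product-formula} into a form directly comparable with $(a_0\times a_1)\star_X(b_0\times b_1)$, and the classical Künneth decomposition of the Poincaré pairing on $X_0\times X_1$. Both originate from the same graded commutativity rule and will cancel, but I would work this out explicitly to eliminate any ambiguity. No further symplectic input is required beyond Theorem \ref{gw-product-formula} and classical cohomological Künneth.
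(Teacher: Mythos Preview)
Your proposal is correct and follows exactly the route the paper indicates: the paper simply states that Corollary~\ref{quantum-kunneth} is obtained by ``specializing to the case $g=0$ and $n=3$'' in Theorem~\ref{gw-product}, without writing out the details you supply. Your unpacking of the quantum product via the Poincar\'e pairing and matching of Novikov coefficients is the standard way to make this specialization explicit, and the sign bookkeeping you flag is indeed the only subtlety.
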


\subsection{Proof of the product formula}  Use Theorem \ref{global-kuranishi-existence} and polarizations $\cO_{X_i}(1)$ on $X_i$ taming $J_i$, for $i=0,1$, to choose a single unobstructed auxiliary datum $(\nabla^X,\cO_X(1),p,\cU,\lambda,k)$ with $\cO_X(1) = \cO_{X_0}(1)\boxtimes\cO_{X_1}(1)$. We obtain a global Kuranishi chart 
\begin{align}\label{product-independent-chart}
    \cK = (G,\cT/\cB,\cE,\fs)    
\end{align}
for the whole of $\Mbar_{g,n}(X,\fA;J)$ by taking the disjoint union over $A\in\fA$ of the resulting charts $\cK_A$ for $\Mbar_{g,n}(X,A;J)$.\par
On the other hand, Theorem \ref{global-kuranishi-existence}\eqref{achieveing-transversality} yields unobstructed auxiliary data
\begin{align*}
    (\nabla^{X_i},\cO_{X_i}(1),p_i,\cU_i,\lambda_i,k_i)    
\end{align*}
for the moduli spaces $\Mbar_{g,n}(X_i,A_i;J_i)$. Let $\cK_i = (G_i,\cT_i/\cB_i,\cE_i,\fs_i)$ be the associated global Kuranishi charts provided by Theorem \ref{global-kuranishi-existence}\eqref{transversality-implies-smoothness} for $i = 0,1$. Recall that $\cB_i = \Mbar_{g,n}^*(\bP^{N_i},m_i)$. Define $\cN$ to be the inverse image of $\cB_0\times\cB_1$ under the natural morphism
\begin{align*}
    \Mbar_{g,n}(\bP^{N_0}\times\bP^{N_1},(m_0,m_1)) \to\Mbar_{g,n}(\bP^{N_0},m_0)\times \Mbar_{g,n}(\bP^{N_1},m_1).
\end{align*}
Denote the morphism induced by the restriction to $\cN$ as 
\begin{align*}
    \Psi:\cN\to\cB_0\times\cB_1.
\end{align*}
Note that $\Psi$ naturally factors through the fibre product $\cB_0\times_{\Mbar_{g,n}}\cB_1$. Note that this is fibre product is in the sense of orbifolds. That is, a point of $\cB_0\times_{\Mbar_{g,n}}\cB_1$ consists of points $(C_i,x^i_1,\ldots,x^i_n,u_i)$ of $\cB_i$ for $i=0,1$, along with an isomorphism $\varphi:(C_0,x^0_1,\ldots,x^0_n)^\text{st}\to(C_1,x^1_1,\ldots,x^1_n)^\text{st}$ between their stabilizations.

\begin{lemma}\label{Regularity of Product Maps}
$\cN$ is a complex manifold of the expected dimension. Moreover, when we have $2g-2+n>0$, the natural morphism
\begin{align}\label{proper-birational-map}
    \cN\to\cB_0\times_{\Mbar_{g,n}}\cB_1    
\end{align}
is proper and birational, i.e., an isomorphism after excising closed complex analytic subsets, which are nowhere dense, from $\cN$ and $\cB_0\times_{\Mbar_{g,n}}\cB_1$.
\end{lemma}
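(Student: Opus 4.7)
The plan is threefold: establish smoothness and the correct dimension of $\cN$ via deformation theory, verify that the dimensions of source and target of \eqref{proper-birational-map} agree, and then prove properness and birationality separately.

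Since $\cN$ is open in $\Mbar_{g,n}(\bP^{N_0} \times \bP^{N_1}, (m_0, m_1))$, smoothness at a point $f = (\iota_0, \iota_1)\cl (C, x_1, \ldots, x_n) \to \bP^{N_0} \times \bP^{N_1}$ follows from vanishing of the obstruction
$$H^1(C, f^*T_{\bP^{N_0} \times \bP^{N_1}}) = H^1(C, \iota_0^*T_{\bP^{N_0}}) \oplus H^1(C, \iota_1^*T_{\bP^{N_1}}).$$
Each summand vanishes by the pulled-back Euler sequence, provided $H^1(C, \iota_i^*\cO_{\bP^{N_i}}(1)) = 0$. I would verify this by factoring $\iota_i$ through its stabilization morphism $C \to C^{\text{st}_i}$: the trees of $\bP^1$'s contracted by $\iota_i$ pull back $\cO_{\bP^{N_i}}(1)$ trivially, so Leray identifies the cohomology on $C$ with that on $C^{\text{st}_i}$, where the vanishing is part of the defining condition of $\cB_i$ in Definition \ref{base-space-de}. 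A Riemann--Roch count then gives $\dim\cN = (N_0+N_1-3)(1-g) + (N_0+1)m_0 + (N_1+1)m_1 + n$. Using Corollary \ref{alg-base-spaces-are-smooth-cor} together with $\dim\Mbar_{g,n} = 3g-3+n$ (valid since $2g-2+n > 0$), a direct computation shows that this equals $\dim\cB_0 + \dim\cB_1 - (3g-3+n) = \dim(\cB_0 \times_{\Mbar_{g,n}} \cB_1)$.

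For properness of \eqref{proper-birational-map}, I would apply the valuative criterion: given a discrete valuation ring $R$ with fraction field $K$, a $\Spec R$-point of $\cB_0 \times_{\Mbar_{g,n}} \cB_1$ and a compatible $\Spec K$-point of $\cN$, the properness of $\Mbar_{g,n}(\bP^{N_0} \times \bP^{N_1}, (m_0, m_1))$ produces a unique $\Spec R$-extension, which lies in $\cN$ because the stabilization morphisms to the $\cB_i$ commute with specialization. For birationality, I would exhibit a dense open subset $U \subset \cB_0 \times_{\Mbar_{g,n}} \cB_1$ parameterizing pairs $((C, x, \iota_0), (C, x, \iota_1))$ whose source curves already coincide with the common stabilization in $\Mbar_{g,n}$, i.e., neither $\iota_i$ contracts a component. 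Over $U$, the morphism \eqref{proper-birational-map} has the manifest inverse $((C, x, \iota_0), (C, x, \iota_1)) \mapsto (C, x, (\iota_0, \iota_1))$, and the product map $(\iota_0, \iota_1)$ is automatically stable as a map to $\bP^{N_0} \times \bP^{N_1}$ because each $\iota_i$ is already an embedding.

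The main obstacle is controlling the closed strata where one or both $\iota_i$ contract components. Over such strata, the source curve in $\cN$ must carry additional components so that $(\iota_0, \iota_1)$ is stable as a map to the product, so the fibres of \eqref{proper-birational-map} can be positive-dimensional. I would verify that these exceptional strata have positive codimension on both sides --- following from smoothness of $\cN$ and the fact that contraction of a component is a closed condition --- which preserves birationality of the global map. The combinatorics of contracted components requires careful bookkeeping but follows from standard arguments on dual graphs of stable maps.
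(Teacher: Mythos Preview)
Your smoothness argument is essentially the paper's: both reduce to $H^1(C,\iota_i^*T_{\bP^{N_i}})=0$ via the Euler sequence and the observation that contracted trees contribute nothing to cohomology. One small omission: to conclude $\cN$ is a \emph{manifold} rather than an orbifold, you should check that automorphisms are trivial. The paper notes this explicitly: since each $\iota_i$ has trivial automorphisms (it stabilizes to an embedding), so does the product map.

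For properness, your valuative-criterion argument is valid but unnecessarily heavy. The paper observes that both $\cN$ and $\cB_0\times_{\Mbar_{g,n}}\cB_1$ are proper over $\cB_0\times\cB_1$ (the former because the forgetful map from a moduli space of stable maps is proper, the latter because $\Delta\colon\Mbar_{g,n}\to\Mbar_{g,n}\times\Mbar_{g,n}$ is proper), and a morphism between spaces proper over the same base is automatically proper.

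For birationality, the paper takes a cleaner route than yours: rather than excising the loci where some $\iota_i$ contracts a component, it excises the locus of \emph{nodal} curves on both sides. On $\cN$ this is nowhere dense because the $H^1$ vanishing you already established means every node can be smoothed; on the target, density of the non-nodal locus is seen by lifting a point to $\cN$, smoothing there, and projecting back. Over the non-nodal locus the curves $C_0$, $C_1$ are smooth, hence (since $2g-2+n>0$) already stable as pointed curves, so the isomorphism $\varphi$ of stabilizations identifies $C_0\cong C_1$ directly and the inverse is manifest. This bypasses the ``careful bookkeeping'' of contracted components you anticipated, and avoids the vague step of arguing positive codimension on the (a priori singular) fibre product side.
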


\begin{proof} 
Consider any stable map $(C,x_1,\ldots,x_n,u:C\to\bP^{N_0}\times\bP^{N_1})$ in $\cN$. Let $(C_i,x^i_1,\ldots,x_n^i,u_i:C_i\to\bP^{N_i})$ be the corresponding point of $\cB_i$, for $i = 0,1$, with $\kappa_i:C\to C_i$ being the natural morphism. Any component of $C$ which is contracted by both $\kappa_0$ and $\kappa_1$ must be a sphere with at least three special points. Since $\text{Aut}(C_i,x^i_1,\ldots,x_n^i,u_i)$ is trivial for $i=0,1$, it follows that $\text{Aut}(C,x_1,\ldots,x_n,u)$ is also trivial. Finally, we observe that 
\begin{align*}
    H^1(C,u^*T_{\bP^{N_i}}) = H^1(C_i,u_i^*T_{\bP^{N_i}}) = 0    
\end{align*}
for $i=0,1$, where the first equality comes from the fact that $C_i$ is obtained from $C$ by sequentially contracting spheres with $\le 2$ special points on which the map $u$ is constant. Thus, $\cN$ is a complex manifold of the expected dimension.

Let us now assume that $2g-2+n>0$ and show that \eqref{proper-birational-map} is proper and birational. Properness is clear since the source and the target of \eqref{proper-birational-map} are proper over $\cB_0\times\cB_1$. For birationality, we excise out the closed analytic subsets corresponding to nodal curves and show that the map on the complement is an isomorphism. 
The nodes of any $(C,x_1,\ldots,x_n,u)$ in $\cN$ can be smoothed since we have $H^1(C,u^*T_{\bP^{N_0}\times\bP^{N_1}}) = 0$ and thus, nodal curves are nowhere dense in $\cN$. A point of $\cB_0\times_{\Mbar_{g,n}}\cB_1$ corresponds to points $(C_i,x^i_1,\ldots,x^i_n,u_i)$ of $\cB_i$ for $i = 0,1$ along with the data of an isomorphism $\varphi:(C_0,x^0_1,\ldots,x^0_n)^\text{st}\to(C_1,x^1_1,\ldots,x^1_n)^\text{st}$ between their stabilizations. Lifting this to a point in $\cN$, smoothing the nodes and mapping it back to $\cB_0\times_{\Mbar_{g,n}}\cB_1$ shows that nodal curves are nowhere dense in $\cB_0\times_{\Mbar_{g,n}}\cB_1$ as well. 
Finally, on the complement of the nodal curves \eqref{proper-birational-map} has an explicit inverse described as follows. Given $(C_i,x^i_1,\ldots,x^i_n,u_i)$ in $\cB_i$ for $i=0,1$ and an isomorphism $\varphi:C_0\to C_1$, we map it to the point of $\cN$ given by $(C_0,x^0_1,\ldots,x^0_n,u=(u_0,u_1\circ\varphi))$.
\end{proof} 

\begin{remark}
    It is crucial in Lemma \ref{Regularity of Product Maps} that the fibre product is taken in the sense of orbifolds (or stacks) and not over the underlying coarse moduli space $\overline{M}_{g,n}$. Indeed, when $(g,n)$ is $(1,1)$ or $(2,0)$, the corresponding map $\cN\to\cB_0\times_{\overline{M}_{g,n}}\cB_1$ is still proper but of degree $2$.
\end{remark}

\begin{lemma}\label{Zero Locus Fibre Product} The natural map
\begin{align*}
    (\fs_0^{-1}(0)\times\fs_1^{-1}(0))\times_{\cB_0\times\cB_1}\cN\to\Mbar_{g,n}(X,\fA;J)
\end{align*}
descends to a homeomorphism on the $(G_0\times G_1)$-quotient.
\end{lemma}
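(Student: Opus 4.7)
The plan is to write down the forgetful map on underlying sets, check that it descends continuously to the $(G_0\times G_1)$-quotient as a bijection, and upgrade it to a homeomorphism using compactness. An element of the fibre product is a triple
\[
\bigl((C_0\subset\bP^{N_0},u_0,0,0)\in\fs_0^{-1}(0),\;(C_1\subset\bP^{N_1},u_1,0,0)\in\fs_1^{-1}(0),\;(C,x_1,\ldots,x_n,\iota)\in\cN\bigr),
\]
where the projection of $\iota$ to $\bP^{N_i}$ stabilizes (after forgetting the $n$ marked points) to $C_i\subset\bP^{N_i}$; this produces a canonical holomorphic morphism $\kappa_i:C\to C_i$ for each $i$. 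To such a triple we associate $(C,x_1,\ldots,x_n,v)$ with $v:=(u_0\circ\kappa_0,u_1\circ\kappa_1):C\to X$, which is $J$-holomorphic since each $u_i$ is $J_i$-holomorphic and each $\kappa_i$ is a morphism of prestable curves, has total class in $\fA$, and can be seen to be a stable map using the stability of the $(C_i,x^i,u_i)$ together with the fact that $\iota$ is an embedding (so no irreducible component of $C$ is contracted by both $\kappa_0$ and $\kappa_1$). Continuity in Gromov topology is immediate, and the assignment descends to the quotient because the $(G_0\times G_1)$-action only modifies the projective framings of the $C_i$.

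For surjectivity on quotients, given $(C,x_1,\ldots,x_n,v)\in\Mbar_{g,n}(X,\fA;J)$, write $v=(v_0,v_1)$ and iteratively contract the components unstable for each $(v_i,x)$ to produce stable maps $(C_i,x^i_1,\ldots,x^i_n,u_i)\in\Mbar_{g,n}(X_i,A_i;J_i)$ and holomorphic stabilization morphisms $\kappa_i:C\to C_i$ with $v_i=u_i\circ\kappa_i$. Using the groupoid equivalence of Discussion \ref{groupoid-equivalence}, lift each $u_i$ to a unitary framed stable map $(C_i\subset\bP^{N_i},u_i,0,0)\in\fs_i^{-1}(0)$. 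The main technical step is to check that the combined morphism $\iota:=(\iota_0\circ\kappa_0,\iota_1\circ\kappa_1):C\to\bP^{N_0}\times\bP^{N_1}$ is an embedding, so that $(C,x,\iota)$ gives the desired point of $\cN$. The key observation is that no irreducible component $C'\subset C$ can be contracted by both $\kappa_0$ and $\kappa_1$: if it were, then $v|_{C'}$ would be constant and, unpacking how special points of $C'$ survive each iterative contraction, $C'$ would carry at most two special points in $C$, contradicting the stability of $(C,x,v)$. A similar analysis of the maximal $\kappa_i$-contracted subtrees adjacent to non-contracted components separates points across distinct components, completing the proof that $\iota$ is an embedding.

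For injectivity on quotients, suppose two triples map to the same $(C,x,v)$. Projecting to $X_i$ and stabilizing yields the same stable map $(C_i,x^i,u_i)\in\Mbar_{g,n}(X_i,A_i;J_i)$ on each side, and by Discussion \ref{groupoid-equivalence} the two unitary framings $C_i\subset\bP^{N_i}$ differ by an element of $G_i$. Once the framings are fixed, $\iota=(\iota_0\circ\kappa_0,\iota_1\circ\kappa_1)$ is uniquely determined by $(C,x,v)$, so the lift in $\cN$ is also unique. Both sides of the bijection are compact Hausdorff: the right side because only finitely many classes in $\fA$ have nonempty moduli by Gromov compactness, and the left side because $\fs_i^{-1}(0)/G_i\cong\Mbar_{g,n}(X_i,A_i;J_i)$ is compact and $\cN\to\cB_0\times\cB_1$ is proper. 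Hence the continuous bijection is automatically a homeomorphism. The main obstacle is the embedding claim for $\iota$ in the surjectivity step, which requires delicate bookkeeping of how the two iterative stabilizations $\kappa_0$ and $\kappa_1$ interact with the nodes and marked points of $C$, all driven in the end by the stability of the input $(C,x,v)$.
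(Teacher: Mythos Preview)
Your overall strategy matches the paper's: define the forgetful map, argue it is a continuous bijection on the quotient, and upgrade to a homeomorphism via compactness of the source and Hausdorffness of the target. The gap is a misreading of $\cN$ together with a false combinatorial claim.

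By definition $\cN\subset\Mbar_{g,n}(\bP^{N_0}\times\bP^{N_1},(m_0,m_1))$ is a subspace of a moduli space of \emph{stable maps}; its points need not be embeddings. Indeed, the proof of Lemma~\ref{Regularity of Product Maps} explicitly treats components of the domain contracted by both $\kappa_0$ and $\kappa_1$. So in your forward-map paragraph you cannot invoke ``the fact that $\iota$ is an embedding'' for an arbitrary point of $\cN$. More seriously, in the surjectivity step your ``key observation'' that no component $C'\subset C$ is contracted by both $\kappa_0$ and $\kappa_1$ is false. Take $g=0$, $n=3$, let $C$ be a tree of four spheres $C_a,C_b,C_c,C_d$ with $C_b$ attached to each of the other three, put all marked points on $C_d$, and arrange that $v_0$ is constant exactly on $C_a\cup C_b$ while $v_1$ is constant exactly on $C_b\cup C_c$. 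Then $(C,x,v)$ is stable (the sphere $C_b$ carries three nodes), yet $\kappa_0$ first contracts the leaf $C_a$ and then $C_b$ (which has become bivalent), and symmetrically $\kappa_1$ contracts $C_c$ and then $C_b$. Thus $\iota$ is constant on $C_b$ and is not an embedding.

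The repair is simpler than what you attempted. For surjectivity one only needs that $(C,x,\iota)$ is a \emph{stable} map, and this is immediate: if $\iota|_{C'}$ is constant then, since each $\iota_i$ is an embedding, both $\kappa_i$ contract $C'$, hence $v|_{C'}$ is constant, hence stability of $(C,x,v)$ forces at least three special points on $C'$. This is precisely the paper's one-line justification (``stability follows from that of $u$''). With this correction in place, your bijectivity and compactness arguments go through and coincide with the paper's proof.
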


\begin{proof}

Continuity of the map is evident. Since the source is compact and the target is Hausdorff, it suffices to argue that we get a bijection after passing to the $(G_0\times G_1)$-quotient. Suppose we are given a point  $(C,x_1,\ldots,x_n,u:C\to X_0\times X_1)$ of $\Mbar_{g,n}(X,\fA;J)$. We get points $(C_i,x^i_1,\ldots,x^i_n,u_i:C\to X_i)$ of $\Mbar_{g,n}(X_i,A_i;J_i)$ and associated contraction maps $\kappa_i:C\to C_i$ for $i=0,1$ by applying the map $\Phi$ from \eqref{forget-from-product}. Since $\cK_i$ is a global Kuranishi chart for $i=0,1$, we can lift $(C_i,x^i_1,\ldots,x^i_n,u_i:C\to X_i)$ to a point $(C_i,x^i_1,\ldots,x^i_n,u_i,\iota_i:C_i\to\bP^{N_i})\in\fs_i^{-1}(0)$ which is unique up to the action of $G_i$. The contraction maps $C\to C_i$ and the maps $\iota_i:C_i\to\bP^{N_i}$ now uniquely determine a map $C\to\bP^{N_0}\times\bP^{N_1}$ whose stability follows from that of $u$. Thus, each point in $\Mbar_{g,n}(X,\fA;J)$ has an inverse image in $(\fs_0^{-1}(0)\times\fs_1^{-1}(0))\times_{\cB_0\times\cB_1}\cN$ which is unique up to the action of $G_0\times G_1$.
\end{proof}

    Lemma \ref{Zero Locus Fibre Product} shows that
    \begin{align*}
        \cK_\Psi := \Psi^*(\cK_0\times\cK_1)
    \end{align*}
    defines a rel--$C^\infty$ global Kuranishi chart for $\Mbar_{g,n}(X,\fA;J)$.

\begin{lemma}\label{Equivalent Charts for Product} $\cK_\Psi$ is stably complex rel--$C^\infty$ equivalent to $\cK$ from \eqref{product-independent-chart}.
\end{lemma}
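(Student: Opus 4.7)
The plan is to build a joint (triply indexed) rel--$C^\infty$ global Kuranishi chart $\cK^\ddagger$ that dominates both $\cK$ and $\cK_\Psi$ through the equivalence moves of Definition \ref{equivalence-of-charts-defined}, mirroring the argument of \textsection\ref{equivalence-of-charts} but now with three projective embeddings instead of two. Since both $\cK$ and $\cK_\Psi$ are global Kuranishi charts for the same moduli space $\Mbar_{g,n}(X,\fA;J)$, showing that each is equivalent to $\cK^\ddagger$ will give the desired stably complex rel--$C^\infty$ equivalence.

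Concretely, I would define the joint thickening $\cT^\ddagger$ to parametrize tuples
\begin{align*}
    (C\subset\bP^N\times\bP^{N_0}\times\bP^{N_1},\;u\cl C\to X,\;\eta,\alpha,\;\eta_0,\alpha_0,\;\eta_1,\alpha_1)
\end{align*}
such that projection to the first factor yields a point of $\cT$ with obstruction data $(\eta,\alpha)$, while projection to the latter two factors, followed by contraction of the unstable components $\kappa_i\cl C\to C_i$, yields points of $\cT_0$ and $\cT_1$ with data $(\eta_i,\alpha_i)$ (where the map to $X_i$ is induced by composing $u$ with $\pr_i$). The base $\cB^\ddagger$ is the corresponding moduli of admissible embeddings, the symmetry group is $G\times G_0\times G_1$, the obstruction bundle $\cE^\ddagger$ is the direct sum of $\cE$ and the pullbacks of $\cE_0,\cE_1$, and $\fs^\ddagger=\fs\oplus\fs_0\oplus\fs_1$. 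Arguing as in \textsection\ref{transversality-implies-smoothness-proof}, using that all three auxiliary data are unobstructed, this yields a rel--$C^\infty$ global Kuranishi chart for $\Mbar_{g,n}(X,\fA;J)$.

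The crucial input, analogous to Lemma \ref{double-thickening-transversality}, is a pair of transversality statements at points of $(\fs^\ddagger)^{-1}(0)$: (i) the vertical linearization of $(\fs_0,\fs_1)$ with respect to the base $\cB$ of $\cK$ surjects onto $\cE_0\oplus\cE_1$, and (ii) the vertical linearization of $\fs$ with respect to the base of $\cK_\Psi$ surjects onto $\cE$. For (i), the $E_{(C\subset\bP^{N_i},u)}^i$ summands are handled because the auxiliary datum is unobstructed on each factor, the $\fs\fu(N_i+1)$ summands are handled via the $\cG_i$-equivariance of $\lambda_i$, and the $H^1(C,\cO_C)$ summands are handled by the Euler sequence applied to deformations of the embeddings into $\bP^{N_i}$; the argument for (ii) is symmetric. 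Together with a rel--$C^\infty$ tubular neighborhood argument and a germ-equivalence reduction to a neighborhood of the zero locus, these transversality statements let us relate $\cK^\ddagger$ to $\cK$ (by dropping the $\bP^{N_0}\times\bP^{N_1}$ part via (Base modification), (Group enlargement), and (Stabilization)) and symmetrically to $\cK_\Psi$.

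The main obstacle will be setting up the joint base $\cB^\ddagger$ so that both reductions can be carried out simultaneously, which requires reconciling the fact that $\cK$ uses a polarization of total degree $p(2g-2+3(d_0+d_1))$ with $\cK_\Psi$ being built from separate degrees on each factor. Here the birationality statement in Lemma \ref{Regularity of Product Maps} is essential: it ensures that, away from loci of positive codimension that can be absorbed by germ equivalence near the compact zero locus, the combinatorial structure of the curve is preserved under all the projections, so that the linear algebra from \textsection\ref{equivalence-of-charts} goes through. The hypothesis that $(g,n)$ avoid $(1,1)$ and $(2,0)$, already used in Lemma \ref{Regularity of Product Maps}, enters precisely at this step.
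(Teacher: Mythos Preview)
Your overall strategy—build a triply-framed joint chart $\cK^\ddagger$ over a base $\cB^\ddagger$ parametrizing maps to $\bP^N\times\bP^{N_0}\times\bP^{N_1}$, with symmetry group $G\times G_0\times G_1$, and reduce to each of $\cK$ and $\cK_\Psi$ via the moves of Definition~\ref{equivalence-of-charts-defined}—is correct and is precisely what the paper means by ``straightforward adaptation''. The transversality ingredients you identify (unobstructedness for the $E$-summands, the Euler sequence for the $H^1(C,\cO_C)$-summands, equivariance of $\lambda$ for the $\fs\fu$-summands) are the right ones.

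Your final paragraph, however, is mistaken. The birationality of Lemma~\ref{Regularity of Product Maps} and the hypothesis $(g,n)\notin\{(1,1),(2,0)\}$ play \emph{no role} in the chart equivalence; they enter only later, in Lemma~\ref{Image and Degree of Product Map}, to compute the pushforward $\Psi_*[\cN]$ in Borel--Moore homology. The analogue of Lemma~\ref{double-thickening-transversality} holds at \emph{every} point of the common zero locus, including points where the contractions $\kappa_i\cl C\to C_i$ collapse components: the Euler-sequence argument for the $H^1$-summand is carried out on $C_i$ directly, and the cokernel of $D(\delbar_{J_i})_{\pr_i\circ u}$ on $C$ coincides with that of $D(\delbar_{J_i})_{u_i}$ on $C_i$ because the collapsed rational trees contribute no $H^1$. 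There is nothing to excise, and (Germ equivalence) only shrinks the thickening to an open neighborhood of the \emph{full} zero locus—it cannot ``absorb'' closed subsets of the zero locus itself. Likewise, the different polarization degrees pose no obstacle: $\cB^\ddagger$ is simply the moduli space of stable maps to the triple product landing in $\cB\times\cN$, and it submerses onto each of $\cB$ and $\cN$ by the same $H^1$-vanishing that makes $\cN$ smooth in Lemma~\ref{Regularity of Product Maps}.
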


\begin{proof} 
This is a straightforward adaptation of the argument used in \textsection\ref{equivalence-of-charts} to prove Theorem \ref{global-kuranishi-existence}\eqref{chart-uniqueness-up-to-equivalence}.
\end{proof}

\begin{remark}
Lemma \ref{Equivalent Charts for Product} corresponds to the comparison of obstruction theories in \cite[Proposition 6]{Beh99}.
\end{remark}

The following fact will be crucial for our proof of the product formula.

\begin{lemma}\label{Image and Degree of Product Map} Assume $2g-2 + n > 0$ and $(g,n)$ is neither $(1,1)$ nor $(2,0)$. Then, we have
	\begin{align}\label{degree-product-map 1} 
	{\Psi}_*\fcl{\cN} = \normalfont(\text{st}\times\text{st})^*\pcd([\Delta])\cap \fcl{\cB_0\times\cB_1}
	\end{align}
	in the Borel--Moore homology of $\cB_0\times\cB_1$ over $\bQ$.
\end{lemma}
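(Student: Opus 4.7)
The plan is to factor $\Psi$ through the orbifold (stack-theoretic) fibre product $\cF := \cB_0\times_{\Mbar_{g,n}}\cB_1$, defined via the stabilization morphisms $\text{st}_i\cl \cB_i\to\Mbar_{g,n}$ for $i=0,1$. Lemma \ref{Regularity of Product Maps} asserts that the induced morphism $\cN\to\cF$ is proper and birational, so pushforward in rational Borel--Moore homology sends $[\cN]$ to $[\cF]$. It therefore suffices to show that the pushforward of $[\cF]$ along the closed immersion $\cF\hookrightarrow\cB_0\times\cB_1$ agrees with $(\text{st}\times\text{st})^*\pcd([\Delta])\cap[\cB_0\times\cB_1]$.

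The next step is to verify that each stabilization morphism $\text{st}_i\cl \cB_i\to\Mbar_{g,n}$ is a smooth representable submersion of orbifolds. This follows because $\cB_i$ is an open subscheme of a relative Hilbert scheme parametrizing projective embeddings of stable pointed curves via a sufficiently positive line bundle, so (arguing as in the smoothness step of Lemma \ref{alg-base-spaces-are-smooth-qproj}) infinitesimal deformations of $(C\subset\bP^{N_i},x_1,\ldots,x_n)$ surject onto infinitesimal deformations of the stabilization $(C,x_1,\ldots,x_n)^{\text{st}}$. Since $2g-2+n>0$, the Deligne--Mumford stack $\Mbar_{g,n}$ is a smooth proper complex orbifold, and the diagonal $\Delta\subset\Mbar_{g,n}\times\Mbar_{g,n}$ is a smoothly embedded closed suborbifold. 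Submersivity of $\text{st}_0\times\text{st}_1$ then guarantees transversality to $\Delta$, and $\cF$ is identified, as a smooth closed suborbifold of $\cB_0\times\cB_1$, with the orbifold-theoretic preimage $(\text{st}\times\text{st})^{-1}(\Delta)$.

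Given these identifications, the identity \eqref{degree-product-map 1} reduces to the standard base-change formula in rational Borel--Moore homology: for a smooth submersion $f\cl M\to Y$ of smooth oriented orbifolds transverse to a closed smoothly embedded oriented suborbifold $N\subset Y$, one has $[f^{-1}(N)] = f^*\pcd([N])\cap[M]$. Once this is established in the orbifold setting, the proof is complete by applying it to $f = \text{st}\times\text{st}$ and $N = \Delta$. The main obstacle is the clean formulation and verification of this base-change statement with $\bQ$-coefficients in the orbifold category, since Poincar\'e duality and the cap product are sensitive to local isotropy groups. The restriction $(g,n)\neq(1,1),(2,0)$ enters precisely here: in those two exceptional cases, the generic point of $\Mbar_{g,n}$ has nontrivial $\bZ/2$ isotropy (the elliptic and hyperelliptic involutions), and this produces exactly the automorphism-related discrepancy between $\cF$ and the coarse preimage of the diagonal flagged in the remark following Lemma \ref{Regularity of Product Maps}; away from these cases the generic isotropy is trivial and the base-change identity holds without correction.
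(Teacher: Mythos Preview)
Your overall strategy matches the paper's, but there is a genuine error: the map $\cF\to\cB_0\times\cB_1$ is \emph{not} a closed immersion, and $\cF$ is not a suborbifold of $\cB_0\times\cB_1$. That map is the base change of the diagonal $\Mbar_{g,n}\to\Mbar_{g,n}\times\Mbar_{g,n}$, and for a Deligne--Mumford stack the diagonal is finite and unramified but not a monomorphism: it fails to be injective precisely over objects with nontrivial automorphisms. Concretely, a point of $\cF$ is a triple $(b_0,b_1,\varphi)$ with $\varphi$ an isomorphism of the stabilized curves, and forgetting $\varphi$ is non-injective whenever that stabilization has automorphisms. So the base-change identity $[f^{-1}(N)]=f^*\pcd([N])\cap[M]$ you invoke, which presumes $f^{-1}(N)\subset M$, does not apply as written. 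What you actually need is that the \emph{pushforward} of $[\cF]$ along the finite map to $\cB_0\times\cB_1$ equals the right-hand side; establishing this is precisely the generic-degree-one computation that uses $(g,n)\neq(1,1),(2,0)$, and it is not a formality.

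The paper sidesteps the orbifold machinery entirely. It works with the coarse space $\overline{M}_{g,n}$ and the honest closed subscheme $\cB_{01}=\cB_0\times_{\overline{M}_{g,n}}\cB_1\subset\cB_0\times\cB_1$, then restricts to a Zariski-dense open $\cN'\subset\cB_{01}$ on which simultaneously the stack agrees with the coarse space, $\Psi$ is an isomorphism onto $\cN'$, and $\text{st}\times\text{st}$ is a submersion. On $\cN'$ the identity is the ordinary manifold transverse-intersection formula; since the complement $\cB_{01}\setminus\cN'$ has complex codimension $\ge 1$, excision in Borel--Moore homology extends it to all of $\cB_0\times\cB_1$. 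This argument never needs an orbifold base-change statement, and it only uses submersivity of $\text{st}_i$ on a dense open set rather than globally (global submersivity is in fact true, via the factorization through the smooth map $\cB_i\to\fM_{g,n}$ to the Artin stack of prestable curves followed by the smooth stabilization $\fM_{g,n}\to\Mbar_{g,n}$, but your appeal to Lemma~\ref{alg-base-spaces-are-smooth-qproj} does not establish it).
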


\begin{proof}
Since $(g,n)$ is neither $(1,1)$ nor $(2,0)$, the forgetful map $\Mbar_{g,n}\to\overline{M}_{g,n}$ from the moduli stack to the coarse moduli space is an isomorphism over a Zariski open subset $\overline{M}_{g,n}^*\subset\overline{M}_{g,n}$. From this and Lemma \ref{Regularity of Product Maps}, it follows that $\Psi$ maps $\cN$ onto the closed subscheme $$\cB_{01}:=\cB_0\times_{\overline{M}_{g,n}}\cB_1\subset\cB_0\times\cB_1$$ 
birationally. 
Let $\cN'\subset\cB_{01}$ be the maximal Zariski open subset for which $\cN'\to\overline{M}_{g,n}$ has image contained in $\overline{M}^*_{g,n}$, the restriction $\Psi^{-1}(\cN')\to\cN'$ is an isomorphism, and the projection $\cB_0\times\cB_1\to\overline{M}_{g,n}\times\overline{M}_{g,n}$ is a submersion at the points of $\cN'$.\par 
Then, the set $F:=\cB_{01}\setminus\cN'$ is Zariski closed in $\cB_0\times\cB_1$ and $\dim_\bC F\le\dim_\bC\cN' - 1$. By construction, it follows that \eqref{degree-product-map 1} holds over the complement of $F$ in $\cB_0\times\cB_1$. To conclude that \eqref{degree-product-map 1} holds over all of $\cB_0\times\cB_1$, we use the excision exact sequence in Borel--Moore homology and the fact that the Borel--Moore homology of $F$ is supported in degrees $\le\dim_\bR\cN-2$.
\end{proof}

\begin{remark}\label{stacky-cases}
    Replacing $\pcd([\Delta])\cap(\cdot)$ in Lemma \ref{Image and Degree of Product Map} by a Gysin pullback along the map $\Delta$ of orbifolds, it is possible to extend Lemma \ref{Image and Degree of Product Map} to cover the cases when $(g,n)$ is $(1,1)$ or $(2,0)$. We do not pursue this generalization here.
\end{remark}

\begin{proof}[Proof of Theorem \ref{gw-product}]
    The left side is \emph{a priori} defined using the global Kuranishi chart $\cK$, but by virtue of Lemma \ref{Equivalent Charts for Product} we can replace $\cK$ by $\cK_\Psi$.
    Let $\tilde\cT/\cN$ be the thickening in the global Kuranishi chart $\cK_\Psi$ and let 
    $$\tilde\Psi:\tilde\cT\to\cT_0\times\cT_1$$ 
    be the natural map covering $\Psi$. Let $\cN'\subset \cB_{01}$ be the subset defined in the proof of Lemma \ref{Image and Degree of Product Map} and let $\tilde\cT'$ be the preimage of $\Psi\inv(\cN')$  under the forgetful map $\tilde{\pi}\cl \tilde\cT\to \cN$. As $\tilde{\pi}$ is a submersion, $\tilde\cT'$ is an open submanifold whose complement has real codimension $\geq 2$. Thus, the argument used to prove Lemma \ref{Image and Degree of Product Map} implies that we also have the identity
    \begin{align}\label{fund-class-identity-thickening}
        \tilde\Psi_*[\tilde\cT/(G_0\times G_1)] = (\text{st}\times\text{st})^*\pcd([\Delta])\cap \fcl{(\cT_0/G_0)\times(\cT_1/G_1)}
    \end{align}
    in the Borel--Moore homology of $\cT_0/G_0\times\cT_1/G_1$ over $\bQ$.
    This implies the desired result once we recall the definition of the virtual fundamental class from \eqref{vfc-de}.
\end{proof}

\appendix
\section{Holomorphic line bundles on families of curves}\label{line-bundles-on-families-of-curves}

\subsection{Case of a single curve}
Let $C$ be a prestable curve. Consider the following natural exact sequence of sheaves of abelian groups on $C$ in the usual (i.e., complex analytic) topology.
\begin{align*}
    0\to\bZ\xrightarrow{2\pi \text{i}}\cO_C\xrightarrow{\exp}\cO_C^\times\to 0.
\end{align*}
Since $C$ is connected, applying $H^0$ to this exact sequence gives 
\begin{align*}
    0\to\bZ\xrightarrow{2\pi \text{i}}\bC\xrightarrow{\exp}\bC^\times\to0.
\end{align*}
The remainder of the long exact sequence in sheaf cohomology contains
\begin{align*}
    0\to H^1(C,\bZ)\xrightarrow{2\pi\text{i}} H^1(C,\cO_C)\xrightarrow{\exp} H^1(C,\cO_C^\times)\to H^2(C,\bZ)\to 0,
\end{align*}
where we have used the vanishing of $H^2(C,\cO_C)$ to get the last $0$. Using the \v{C}ech description of sheaf cohomology, we see that $\text{Pic}(C):= H^1(C,\cO_C^\times)$ is the group of isomorphism classes of holomorphic line bundles on $C$ under the tensor product operation, i.e., the Picard group of $C$. The map $H^1(C,\cO_C^\times)\to H^2(C,\bZ)$ is given by the first Chern class \cite[Chapter 1, pages 139--143]{Griffiths-Harris}. From this, we get
\begin{align}\label{fundamental-pic-iso}
    \frac{H^1(C,\cO_C)}{H^1(C,\bZ)}\xrightarrow{\simeq} \text{Pic}^0(C):=\ker(H^1(C,\cO_C^\times)\xrightarrow{c_1}H^2(C,\bZ)).
\end{align}
The group $H^1(C,\cO_C)$ has the alternate Dolbeault description 
\begin{align*}
    H^1(C,\cO_C) = \coker(\delbar:\Omega^0(C,\bC)\to\Omega^{0,1}(\tilde C,\bC)),
\end{align*}
where $\tilde C\to C$ is the normalization. It will be useful to have an explicit description of the isomorphism \eqref{fundamental-pic-iso} in Dolbeault terms.

\begin{lemma}[Line bundle from Dolbeault class]\label{line-bundle-dolbeault}
    Let $\alpha\in\Omega^{0,1}(\tilde C,\bC)$. Then, the image of $[\alpha]\in H^1(C,\cO_C)$ in $\normalfont\text{Pic}^0(C)$ is $[\cL_\alpha]$, where  $\cL_\alpha$ is holomorphic line bundle defined by equipping $C\times\bC$ with the Cauchy--Riemann operator
    \begin{align*}
        \Omega^0(C,\bC)&\to\Omega^{0,1}(\tilde C,\bC)\\
        F&\mapsto \delbar F - F\alpha.
    \end{align*}
\end{lemma}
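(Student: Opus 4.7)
My plan is to prove the lemma via the Čech--Dolbeault isomorphism by exhibiting nonvanishing local holomorphic frames of $\cL_\alpha$ whose transition functions manifestly realize the exponential of the given Dolbeault class.

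First I would record two preliminary observations. Since the underlying smooth bundle of $\cL_\alpha$ is trivial, $c_1(\cL_\alpha) = 0$ and hence $[\cL_\alpha]$ lies in $\text{Pic}^0(C)$. Next, the isomorphism class $[\cL_\alpha]$ depends only on $[\alpha] \in H^1(C,\cO_C)$: if $\alpha' = \alpha + \bar\partial F$ for some $F \in C^\infty(C,\bC)$ (i.e., smooth on $\tilde C$ and matching at nodes), then multiplication by $\exp(F)$ is a well-defined smooth automorphism of the trivial $\bC$-bundle on $C$ and intertwines the Cauchy--Riemann operators $\bar\partial_\alpha = \bar\partial - \alpha$ and $\bar\partial_{\alpha'} = \bar\partial - \alpha'$, giving a holomorphic isomorphism $\cL_\alpha \cong \cL_{\alpha'}$.

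Next, I would choose a sufficiently fine open cover $\{U_i\}_{i \in I}$ of $C$ such that on each $U_i$ there exists a smooth function $h_i \colon U_i \to \bC$, respecting nodal matching, with $\bar\partial h_i = \alpha|_{U_i}$. Away from the nodal set this is the classical $\bar\partial$-Poincaré lemma on small disks; near a single node, one solves $\bar\partial$ separately on the two branches of $\tilde C$ meeting at the node and adds a constant to one branch to enforce matching at the node's preimages. Then $e_i := \exp(h_i)$ is nonvanishing on $U_i$ and satisfies
\begin{equation*}
\bar\partial_\alpha e_i = \bar\partial e_i - e_i \alpha = e_i(\bar\partial h_i - \alpha) = 0,
\end{equation*}
so $e_i$ is a holomorphic frame of $\cL_\alpha$ over $U_i$.

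The final step is a direct Čech computation. On overlaps the transition functions are
\begin{equation*}
g_{ij} := e_i / e_j = \exp(h_i - h_j) = \exp(f_{ij}),
\end{equation*}
where $f_{ij} := h_i - h_j$ is holomorphic on $U_i \cap U_j$ since $\bar\partial f_{ij} = 0$. The Čech cocycle $\{f_{ij}\}$ represents the Dolbeault class $[\alpha]$ under the isomorphism $\check H^1(\{U_i\},\cO_C) \cong H^1(C,\cO_C)$; this is verified by the standard partition-of-unity formula, which produces global $\tilde h_i := \sum_k \rho_k f_{ik}$ with $\tilde h_i - \tilde h_j = f_{ij}$ and $\bar\partial \tilde h_i$ independent of $i$, agreeing with a representative of $[\alpha]$. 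By functoriality of cohomology under the sheaf map $\exp \colon \cO_C \to \cO_C^\times$, the class $[\cL_\alpha]$ represented by $\{g_{ij}\} = \{\exp(f_{ij})\}$ is exactly $\exp[\alpha]$. The only mildly delicate point in the whole argument is the nodal matching adjustment in Step 2, but this is standard; I do not anticipate any serious obstacle.
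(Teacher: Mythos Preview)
Your proof is correct and follows essentially the same approach as the paper: both construct local $\bar\partial$-primitives $h_i$ for $\alpha$ (with the same nodal matching adjustment), form the \v{C}ech cocycle $f_{ij}=h_i-h_j$, and identify the transition functions of $\cL_\alpha$ with $\exp(f_{ij})$. The only cosmetic difference is direction: the paper builds the line bundle $\cL$ from the \v{C}ech side and then exhibits a smooth isomorphism $\varphi:\cL\to C\times\bC$ via $\exp(f_i)$ that carries $\bar\partial_\cL$ to $\bar\partial-\alpha$, whereas you start from $\cL_\alpha$ and read off transition functions from the holomorphic frames $e_i=\exp(h_i)$; these are the same computation.
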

\begin{proof}
    Using the $\delbar$-Poincar\'e lemma, choose an open cover $\{U_i\}_i$ of $C$ and smooth functions $f_i:U_i\to\bC$ such that $\delbar f_i = \alpha|_{U_i}$. Near a node, we are applying the $\delbar$-Poincar\'e lemma on $\tilde C$ and then adjusting the primitives by adding constants to ensure that they agree on the inverse images of the node in $\tilde C$. 
    
    The $\cO_C$-valued \v{C}ech $1$-cocycle $\{f_{ij}\}_{i,j}$ defined by
    \begin{align*}
        f_{ij} = f_i|_{U_i\cap U_j}-f_j|_{U_i\cap U_j}
    \end{align*}
    represents $[\alpha]\in H^1(C,\cO_C)$. This means its image in $\text{Pic}^0(C)$ is the isomorphism class of the holomorphic line bundle $\cL$ obtained by gluing the trivial holomorphic line bundles $U_i\times\bC\to U_i$ using the transition functions
    \begin{align*}
        (U_i\times\bC)|_{U_i\cap U_j}\xrightarrow{\simeq} (U_j\times\bC)|_{U_i\cap U_j}
    \end{align*}
    given by $g_{ij} = \exp(f_{ij}):U_i\cap U_j\to\bC^\times$. Define the smooth isomorphism
    \begin{align*}
        \varphi:\cL\to C\times\bC    
    \end{align*}
    of complex line bundles using $\exp(f_i):U_i\to\bC^\times$ over the open cover $\{U_i\}_i$. This is well-defined since we have $\exp(f_i) = g_{ij}\exp(f_j)$ over $U_i\cap U_j$. Pushing forward the Cauchy--Riemann operator $\delbar_\cL$ under the isomorphism $\varphi$, we obtain the Cauchy--Riemann operator on $C\times\bC$ which is given on $U_i\subset C$ by the expression
    \begin{align*}
        F\mapsto \exp(f_i)\delbar(F\exp(-f_i)) = \delbar F-F\cdot\alpha|_{U_i}.
    \end{align*}
    Thus, $\cL$ and $\cL_\alpha$ are isomorphic as holomorphic line bundles.
\end{proof}

\begin{corollary}[Image of $H^1(C,\bZ)$ via Dolbeault classes]\label{integral-dolbeault}
    Let $\alpha\in\Omega^{0,1}(\tilde C,\bC)$. Then, $[\alpha]\in H^1(C,\cO_C)$ lies in the image of $2\pi\normalfont\text{i}:H^1(C,\bZ)\to H^1(C,\cO_C)$ or, equivalently, lies in the kernel of $\exp:H^1(C,\cO_C)\to\normalfont\text{Pic}^0(C)$, if and only if there exists a smooth function $\varphi:C\to\bC^\times$ such that $\alpha = \varphi^{-1}\delbar\varphi$.
\end{corollary}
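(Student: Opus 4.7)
The equivalence between the two reformulations in the statement (image of $2\pi\mathrm{i}: H^1(C,\bZ) \to H^1(C,\cO_C)$ versus kernel of $\exp: H^1(C,\cO_C) \to \mathrm{Pic}^0(C)$) is immediate from the exact sequence
\[
0 \to H^1(C,\bZ) \xrightarrow{2\pi\mathrm{i}} H^1(C,\cO_C) \xrightarrow{\exp} H^1(C,\cO_C^\times)
\]
recalled just before Lemma \ref{line-bundle-dolbeault}. Thus it suffices to characterize when $[\alpha]$ lies in $\ker(\exp)$.

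My plan is to invoke Lemma \ref{line-bundle-dolbeault}, which says $\exp([\alpha]) = [\cL_\alpha]$ in $\mathrm{Pic}^0(C)$, where $\cL_\alpha$ is the smoothly trivial line bundle $C \times \bC$ equipped with the Cauchy--Riemann operator $F \mapsto \delbar F - F\alpha$. Then $[\alpha] \in \ker(\exp)$ if and only if $\cL_\alpha$ is holomorphically trivial, and this holds if and only if $\cL_\alpha$ admits a global nowhere-vanishing holomorphic section.

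A global holomorphic section of $\cL_\alpha$ is, by the very definition of its Cauchy--Riemann operator, an element $\varphi \in \Omega^0(C,\bC)$ satisfying $\delbar\varphi - \varphi\alpha = 0$ in $\Omega^{0,1}(\tilde C, \bC)$. Such a section is nowhere vanishing precisely when $\varphi$ takes values in $\bC^\times$, in which case the identity rearranges to $\alpha = \varphi^{-1} \delbar \varphi$. Conversely, any smooth function $\varphi: C \to \bC^\times$ satisfying $\alpha = \varphi^{-1}\delbar\varphi$ trivializes $\cL_\alpha$ holomorphically.

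There is no real obstacle here; the only minor subtlety is that $\Omega^0(C,\bC)$ consists of continuous functions on $C$ that are smooth on each irreducible component (equivalently, smooth functions on $\tilde C$ descending across the nodes), so both $\varphi$ and $\varphi^{-1}\delbar\varphi$ live in the appropriate spaces and the argument above goes through verbatim.
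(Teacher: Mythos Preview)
Your proof is correct and follows essentially the same approach as the paper: both reduce the question to whether $\cL_\alpha$ from Lemma~\ref{line-bundle-dolbeault} is holomorphically trivial, and then identify a nowhere-vanishing holomorphic section of $\cL_\alpha$ with a smooth $\varphi:C\to\bC^\times$ satisfying $\delbar\varphi = \varphi\alpha$. Your write-up is slightly more explicit about the converse direction and the meaning of $\Omega^0(C,\bC)$ on a nodal curve, but there is no substantive difference.
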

\begin{proof}
    We know that $[\alpha]$ lies in the image of $2\pi\text{i}:H^1(C,\bZ)\to H^1(C,\cO_C)$ if and only if the line bundle $\cL_\alpha$ from Lemma \ref{line-bundle-dolbeault} is holomorphically trivial. Now, $\cL_\alpha$ is holomorphically trivial if and only if it has a nowhere vanishing global holomorphic section. The proof is complete once we note that a global holomorphic section of $\cL_\alpha$ is the same as a smooth function $\varphi:C\to\bC$ such that $\delbar \varphi - \varphi\alpha = 0$.
\end{proof}

\subsection{Case of a family of curves} Let $\cS$ be a smooth quasi-projective scheme and let $\pi:\cC\to\cS$ be a flat projective algebraic family of prestable genus $g$ curves on $\cS$. In particular, $\cC$ is a reduced scheme. Since $\cS$ is quasi-projective, its underlying smooth manifold is second countable and thus has partitions of unity.

For $s\in\cS$, we denote the fibre of $\pi$ over $s$ by $C_s$ and its normalization by $\tilde C_s\to C_s$. Let $\cC^\circ\subset\cC$ the subset over which the map $\pi$ is a submersion, i.e., $\cC^\circ$ is the complement of the nodes in the fibres of $\pi$. Note that $\cC^\circ\subset\cC$ is an open subset in the usual topology.\footnote{It can also be shown to be open in the Zariski topology, but we do not need this here.}

\begin{definition}[Hodge bundle and its dual]
    Define the \emph{Hodge bundle} to be the coherent sheaf $\bH_{\cC/\cS} := \pi_*\omega_{\cC/\cS}$ where $\omega_{\cC/\cS}$ is the relative dualizing line bundle of $\pi$. Define the \emph{dual Hodge bundle} to be the coherent sheaf $\bH^*_{\cC/\cS}:=R^1\pi_*\cO_C$.
\end{definition}

\begin{lemma}\label{Vector Bundle via Homology of Structural Line}
$\bH_{\cC/\cS}$ and $\bH_{\cC/\cS}^*$ are algebraic vector bundles of rank $g$ on $\cS$ which are dual to each other and their fibres over any $s\in S$ are canonically identified with $H^0(C_s,\omega_{C_s})$ and $H^1(C_s,\cO_{C_s})$ respectively.
\end{lemma}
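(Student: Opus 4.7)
The plan is to apply the theorem on cohomology and base change \cite[Theorem III.12.11]{Har77} (or its variant due to Grauert, e.g.\ \cite[\href{https://stacks.math.columbia.edu/tag/0B9T}{Tag 0B9T}]{stacks-project}) to the flat projective morphism $\pi:\cC\to\cS$, and then use relative Serre duality to identify the two resulting sheaves.

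First I would verify that the function $s\mapsto \dim_\bC H^1(C_s,\cO_{C_s})$ is constant on $\cS$. Because each fibre $C_s$ is a connected prestable curve of arithmetic genus $g$, we have $H^0(C_s,\cO_{C_s}) = \bC$, so $\chi(\cO_{C_s}) = 1-g$ forces $\dim_\bC H^1(C_s,\cO_{C_s}) = g$. Combined with flatness of $\pi$ and the fact that $\cS$ is reduced (being smooth), the theorem on cohomology and base change then shows that $R^1\pi_*\cO_\cC = \bH^*_{\cC/\cS}$ is locally free of rank $g$ on $\cS$ and that the natural base change map $(\bH^*_{\cC/\cS})_s \to H^1(C_s,\cO_{C_s})$ is an isomorphism for every $s\in\cS$. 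The same machinery applied one degree lower, together with the fact that $\pi_*\cO_\cC = \cO_\cS$ since the fibres of $\pi$ are connected, shows that base change also holds in degree $0$ (here we use that the degree $0$ base change is automatically an isomorphism once the higher degree one is, again by the cited theorem).

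Next I would invoke relative Serre (Grothendieck) duality for the flat proper Cohen--Macaulay morphism $\pi:\cC\to\cS$: the relative dualizing sheaf $\omega_{\cC/\cS}$ exists as a line bundle (since the fibres are Gorenstein, being nodal curves), and there is a natural perfect pairing
\begin{equation*}
    \pi_*\omega_{\cC/\cS}\otimes R^1\pi_*\cO_\cC \to \cO_\cS
\end{equation*}
which, at each fibre, restricts to the classical Serre duality pairing $H^0(C_s,\omega_{C_s})\otimes H^1(C_s,\cO_{C_s})\to\bC$. Since $\bH^*_{\cC/\cS}$ is a locally free sheaf of rank $g$ with the stated fibres, this pairing exhibits $\bH_{\cC/\cS} = \pi_*\omega_{\cC/\cS}$ as the $\cO_\cS$-linear dual of $\bH^*_{\cC/\cS}$, hence also as a rank $g$ algebraic vector bundle with fibres $H^0(C_s,\omega_{C_s})$.

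The only mildly subtle point is the need to work with the dualizing sheaf $\omega_{\cC/\cS}$ in place of a relative cotangent bundle, and to know that relative Serre duality with this sheaf is compatible with arbitrary base change for a flat family of nodal curves. This is standard (the fibres are local complete intersections, so the formation of $\omega_{\cC/\cS}$ commutes with base change), and is the only non-formal input beyond the cohomology and base change theorem.
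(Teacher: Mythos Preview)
Your proof is correct and follows essentially the same approach as the paper: cohomology and base change \cite[Theorem III.12.11]{Har77} for local freeness and identification of fibres, and Serre duality for the duality statement. You supply more detail (the Euler characteristic computation, the Gorenstein property of nodal curves, relative duality in families) than the paper's two-sentence proof, but the underlying argument is the same.
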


\begin{proof} 
The fact that $\bH_{\cC/\cS}$ and $\bH_{\cC/\cS}^*$ are locally free sheaves with the claimed description of fibres is a consequence of the theorem on cohomology and base change \cite[Theorem III.12.11]{Har77}. The assertion that they are dual is a consequence of Serre duality \cite[Theorem III.7.6]{Har77} for curves.
\end{proof}

For the remainder of this appendix, we will not be using the scheme structures of $\cC$ and $\cS$ except indirectly via Lemma \ref{Vector Bundle via Homology of Structural Line}.\footnote{In fact, with some more work, Lemma \ref{Vector Bundle via Homology of Structural Line} would also go through in the complex analytic category, so the algebraic structures are not actually necessary.} Thus, we will use `open' to mean `open in the usual (i.e. complex analytic) topology'. 

Let $\Lambda^{0,1}T^*_{\cC^\circ/\cS}$ be the bundle of $(0,1)$-forms on the fibres of $\cC^\circ\to S$, i.e., it is the $\bC$-antilinear dual of the vertical tangent bundle of $\cC^\circ\to\cS$. Consider sections
\begin{align}\label{dolbeault-forms-basis}
    \alpha_1,\ldots,\alpha_g\in C^\infty_c(\cC^\circ,\Lambda^{0,1}T^*_{\cC^\circ/\cS})
\end{align}
of compact support. The compact support condition ensures that, for any $s\in\cS$, the restrictions $\alpha_1(s),\ldots,\alpha_g(s)$ are supported away from the nodes of $C_s$ and therefore make sense as elements of $\Omega^{0,1}(\tilde C_s,\bC)$. 

\begin{lemma}\label{dolbeault-triv-point}
    Given any $s\in\cS$, there exist $\alpha_1,\ldots,\alpha_g$ as in \eqref{dolbeault-forms-basis} so that 
    \begin{align*}
        [\alpha_1(s)],\ldots,[\alpha_g(s)]\in H^1(C_s,\cO_{C_s})    
    \end{align*}
    forms a basis of $H^1(C_s,\cO_{C_s})$.    
\end{lemma}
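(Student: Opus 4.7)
The plan is to first solve the problem fibrewise on $C_s$, producing a basis of $H^1(C_s,\cO_{C_s})$ represented by $(0,1)$-forms on $\tilde{C}_s$ whose supports avoid the preimages of the nodes, and then extend these forms to compactly supported sections of $\Lambda^{0,1}T^*_{\cC^\circ/\cS}$ defined in a neighborhood of $s$ in $\cS$.

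First, using the Dolbeault description $H^1(C_s,\cO_{C_s}) = \Omega^{0,1}(\tilde C_s,\bC)/\delbar\,\Omega^0(C_s,\bC)$ noted above Lemma \ref{line-bundle-dolbeault}, pick arbitrary representatives $\gamma_1,\dots,\gamma_g\in\Omega^{0,1}(\tilde C_s,\bC)$ of a basis. I would then modify each $\gamma_j$ to be supported away from the preimages of the nodes. Given a node $p\in C_s$ with preimages $\tilde p,\tilde p'\in\tilde C_s$, the $\delbar$-Poincar\'e lemma provides smooth functions $f,f'$ on small disk neighborhoods of $\tilde p,\tilde p'$ with $\delbar f=\gamma_j$, $\delbar f'=\gamma_j$. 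After normalizing by subtracting constants so that $f(\tilde p)=f'(\tilde p')=0$, the pair $(f,f')$ descends to a continuous function $F_p$ on a neighborhood of $p$ in $C_s$ which is smooth on $\tilde C_s$. Multiplying by a cutoff $\chi_p$ supported near $p$ with $\chi_p\equiv 1$ on a smaller neighborhood, the class of $\gamma_j-\delbar(\chi_pF_p)\in\Omega^{0,1}(\tilde C_s,\bC)$ equals that of $\gamma_j$ in $H^1(C_s,\cO_{C_s})$, and this form now vanishes near the preimages of $p$. Iterating over all nodes produces representatives $\beta_1,\dots,\beta_g\in\Omega^{0,1}(\tilde C_s,\bC)$ with compact support in $\tilde C_s\setminus\{\text{preimages of nodes}\} = C_s^\circ$, which still span $H^1(C_s,\cO_{C_s})$.

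Second, I would extend each $\beta_j$ to a section of $\Lambda^{0,1}T^*_{\cC^\circ/\cS}$ near $s$. Let $K\subset C_s^\circ$ be a compact set containing the supports of all the $\beta_j$. Since $\pi\cl\cC^\circ\to\cS$ is a smooth submersion, I can find an open neighborhood $V$ of $K$ in $\cC^\circ$, an open $U\ni s$ in $\cS$ and a smooth trivialization $\Psi\cl V\xrightarrow{\simeq}K'\times U$ compatible with $\pi$, where $K'\subset C_s^\circ$ is a slightly larger open neighborhood of $K$. Pull $\beta_j|_{K'}$ back along the projection $V\to K'$ to get a $\bC$-valued smooth section of the vertical cotangent bundle on $V$; multiplying by a bump function $\rho$ on $\cS$ supported in $U$ with $\rho(s)=1$ (and shrinking $U$ so the support stays proper) yields a compactly supported section on $\cC^\circ$. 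Finally, projecting to the $(0,1)$-part using the vertical almost complex structure along the fibres of $\pi$ defines $\alpha_j\in C^\infty_c(\cC^\circ,\Lambda^{0,1}T^*_{\cC^\circ/\cS})$. At $s$ this projection is the identity because $\beta_j$ is already of type $(0,1)$ on $C_s$, so $\alpha_j(s)=\beta_j$ and the $[\alpha_j(s)]$ form the desired basis.

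The technically touchiest step is the fibrewise modification: one must both solve $\delbar$ locally and choose the resulting primitives compatibly across the two branches at each node so that $\chi_pF_p$ defines a legitimate element of $\Omega^0(C_s,\bC)$ (continuous on $C_s$, smooth on $\tilde C_s$). Once this is handled, the smooth extension to a family is routine, since we only require extensions in the $C^\infty$ sense (not as a holomorphic family), so no deformation-theoretic obstruction appears.
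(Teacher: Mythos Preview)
Your proof is correct, but the first step takes a different route from the paper. Rather than starting with arbitrary Dolbeault representatives and explicitly modifying them near each node via the $\delbar$-Poincar\'e lemma, the paper argues directly by Serre duality: for any open $U_s\subset C_s\cap\cC^\circ$ meeting every irreducible component, the map $\Omega^{0,1}_c(U_s,\bC)\to H^1(C_s,\cO_{C_s})$ is surjective, since any $\gamma\in H^0(C_s,\omega_{C_s})$ annihilating its image would vanish on $U_s$ and hence everywhere by analytic continuation. This yields the $\beta_j$ in one stroke, with no need to track matching conditions at nodes. Your approach is more hands-on and constructive (and your careful treatment of the matching $f(\tilde p)=f'(\tilde p')=0$ is exactly what is needed to stay in $\Omega^0(C_s,\bC)$), while the paper's is shorter and more conceptual. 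For the extension to the family, the paper simply remarks that it is easy; you have spelled out the local-trivialization-plus-cutoff argument, which is the expected elaboration.
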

\begin{proof}
    Let $U_s\subset C_s\cap\cC^\circ$ be any open subset of $C_s$ which meets each irreducible component of $C_s$. We claim that the natural map
    \begin{align*}
        \Omega^{0,1}_c(U_s,\bC)\to H^1(C_s,\cO_{C_s})
    \end{align*}
    is then surjective. Indeed, if this map is not surjective, then Serre duality provides us with a non-zero element $\gamma\in H^0(C_s,\omega_{C_s})$ which annihilates the image of this map. This implies that $\gamma$ identically vanishes on $U_s$ and therefore on the whole of $C_s$ by analytic continuation, which is a contradiction. 
    
    Thus, we may choose elements $\beta_1,\ldots,\beta_g\in\Omega^{0,1}_c(U_s,\bC)$ so that $[\beta_1],\ldots,[\beta_g]$ constitute a basis of $H^1(C_s,\cO_{C_s})$. It is now easy to find compactly supported $\alpha_1,\ldots,\alpha_g$ as in \eqref{dolbeault-forms-basis} so that $\alpha_i(s) = \beta_i$.
\end{proof}

\begin{lemma}[Local Dolbeault trivializations of $\bH^*_{\cC/\cS}$]\label{dolbeault-triv}
    Given $\alpha_1,\ldots,\alpha_g$ as in \eqref{dolbeault-forms-basis}, the set of points $s\in\cS$ for which
    \begin{align}\label{dolbeault-basis}
        [\alpha_1(s)],\ldots,[\alpha_g(s)]\in H^1(C_s,\cO_{C_s})   
    \end{align}
    forms a basis of $H^1(C_s,\cO_{C_s})$ is open in $\cS$. Over this open subset, the basis \eqref{dolbeault-basis} provides a $C^\infty$ trivialization of the vector bundle $\bH^*_{\cC/\cS}$.
\end{lemma}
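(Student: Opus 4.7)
The plan is to first establish that each $s \mapsto [\alpha_i(s)] \in H^1(C_s,\cO_{C_s})$ defines a $C^\infty$ section of $\bH^*_{\cC/\cS}$, and then deduce the openness statement from linear algebra. The key tool is the Serre duality pairing
\begin{align*}
    H^1(C_s,\cO_{C_s}) \otimes H^0(C_s,\omega_{C_s}) \to \bC, \qquad ([\alpha],\omega) \mapsto \int_{\tilde C_s}\omega\wedge\alpha,
\end{align*}
which realizes the duality asserted by Lemma \ref{Vector Bundle via Homology of Structural Line}. Here the integral makes sense because $\alpha$ is compactly supported in $\cC^\circ$ (hence away from the nodes), and the sum is taken over the components of $\tilde C_s$.

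The first step is to argue that for any local algebraic (in particular $C^\infty$) frame $\omega_1,\ldots,\omega_g$ of $\bH_{\cC/\cS}$ over an open $V \subset \cS$, the functions
\begin{align*}
    M_{ij}(s) := \int_{\tilde C_s} \omega_j(s) \wedge \alpha_i(s), \qquad s \in V,
\end{align*}
are $C^\infty$. The support $K := \bigcup_i \operatorname{supp}(\alpha_i)$ is a compact subset of $\cC^\circ$, where $\pi$ is a submersion. Cover $K$ by finitely many open subsets $W_\mu \subset \cC^\circ$ on each of which $\pi$ admits a $C^\infty$ trivialization $W_\mu \simeq \Sigma_\mu \times V_\mu$, choose a partition of unity $\{\chi_\mu\}$ subordinate to this cover, and write $M_{ij}(s) = \sum_\mu \int_{\Sigma_\mu \times \{s\}} \chi_\mu\, \omega_j(s) \wedge \alpha_i(s)$. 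In each local trivialization, $\alpha_i$ is a $C^\infty$ family of compactly supported $(0,1)$-forms on $\Sigma_\mu$, and $\omega_j$ is (holomorphic hence) $C^\infty$ in $s$ as well; differentiation under the integral sign then yields the claim.

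The second step uses the pairing to encode $[\alpha_i(s)]$ as a section of $\bH^*_{\cC/\cS}$. Writing $\omega_1^\vee,\ldots,\omega_g^\vee$ for the dual frame of $\bH^*_{\cC/\cS}$ over $V$ (under the Serre duality identification), we have $[\alpha_i(s)] = \sum_j M_{ij}(s)\, \omega_j^\vee(s)$ in this local frame. By the first step the coefficients $M_{ij}$ are $C^\infty$, so $s \mapsto [\alpha_i(s)]$ is a $C^\infty$ section of $\bH^*_{\cC/\cS}$ on $V$. Since $V$ was an arbitrary local trivializing neighbourhood, this holds globally on $\cS$.

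Finally, the set of $s \in \cS$ at which \eqref{dolbeault-basis} is a basis is precisely the locus where the $g \times g$ matrix $(M_{ij}(s))$ is invertible, which is open by continuity of $\det$, and over this locus the $C^\infty$ sections $\{[\alpha_i(s)]\}_{i=1}^g$ trivialize $\bH^*_{\cC/\cS}$. The main subtlety is the first step, namely justifying differentiation under the integral sign across a varying family of possibly nodal curves; this is handled by the compact support of the $\alpha_i$ in $\cC^\circ$, which reduces the problem to the submersive part of the family where one has honest $C^\infty$ fibre bundle trivializations.
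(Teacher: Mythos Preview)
Your proposal is correct and follows essentially the same approach as the paper: both arguments use a local holomorphic frame of the Hodge bundle $\bH_{\cC/\cS}$, form the Serre duality pairing matrix $M_{ij}(s)=\int \omega_j(s)\wedge\alpha_i(s)$, observe that it is $C^\infty$ in $s$ because the $\alpha_i$ are compactly supported in $\cC^\circ$, and conclude openness and the $C^\infty$ trivialization from $\det M\ne 0$. Your version is in fact slightly more explicit than the paper's in justifying the smoothness of the integral via local trivializations of the submersion $\cC^\circ\to\cS$ and differentiation under the integral sign, whereas the paper simply asserts this step.
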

\begin{proof}
    Let $s\in\cS$ be a point where \eqref{dolbeault-basis} is a basis. Let $\gamma_1,\ldots,\gamma_g$ be holomorphic sections of $\omega_{\cC/\cS}$, defined over a neighborhood of $s$, which give a local holomorphic trivialization of $\bH_{\cC/\cS} = \pi_*\omega_{\cC/\cS}$. The $g\times g$ matrix valued function
    \begin{align*}
        s'\mapsto M(s') = \left(\int_{C_{s'}\cap\cC^\circ}\gamma_i(s')\wedge\alpha_j(s')\right)_{i,j}
    \end{align*}
    satisfies $\det M(s)\ne 0$, where we are using the non-degeneracy of the Serre duality pairing. Since the $\gamma_i$ are holomorphic and the $\alpha_j$ are $C^\infty$ (and supported away from the nodes), $s'\mapsto M(s')$ is a $C^\infty$ function and we have $\det M(s')\ne 0$ for all $s'$ near $s$. Using the duality between $\bH_{\cC/\cS}$ and $\bH^*_{\cC/\cS}$, we see that
    \begin{align*}
        s'\mapsto[\alpha_1(s')],\ldots,[\alpha_g(s')]
    \end{align*} 
    indeed defines a $C^\infty$ trivialization of $\bH_{\cC/\cS}^*$ near $s$.
\end{proof}

\begin{lemma}[Injectivity of exp near zero section of $\bH^*_{\cC/\cS}$]\label{injectivity-of-exp}
    There is an open neighborhood $U$ of the zero section in $\bH^*_{\cC/\cS}$ with the following property. For each $s\in \cS$, the restriction of the exponential map
    \begin{align*}
        U\cap H^1(C_s,\cO_{C_s})\xrightarrow{\exp}\normalfont\text{Pic}^0(C_s)
    \end{align*}
    is injective.
\end{lemma}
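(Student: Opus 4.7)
The plan is to argue by contradiction, after reducing to a local statement on $\cS$. Since $\cS$ is paracompact (being second countable), it suffices to show: for each $s_0 \in \cS$, there is an open neighborhood $V \ni s_0$ in $\cS$ and an open neighborhood $U_V$ of the zero section in $\bH^*_{\cC/\cS}|_V$ such that the fibrewise exponential is injective on $U_V$; the sought global $U$ is then obtained by patching with a partition of unity. Fixing such an $s_0$, I would apply Lemma \ref{dolbeault-triv} to choose a Dolbeault trivialization $[\alpha_1(s)], \ldots, [\alpha_g(s)]$ of $\bH^*_{\cC/\cS}$ over $V$. It then suffices to find $\epsilon > 0$ such that for every $s \in V$ and every $c \in \bC^g$ with $0 < |c| < 2\epsilon$, the holomorphic line bundle $\cL_{\eta(c,s)}$ on $C_s$ from Lemma \ref{line-bundle-dolbeault} is non-trivial, where $\eta(c,s) := \sum_i c_i \alpha_i(s)$.

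Suppose no such $\epsilon$ exists. Then there are sequences $s_n \to s_0$ in $V$ and $0 \ne c^{(n)} \to 0$ in $\bC^g$ for which $\cL_{\eta_n}$ is holomorphically trivial on $C_{s_n}$, where $\eta_n := \eta(c^{(n)}, s_n)$. By Corollary \ref{integral-dolbeault}, pick smooth $\varphi_n \cl C_{s_n} \to \bC^\times$ satisfying $\delbar \varphi_n = \eta_n\varphi_n$. Writing $\varphi_n = e^{v_n}$ on local branches, the real-valued function $u_n := \log|\varphi_n|^2 = v_n + \bar v_n$ is globally defined on $C_{s_n}$ and satisfies
\begin{equation*}
    dd^c u_n \;=\; 2\text{i}\,(\partial\eta_n - \delbar\bar\eta_n)
\end{equation*}
on the smooth locus of $C_{s_n}$. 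Since $c^{(n)} \to 0$ and the $\alpha_i$ are supported in $\cC^\circ$, the right-hand side tends to zero smoothly on compact subsets of $\cC^\circ$.

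The crux is a uniform $C^0$ estimate $\|u_n - \mathrm{avg}(u_n)\|_{C^0(C_{s_n})} \to 0$ (with the average computed in a smoothly varying area form on the fibres over $V$). Once this is in hand, multiplying $\varphi_n$ by an appropriate constant in $\bC^\times$ yields $\varphi_n \to 1$ uniformly on $C_{s_n}$. Then for $n$ large, $|\varphi_n - 1| < 1/2$ everywhere, so the straight-line homotopy $(t,x) \mapsto (1-t) + t\varphi_n(x)$ takes values in the disc $\{|z-1| < 1/2\} \subset \bC^\times$, exhibiting $\varphi_n$ as homotopic to the constant $1$ through $\bC^\times$-valued maps. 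Hence $\varphi_n$ admits a globally defined logarithm $g_n \cl C_{s_n} \to \bC$, and the PDE reduces to $\delbar g_n = \eta_n$. Passing to Dolbeault classes then gives $\sum_i c_i^{(n)} [\alpha_i(s_n)] = 0$ in $H^1(C_{s_n}, \cO_{C_{s_n}})$; since $\{[\alpha_i(s_n)]\}$ remains a basis for $s_n$ close to $s_0$ by Lemma \ref{dolbeault-triv}, this forces $c^{(n)} = 0$, contradicting the assumption.

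The main obstacle will be the uniform $C^0$ estimate on $u_n - \mathrm{avg}(u_n)$ in the degeneration regime where $C_{s_0}$ is nodal and the $C_{s_n}$ smooth out its nodes: along the pinching neck regions, the first non-zero Laplacian eigenvalue on $C_{s_n}$ can tend to zero, making naive uniform elliptic estimates fail. The way around this is to exploit the compact support of each $\alpha_i$ in $\cC^\circ$: for $n$ large, $\eta_n$ vanishes identically on the neck regions of $C_{s_n}$, where $\varphi_n$ is therefore holomorphic. The maximum modulus principle applied on these annular necks controls the oscillation of $\log|\varphi_n|$ there in terms of its boundary values, which are in turn controlled by standard elliptic estimates on the non-degenerating portion of $C_{s_n}$ (which converges smoothly, away from the nodes, to the normalization of $C_{s_0}$). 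A patching argument across nodes, using connectedness of the dual graph of $C_{s_0}$, will assemble these local bounds into the required uniform estimate.
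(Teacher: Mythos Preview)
Your overall framework matches the paper's: reduce locally via partition of unity, argue by contradiction with $c^{(n)}\to 0$ and $\varphi_n:C_{s_n}\to\bC^\times$ satisfying $\delbar\varphi_n=\eta_n\varphi_n$, and conclude by producing a global logarithm of $\varphi_n$. However, there is a genuine gap in your argument.

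The step ``once $\|u_n-\mathrm{avg}(u_n)\|_{C^0}\to 0$, multiplying $\varphi_n$ by a suitable constant yields $\varphi_n\to 1$ uniformly'' does not follow. The quantity $u_n=\log|\varphi_n|^2$ controls only the \emph{modulus} of $\varphi_n$; after normalizing you obtain $|\varphi_n|\to 1$, but the phase $\arg\varphi_n$ is entirely uncontrolled by your $dd^c$ equation. On a positive-genus surface, a map to $\bC^\times$ with $|\varphi_n|$ close to $1$ can still have nontrivial class in $H^1(C_{s_n},\bZ)$, so $|\varphi_n-1|<\tfrac12$ is not guaranteed and your straight-line homotopy argument does not apply. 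Any repair requires either a separate argument for the phase (e.g.\ flux quantization across necks plus matching of the imaginary constants) or abandoning the detour through $u_n$ altogether.

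The paper's proof sidesteps this by never passing to $\log|\varphi_\nu|^2$. It exploits that $\delbar\varphi_\nu=\varphi_\nu\beta_\nu$ is \emph{linear} in $\varphi_\nu$: after normalizing $\|\varphi_\nu\|_{L^\infty}=1$, elliptic bootstrapping and compactness give subsequential $C^\infty$ convergence of $\varphi_\nu$ itself (not just its modulus) to a holomorphic function on $C_s\cap\cC^\circ$. The degeneration across necks is then handled by an elementary Laurent-series lemma for holomorphic functions on degenerating annuli (Lemma~\ref{RRS}), yielding removable singularities and uniform convergence $\varphi_\nu\to\zeta\in S^1$. This is both shorter and avoids the phase issue entirely.
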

\begin{proof}
    Fix a smooth Hermitian inner product on $\bH^*_{\cC/\cS}$ and let $\|\cdot\|$ be the resulting norm. It will be enough to produce a continuous function 
    \begin{align*}
        r:\cS\to(0,+\infty)    
    \end{align*}
    with the following property: for any $s\in\cS$, if $c_s\in H^1(C_s,\cO_{C_s})$ lies in the kernel of $\exp$ and also satisfies $\|c_s\|<r(s)$, then $c_s = 0$. Indeed, once we produce such $r$, we can take $U$ to be the open disc bundle of radius $\frac12r$ in $\bH^*_{\cC/\cS}$. Note that if $r_1,r_2:\cS\to(0,+\infty)$ are two functions with the above property then, for any continuous function $\chi:\cS\to[0,1]$, the function $\chi r_1 + (1-\chi)r_2$ also satisfies the same property. Thus, once we find such a function $r$ locally near every $s\in\cS$, we may patch these local functions using a partition of unity. 
    
    To find such a function $r$ near a given $s\in\cS$, it is enough to find an open set $V\subset\bH^*_{\cC/\cS}$ containing $0\in H^1(C_s,\cO_{C_s})$ such that if $c_{s'}\in H^1(C_{s'},\cO_{C_{s'}})\cap V$ lies in the kernel of $\exp$, then $c_{s'} = 0$. For the sake of a contradiction, assume that no such neighborhood $V$ of exists. Then, we can find a sequence $s_\nu\to s$ in $\cS$ and \emph{nonzero} elements $c_\nu\in H^1(C_{s_\nu},\cO_{C_{s_\nu}})$ converging\footnote{This convergence is taking place in the total space of $\bH^*_{\cC/\cS}$.} to $0\in H^1(C_s,\cO_{C_s})$ such that $\exp(c_\nu)$ is the isomorphism class of the trivial holomorphic line bundle on $C_{s_\nu}$ for all $\nu$. Choosing a local trivialization of $\bH^*_{\cC/\cS}$ near $s$ as in Lemmas \ref{dolbeault-triv-point} and \ref{dolbeault-triv}, given by $(0,1)$-forms $\alpha_1,\ldots,\alpha_g$ as in \eqref{dolbeault-forms-basis}, we may represent $c_\nu$ by
    \begin{align*}
        \beta_\nu = c_{\nu,1}\cdot\alpha_1(s_\nu) + \cdots + c_{\nu,g}\cdot\alpha_g(s_\nu)\in\Omega^{0,1}(\tilde C_{s_\nu},\bC),
    \end{align*}
    for some uniquely determined $c_{\nu,1},\ldots,c_{\nu,g}\in\bC$. By assumption, we have $c_{\nu,i}\to 0$ as $\nu\to\infty$ for each $1\le i\le g$. Thus, the $(0,1)$-forms $\beta_\nu$, which are compactly supported away from the nodes, converge uniformly (with all derivatives) to the $(0,1)$-form $0\in\Omega^{0,1}(\tilde C_s,\bC)$. Lemma \ref{integral-dolbeault} and the assumption on $\exp(c_\nu)$ yield smooth functions $\varphi_\nu:C_{s_\nu}\to\bC^\times$ such that 
    \begin{align}\label{linear-delbar-equation}
        \delbar\varphi_\nu = \varphi_\nu\beta_\nu.    
    \end{align}
    Since $\beta_\nu$ is supported away from the nodes of $C_{s_\nu}$, \eqref{linear-delbar-equation} shows that $\varphi_\nu$ is holomorphic near the nodes. After rescaling $\varphi_\nu$ suitably, we will assume that $\|\varphi_\nu\|_{L^\infty} = 1$ for all $\nu$. Using the $L^\infty$ bound on $\varphi_\nu$, the convergence of $\beta_\nu$ to $0$ and the \emph{linear} Cauchy--Riemann equation \eqref{linear-delbar-equation} satisfied by $\varphi_\nu$, we can pass to a subsequence so that $\varphi_\nu$ converges uniformly away from the nodes (with all derivatives) to a holomorphic function $\varphi:C_s\cap\cC^\circ\to\bC$. By a fact from complex analysis, stated in Lemma \ref{RRS}, the singularities are removable and we get a holomorphic function $\varphi:C_s\to\bC$.

    Since $C_s$ is connected and compact, $\varphi$ must be constant. Let $\zeta\in\bC$ be this constant value. We claim that $\|\varphi_\nu-\zeta\|_{L^\infty}$ converges to $0$. This is already clear away from the nodes. Near the nodes (where each $\beta_\nu$ is zero), this follows from the last assertion of Lemma \ref{RRS}. From $\|\varphi_\nu-\zeta\|_{L^\infty}\to 0$, we conclude that $|\zeta| = 1$ and that $\varphi_\nu$ has a well-defined logarithm, i.e., a smooth function $\psi_\nu:C_{s_\nu}\to\bC$ such that $\varphi_\nu = \exp(\psi_\nu)$ for $\nu\gg 1$. 
    
    This means $\beta_\nu = \varphi_\nu^{-1}\delbar\varphi_\nu = \delbar\psi_\nu$ is $\delbar$-exact for $\nu\gg 1$. This contradicts the assumption that the cohomology classes $c_\nu = [\beta_\nu]$ were all nonzero.
\end{proof}

\begin{lemma}[Analytic functions on degenerating annuli]\label{RRS}
    Let $t_\nu\in\bC$ be sequence converging to $0\in\bC$. For each $\nu$, let $f_\nu:A_{t_\nu}\to\bC$ be a holomorphic function on
    \begin{align*}
        A_{t_\nu} = \{(z,w)\in\bC^2:|z|\le 1,\,|w|\le 1,\,zw = t_\nu\}.
    \end{align*}
    If the functions $z\mapsto f_\nu(z,t_\nu/z)$ converge uniformly on $\{z\in\bC:\frac12\le|z|\le 1\}$ to some holomorphic function $F$ and the functions $w\mapsto f_\nu(t_\nu/w,w)$ converge uniformly on $\{w\in\bC:\frac12\le|w|\le 1\}$ to some holomorphic function $G$. 
    Then, there exists a holomorphic function $f:A_0\to\bC$ on
    \begin{align*}
        A_0 = \{(z,w)\in\bC^2:|z|\le 1,\,|w|\le 1,\,zw = 0\}
    \end{align*}
    satisfying $f(z,0) = F(z)$ for $\frac12\le|z|\le 1$ and $f(0,w) = G(w)$ for $\frac12\le|w|\le 1$. If $F$ and $G$ are both identically zero, then $\|f_\nu\|_{L^\infty}$ converges to $0$ as $\nu\to\infty$.
\end{lemma}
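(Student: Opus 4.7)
My plan for Lemma~\ref{RRS} is to use Laurent expansions on the annuli $A_{t_\nu}$ (for $t_\nu \ne 0$) and to trace how the two prescribed limits $F$ and $G$ constrain the negative Laurent coefficients. The case $t_\nu = 0$ is handled separately and is essentially trivial, since $f_\nu(\cdot,0)$ and $f_\nu(0,\cdot)$ are holomorphic on unit discs and uniform convergence on the unit circle propagates inward by Cauchy's integral formula.

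For $t_\nu \ne 0$, I would first invoke the maximum modulus principle on the smooth Riemann surface $A_{t_\nu}$, whose boundary is $\{|z|=1\}\cup\{|w|=1\}$, to obtain a uniform bound $|f_\nu|\le M$ on $A_{t_\nu}$ coming from the uniform bounds on $F$ and $G$ (plus eventual closeness of $f_\nu$ to them on the boundary circles). Parametrizing $A_{t_\nu}$ by $z$, write the Laurent expansion
\[
f_\nu(z,t_\nu/z)=\sum_{n\in\bZ}a_n^\nu z^n,\qquad |t_\nu|\le|z|\le 1,
\]
and note the key substitution $w=t_\nu/z$, which yields
\[
f_\nu(t_\nu/w,w)=\sum_{k\in\bZ}b_k^\nu w^k,\qquad b_k^\nu=a_{-k}^\nu t_\nu^{-k}.
\]
Computing coefficients via the circle integrals on $|z|=1$ and $|w|=1$, the hypothesized uniform convergences give $a_n^\nu\to\widehat F_n$ and $b_k^\nu\to\widehat G_k$ for every $n,k\in\bZ$, where $\widehat F_n$, $\widehat G_k$ are the Laurent coefficients of $F$ and $G$ on their respective annuli.

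The crucial step is now to observe: for $k>0$ the identity $b_k^\nu=a_{-k}^\nu t_\nu^{-k}$ together with $a_{-k}^\nu\to\widehat F_{-k}$ and $t_\nu\to 0$ forces $\widehat F_{-k}=0$, for otherwise $|b_k^\nu|\to\infty$, contradicting convergence to $\widehat G_k$. Symmetrically, for $n<0$ the negative $w$-coefficients satisfy $a_n^\nu=b_{-n}^\nu t_\nu^{-n}\to 0$, so $\widehat G_{k}=0$ for $k<0$. Hence $F$ extends to a holomorphic function on the full disc $|z|\le 1$ and $G$ to $|w|\le 1$. Matching the constant terms is automatic from $b_0^\nu=a_0^\nu$, giving $F(0)=\widehat F_0=\widehat G_0=G(0)$, so the two extensions glue to a single holomorphic $f\colon A_0\to\bC$.

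The last assertion, when $F\equiv G\equiv 0$, is then immediate: the uniform convergence of $f_\nu$ to zero on the two boundary circles $\{|z|=1\}$ and $\{|w|=1\}$ of $A_{t_\nu}$ gives $\|f_\nu\|_{L^\infty(\partial A_{t_\nu})}\to 0$, and the maximum modulus principle propagates this to $\|f_\nu\|_{L^\infty(A_{t_\nu})}\to 0$. I do not foresee any serious technical obstacle; the only mildly subtle point is justifying the maximum modulus argument on $A_{t_\nu}$ (a Riemann surface with corners on its boundary) and extracting the coefficient convergence uniformly in $\nu$, which follows from the Cauchy integral representation on the unit circles together with the assumed uniform convergences.
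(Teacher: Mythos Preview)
Your proposal is correct and matches the paper's proof essentially line for line: Laurent expansions in $z$ and $w$, the coefficient relation $a_{-k}^\nu = b_k^\nu t_\nu^k$ forcing the negative coefficients of $F$ and $G$ to vanish (the paper phrases this as a direct limit rather than by contradiction, but the content is identical), the matching $F(0)=G(0)$ via $a_0^\nu=b_0^\nu$, and the maximum principle on $A_{t_\nu}$ for the final assertion. Your side remarks about the $t_\nu=0$ case and ``corners'' on $\partial A_{t_\nu}$ are unnecessary (for $t_\nu\ne 0$ the boundary is two smooth circles), but they do no harm.
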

\begin{proof}
    The idea for this proof is taken from \cite{RRS}. Using Laurent expansions for analytic functions on the annuli $\frac12\le |z|\le 1$ and $\frac12\le |w|\le 1$, we may write
    \begin{align*}
        f_\nu(z,t_\nu/z) = \sum_{n\in\bZ} a_{\nu,n}z^n&\xrightarrow{\nu\to\infty} F(z) = \sum_{n\in\bZ} a_nz^n,\\
        f_\nu(t_\nu/w,w) = \sum_{n\in\bZ} b_{\nu,n}w^n&\xrightarrow{\nu\to\infty} G(w) = \sum_{n\in\bZ} b_nw^n.
    \end{align*}
    For each $\nu$ and $n\ge 0$, we have $a_{\nu,-n} = b_{\nu,n}t_\nu^n$ and $b_{\nu,-n} = a_{\nu,n}t_\nu^n$. In the limit, we therefore have $a_0 = b_0$ and $a_{-n} = b_{-n} = 0$ for all $n\ge 1$. 
    
    The functions $F(z)$ and $G(w)$ have no negative powers of $z$ and $w$ in their respective Laurent expansions. So, they extend to holomorphic functions defined for $0\le |z|\le 1$ and $0\le |w|\le 1$ respectively. Moreover, $a_0 = b_0$ implies that $F(0) = G(0)$. Thus, there exists a well-defined holomorphic function $f:A_0\to\bC$ with the desired properties.

    For the last assertion, note that $F = 0$ and $G = 0$ implies that 
    \begin{align*}
        \sup_{\frac12\le|z|\le 1}|f_\nu(z,t_\nu/z)|\quad\text{and}\quad\sup_{\frac12\le|w|\le 1}|f_\nu(t_\nu/w,w)|
    \end{align*}
    converge to zero. The maximum principle for holomorphic functions applied to each $f_\nu$, now shows that $\|f_\nu\|_{L^\infty}$ also converges to zero.
\end{proof}

\begin{lemma}[Abel--Jacobi type map for $\cC/\cS$]\label{abel-jacobi-construction}
    There exists a holomorphic map
    \begin{align*}
        \rho_{\cC/\cS}:\cC^\circ\times_\cS\cC^\circ\to\bH^*_{\cC/\cS},
    \end{align*}
    defined near the diagonal $\Delta_{\cC^\circ}\subset\cC^\circ\times_\cS\cC^\circ$, with the following property. 
    
    For any $s\in\cS$ and $y_1,y_2\in C_s$ such that $(y_1,y_2)$ is in the domain of $\rho_{\cC/\cS}$, the element $\rho_{\cC/\cS}(y_1,y_2)\in H^1(C_s,\cO_{C_s})$ satisfies
    \begin{align*}
        \exp\rho_{\cC/\cS}(y_1,y_2) = [\cO_{C_s}(y_1-y_2)]\in\Pic^0(C_s).   
    \end{align*}
    Moreover, $\rho_{\cC/\cS}(y_1,y_2)$ vanishes whenever $y_1 = y_2$.
\end{lemma}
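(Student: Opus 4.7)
The plan is to construct $\rho_{\cC/\cS}$ via an explicit Dolbeault representative depending holomorphically on $(y_1,y_2)$ near the diagonal, then patch these local constructions using the injectivity statement from Lemma \ref{injectivity-of-exp}.

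First I would work locally. Fix $y_0 \in \cC^\circ$ with image $s_0 \in \cS$. Since $\pi\cl \cC^\circ \to \cS$ is a holomorphic submersion, I can choose an open neighborhood $\tilde U \subset \cC^\circ$ of $y_0$ together with a holomorphic function $z\cl \tilde U \to \bC$ whose restriction to each fibre $C_s \cap \tilde U$ is a coordinate; shrinking $\tilde U$, I arrange that each such fibre slice is biholomorphic to a disc via $z$. Pick an even smaller neighborhood $\tilde U_0 \subset \tilde U$ of $y_0$ (also cut out as the preimage of a disc under $z$) and a smooth cutoff function $\chi \cl \cC^\circ \to [0,1]$, supported in $\tilde U$ and identically $1$ on $\tilde U_0$. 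Shrinking further, I may assume that for $(y_1,y_2) \in \tilde U_0 \times_\cS \tilde U_0$, the function $h_{y_1,y_2}(z) := \frac{z-z(y_1)}{z-z(y_2)}$ is defined and close enough to $1$ on the support of $\delbar \chi$ that $\log h_{y_1,y_2}$ is given by a single-valued holomorphic function there, depending holomorphically on $y_1,y_2$.

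Next I would define the family of $(0,1)$-forms
\begin{equation*}
\beta(y_1,y_2) := -\frac{1}{2\pi\mathrm{i}}\,\delbar\chi \cdot \log h_{y_1,y_2}
\end{equation*}
on the fibre $C_s$ containing $y_1,y_2$, extended by zero outside $\tilde U$. By construction $\beta(y_1,y_2)$ is a compactly supported smooth $(0,1)$-form on $C_s\cap\cC^\circ$, it depends holomorphically on $(y_1,y_2)$, and it vanishes on the diagonal $y_1=y_2$. Combining this with Lemma \ref{dolbeault-triv}, I then define $\rho_{\cC/\cS}(y_1,y_2) := [\beta(y_1,y_2)] \in H^1(C_s,\cO_{C_s})$; the local trivializations of $\bH^*_{\cC/\cS}$ provided by that lemma ensure $\rho_{\cC/\cS}$ is a holomorphic map into $\bH^*_{\cC/\cS}$, defined near $(y_0,y_0)$.

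To verify the exponential identity, I would apply Lemma \ref{line-bundle-dolbeault} to show $\exp\rho_{\cC/\cS}(y_1,y_2) = [\cO_{C_s}(y_1-y_2)]$: the smooth function $\varphi := h_{y_1,y_2}^{\chi}$ on $C_s\setminus\{y_1,y_2\}$ satisfies $\delbar\varphi = \varphi \cdot (\delbar\chi)\log h_{y_1,y_2}$ since $h_{y_1,y_2}$ is meromorphic, and hence defines (after checking the behavior at $y_1,y_2$, where $\chi\equiv 1$ makes $\varphi = h_{y_1,y_2}$) a meromorphic section of $\cL_\beta$ with divisor $(y_1)-(y_2)$. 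This identifies $\cL_\beta$ with $\cO_{C_s}(y_1-y_2)$ as required.

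Finally, to patch the constructions, suppose $\rho^{(1)}$ and $\rho^{(2)}$ are two such locally defined maps. Their difference has image lying in the kernel of $\exp$, vanishes on the diagonal, and is continuous. By Lemma \ref{injectivity-of-exp}, $\exp$ is injective on some open neighborhood of the zero section of $\bH^*_{\cC/\cS}$, so the two maps agree on a neighborhood of $\Delta_{\cC^\circ}$. The resulting global map $\rho_{\cC/\cS}$, defined by this local-to-global patching procedure, satisfies all the required properties. The main obstacle is the holomorphic-dependence issue: ensuring that the ambiguity in $\log h_{y_1,y_2}$ (multivaluedness, choice of branch) does not prevent $\beta(y_1,y_2)$ from being a well-defined holomorphic family, which is handled by restricting to $(y_1,y_2)$ close enough to $y_0$ that $h_{y_1,y_2}$ stays near $1$ on $\mathrm{supp}\,\delbar\chi$.
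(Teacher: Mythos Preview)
Your approach is essentially the same as the paper's: both build a local Dolbeault representative $\delbar\chi\cdot\log\frac{z-z(y_1)}{z-z(y_2)}$ on a coordinate chart, verify the exponential identity, and patch using Lemma \ref{injectivity-of-exp}. Two small points to clean up: (i) the factor $-\tfrac{1}{2\pi\mathrm{i}}$ is spurious under the paper's convention for $\exp:H^1(C,\cO_C)\to\Pic^0(C)$ (the exponential sequence is $0\to\bZ\xrightarrow{2\pi\mathrm{i}}\cO_C\xrightarrow{\exp}\cO_C^\times\to 0$, so the \v{C}ech cocycle is $\log h_{y_1,y_2}$ with no prefactor, and indeed your own check $\delbar\varphi=\varphi\cdot(\delbar\chi)\log h$ only matches $\cL_\beta$ when $\beta=(\delbar\chi)\log h$); (ii) Lemma \ref{dolbeault-triv} only yields $C^\infty$ trivializations of $\bH^*_{\cC/\cS}$, so to see holomorphicity you should instead pair $\beta(y_1,y_2)$ against local holomorphic sections of $\bH_{\cC/\cS}=\pi_*\omega_{\cC/\cS}$ via Serre duality, exactly as the paper does.
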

\begin{proof}
    Given a point $x\in\cC^\circ$, we will define a holomorphic function $\rho_x$ on a neighborhood $V_x\subset\cC^\circ\times_\cS\cC^\circ$ of $(x,x)$ such that $\rho_x$ vanishes on $\Delta_{\cC^\circ}\cap V_x$ and we have
    \begin{align*}
        \exp\rho_x(y_1,y_2) = [\cO_{C_s}(y_1-y_2)]\in\text{Pic}^0(C_s)
    \end{align*}
    for any $(y_1,y_2)\in V_x$ with image $s\in\cS$. By shrinking $V_x$, we can then arrange for $\rho_x$ to map into the open subset $U\subset\bH^*_{\cC/\cS}$ from Lemma \ref{injectivity-of-exp}. Repeating this at each point of $\cC^\circ$, we obtain functions which automatically agree on overlaps by Lemma \ref{injectivity-of-exp} and patch together to define $\rho_{\cC/\cS}$ in a neighborhood of $\Delta_{\cC^\circ}$.

    We now construct $\rho_x$. Choose local holomorphic coordinates $(z_1,\ldots,z_n,w)$ on $\cC^\circ$ centred at $x$ so that $\pi:\cC\to\cS$ is given by $(z_1,\ldots,z_n,w)\mapsto(z_1,\ldots,z_n)$.
    Assume that the image of the coordinate chart $(z_1,\ldots,z_n,w)$ is the polydisc of polyradius $(1,\ldots,1,1)$ centred at $0\in\bC^{n+1}$. Consider the open subsets 
    \begin{align*}
        W := \{|w|<1\}\subset\cC^\circ\quad\text{and}\quad W' := \cC\setminus\{|w|\le\textstyle\frac12\}\subset\cC.    
    \end{align*}
    Then, for every $s\in\cS$ lying in the domain of the coordinate functions $(z_1,\ldots,z_n)$, the sets $W_s := W\cap C_s$ and $W'_s := W'\cap C_s$ constitute an open cover of $C_s$.

    Define $V_x$ to consist of those points $(y_1,y_2)\in\cC^\circ\times_\cS\cC^\circ$ for which $y_1$ and $y_2$ lie in our coordinate neighborhood and their respective $w$-coordinates $\xi_1$ and $\xi_2$ satisfy $|\xi_1|<\frac12$ and $|\xi_2|<\frac12$. Now, consider any $(y_1,y_2)\in V_x$ and let its image in $\cS$ be $s$. With respect to the open cover $\{W_s,W'_s\}$ of $C_s$, the line bundle $\cO_{C_s}(y_1-y_2)$ is represented by the transition function 
    \begin{align*}
        W_s\cap W'_s\to\bC^\times,\quad
        \textstyle w\mapsto\frac{w-\xi_1}{w-\xi_2}.
    \end{align*}
    By considering winding numbers around the origin in $\bC$, we find that there is a well-defined holomorphic function $L(w,\xi_1,\xi_2)$, defined for $\frac12<|w|<1$, $|\xi_1|<\frac12$ and $|\xi_2|<\frac12$, so that $L(w,\xi_1,\xi_2) = 0$ whenever $\xi_1 = \xi_2$ and we have
    \begin{align*}
        \exp (L(w,\xi_1,\xi_2)) = \textstyle\frac{w-\xi_1}{w-\xi_2}.
    \end{align*}
    Define $\rho_x(y_1,y_2)\in H^1(C_s,\cO_{C_s})$ to be the class of the $\cO_{C_s}$-valued \v{C}ech $1$-cocyle for the cover $\{W_s,W'_s\}$ given by the function $L(\cdot,\xi_1,\xi_2)$ on $W_s\cap W'_s$. By construction $\rho_x$ vanishes when $y_1 = y_2$. To construct a Dolbeault representative for $\rho_x(y_1,y_2)$, choose a smooth function $\chi:\bR\to[0,1]$ satisfying 
    \begin{align*}
        \chi(t) = \begin{cases}
            1 & \text{if }t\le\textstyle\frac12+\epsilon\\
            0 & \text{if }t\ge 1-\epsilon
        \end{cases}
    \end{align*}
    for some $0<\epsilon<\frac14$.
    Then, the $(0,1)$-form 
    \begin{align*}
        \alpha_x(y_1,y_2) = \delbar\chi(|\cdot|)L(\cdot,\xi_1,\xi_2)    
    \end{align*}
    on $C_s$ represents $\rho_x(y_1,y_2)$. Indeed, it has $\delbar$-primitives $-(1-\chi(|\cdot|))L(\cdot,\xi_1,\xi_2)$ on $W_s$ and $\chi(|\cdot|)L(\cdot,\xi_1,\xi_2)$ on $W_s'$ such that their difference on $W_s\cap W'_s$ is the holomorphic function $L(\cdot,\xi_1,\xi_2)$.
    
    We will use the Serre duality pairing to show that $\rho_x$ is holomorphic. Choose local holomorphic sections $\gamma_1,\ldots,\gamma_g$ trivializing $\pi_*\omega_{\cC/\cS} = \bH_{\cC/\cS}$ near $s$, and write $\gamma_j = f_j(z_1,\ldots,z_n,w)\,dw$ in local coordinates. Since the expression
    \begin{align*}
        \int_{C_s}\alpha_x(y_1,y_2)\wedge\gamma_j(s) = \int f_j(z_1,\ldots,z_n,w)L(w,\xi_1,\xi_2)\textstyle\frac{\partial}{\partial\overline w}\chi(|w|)\,d\overline w\wedge dw
    \end{align*}
    is clearly holomorphic in the coordinates $(z_1,\ldots,z_n,\xi_1,\xi_2)$ on $\cC^\circ\times_\cS\cC^\circ$ for all $1\le j\le g$, we conclude that $\rho_x$ is holomorphic.
\end{proof}
\section{Refined rel--$C^\infty$ representability}\label{rel-smooth-addendum}

We work in the setting established in \textsection\ref{subsec:rel-smooth-review}. Recall that we fixed a complex manifold $\fB$ and an analytic family of prestable genus $g$ curves $\pi:\fC\to\fB$ on it, with the fibre of $\pi$ over any $s\in\fB$ denoted by $C_s$. We also fixed an almost complex manifold $(X,J)$ and a homology class $A\in H_2(X,\bZ)$. 

Following \cite[\textsection 4]{Swa21}, we summarize the key steps in the construction of a local rel--$C^\infty$ chart representing the functor $\fM^\text{reg}(\pi,X)_A$ from Definition \ref{rel-smooth-moduli-functors} near a point $(s\in\fB,u:C_s\to X)$, where $u$ is a $J$-holomorphic map in class $A$ such that linearized Cauchy--Riemann operator $D(\delbar_J)_u$ is surjective. The construction resembles the usual gluing analysis (e.g., as explained in \cite[Appendix B]{P16}), except for our invocation of the polyfold implicit function theorem. We emphasize only the parts of the construction which are relevant here.

From this, we extract a refinement of the representability statement, which is absent from \cite{Swa21}, although Lemma 6.3 therein has a similar spirit.

\subsection{Setup, following \cite[\textsection4.1--4.2]{Swa21}}\label{gluing-setup}

Write $C:=C_s$ for simplicity and write $j_0$ for the almost complex structure on $C$. Let $\nu_1,\ldots,\nu_d$ be an enumeration of the nodes of $C$ and let $C^\circ$ be the complement of the nodes in $C$. For $1\le l\le d$, choose holomorphic coordinates $z_l,z_l'$, defined on discs on either side of $\nu_j$. Note that $z_l,z_l'$ define coordinates $(s_l,t_l),(s_l',t_l')\in \bR_+\times S^1$ via $s_l+\text{i}t_l = -\log z_l$ and $s_l' + \text{i}t_l' = -\log z_l'$ on the cylindrical ends of $C^\circ$. We fix a metric on $X$ which is the flat metric with respect to chosen local coordinates near the image of each node under $u$. This allows us to define exponential maps and parallel transport.

Following \cite[Lemma 4.1]{Swa21}, we reduce to the case when $\fC\to\fB$ is a versal deformation of the curve $C$ near $s\in\fB$ and has the following standard form. First, we have a product decomposition $\fB = \bG\times V$, where $\bG\subset\bC^d$ is an open neighborhood of $0$ and $V$ is a finite dimensional vector space parametrizing a smooth family of almost complex structures $\{j(v)\}_{v\in V}$ on $C$ with $j(0) = j_0$ and $j(v)\equiv j_0$ on the coordinate neighborhoods of the nodes. Second, the point $s$ is given by $(0,0)$ with respect to $\fB = \bG\times V$ and the fibre over any point $(\alpha,v)$, with $\alpha = (\alpha_1,\ldots,\alpha_d)\in \bG$ and $v\in V$, is given by equipping $C$ with the almost complex structure $j(v)$ and replacing the neighborhood $\{z_lz_l'=0\}$ of each node $\nu_l$ by the plumbing $\{z_lz_l'=\alpha_l\}$. When $\alpha_l = \exp(-(R_l + \text{i}\theta_l))\ne 0$, this amounts to truncating the cylindrical ends of $C^\circ$ on either side of $\nu_j$ to $(0,R_l)\times S^1$ and identifying them using $s_l+s_l'=R_l$ and $t_l+t_l'=\theta_l$. We will denote the fibre of $\fC\to\fB$ over $(\alpha,v)$ by $(C_\alpha,j_\alpha(v))$.

Fix a $C^\infty$ function $\beta:\bR\to[0,1]$ with $\beta(s)\equiv 1$ for $s\le-1$, $\beta(s)+\beta(-s)\equiv 1$ for all $s$ and $-1\le\beta'(s)<0$ for $|s|<1$. Using the functions $\beta_R(s) = \beta(s-R/2)$ for $R>0$, we can `pre-glue' the map $u:C\to X$ to a map $\oplus_\alpha u:C_\alpha\to X$ for any small value of the gluing parameter $\alpha\in \bG$. Explicitly, if $\alpha = (\alpha_1,\ldots,\alpha_d)$ and $1\le l\le d$ is such that $\alpha_l = \exp(-(R_l+i\theta_l))\ne 0$, then we define
\begin{align*}
    (\oplus_\alpha u)(s_l,t_l) = \beta_{R_l}(s_l)\cdot u(s_l,t_l) + \beta_{R_l}(s_l')\cdot u(s_l',t_l')
\end{align*}
on the region $\{z_lz_l'=\alpha_l\}$, using the flat local coordinates on $X$ near $u(\nu_l)$. The map $\oplus_\alpha u$ is defined to be the same as $u$ everywhere else on $C_\alpha$. More generally, given a section $\xi$ of $u^*T_X$ on $C$, we may use the same formula to `pre-glue' it to a section $\oplus_\alpha\xi$ of $u_\alpha^*T_X$ on $C_\alpha$. In particular, $\oplus_\alpha\xi$ coincides with $\xi$ away from the cylindrical ends which get plumbed. Note that $\xi$ cannot be recovered just from the data of $\alpha$ and its `pre-gluing' $\oplus_\alpha\xi$, because we multiply $\xi$ by (translates) of the cutoff function $\beta$. For fixed $\alpha$, among all possible $\xi$ which give rise to the same $\oplus_\alpha\xi$, there is a unique choice whose `anti-gluing' $\ominus_\alpha\xi$ vanishes. The precise formula \cite[Equation (4.2.7)]{Swa21} for `anti-gluing' is not relevant for us here.

\begin{remark}
    Informally, the upshot of this discussion is the following: any pseudo-holomorphic map from $(C_\alpha,j_\alpha(v))$ to $(X,J)$, which is sufficiently close to $\oplus_\alpha u$, can be written as $\exp_{\oplus_\alpha u}(\oplus_\alpha\xi)$ for a unique $\xi$ subject to the condition $\ominus_\alpha\xi = 0$. This helps motivate the functional analytic setup in \textsection\ref{gluing-ift} below.
\end{remark}

In the above discussion, given $\alpha = (\alpha_1,\ldots,\alpha_d)\in \bG$, we plumbed $C$ according to the equations $\{z_lz'_l=\alpha_l\}$. This is known in the polyfold literature as the `logarithmic gluing profile'. An alternative that is better suited to applying the sc-implicit function theorem is the `exponential gluing profile', where we instead plumb according to the equations $\{z_lz_l' = \varphi_{\exp}(\alpha_l)\}$, where
\begin{align*}
    \varphi_{\exp}(\alpha_l) = \exp(-2\pi (e^{1/|\alpha_l|}-e))\cdot\textstyle\frac{\alpha_l}{|\alpha_l|}
\end{align*}
for $\alpha_l\ne 0$ and $\varphi_{\exp}(0) = 0$. With this in mind, define $\bG_{\exp}$ to be the $C^\infty$ manifold which has the same underlying topological space as $\bG$ but whose $C^\infty$ structure is induced from the continuous embedding $(\alpha_1,\ldots,\alpha_d)\mapsto(\varphi_{\exp}(\alpha_1),\ldots,\varphi_{\exp}(\alpha_d))$ into $\bC^d$. Note that the identity map $\bG_{\exp}\to \bG$ is $C^\infty$ but its inverse is not $C^\infty$.

\subsection{Chart construction, following \cite[\textsection4.3--4.4]{Swa21}}\label{gluing-ift}

Define the weighted Sobolev spaces of sections
\begin{align*}
    H^{k,\delta}:=W^{k,2,\delta}(C,u^*T_X)\quad\text{and}\quad H^{k-1,\delta}:=W^{k-1,2,\delta}(\tilde C,\Omega^{0,1}_{\tilde C}\otimes\tilde u^*T_X)
\end{align*}
as in \cite[\textsection B.4]{P16}, for some integer $k\ge 10$ and real number $0<\delta<1$, with $\tilde C$ being the normalization of $C$. Following \cite{HWZ17}, fix an increasing sequence $(\delta_m)_{m\ge 0}$ in $(0,1)$ with $\delta_0 = \delta$ and view these as sc-Hilbert spaces with scales given by $H^{k+m,\delta_m}$ and $H^{k+m-1,\delta_m}$ for integers $m\ge 0$. Following \cite[Lemma 4.10]{Swa21}, fix an sc-complement $E\subset H^{k,\delta}$ of the finite dimensional subspace
\begin{align*}
    K:=\ker(D(\delbar_J)_u:H^{k,\delta}\to H^{k-1,\delta}).
\end{align*}
Being an sc-Hilbert space in its own right, $E$ has scales $E_m = E\cap H^{k+m,\delta_m}$ for each $m\ge 0$. We have $K\subset H^{k+m,\delta_m}$ for all $m\ge 0$ by elliptic regularity.

Applying the sc-implicit function theorem \cite[Theorem 4.6]{HWZ-ImpFuncThms} to the sc-Fredholm operator $\delbar_J$ yields the following. First, for small $(\alpha,v,\kappa)$ in $\bG\times V\times K$, there is a unique small element $\xi_{\alpha,v,\kappa}\in E$ such that $\ominus_\alpha(\xi_{\alpha,v,\kappa}) = 0$ and the map
\begin{align*}
    \exp_{\oplus_\alpha u}\left(\oplus_\alpha(\kappa+\xi_{\alpha,v,\kappa})\right):(C_\alpha,j_\alpha(v))\to (X,J)
\end{align*}
is pseudo-holomorphic and has surjective linearization. Second, the map
\begin{align}
    \label{gluing-map-1} \bG_{\exp}\times V\times K&\to E\subset H^{k,\delta}\\
    \label{gluing-map-2}(\alpha,v,\kappa)&\mapsto\xi_{\alpha,v,\kappa}
\end{align}
is sc-smooth. It is crucial that we use $\bG_{\exp}$ here, rather than $\bG$.

Note that the first assertion can be obtained via usual gluing analysis (e.g., as in \cite[Appendix B]{P16}). It is the second assertion is where the full strength of the sc-implicit function theorem is actually used. 

At this point, using the sc-smoothness of \eqref{gluing-map-1}--\eqref{gluing-map-2}, one can show that 
$$\bG\times V\times K\to \bG\times V$$ 
provides a rel--$C^\infty$ representing object for $\fM^\text{reg}(\pi,X)_A$ near $(s\in\fB,u:C_s\to X)$. See the proof of \cite[Theorem 4.16]{Swa21} for details. Note that we are able to change $\bG_{\exp}$ back to $\bG$ here since the rel--$C^\infty$ structure forgets the smooth structure in the $\fB$-direction. 

This concludes our summary of the key steps in the proof of Theorem \ref{rel-smooth-representability}.

\subsection{Refinement}\label{gluing-refinement}

We use the above discussion to refine Theorem \ref{rel-smooth-representability}.

\begin{lemma}\label{gluing-map-smooth}
    The assignment from \eqref{gluing-map-2} defines a $C^\infty$ map
    \begin{align*}
        \bG_{\exp}\times V\times K&\to H^{k+m,\delta_m}\\
        (\alpha,v,\kappa)&\mapsto\xi_{\alpha,v,\kappa}
    \end{align*}
    of Banach manifolds for all integers $m\ge 0$.
\end{lemma}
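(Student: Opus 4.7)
The plan is to use the classical Banach-space implicit function theorem at each level $m$, and then identify the resulting classical solution with the sc-smooth solution $\xi_{\alpha,v,\kappa}$ via uniqueness. Fix an integer $m \geq 0$ and set $k' = k + m$, $\delta' = \delta_m$. The subspace $E_m := E \cap H^{k',\delta'}$ is a classical Hilbert space. Consider the nonlinear map
\[
    F_m\cl \bG_{\exp} \times V \times K \times E_m \to H^{k'-1,\delta'},
\]
whose vanishing encodes the pseudo-holomorphicity of $\exp_{\oplus_\alpha u}(\oplus_\alpha(\kappa + \xi))$ as a map from $(C_\alpha, j_\alpha(v))$ to $(X,J)$, with the normalization $\ominus_\alpha \xi = 0$ built into $E$ by construction. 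Three ingredients underpin the setup: (i) the exponential gluing profile renders the pre-gluing and anti-gluing operators $C^\infty$ in $\alpha \in \bG_{\exp}$ when regarded as operations on a fixed weighted Sobolev space; (ii) the choice $k \geq 10$ places $H^{k',\delta'}$ well above the critical Sobolev exponent on the $2$-dimensional curve $C$, so that the Nemytskii operators associated to the smooth target-side data (the geodesic exponential on $X$, the almost complex structure $J$) are classically $C^\infty$ between the Banach spaces at hand; and (iii) elliptic regularity propagates the surjectivity of $D(\delbar_J)_u$ from level $0$ to all higher levels.

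By (i) and (ii), $F_m$ is $C^\infty$ as a map between Banach manifolds. Its partial derivative in the $\xi$-direction at $(0,0,0,0)$ is the linearization $D(\delbar_J)_u$ restricted to $E_m$; by (iii) and the sc-complementarity of $E$ and $K = \ker D(\delbar_J)_u$, this restriction is a Banach-space isomorphism $E_m \sra H^{k'-1,\delta'}$. The classical implicit function theorem therefore yields a unique $C^\infty$ function $\xi^{(m)}\cl U_m \to E_m$ with $F_m\bigl(\alpha,v,\kappa,\xi^{(m)}(\alpha,v,\kappa)\bigr) = 0$ on some open neighborhood $U_m$ of the origin in $\bG_{\exp} \times V \times K$.

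To conclude, note that the sc-smooth solution $\xi_{\alpha,v,\kappa}$ from \eqref{gluing-map-1}--\eqref{gluing-map-2} lands in $E_m$ for $(\alpha,v,\kappa)$ sufficiently small (this is part of sc-smoothness, which requires each scale of the target to be reached continuously) and satisfies $F_m\bigl(\alpha,v,\kappa,\xi_{\alpha,v,\kappa}\bigr) = 0$ by construction. Uniqueness in the classical implicit function theorem then forces $\xi_{\alpha,v,\kappa} = \xi^{(m)}(\alpha,v,\kappa)$ on a common neighborhood of the origin, which is precisely the claim that $(\alpha,v,\kappa) \mapsto \xi_{\alpha,v,\kappa}$ is $C^\infty$ as a map into $H^{k+m,\delta_m}$.

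The main obstacle is verifying ingredient (i): that $\alpha \mapsto \oplus_\alpha$ and $\alpha \mapsto \ominus_\alpha$ are $C^\infty$ in $\alpha \in \bG_{\exp}$ when regarded as linear operators on a fixed weighted Sobolev space. This is the central point motivating the exponential gluing profile, and the requisite estimates are implicit in the verification of sc-smoothness carried out in \cite[\textsection 4.3--4.4]{Swa21} (and the underlying polyfold gluing analysis, as in \cite{HWZ17}). Ingredients (ii) and (iii) are standard in classical gluing analysis for pseudo-holomorphic curves.
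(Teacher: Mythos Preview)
Your approach is quite different from the paper's and has a genuine gap at ingredient (i). The paper's proof is a two-line argument: the map $(\alpha,v,\kappa)\mapsto\xi_{\alpha,v,\kappa}$ is already known to be sc-smooth (from the sc-implicit function theorem), the domain $\bG_{\exp}\times V\times K$ is finite-dimensional so all its sc-scales coincide, and then \cite[Proposition 2.3]{HWZ10} gives directly that an sc-smooth map with finite-dimensional source is classically $C^\infty$ into every target scale $H^{k+m,\delta_m}$. No reworking of the implicit function theorem is needed.

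Your route instead requires the nonlinear map $F_m$ to be classically $C^\infty$ on the domain $\bG_{\exp}\times V\times K\times E_m$, which has the \emph{infinite-dimensional} factor $E_m$. This is where ingredient (i) enters, and it is not a consequence of sc-smoothness: sc-smoothness of a map with infinite-dimensional source is strictly weaker than classical $C^\infty$-smoothness on each fixed level (each sc-derivative loses one scale), and this loss is precisely why sc-calculus was introduced. Saying the estimates are ``implicit in the verification of sc-smoothness'' is circular. If one could show gluing and anti-gluing are classically $C^\infty$ on a fixed weighted Sobolev space jointly in $(\alpha,\xi)$, there would be no need for the polyfold machinery at all. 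There is also a smaller inaccuracy: the condition $\ominus_\alpha\xi=0$ is not ``built into $E$'', since $E$ is a fixed sc-complement of $K$ while $\ominus_\alpha$ depends on $\alpha$; this condition must appear as a component of $F_m$. Finally, note that the one place where you do invoke sc-smoothness (to put $\xi_{\alpha,v,\kappa}$ into $E_m$) already contains the content of the paper's argument, so even if (i) could be established your proof would be strictly longer.
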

\begin{proof}
    We already know that the map $\bG_{\exp}\times V\times K\to H^{k,\delta}$ induced by the assignment $(\alpha,v,\kappa)\mapsto\xi_{\alpha,v,\kappa}$ is sc-smooth. As $\bG_{\exp}\times V\times K$ is finite dimensional, all its scales coincide, i.e., $(\bG_{\exp}\times V\times K)_m = \bG_{\exp}\times V\times K$ for all $m\ge 0$. The desired result is now a formal consequence of \cite[Proposition 2.3]{HWZ10}.
\end{proof}

In terms of the notation established above, let $C^\text{int}\subset C$ be the complement of the closure of the cylindrical ends in $C^\circ$. Note that we have canonical embeddings $C^\text{int}\subset C_\alpha$ for all small values of $\alpha\in \bG$.

\begin{lemma}\label{eval-smooth}
    The following is a $C^\infty$ map of manifolds.
    \begin{align*}
        \normalfont \bG_{\exp}\times V\times K\times C^\text{int}&\to X \\
        (\alpha,v,\kappa,w)&\mapsto\exp_{u(w)}(\kappa(w) + \xi_{\alpha,v,\kappa}(w)).
    \end{align*}
\end{lemma}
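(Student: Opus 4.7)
The plan is to reduce the claim to Lemma \ref{gluing-map-smooth}, combined with the Sobolev embedding theorem and smoothness of the exponential map on $X$.

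First I would choose an open neighbourhood $U\subset C^\circ$ of the closure of $C^\text{int}$ which is disjoint from the cylindrical ends set up in \textsection\ref{gluing-setup}. Since $U$ stays a positive distance away from the nodes, the exponential weights built into the spaces $H^{k+m,\delta_m}$ are irrelevant there and the restriction operator $H^{k+m,\delta_m}\to W^{k+m,2}(U,u^*T_X)$ is continuous for every $m\ge 0$. Combining this with Lemma \ref{gluing-map-smooth} shows that
\[
\bG_{\exp}\times V\times K\to W^{k+m,2}(U,u^*T_X),\qquad (\alpha,v,\kappa)\mapsto \xi_{\alpha,v,\kappa}|_U,
\]
is $C^\infty$ for every $m\ge 0$. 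The map $\kappa\mapsto\kappa|_U$ is smooth into the same target by elliptic regularity for $K\subset H^{k+m,\delta_m}$, so $(\alpha,v,\kappa)\mapsto(\kappa+\xi_{\alpha,v,\kappa})|_U$ is $C^\infty$ into $W^{k+m,2}(U,u^*T_X)$ for every $m$.

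Next, because $U$ is two-dimensional, the Sobolev embedding yields continuous inclusions $W^{k+m,2}(U,u^*T_X)\hookrightarrow C^\ell(U,u^*T_X)$ whenever $k+m-\ell>1$. Thus $(\alpha,v,\kappa)\mapsto (\kappa+\xi_{\alpha,v,\kappa})|_U$ is $C^\infty$ into $C^\ell(U,u^*T_X)$ for arbitrarily large $\ell$. Now the joint evaluation map $C^\ell(U,u^*T_X)\times C^\text{int}\to u^*T_X$, $(f,w)\mapsto f(w)$, is of class $C^{\ell-1}$, and composing it with the previous smooth parameter map together with the (ordinary) smooth exponential map of $X$ on a neighbourhood of the zero section of $u^*T_X$ produces a $C^{\ell-1}$ map
\[
\bG_{\exp}\times V\times K\times C^\text{int}\to X,\qquad (\alpha,v,\kappa,w)\mapsto\exp_{u(w)}\bigl(\kappa(w)+\xi_{\alpha,v,\kappa}(w)\bigr).
\]
Since $\ell$ is arbitrary, every partial derivative of every order exists and is continuous, which is the claim.

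The main obstacle is the first paragraph: translating Lemma \ref{gluing-map-smooth}, whose output a priori lives in the weighted Sobolev space attached to the entire singular curve $C$, into genuine smoothness into an ordinary Sobolev space on the interior domain $U$ where the weights have no effect. Once this reduction is carried out, the remainder is a standard Sobolev-embedding-plus-evaluation package, handled by letting $\ell\to\infty$.
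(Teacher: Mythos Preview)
Your argument is correct and follows essentially the same route as the paper: invoke Lemma~\ref{gluing-map-smooth} to get $C^\infty$ dependence into $H^{k+m,\delta_m}$, pass via Sobolev embedding to $C^\ell$ on the interior (where the weights are harmless), then use finite-regularity of the evaluation map together with smoothness of $\exp$ and let $m$ (equivalently $\ell$) tend to infinity. Your intermediate restriction to an unweighted Sobolev space on $U$ is just an explicit unpacking of the Sobolev embedding step the paper does directly, so there is no substantive difference.
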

\begin{proof}
    Fix an integer $m\ge 0$.    
    Lemma \ref{gluing-map-smooth} shows that $(\alpha,v,\kappa)\mapsto \kappa + \xi_{\alpha,v,\kappa}$ is $C^\infty$ as a map $\bG_{\exp}\times V\times K\to H^{k+m,\delta_m}$. The Sobolev embedding theorem gives a continuous restriction map $H^{k+m,\delta_m}\to C^m(C^\text{int},u^*T_X)$. Using the smoothness of the exponential map on $X$ and working locally on $C^\text{int}$ to trivialize $u^*T_X$, we are left to consider evaluation map
    \begin{align*}
        C^m(\bD,\bR)\times\bD\to\bR,\quad
        (f,w)\mapsto f(w)
    \end{align*}
    where $\bD\subset\bC$ is a disc, which is evidently $C^m$. This shows that the map in the statement of the lemma is a composition of $C^m$ maps and is therefore $C^m$. As $m\ge 0$ was arbitrary, we get the desired result.
\end{proof}

We can now state a refinement of Theorem \ref{rel-smooth-representability}. Denote the complement of the nodes in the fibres of $\pi$ by $\fC^\circ\subset\fC$ and note that $\pi$ restricts to a (holomorphic) submersion $\pi^\circ:\fC^\circ\to\fB$. In particular, we may naturally regard $\fC^\circ/\fB$ as a rel--$C^\infty$ manifold. If we take any commutative diagram
\begin{center}
\begin{tikzcd}
    \fC \arrow[d,"\pi"] & \arrow[l] \fC_T \arrow[d] & \arrow[l] \fC_Z \arrow[r,"f"] \arrow[d] & X \\
    \fB & \arrow[l,"\varphi"] T & \arrow[l] Z &
\end{tikzcd}
\end{center}
belonging to $\fM^\text{reg}(\pi,X)_A(Z/T)$ for some rel--$C^\infty$ manifold $Z/T$ (as in Definition \ref{rel-smooth-moduli-functors}), then restricting it over $\fC^\circ\subset\fC$ yields a commutative diagram
\begin{center}
\begin{tikzcd}
    \fC^\circ \arrow[d,"\pi^\circ"] & \arrow[l] \fC_T^\circ \arrow[d] & \arrow[l] \fC_Z^\circ \arrow[r,"f^\circ"] \arrow[d] & X \\
    \fB & \arrow[l,"\varphi"] T & \arrow[l] Z &
\end{tikzcd}
\end{center}
where $\fC^\circ_T/T$ and $\fC^\circ_Z/Z$ are the pullbacks of $\pi^\circ$ and the maps $f^\circ$ is the restriction of $f$. Note that $\fC^\circ_T/T$ is naturally a rel--$C^\infty$ manifold by pulling back the rel--$C^\infty$ structure of $\fC^\circ/\fB$. Thus, from the rel--$C^\infty$ structures on $Z/T$ and $\fC^\circ_T/T$, we obtain a natural rel--$C^\infty$ structure on their fibre product $\fC_Z^\circ/T$. 

\begin{proposition}[Refinement of Theorem \ref{rel-smooth-representability}]\label{extra-smoothness}
    For any rel--$C^\infty$ manifold $Z/T$ and any diagram in $\normalfont\fM^\text{reg}(\pi,X)_A(Z/T)$ as above, $f$ restricts to a rel--$C^\infty$ map
    \begin{align*}
        f^\circ:\fC^\circ_Z/T\to X/\normalfont\text{pt}.
    \end{align*}
\end{proposition}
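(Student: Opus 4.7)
The assertion is local on $\fC^\circ_Z$, so I would fix a point $x \in \fC^\circ_Z$ and establish rel-$C^\infty$ smoothness of $f^\circ$ over $T$ in a neighborhood of $x$. Let $z \in Z$, $t \in T$, $s_0 = \varphi(t) \in \fB$ and $w_0 \in C_{s_0}^\circ$ record the successive images of $x$ under the projections, and let $u_0 \colon C_{s_0} \to X$ be the pseudo-holomorphic map corresponding to $z$ in the fibre.

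The first step is to pass to a local rel-$C^\infty$ chart for $Z/T$ at $z$, furnished by the gluing construction recalled in \textsection\ref{gluing-setup}--\textsection\ref{gluing-ift} with base point $(s_0, u_0)$. After shrinking, this identifies neighborhoods as $Z \simeq \bG \times V \times K$ with $T \simeq \bG \times V$, the map $\varphi$ being the projection to $\bG \times V$. The coordinate neighborhoods of the nodes used to define the cylindrical ends can be chosen so small that $w_0$ lies in the interior region $C^{\text{int}} \subset C_{s_0}^\circ$. Since $\fC^\circ \to \fB$ is a holomorphic submersion on which plumbing acts trivially over $C^{\text{int}}$, a neighborhood of $x$ in $\fC^\circ_Z$ is identified as a rel-$C^\infty$ manifold over $\bG \times V$ with an open subset of $\bG \times V \times K \times C^{\text{int}}$.

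The second and final step is an appeal to Lemma \ref{eval-smooth}: in these coordinates $f^\circ$ is given by
\[
    (\alpha, v, \kappa, w) \longmapsto \exp_{u_0(w)}\!\left(\kappa(w) + \xi_{\alpha, v, \kappa}(w)\right),
\]
which that lemma shows to be jointly $C^\infty$ on $\bG_{\exp} \times V \times K \times C^{\text{int}}$. Since $\bG$ and $\bG_{\exp}$ share the same underlying topology, all partial derivatives in the $(\kappa, w)$ directions depend continuously on $(\alpha, v) \in \bG \times V$, which is precisely the rel-$C^\infty$ condition over $T$. The substantive analytic input is therefore Lemma \ref{eval-smooth}, which in turn rests on the sc-smoothness of the gluing map \eqref{gluing-map-1}--\eqref{gluing-map-2} together with Sobolev embedding. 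I expect the main piece of bookkeeping to be the verification in the first step that the two natural rel-$C^\infty$ structures on $\fC^\circ_Z/T$ near $x$ --- the one inherited from the pullback of $\fC^\circ/\fB$ along $Z \to \fB$, and the one produced by the local model --- agree; this should be routine given the triviality of plumbing on $C^{\text{int}}$ and the holomorphicity of $\fC^\circ \to \fB$.
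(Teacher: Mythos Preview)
Your argument has the right ingredients but contains a confusion in the first step. The gluing construction in \textsection\ref{gluing-setup}--\textsection\ref{gluing-ift} builds local rel--$C^\infty$ charts for the \emph{representing object} $\fM^{\text{reg}}(\pi,X)_A/\fB$, not for an arbitrary rel--$C^\infty$ manifold $Z/T$. For a general $Z/T$ there is no reason for a neighborhood of $z$ to look like $\bG\times V\times K$ over $\bG\times V$ (indeed $Z$ and $T$ may have any dimensions, and $\varphi:T\to\fB$ need not even be locally modeled on a projection). What you do have is the classifying morphism $Z/T\to\fM^{\text{reg}}(\pi,X)_A/\fB$ coming from representability, and the given diagram over $Z/T$ is the pullback of the universal one along it.

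The fix is exactly what the paper does: reduce at the outset to the universal case $Z/T=\fM^{\text{reg}}(\pi,X)_A/\fB$, observing that the rel--$C^\infty$ property of $f^\circ$ is preserved under pullback along rel--$C^\infty$ morphisms. After this reduction your second step goes through verbatim and coincides with the paper's argument: choose the cylindrical ends small enough that the base point lies in $C^{\text{int}}$, and invoke Lemma~\ref{eval-smooth}. So the approach is correct and matches the paper once the reduction to the universal case is inserted; the bookkeeping you flagged at the end is then not an additional concern.
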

\begin{proof}
    We reduce to the universal case, i.e., $Z/T = \fM^\text{reg}(\pi,X)_A/\fB$ and $f$ is the universal pseudo-holomorphic map over it. This is possible because a diagram over a general $Z/T$ is obtained from the universal case by a rel--$C^\infty$ pullback. 
    
    Given a point $(s,u)\in\fM^\text{reg}(\pi,X)_A$ and a point $\zeta\in C_s\cap\fC^\circ$, we must show that $f^\circ$ is rel--$C^\infty$ near $(s,u,\zeta)$. Clearly, this can be checked after passing to a local rel--$C^\infty$ chart for $\fM^\text{reg}(\pi,X)_A$  near $(s,u)$. 
    
    Construct a chart near $(s,u)$, as in \textsection\ref{gluing-setup}--\ref{gluing-ift}, ensuring that the cylindrical ends on $C_s$ are chosen to be disjoint from a neighborhood of $\zeta$ in $C_s$. In terms of the notation introduced above Lemma \ref{eval-smooth}, this means we have $\zeta\in C^\text{int}$. The fact that $f^\circ$ is rel--$C^\infty$ near $(s,u,\zeta)$ now follows from Lemma \ref{eval-smooth}.
\end{proof}

\bibliographystyle{amsalpha}
\bibliography{HGGW}

\Addresses

\end{document}